
\documentclass[11pt,UTF-8,reqno]{amsart}
\usepackage{enumerate, bbm}
\setlength{\topmargin}{-0.3cm}
\setlength{\oddsidemargin}{0.5cm}\setlength{\evensidemargin}{0.5cm}
\setlength{\textwidth}{15truecm}
\usepackage{amssymb,url,color, booktabs}
\usepackage{mathrsfs}
\usepackage{cite}
\usepackage[utf8]{inputenc}

\usepackage{color}
\usepackage[colorlinks=true]{hyperref}
\hypersetup{
    linkcolor=blue,          
    citecolor=red,        
    filecolor=blue,      
    urlcolor=cyan
}

\usepackage{color}
\definecolor{MyDarkBlue}{cmyk}{0.8,0.3,0.8,0.4}
\definecolor{yellow}{rgb}{0.99,0.99,0.70}
\definecolor{white}{rgb}{1.0,1.0,1.0}
\definecolor{black}{rgb}{0.00,0.00,0.00}


\numberwithin{equation}{section}

\newcommand{\be}{\begin{eqnarray}}
\newcommand{\ee}{\end{eqnarray}}
\newcommand{\ce}{\begin{eqnarray*}}
\newcommand{\de}{\end{eqnarray*}}
\newtheorem{theorem}{Theorem}[section]
\newtheorem{lemma}{Lemma}[section]

\newtheorem{definition}{Definition}[section]
\newtheorem{proposition}{Proposition}[section]

\newtheorem{corollary}{Corollary}[section]

\newtheorem{remark}[theorem]{Remark}


\def\[{{\Big[}}
\def\]{{\Big]}}
\def\<{{\langle}}
\def\>{{\rangle}}
\def\({{\big(}}
\def\){{\big)}}

\def\bb2{{\boldsymbol{2}}}

\def\={&\!\!=\!\!&}
\def\RR{\mathbb{R}}

\def\b1{{\mathbbm 1}}

\def\geq{\geqslant}
\def\leq{\leqslant}

\def\le{\leqslant}

\def\[{{\Big[}}
\def\]{{\Big]}}
\def\<{{\langle}}
\def\>{{\rangle}}

\def\={&\!\!=\!\!&}
\def\bt{\begin{theorem}}
\def\et{\end{theorem}}
\def\bl{\begin{lemma}}
\def\el{\end{lemma}}
\def\br{\begin{remark}}
\def\er{\end{remark}}
\def\bd{\begin{definition}}
\def\ed{\end{definition}}
\def\bc{\begin{corollary}}
\def\ec{\end{corollary}}

\def\geq{\geqslant}
\def\leq{\leqslant}

\def\le{\leqslant}

\def\<{\langle} \def\>{\rangle}

\def \eref#1{\hbox{(\ref{#1})}}

\allowdisplaybreaks

\begin{document}

\title[Mean Field SPDEs]
{Mean Field  Stochastic Partial Differential Equations with Nonlinear Kernels$^\dagger$}

\thanks{$\dagger$
This work is supported by National Key R\&D program of China (No. 2023YFA1010101). The research of W. Hong is also supported by  NSFC (No.~12401177) and  Basic Research Program of Jiangsu (No.~BK20241048). The research of S. Li is also supported by NSFC (No.~12371147). The research of W. Liu is also supported by NSFC (No.~12171208, 12090011,12090010) and the PAPD of Jiangsu Higher Education Institutions. }


\maketitle
\centerline{\scshape Wei Hong, Shihu Li,   Wei Liu}

\medskip

\vspace{1mm}
 {\footnotesize
\centerline{   School of Mathematics and Statistics, Jiangsu Normal University, Xuzhou 221116, China,}}

\begin{abstract}
This work focuses on the mean field stochastic partial differential equations with nonlinear kernels. We first prove the existence and uniqueness of strong and weak solutions for mean field stochastic partial differential equations in the variational framework, then establish the convergence (in certain Wasserstein metric) of the empirical laws of interacting systems to the law of solutions of mean field equations, as the number of particles tends to infinity. The main challenge lies in addressing the inherent interplay  between
 the high nonlinearity of operators and the non-local effect of  coefficients that depend on the measure. In particular, we do not need to assume any exponential moment control condition of solutions, which extends the range of the applicability of our results.

As applications, we first study a class of finite-dimensional interacting particle systems with polynomial kernels, which are commonly encountered in fields such as the data science and the machine learning. Subsequently, we present several illustrative examples of infinite-dimensional interacting systems with nonlinear kernels, such as the stochastic climate models, the stochastic Allen-Cahn equations, and the stochastic Burgers type equations.

\bigskip
\noindent
\textbf{Keywords}: SPDE; Mean field equations; Variational framework; Large $N$ Limit
\\
\textbf{Mathematics Subject Classification (2020)}: 60H15, 60H10

\end{abstract}
\maketitle \rm

\tableofcontents

\section{Introduction}
\subsection{Background and motivation}
 Mean field equations are a type of stochastic equations whose coefficients depend on the law of solutions, which were initially developed by McKean \cite{M66}. They are also known as distribution-dependent stochastic equations or McKean-Vlasov stochastic equations in the literature (cf.~\cite{BR1,BR2}), which typically arise as the limit of mean field interacting particle systems. For instance, we consider the dynamics of an $N$ ($N\geq 1$) particles system
governed  by the following stochastic differential equations (SDEs)
\begin{equation}\label{I03}
d X_t^{N,i} = b(X_t^{N,i},\mathcal{S}_t^N)dt+\sigma(X_t^{N,i},\mathcal{S}_t^N)d W_t^i,~\mathcal{S}_t^N:=N^{-1}\sum_{j=1}^N\delta_{X_t^{N,j}},
\end{equation}
where $X_t^{N,i}$ represents the position of $i$-th particle ($1\leq i\leq N$) at time $t$,  $\{W_t^i\}_{1\leq i\leq N}$ are independent standard Wiener processes.
Notably, the mean field interaction here is described as the dependency of coefficients on the empirical measure of the system. Under some regularity conditions, e.g.~globally Lipschitz (cf.~Sznitman \cite{S1}), on coefficients $b$ and $\sigma$, it is well-known that the empirical laws  of the particles system (\ref{I03}) will converge
weakly to the law of solutions of following mean field SDE
\begin{equation*}\label{I04}
d X_t = b(X_t, \mathscr{L}_{X_t})dt+\sigma(X_t, \mathscr{L}_{X_t})d W_t,
\end{equation*}
where $\mathscr{L}_{X_t}$ is the time marginal law of $X_t$. This kind of macroscopic limit behavior for the interacting particle system is usually referred as the mean field limit (also called the propagation of chaos)  in the literature, which is widely used in the analysis of large-scale systems in various fields, including statistical physics,  neuroscience and population dynamics. We also refer interested readers to \cite{BLPR,E1,GHM,HSS,HLLS23,HW1,TT,WFY} and references therein for the study of mean field SDEs.

\subsubsection{Finite-dimensional case}
  Classical results on mean field limit assume that the interacting kernels are globally Lipschitz and bounded. However, it is noted that such hypotheses are most often too strong in applications of practice.  Handling certain singular interacting kernels or only locally Lipschitz interacting kernels with polynomial growth has been a longstanding and challenging question. As mentioned in Section 3.1.2 in \cite{CD22a}, there is no real hope for better results at this level of generality, many directions have been explored to weaken the hypotheses in specific cases. Motivated by this, in the present work, we focus on the mean field limit for interacting particle systems with nonlinear kernels. Since in many fields, such as data science and machine learning, the particle systems of interest often involve polynomial interacting kernels and only satisfy certain {\it{local monotonicity}} condition.

  \vspace{1mm}
  One  motivating example is the following interacting particle system $(X^{N,1},\ldots,$ $X^{N,N})\in\mathbb{R}^N$ given by
\begin{equation}\label{in05}
dX_t^{N,i}=-\nabla \Phi(X_t^{N,i})\text{Var}[\mathcal{S}^N_t]dt
+\sqrt{2}dW_t^i,
\end{equation}
where $\Phi(\cdot)$ is a real value function,  $ \text{Var}[\cdot]$ is
 the empirical variance between particles, i.e.,
 $$\text{Var}[\mathcal{S}^N_t]=\frac{1}{N}\sum_{j=1}^{N}\big(X_t^{N,j}-\bar{X}_t^N\big)^{2},~~~\bar{X}^N_t:=\frac{1}{N}\sum_{i=1}^{N} X_t^{N,i}.$$
The interacting particle system \eref{in05} is constructed from the \emph{Metropolis Adjusted Langevin Algorithm} (MALA), which has important applications in data science (cf.~\cite{DL21,GHLS20} and the reference therein). Since the variance function is a quadratic quantity, the Lipschitz assumption  in McKean's theorem does not hold.
The formal mean field limit of (\ref{in05}) as $N\to \infty$ is given by the following Fokker-Planck equation
\begin{equation*}
\partial f_t(x)=\Delta f_t(x)-\nabla\cdot \Big(f_t(x) \nabla \Phi(x)\text{Var}[f_t]\Big),
\end{equation*}
where
$$\text{Var}[\mu]:=\int_{\mathbb{R}}(x-m[\mu])^2\mu(dx),~~m[\mu]:=\int_{\mathbb{R}}x\mu(dx),$$
and the associated mean field SDE is given by
\begin{equation}\label{in06}
dX_t=-\nabla \Phi(X_t)\text{Var}[\mathscr{L}_{X_t}]dt+\sqrt{2}dW_t.
\end{equation}
 For simplicity, here we only consider the one-dimensional case (i.e.~$X^{N,i}\in\mathbb{R},1\leq i\leq N$).  However, it is important to note that the results presented in this paper are also applicable to the general dimension $d$ (in this case, the empirical variance is replaced by the  empirical covariance).

Moreover, the interacting particle systems with polynomial kernels also have wide applications in the field of machine learning. For instance, the following type of interacting particle system is derived by the \emph{Stein Variational Gradient Descent} (SVGD)  in \cite{LW16} (see also \cite[Lemma 45]{DNS23}),
  \begin{equation}\label{mean03}
  dX^{N,i}_t=-\frac{1}{N}\sum_{j=1}^N\nabla_y\kappa(X^{N,i}_t,X^{N,j}_t)dt-\frac{1}{N}\sum_{j=1}^N\kappa(X^{N,i}_t,X^{N,j}_t)\nabla \Phi(X^{N,j}_t)dt,
  \end{equation}
where the functions $\kappa,\Phi$ are typically chosen as
  \begin{equation}\label{mean01}
\kappa(x,y)=x\cdot y~\text{and}~\Phi(y)=\frac{1}{2}y^2.
  \end{equation}
 In this case, the corresponding mean field limit of system \eref{mean03} is given by the following mean field equation
\begin{equation}\label{mean003b}
dX_t=-X_tdt-\int_{\mathbb{R}}\big(\kappa(X_t,y)\nabla \Phi(y)\big)\mathscr{L}_{X_t}(dy)dt.
\end{equation}

It is evident that  the interacting kernels in systems \eref{in05} and \eref{mean03} are of polynomial growth and do not satisfy globally Lipschitz or monotone conditions (cf.~\cite{WFY}). This distinguishes them from the assumptions typically imposed in existing literature, such as the works \cite{BCC11,E1,GHM,WFY}. It worth noting that, as stated in \cite[Remark 46]{DNS23}, the main results presented in \cite{DNS23} cannot be directly applied to the polynomial kernel (\ref{mean01}), since it does not satisfy the integrally strictly positive-definite assumption proposed in \cite{DNS23}.
With these applications in mind, in the current study, we aim to establish a general setting for handling a class of particle systems with polynomial kernels.
Then the mean field limit of \eref{in05} and \eref{mean03} as well as the well-posedness of \eref{in06} and \eref{mean003b} can be directly derived using our general theory.

By the way, we also mention that the mean field limit for certain singular interacting kernels has achieved significant progress recently.
  Several notable works have contributed to this area, including references such as \cite{ESY10,HRZ,JW18,WZF}. In particular, Jabin and Wang \cite{JW18} have made important contributions by  studying the mean field limit for a class of interacting particle systems with $W^{-1,\infty} $ kernels, and they obtain the quantitative convergence rates in terms of the relative entropy for all finite marginal laws of the particles. For more recent significant progress on the mean field limit of interacting particle systems with singular interacting kernels, one could refer to the works by Serfaty \cite{Se} and Wang et al. \cite{WZZ23}.
  We also refer the interested readers to the surveys \cite{CD22a,CD22b,G1,S1} and the monographs \cite{CD1,S2} on this topic.

\subsubsection{Infinite-dimensional case}
In recent years, there has been a growing interest in the study of weakly interacting stochastic partial differential equations (SPDEs) and their limiting mean field counterparts, due to the potential applications in several fields such as neurophysiology and quantum
field theory. As far as we know, the initial research on this topic was done by Chiang et al.~\cite{CKS} and  Kallianpur and Xiong \cite{KX}, in which they considered the mean field limit for the following interacting system
$$dX^{N,i}_t=\Bigg\{\Delta X^{N,i}_t+\frac{1}{N}\sum_{j=1}^Nb_t(X^{N,i}_t,X^{N,j}_t)\Bigg\}dt+dW^i_t,$$
taking values in the dual of a countably Hilbertian nuclear space and with globally Lipschitz type conditions,
 which was later extended in \cite{BKK} to more general non-nuclear spaces, here and in the following, $\Delta$ denotes the classical Laplace operator.
The results in \cite{CKS,KX} have applications in describing random strings and the fluctuation of voltage potentials of interacting spatially extended neurons. They are particularly relevant when modeling large numbers of neurons in close proximity to each other, providing a more realistic representation of neural systems.
 E and Shen \cite{ES1} considered a dynamical model of polymer systems, which is described by a system of $N$-coupled SPDEs, assuming suitable
assumptions on the initial and boundary conditions as well as regularity of the coefficients. They proved the mean field limit for the $N$-coupled system and the well-posedness of the mean field limit equation. Criens \cite{C1} generalized the results of \cite{BKK,CKS,ES1} and established the mean field limit for the  weakly interacting semi-linear SPDEs whose coefficients satisfy linear growth and Lipschitz type conditions. Ren et al.~\cite{RTW}  derived the well-posedness for a class of path-distribution dependent  stochastic transport type equations, including stochastic Camassa-Holm equation with distribution dependent coefficients.  Bailleul and Moench \cite{BM} investigated the mean field limit for some systems of interacting fields, whose evolution is given by singular SPDEs of mean field type, under Lipschitz conditions based on paracontrolled calculations. In \cite{CHK}, Crisan, Holm and Korn investigated the stochastic  idealized climate models.  This climate model couples a stochastic PDE for the atmospheric circulation to a deterministic PDE for the circulation of the ocean, which is  derived by the {\it{Stochastic Advection by Lie Transport}} (SALT) method, and the abstract form of such model is given by the following mean field SPDE
 \begin{equation}\label{I007}
dX_t+\big(B(X_t,X_t)-\mathcal{R}(X_t)+\kappa\mathbb{E}X_t\big)dt=\nu\Delta X_tdt+\sigma(X_t)dW_t,
\end{equation}
where $B$ is the usual bilinear transport operator, $\mathcal{R}$ is a linear operator, $\kappa\in\mathbb{R}$, and $W_t$ is a cylindrical Wiener process.

In view of the aforementioned works, a natural question is whether the mean field limit holds for weakly interacting SPDEs with nonlinear kernels.
More recently, Shen et al.~\cite{SSZZ,SZZ22} have made significant progress in studying the following interacting stochastic Allen-Cahn equations
\begin{equation}\label{mean004}
dX_t^{N,i}=\Bigg\{\Delta X_t^{N,i}+X_t^{N,i}-\frac{1}{N}\sum_{j=1}^{N}(X_t^{N,j})^2X_t^{N,i}\Bigg\}dt+dW_t^i,
\end{equation}
where
$\{W_t^i\}_{1\leq i\leq N}$ are independent  cylindrical Wiener processes.
This model arises as the stochastic quantization of the $N$-component generalization of the $\Phi^4_d$ model in the quantum
field theory. They specifically focused on the large $N$ limit problem of the system \eref{mean004}, where such  limit can be characterize by the following mean field stochastic Allen-Cahn equations
$$dX_t=\Big\{\Delta X_t+X_t-(\mathbb{E}X_t^2)X_t\Big\}dt+dW_t.$$
In \cite{GRZ}, Gabriel et al. presented a novel and interesting result that demonstrates mean field stochastic Allen-Cahn equation can be derived as a scaling limit of the classical Allen-Cahn equation with a rough random initial datum, which also provides an independent interest supporting the significance of studying mean field SPDEs. The reader can refer to e.g. \cite{MZ03} and the references therein for the applications of large $N$ limit in quantum field theory.



\subsection{Main results}\label{secmr}
In this paper, our aim is to develop a general result to investigate the mean field limit (or the large $N$ limit problem) for weakly interacting SPDEs with nonlinear interactions, which also covers the finite-dimensional models  \eref{in05} and \eref{mean03} as special cases.
More precisely, we consider the following general weakly interacting SPDEs
 \begin{equation}\label{eqi01}
dX^{N,i}_t=\mathcal{A}(t,X^{N,i}_t,\mathcal{S}^N_t)dt+\mathcal{B}(t,X^{N,i}_t,\mathcal{S}^N_t)dW_t^i,~X^{N,i}_0=\xi^i,
\end{equation}
where $\{W_t^i\}_{1\leq i\leq N}$ are independent cylindrical Wiener processes defined on a complete filtered probability space $\left(\Omega,\mathscr{F},\{\mathscr{F}_t\}_{t\in[0,T]},\mathbb{P}\right)$, the initial random variables $\{\xi^i\}_{1\leq i\leq N}$ are exchangeable. To extend the applicability of our study to various important interacting systems, we will investigate this problem within the generalized variational framework.

The classical variational framework in the theory of SPDEs, under the following Gelfand triple
\begin{equation}\label{gel1}
\mathbb{V}\subset \mathbb{H} \subset \mathbb{V}^*,
\end{equation}
was initially established by Pardoux, Krylov and Rozovskii, see \cite{Par75,KR,LR1,RS}. In their works, they employed the well-known monotonicity trick to verify the well-posedness of
solutions to SPDEs that satisfy the classical monotonicity and coercivity assumptions. In the past several years, the variational framework has been substantially extended in \cite{AV24,HHL,LR2,LR13,LR1,RSZ1,RZZ15} to more general circumstances that satisfy the local or fully local monotonicity conditions.

Let us  briefly collect our main results established in this work.
%
%
First, under general growth and coercivity conditions (see $\mathbf{(A_0)}$-$\mathbf{(A_4)}$ below), we provide
the existence of martingale/weak solutions to  the following mean field SPDE
\begin{equation}\label{eqSPDE01}
dX_t=\mathcal{A}(t,X_t,\mathscr{L}_{X_t})dt+\mathcal{B}(t,X_t,\mathscr{L}_{X_t})dW_t,~X_0\sim\mu_0.
\end{equation}
Then under the following local monotonicity condition
 \begin{eqnarray*}
&&2_{{\mathbb{V}}^*}\langle \mathcal{A}(t,u,\mu)-\mathcal{A}(t,v,\mu),u-v\rangle_{\mathbb{V}}+\|\mathcal{B}(t,u,\mu)-\mathcal{B}(t,v,\mu)\|_{L_2(U,{\mathbb{H}})}^2
\nonumber\\
&&\leq
(C+\rho(u,\mu)+\eta(v,\mu))\|u-v\|_{\mathbb{H}}^2,
\end{eqnarray*}
 we show the existence of (probabilistically) strong solutions to (\ref{eqSPDE01}), by proving the uniqueness of solutions to the corresponding decoupled equation with frozen distributions. However, under this condition, it is not sufficient to obtain the uniqueness of either strong or weak solutions for the mean field equation (\ref{eqSPDE01}), which differs from the classical theory of SPDEs. In fact, the strong and weak uniqueness of solutions to \eref{eqSPDE01} may not hold  as the drift $\mathcal{A}(t,u,\mu)$ is locally monotone with respect to both the spatial variable $u$ and the measure $\mu$ like the following form
 \begin{eqnarray*}
&&2_{{\mathbb{V}}^*}\langle \mathcal{A}(t,u,\mu)-\mathcal{A}(t,v,\nu),u-v\rangle_{\mathbb{V}}+\|\mathcal{B}(t,u,\mu)-\mathcal{B}(t,v,\nu)\|_{L_2(U,{\mathbb{H}})}^2
\nonumber\\
&&\leq
(C+\rho(u,\mu)+\eta(v,\nu))(\|u-v\|_{\mathbb{H}}^2+\mathcal{W}_{2,{\mathbb{H}}}(\mu,\nu)^2).
\end{eqnarray*}
This  highlights the intrinsic difference between mean field  SPDEs and classical  (distribution-independent) SPDEs.  One can  refer to Remark  \ref{re1} below for some typical counterexamples at this point.

Moreover, due to the inherent interplay  between
 the high nonlinearity of operators and the non-local effect of  coefficients that depend on the measure, comparing to the classical theory of locally monotone operators (cf.~\cite{LR2,LR13,LR1,RSZ1}), the well-posedness of solutions in the mean field case is a more challenging problem.
To illustrate this, for example, let us consider  the stopped process $Z_t:= X_{t\wedge\tau}$ of $X_t$ in (\ref{eqSPDE01}), where $\tau$ is some stopping time. In this case, $Z_t$ satisfies
$$Z_t=Z_0+\int_0^{t\wedge\tau}\mathcal{A}(s,Z_s,\mathscr{L}_{X_s})ds
+\int_0^{t\wedge\tau}\mathcal{B}(s,Z_s,\mathscr{L}_{X_s})dW_s,$$
where it is crucial to mention that $X_t$ is still involved in the equation, rather than only $Z_t$. As a result, the classical localization argument cannot be directly applied to deal with the well-posedness of mean field SPDEs \eref{eqSPDE01}.

 To deal with this problem, it is essential to balance the influence between solutions and laws. In this work, we propose two  distinct local monotonicity conditions (i.e.~$(\mathbf{A'_5})$ and $(\mathbf{A''_5})$ below) and establish the uniqueness of strong solutions for mean field SPDE  \eref{eqSPDE01}. Shortly speaking,  $(\mathbf{A'_5})$ is a general local monotonicity condition of $\mathcal{A}(t,u,\mu)$  with respect to the measure $\mu$, but requires $\mathcal{A}(t,u,\mu)$  to be monotone with respect to $u$. On the other hand, $(\mathbf{A''_5})$ is a more general local monotonicity condition of $\mathcal{A}(t,u,\mu)$ with respect to $u$, but imposes restrictions on the measure $\mu$ based on a ``local" Wasserstein distance. More specifically, for any $\mu,\nu\in\mathscr{P}(\mathbb{S})$, $\mathbb{S}:=C([0,T];\mathbb{H})\cap L^{\alpha}([0,T];\mathbb{V})$, we recall the  local $L^2$-Wasserstein distance, which is a modified form of \cite{RTW}, as follows
\begin{equation*}
\mathcal{W}_{2,R}(\mu,\nu):=\inf_{\pi\in\mathscr{C}(\mu,\nu)}\Bigg(\int\|\xi^{\tau_R^\xi\wedge\tau_R^{\eta}}-\eta^{\tau_R^\xi\wedge\tau_R^{\eta}}\|_{T,\mathbb{H}}^2\pi(d\xi,d\eta)\Bigg)^{\frac{1}{2}},
\end{equation*}
where the time  $\tau_R^\xi$ is defined by
$$\tau_R^\xi:=\inf\Big\{t\geq 0:\|\xi_t\|_{\mathbb{H}}+\int_0^t\|\xi_s\|_{\mathbb{V}}^{\alpha}ds\geq R\Big\}\wedge T.$$
Different from the classical $L^2$-Wasserstein distance, the local one means that the distance between two measure flows is determined by the laws of local paths.

Based on the well-posedness of mean field SPDE  \eref{eqSPDE01},  we combine the Galerkin method with the joint tightness argument to study the mean field limit of the infinite-dimensional interacting  system (\ref{eqi01}). More precisely,
we consider the  Galerkin approximation  system of (\ref{eqi01}), i.e.,
$$dX^{(n),N,i}_t=\pi_n\mathcal{A}(t,X^{(n),N,i}_t,\mathcal{S}^{(n),N}_t)dt+\pi_n\mathcal{B}(t,X^{(n),N,i}_t,\mathcal{S}^{(n),N}_t)dW_t^{(n),i},$$
and prove that the sequence
\begin{equation}\label{ti1}
\{X^{(n),N,1}\}_{n,N\in\mathbb{N}}~\text{is jointly tight in}~C([0,T];\mathbb{H})\cap L^{\alpha}([0,T];\mathbb{V})
\end{equation}
  with respect to both the dimension $n$ and the number of particles $N$.
    Next, we first take the limit as $n$ tends to infinity, and show the Galerkin interacting  system $\{X^{(n),N,i}\}_{1\leq i\leq N}$ converges to the original particle system $\{X^{N,i}\}_{1\leq i\leq N}$.  In this case, we can see the original particle system $\{X^{N,i}\}_{1\leq i\leq N}$ remains tight in $C([0,T];\mathbb{H})\cap L^{\alpha}([0,T];\mathbb{V})$ based on the result (\ref{ti1}).  Then by letting $N$ tends to infinity, we can establish the convergence of the empirical laws sequence $\{\mathcal{S}^{N}\}_{N\in\mathbb{N}}$
to the law of solutions of mean field SPDE  \eref{eqSPDE01} in the following Wasserstein space
 \begin{equation*}\label{wass1}
 \mathscr{P}_{2}(C([0,T];\mathbb{H}))\cap \mathscr{P}_{\alpha}(L^{\alpha}([0,T];\mathbb{V})).
\end{equation*}
To the best of our knowledge, this is the first time to employ
the Galerkin method and utilize the joint tightness argument to prove the mean field limit of $N$-stochastic PDEs.

Now, let us compare the main results of this paper with some related works in the literature.
Note that even in finite-dimensional settings, our results still have independent interest and cannot be encompassed by existing works.  In \cite{BCC11}, Bolley et al. significantly extend McKean's classical result to non-globally Lipschitz bounded settings.
However, this comes at the price of some additional assumptions on the boundedness of exponential moments of solutions and on the linear growth of interacting kernels (cf. (1.7) in \cite{BCC11} or \cite[Theorem 3.2]{CD22a}). Recently, Erny \cite{E1} further established the well-posedness and mean field limit for mean field SDEs with jump noise and locally Lipschitz coefficients. Similarly, in \cite{E1} some analogous assumptions are also required, such as the linear growth condition on the drift coefficients and the boundedness on the diffusion coefficients. Moreover, the initial value needs to be exponentially integrable.

In the present work, we do not need to assume such exponential moment control and the linear growth assumption. Another feature of our theory is that we also weaken the locally Lipschitz condition of the coefficients by introducing two distinct local monotonicity conditions.
Based on this more general setting, our results have a broader range of applications and can be used to handle more interesting examples, including but not limited to the models (\ref{in05})-(\ref{mean03}) in the fields of data science and machine learning (see Section \ref{sec5.1} for details). However, the aforementioned models (\ref{in05})-(\ref{mean03}) can not be treated by \cite{BCC11} and \cite{E1}.

On the other hand, in contrast to the finite-dimensional settings, dealing with the mean field limit for infinite dimensional interacting systems with nonlinear kernels is a more challenging problem. Existing results mainly considered the interacting SPDEs with the linear growth and Lipschitz type conditions (cf. \cite{BKK,CKS,C1,ES1}). In this work, we refine the famous variational framework introduced by Pardoux, Krylov and Rozovskii (cf. \cite{Par75,KR,LR1,RS}) to deal with the mean field SPDEs and infinite-dimensional particle systems with coefficients of polynomial growth and with nonlinear kernels, which expands the scope of applicability for the analysis in the infinite dimensional interacting systems and the associated mean field SPDEs.

\vspace{2mm}

\vspace{1mm}
The rest of manuscript is organized as follows. In Section \ref{sec4}, we introduce the detailed framework and state the main results about the existence and uniqueness of martingale/strong solutions in Theorems \ref{th1}-\ref{th2}, respectively. The main results about the mean field limit for weakly interacting SPDEs  are provided in Theorem \ref{th4}. In Sections \ref{sec5.1} and \ref{example}, we present many concrete examples to illustrate the wide applicability of
our main results.
In Section \ref{well-posed}, we prove the existence and uniqueness of solutions to mean field SPDE \eref{eqSPDE01}. Section \ref{sec meam} is devoted to proving the main results on the mean field limit.

\vspace{1mm}
Throughout this paper $C_p$ denotes a positive constant which may changes from line to line,
where the subscript $p$ is used to emphasize that the constant depends on certain parameter.

\section{Preliminaries and main results}\label{sec4}

 \subsection{Definitions and notations}
In this subsection, we intend to introduce some functional spaces and necessary notations that are frequently used in this paper.

Let $(U,\langle\cdot,\cdot\rangle_U)$ and $({\mathbb{H}}, \langle\cdot,\cdot\rangle_{\mathbb{H}}) $ be  separable Hilbert spaces, and ${\mathbb{H}}^*$ be the dual space of ${\mathbb{H}}$. Let $({\mathbb{X}},\|\cdot\|_{\mathbb{X}})$, $({\mathbb{V}},\langle\cdot,\cdot\rangle_{\mathbb{V}})$ denote a reflexive Banach space and a separable Hilbert space, respectively, such that the embeddings
$$\mathbb{X}\subset \mathbb{V}\subset \mathbb{H}$$
are compact and dense. Identifying $\mathbb{H}$ with its dual space in view of the Riesz isomorphism, we obtain a  Gelfand triple
$$ {\mathbb{V}}\subset {\mathbb{H}}(\simeq {\mathbb{H}}^*)\subset {\mathbb{V}}^*.$$
The dualization between ${\mathbb{V}}$ and ${\mathbb{V}}^*$ is denoted by $_{{\mathbb{V}}^*}\langle\cdot,\cdot\rangle_{\mathbb{V}}$. It is straightforward that
$$_{{\mathbb{V}}^*}\langle\cdot,\cdot\rangle_{\mathbb{V}}|_{{\mathbb{H}}\times {\mathbb{V}}}=\langle\cdot,\cdot\rangle_{\mathbb{H}}.$$
Let $L_2(U,\mathbb{H}),L_2(U,\mathbb{V})$ be  the spaces of all Hilbert-Schmidt operators from $U$ to $\mathbb{H},\mathbb{V}$, respectively.
Let $\mathbb{C}_T:=C([0,T];\mathbb{H})$ be the space of all continuous functions from $[0,T]$ to $\mathbb{H}$, which is a Banach space equipped with  the uniform norm given by
$$\|u\|_{T,\mathbb{H}}:=\sup_{t\in[0,T]}\|u_t\|_{\mathbb{H}},~u\in \mathbb{C}_T.$$

For any separable Banach space $({\mathbb{B}}, \|\cdot\|_{\mathbb{B}}) $,
let $\mathscr{P}({\mathbb{B}})$ be the space of all probability measures on $\mathbb{B}$ equipped with the weak topology,  i.e., the topology of convergence in law. Furthermore, for any $p>0$, we denote
$$\mathscr{P}_p({\mathbb{B}}):=\Big\{\mu\in\mathscr{P}({\mathbb{B}}):\mu(\|\cdot\|_{{\mathbb{B}}}^p):=\int\|\xi\|_{\mathbb{B}}^p\mu(d\xi)<\infty\Big\}.$$
Then $\mathscr{P}_p({\mathbb{B}})$ is a Polish space under the following $L^p$-Wasserstein distance
$$\mathcal{W}_{p,{\mathbb{B}}}(\mu,\nu):=\inf_{\pi\in\mathscr{C}(\mu,\nu)}\Bigg(\int\|\xi-\eta\|_{\mathbb{B}}^p\pi(d\xi,d\eta)\Bigg)^{\frac{1}{p\vee1}},~\mu,\nu\in\mathscr{P}_p({\mathbb{B}}),$$
where $\mathscr{C}(\mu,\nu)$ stands for the set of all couplings for  $\mu$ and $\nu$,  i.e., the set of all Borel probability measures $\pi$ on $\mathbb{B}\times \mathbb{B}$ such that
$\pi(\cdot\times \mathbb{B})=\mu(\cdot)~\text{and}~\pi(\mathbb{B}\times \cdot)=\nu(\cdot).$

\subsection{Existence of martingale and strong solutions}\label{secin}
For the measurable maps
$$
\mathcal{A}:[0,T]\times {\mathbb{V}}\times\mathscr{P}(\mathbb{V})\rightarrow {\mathbb{V}}^*,~~\mathcal{B}:[0,T]\times {\mathbb{V}}\times\mathscr{P}(\mathbb{V})\rightarrow L_2(U,\mathbb{H}),
$$
we are interested in the following general mean field SPDE,
\begin{equation}\label{eqSPDE}
dX_t=\mathcal{A}(t,X_t,\mathscr{L}_{X_t})dt+\mathcal{B}(t,X_t,\mathscr{L}_{X_t})dW_t,
\end{equation}
where $\{W_t\}_{t\in [0,T]}$ is an $U$-valued cylindrical Wiener process defined on a complete filtered probability space $\left(\Omega,\mathscr{F},\{\mathscr{F}_t\}_{t\in [0,T]},\mathbb{P}\right)$.

Denote $\Omega:=\mathbb{C}_T$. We will use $w$ to denote
a path in $\Omega$, and $\pi_t(w):=w_t$ to denote the coordinate process.
  Define the $\sigma$-algebra
$$\mathscr{F}_t:=\sigma\big\{\pi_s:~s\leq t\big\}.$$

We  recall the definition of the martingale problem associated with mean field SPDE (\ref{eqSPDE}).

\begin{definition} \label{de3}$($Martingale solution$)$ Let $\mu_0\in\mathscr{P}(\mathbb{H})$. A probability measure $\mathbf{P}\in\mathscr{P}(\Omega)$ is  called a martingale solution of mean field SPDE (\ref{eqSPDE}) with initial law $\mu_0$ if

$(M1)$~~$\mathbf{P}\circ \pi_0^{-1}=\mu_0$ and
$$\mathbf{P}\Bigg(w\in \Omega:\int_0^T\|\mathcal{A}(s,w_s,\mu_s)\|_{\mathbb{V}^*}ds+\int_0^T\|\mathcal{B}(s,w_s,\mu_s)\|_{L_2(U;{\mathbb{H}})}^2ds<\infty\Bigg)=1,$$
where $\mu_s:=\mathbf{P}\circ \pi_s^{-1}$.

\vspace{1mm}
$(M2)$~~for each $l\in\mathbb{V}$, the process
$$\mathcal{M}_l(t,w,\mu):={}_{\mathbb{V}^*}\langle w_t,l\rangle_{\mathbb{V}}-{}_{\mathbb{V}^*}\langle w_0,l\rangle_{\mathbb{V}}-\int_0^t{}_{\mathbb{V}^*}\langle \mathcal{A}(s,w_s,\mu_s),l\rangle_{\mathbb{V}}ds,~t\in[0,T],$$
is a continuous square integrable $(\mathscr{F}_t)$-martingale under $\mathbf{P}$, 
whose quadratic variation process is given by
$$\langle \mathcal{M}_l\rangle(t,w,\mu)=\int_0^t\|\mathcal{B}(s,w_s,\mu_s)^*l\|_U^2ds,~t\in[0,T].$$
We shall use $\mathscr{M}_{\mu_0}^{\mathcal{A},\mathcal{B}}$ to denote the set
of all martingale solutions of mean field SPDE (\ref{eqSPDE}) associated with $\mathcal{A},\mathcal{B}$ and initial law $\mu_0$.
\end{definition}

In this part, we suppose that there are some constants $\alpha>1$, $\gamma\geq \alpha$, $\lambda\geq1$,  $\beta\geq 0$ and $C,\delta_1,\delta_2>0$ such that the following conditions hold for all $t\in[0,T]$.

\begin{enumerate}
\item [$(\mathbf{A_0})$] There exists an orthogonal set $\{e_1,e_2,\ldots\}\subset \mathbb{X}$ in $\mathbb{V}$ such that it constitute an orthonormal
basis of $\mathbb{H}$.

\vspace{1mm}
\item [$(\mathbf{A_1})$]
 $($Demicontinuity$)$ For any $v\in {\mathbb{V}}$, the maps
\begin{eqnarray*}
&&{\mathbb{V}}\times\mathbb{M}_1\ni(u,\mu)\mapsto~_{{\mathbb{V}}^*}\langle \mathcal{A}(t,u,\mu),v\rangle_{\mathbb{V}},
\nonumber \\
&&{\mathbb{V}}\times\mathbb{M}_1\ni(u,\mu)\mapsto\| \mathcal{B}(t,u,\mu)^*v\|_U
\end{eqnarray*}
are continuous, where $\mathbb{M}_1:=\mathscr{P}_2(\mathbb{H})\cap \mathscr{P}_{\alpha}(\mathbb{V}).$

\vspace{1mm}
\item [$(\mathbf{A_2})$]
 $($Coercivity$)$ For any $u\in \mathbb{V}$, $\mu\in\mathbb{M}_1$,
\begin{eqnarray*}
2_{\mathbb{V}^*}\langle \mathcal{A}(t,u,\mu),u\rangle_{\mathbb{V}}+\|\mathcal{B}(t,u,\mu)\|_{L_2(U,\mathbb{H})}^2+\delta_1\|u\|_{\mathbb{V}}^\alpha\leq C(1+\|u\|_{\mathbb{H}}^2+\mu(\|\cdot\|_{\mathbb{H}}^2)).
\end{eqnarray*}

\vspace{1mm}
\item [$(\mathbf{A_3})$]
 $($Growth$)$ For any $u\in \mathbb{V}$, $\mu\in\mathbb{M}_2:=\mathscr{P}_{\beta}(\mathbb{H})\cap \mathscr{P}_{\alpha}(\mathbb{V})$,
\begin{equation}\label{conA3}
\|\mathcal{A}(t,u,\mu)\|_{{\mathbb{V}}^*}^{\frac{\alpha}{\alpha-1}}\leq C(1+\|u\|_{\mathbb{V}}^{\alpha}+\mu(\|\cdot\|_{\mathbb{V}}^{\alpha} ))(1+\|u\|_{{\mathbb{H}}}^{\beta}+\mu(\|\cdot\|_{\mathbb{H}}^{\beta})),
\end{equation}
and for any $u\in \mathbb{V}$, $\mu\in\mathbb{M}_1$,
\begin{equation}\label{conb}
\|\mathcal{B}(t,u,\mu)\|_{L_2({U},{\mathbb{H}})}^2\leq C(1+\|u\|_{\mathbb{H}}^2+\mu(\|\cdot\|_{\mathbb{H}}^2)).
\end{equation}

\vspace{1mm}
\item [$(\mathbf{A_4})$]
Let $\mathbb{H}_n:=\text{span}\{e_1,\ldots,e_n\}$. For any $n\in\mathbb{N}$, the operators $\mathcal{A},\mathcal{B}$
\begin{eqnarray*}
&&[0,T]\times\mathbb{H}_n\times\mathscr{P}(\mathbb{H}_n)\ni (t,u,\mu)\mapsto \mathcal{A}(t,u,\mu)\in \mathbb{V},
\nonumber \\
&&
[0,T]\times\mathbb{H}_n\times\mathscr{P}(\mathbb{H}_n)\ni (t,u,\mu)\mapsto \mathcal{B}(t,u,\mu)\in L_2({U},{\mathbb{V}})
\end{eqnarray*}
 satisfy that there exists $C>0$ (independent of $n$) such that for any $u\in\mathbb{H}_n,\mu\in\mathscr{P}_{(\alpha\vee \lambda)}(\mathbb{H}_n)$,
\begin{eqnarray*}
&&2\langle \mathcal{A}(t,u,\mu),u\rangle_{\mathbb{V}}+\delta_2\|u\|_{\mathbb{X}}^{\gamma}
\nonumber\\
&&\leq C\|u\|_{\mathbb{V}}^{\alpha+2}\|u\|_{\mathbb{H}}^{\lambda} + C(1+\|u\|_{\mathbb{V}}^{\alpha}+\mu(\|\cdot\|_{\mathbb{V}}^{\alpha}))(1+\|u\|_{\mathbb{V}}^{2}+\|u\|_{\mathbb{H}}^{\lambda}+\mu(\|\cdot\|_{\mathbb{H}}^{\lambda}))
\end{eqnarray*}
and
\begin{eqnarray*}
\|\mathcal{B}(t,u,\mu)\|_{L_2({U},{\mathbb{V}})}^2
\leq C(1+\|u\|_{\mathbb{V}}^{2}+\mu(\|\cdot\|_{\mathbb{V}}^{\alpha}))(1+\|u\|_{\mathbb{H}}^{\lambda}+\mu(\|\cdot\|_{\mathbb{H}}^{\lambda})).
\end{eqnarray*}
\end{enumerate}
\begin{remark}
We would like to comment on  the non-homogeneous character of assumptions $(\mathbf{A_2})$-$(\mathbf{A_4})$.

Considering the case of finite-dimensional stochastic differential equations (in this case, both $\|\cdot\|_{\mathbb{H}}$ and $\|\cdot\|_{\mathbb{V}}$ are equal to the Euclidean norm $|\cdot|$), since the constant $\alpha>1$, if $\mathcal{A}$ is linear in $u$ (e.g.~$\mathcal{A}(t,u,\mu)=C_0u$) then we can take $ \alpha=2$ in assumptions $(\mathbf{A_2})$-$(\mathbf{A_4})$, e.g. in  assumption $(\mathbf{A_2})$
$$2\langle C_0 u,u\rangle+\delta_1|u|^{2}=(2C_0+\delta_1)|u|^2\leq C(1+|u|^2).$$

However, in the case of infinite dimensions, we consider the embedding structure $\mathbb{V}\subset \mathbb{H}\subset \mathbb{V}^*$ compactly and densely. In this case, we can consider  $\mathcal{A}(u)=\Delta u+C_0u$ rather than only  $\mathcal{A}(u)=C_0u$. More precisely,  we take $\mathbb{V}=W_0^{1,2}$, $\mathbb{H}=L^2$, $\alpha=2$, $\delta_1=2$ in  assumption  $(\mathbf{A_2})$ to get
$$2{}_{\mathbb{V}^*}\langle \Delta u+C_0 u,u\rangle_{\mathbb{V}}+\delta_1\|u\|_{\mathbb{V}}^{2}=-2\|u\|_{\mathbb{V}}^{2}+2C_0\|u\|_{\mathbb{H}}^{2}+2\|u\|_{\mathbb{V}}^{2}\leq C(1+\|u\|_{\mathbb{H}}^{2}).$$
\end{remark}

We state the first main result concerning the  existence of martingale solutions to  mean field SPDE (\ref{eqSPDE}).
\begin{theorem}\label{th1}
Suppose that $\mathbf{(A_0)}$-$\mathbf{(A_4)}$ hold.
For any initial data $\xi\sim\mu_0\in\mathscr{P}_2(\mathbb{V})\cap \mathscr{P}_p(\mathbb{H})$ with $p\in[\vartheta,\infty)\cap (2,\infty)$, where
$\vartheta:= \max\{\beta+2, \lambda\}$,
then (\ref{eqSPDE}) has a martingale solution $\mathbf{P}$ in the sense of Definition \ref{de3}. Moreover, we have
\begin{equation}\label{esq370}
\mathbb{E}^{\mathbf{P}}\Big[\sup_{t\in[0,T]}\|w_t\|_{\mathbb{H}}^p\Big]+\mathbb{E}^{\mathbf{P}}\int_0^T(1+\|w_t\|_{\mathbb{H}}^{p-2})\|w_t\|_{\mathbb{V}}^{\alpha}dt<\infty.
\end{equation}
\end{theorem}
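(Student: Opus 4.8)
The plan is to prove existence of a martingale solution to the mean field SPDE \eqref{eqSPDE} by a Galerkin--Euler approximation scheme combined with a fixed-point/compactness argument that handles the distribution dependence. First I would set up the finite-dimensional approximation: project onto $\mathbb{H}_n = \mathrm{span}\{e_1,\dots,e_n\}$ using the basis from $(\mathbf{A_0})$, and consider the system
\begin{equation*}
dX_t^{(n)} = \pi_n\mathcal{A}(t,X_t^{(n)},\mathscr{L}_{X_t^{(n)}})dt + \pi_n\mathcal{B}(t,X_t^{(n)},\mathscr{L}_{X_t^{(n)}})d(\pi_n W_t),\quad X_0^{(n)}=\pi_n\xi.
\end{equation*}
This is itself a finite-dimensional McKean--Vlasov SDE; its solvability is not automatic because the coefficients are only locally monotone in $u$ and merely continuous in $\mu$. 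I would resolve this by a further layer of approximation — either a Euler--Maruyama time discretization of the measure argument (freeze $\mathscr{L}_{X_t^{(n)}}$ on each subinterval using the law from the previous step and pass to the limit in the mesh), or a Schauder fixed-point argument on $\mathscr{P}_2(C([0,T];\mathbb{H}_n))$ — using $(\mathbf{A_4})$ to get the coercivity/growth estimates that keep the map well-defined and the relevant set of laws tight. The bound $\delta_2\|u\|_{\mathbb{X}}^\gamma$ in $(\mathbf{A_4})$ provides the extra compactness ($\mathbb{X}\subset\mathbb{V}$ compact) needed at the Galerkin level.

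The second step is the uniform a priori estimates, which are the technical heart. Applying Itô's formula to $\|X_t^{(n)}\|_{\mathbb{H}}^p$ and using $(\mathbf{A_2})$ together with the fact that $\mu_t^{(n)}(\|\cdot\|_{\mathbb{H}}^2) = \mathbb{E}\|X_t^{(n)}\|_{\mathbb{H}}^2$ (so the measure term is controlled by the same quantity one is estimating, via Gronwall), plus Burkholder--Davis--Gundy for the stochastic term and $(\mathbf{A_3})$ \eqref{conb} to handle $\|\mathcal{B}\|^2_{L_2(U,\mathbb{H})}$, I would obtain
\begin{equation*}
\sup_n\Big(\mathbb{E}\sup_{t\in[0,T]}\|X_t^{(n)}\|_{\mathbb{H}}^p + \mathbb{E}\int_0^T(1+\|X_t^{(n)}\|_{\mathbb{H}}^{p-2})\|X_t^{(n)}\|_{\mathbb{V}}^\alpha dt\Big) < \infty,
\end{equation*}
for $p\in[\vartheta,\infty)\cap(2,\infty)$; the assumption $\mu_0\in\mathscr{P}_2(\mathbb{V})\cap\mathscr{P}_p(\mathbb{H})$ feeds the initial data into these bounds, and the threshold $\vartheta = \max\{\beta+2,\lambda\}$ is exactly what is needed so that the polynomial-growth terms in $(\mathbf{A_3})$--$(\mathbf{A_4})$ are absorbed by the moments being estimated (e.g.\ the $\|u\|_{\mathbb{V}}^{\alpha+2}\|u\|_{\mathbb{H}}^\lambda$ and $(1+\|u\|_{\mathbb{V}}^\alpha)(1+\|u\|_{\mathbb{H}}^\lambda)$ terms require $\lambda$-th and $(\beta+2)$-th moments via Young's inequality against the $\|u\|_{\mathbb{X}}^\gamma$ term). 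From these I would also derive a Hölder/fractional-Sobolev estimate in time for $\int_0^\cdot\pi_n\mathcal{A}\,ds$ and the martingale part, which gives tightness of $\mathscr{L}(X^{(n)})$ in $C([0,T];\mathbb{H})\cap L^\alpha([0,T];\mathbb{V})$ — using the compact embedding $\mathbb{X}\subset\mathbb{V}\subset\mathbb{H}$ and an Aubin--Lions--type / Dubinsky compactness criterion.

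The third step is passage to the limit. By Prokhorov, a subsequence of the joint laws of $(X^{(n)}, W^{(n)}, \int_0^\cdot\pi_n\mathcal{A}\,ds, \text{martingale part})$ converges weakly; by Skorokhod representation (or by directly verifying the martingale problem on the canonical space $\Omega=\mathbb{C}_T$ as in Definition \ref{de3}) I would extract a limit $\mathbf{P}$. The key point is identifying the limiting coefficients: since $\mu_t^{(n)}\to\mu_t := \mathbf{P}\circ\pi_t^{-1}$ weakly together with convergence of second moments (from the uniform $p>2$ bound), demicontinuity $(\mathbf{A_1})$ lets one pass to the limit in $_{\mathbb{V}^*}\langle\mathcal{A}(t,w_s,\mu_s),l\rangle_{\mathbb{V}}$ and in $\|\mathcal{B}(t,w_s,\mu_s)^*l\|_U$ against test functions $l\in\mathbb{V}$ and $w$ ranging in $\mathbb{H}_n$; the uniform moment bounds give uniform integrability to justify the limit of the time integrals and of the quadratic variations. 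Verifying $(M1)$ and $(M2)$ then follows from a standard argument showing $\mathcal{M}_l(t,w,\mu)$ is a martingale with the prescribed quadratic variation, using the convergence of the approximate martingales and their brackets. Finally \eqref{esq370} follows from the uniform bounds by Fatou's lemma (lower semicontinuity of the norms under weak convergence on Skorokhod space).

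The main obstacle I anticipate is the interplay between the high polynomial nonlinearity of $\mathcal{A}$ (captured by the non-homogeneous terms in $(\mathbf{A_3})$--$(\mathbf{A_4})$, including the $\|u\|_{\mathbb{V}}^{\alpha+2}\|u\|_{\mathbb{H}}^\lambda$ term) and the nonlocal measure dependence: one must close the moment estimates at the correct exponent $p$ simultaneously for the solution and its law, which forces a careful bookkeeping of which moments of $\mu$ appear where, and it is precisely here that the choice $p\ge\vartheta$ and the coercivity gains $\delta_1\|u\|_{\mathbb{V}}^\alpha$, $\delta_2\|u\|_{\mathbb{X}}^\gamma$ must be balanced by Young's inequality against the growth terms. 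A secondary subtlety is that, unlike the classical variational setting, one cannot use localization by stopping times to reduce to bounded coefficients (as emphasized in the introduction), so the a priori estimates must be global from the outset — which is why only existence of a martingale (not strong) solution is claimed at this stage, the identification of $\mathcal{A}(t,X_t,\mathscr{L}_{X_t})$ as the weak limit being handled by demicontinuity rather than by a monotonicity/Minty argument.
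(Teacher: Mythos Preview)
Your overall architecture---Galerkin approximation, uniform moment bounds, tightness, Skorokhod, martingale-problem identification via demicontinuity---matches the paper's proof. But there is one concrete misconception that, if followed, would leave a genuine gap.

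You assert that localization by stopping times is unavailable and that ``the a priori estimates must be global from the outset.'' The paper does the opposite: the global estimates are only the $\mathbb{H}^q$-moments (obtained from $(\mathbf{A_2})$, whose measure term $\mu(\|\cdot\|_{\mathbb{H}}^2)$ closes by Gronwall). The $\sup_t\|X^{(n)}_t\|_{\mathbb{V}}^2$ and $\int_0^{\cdot}\|X^{(n)}_t\|_{\mathbb{X}}^\gamma dt$ bounds needed for tightness are obtained only up to a stopping time $\tau_M^{(n)}$ that caps $\|X^{(n)}_t\|_{\mathbb{H}}$ and $\int_0^t\|X^{(n)}_s\|_{\mathbb{V}}^\alpha ds$. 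The reason you cannot avoid this is the $\|u\|_{\mathbb{V}}^{\alpha+2}\|u\|_{\mathbb{H}}^\lambda$ term in $(\mathbf{A_4})$: only $\gamma\geq\alpha$ is assumed, so this term is \emph{not} absorbable into $\delta_2\|u\|_{\mathbb{X}}^\gamma$ by Young's inequality as you suggest. Instead one writes it as $\|X^{(n)}_s\|_{\mathbb{V}}^{\alpha}\cdot\|X^{(n)}_s\|_{\mathbb{V}}^{2}\cdot\|X^{(n)}_s\|_{\mathbb{H}}^{\lambda}$ and applies Gronwall with driver $\int_0^t\|X^{(n)}_s\|_{\mathbb{V}}^\alpha ds$; the resulting exponential is finite only because the stopping time bounds this integral by $M$. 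Tightness in $\mathbb{C}_T$ and in $L^\alpha([0,T];\mathbb{V})$ is then proved by combining these stopped estimates with the uniform smallness of $\mathbb{P}(\tau_M^{(n)}<T)$ (which follows from the global $\mathbb{H}$-bounds).

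The caveat in the introduction about stopping times concerns the \emph{limiting} mean-field equation and the uniqueness proof, not the a~priori estimates for the approximants. The trick that makes stopping work here is a two-stage decoupling: the global $\mathbb{H}$-estimates are established first and used to control all the \emph{law} terms $\mu(\|\cdot\|_{\mathbb{V}}^\alpha)$, $\mu(\|\cdot\|_{\mathbb{H}}^\lambda)$ appearing in $(\mathbf{A_4})$ (these do not stop, but they are already integrable); the remaining \emph{pathwise} terms are then handled by $\tau_M^{(n)}$. This is also where the hypothesis $\mu_0\in\mathscr{P}_2(\mathbb{V})$ actually enters---it supplies the initial datum $\mathbb{E}\|X^{(n)}_0\|_{\mathbb{V}}^2$ for the stopped $\mathbb{V}$-estimate, not for the global $\mathbb{H}^p$-bound.
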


%

\begin{remark}
We mention that  the current setting, particularly the condition $\mathbf{(A_4)}$,  is also of independent interest in the investigations of classical SPDEs. Recently,  the authors in \cite{RSZ1} employed the technique of  pseudo-monotone operators to prove the existence of weak solutions to SPDE (\ref{eqSPDE}) without distribution dependence, where the pseudo-monotonicity of drift was required. In the present work, instead of the pseudo-monotonicity, we  utilize the condition $\mathbf{(A_4)}$  and take a different approach  to establish the existence of weak solutions.
\end{remark}

Next, we want to show the existence of strong solutions to  mean field SPDE (\ref{eqSPDE}). To this end, let us review the following definitions.

\begin{definition}\label{dew} $($Weak solution$)$ A pair $(X,W)$ is called a (probabilistically) weak solution to mean field SPDE (\ref{eqSPDE}), if there exists a stochastic basis $(\Omega,\mathscr{F},\{\mathscr{F}_t\}_{t\in[0,T]},\mathbb{P})$ such that $X$ is an $\{\mathscr{F}_t\}$-adapted process and  $W$ is an $U$-valued cylindrical Wiener process on $(\Omega,\mathscr{F},\{\mathscr{F}_t\}_{t\in[0,T]},\mathbb{P})$ and the following holds:

\vspace{2mm}
(i) $X\in \mathbb{C}_T\cap L^\alpha([0,T];\mathbb{V})$, $\mathbb{P}$-a.s.;

\vspace{2mm}
(ii) $\int_0^T\|\mathcal{A}(s,X_s,\mathscr{L}_{X_s})\|_{\mathbb{V}^*}ds+\int_0^T\|\mathcal{B}(s,X_s,\mathscr{L}_{X_s})\|_{L_2(U;\mathbb{H})}^2ds<\infty$, $\mathbb{P}$-a.s.;

\vspace{2mm}
(iii)
$X_t=X_0+\int_0^t \mathcal{A}(s,X_s,\mathscr{L}_{X_s})ds+\int_0^t \mathcal{B}(s,X_s,\mathscr{L}_{X_s})dW_s,t\in[0,T],\mathbb{P}\text{-a.s.}$
holds in ${\mathbb{V}}^*$.
\end{definition}

\begin{definition}\label{de1}
(Strong solution) We say that there exists a (probabilistically) strong solution to (\ref{eqSPDE}) if for every probability space $(\Omega,\mathscr{F},\{\mathscr{F}_t\}_{t\in[0,T]},\mathbb{P})$ with an $U$-valued cylindrical Wiener process $W$, there exists
an $\{\mathscr{F}_t\}$-adapted process $X$ such that properties (i)-(iii) in Definition \ref{dew} hold.
\end{definition}

We assume the following type of local monotonicity condition to guarantee the existence of strong solutions to  mean field SPDE (\ref{eqSPDE}):

\vspace{1mm}
\begin{enumerate}
\item [$(\mathbf{A_5})$]\label{H2}
  For any $u,v\in {\mathbb{V}}$, $\mu\in\mathbb{M}_2$,
\begin{eqnarray*}
&&2_{{\mathbb{V}}^*}\langle \mathcal{A}(t,u,\mu)-\mathcal{A}(t,v,\mu),u-v\rangle_{\mathbb{V}}+\|\mathcal{B}(t,u,\mu)-\mathcal{B}(t,v,\mu)\|_{L_2(U,{\mathbb{H}})}^2
\nonumber\\
&&\leq
(C+\rho(u,\mu)+\eta(v,\mu))\|u-v\|_{\mathbb{H}}^2,
\end{eqnarray*}
where $\rho,\eta:{\mathbb{V}}\times \mathbb{M}_2\to [0,\infty)$ are  measurable functions satisfying
\begin{equation}\label{esq22}
\rho(u,\mu)+\eta(u,\mu)\leq C(1+\|u\|_{\mathbb{V}}^{\alpha}+\mu(\|\cdot\|_{\mathbb{V}}^{\alpha}))(1+\|u\|_{\mathbb{H}}^{\beta}+\mu(\|\cdot\|_{\mathbb{H}}^{\beta})).
\end{equation}
\end{enumerate}

\vspace{1mm}
Now, we state the second main result concerning the  existence of strong solutions to  mean field SPDE (\ref{eqSPDE}).
\begin{theorem}\label{th3}
Suppose that $\mathbf{(A_0)}$-$\mathbf{(A_5)}$ hold.
For any initial data $\xi\in L^p(\Omega,\mathscr{F}_0,\mathbb{P};{\mathbb{H}})\cap L^2(\Omega,\mathscr{F}_0,\mathbb{P};{\mathbb{V}})$ with $p\in[\vartheta,\infty)\cap (2,\infty)$,
then (\ref{eqSPDE}) has a strong solution $X$ in the sense of Definition \ref{de1}. Moreover, we have the estimates
\begin{equation}\label{es45}
\mathbb{E}\Big[\sup_{t\in[0,T]}\|X_t\|_{\mathbb{H}}^p\Big]+\mathbb{E}\int_0^T(1+\|X_t\|_{\mathbb{H}}^{p-2})\|X_t\|_{\mathbb{V}}^{\alpha}dt<\infty.
\end{equation}
\end{theorem}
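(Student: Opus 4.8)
The plan is to deduce the strong solution from the martingale solution furnished by Theorem~\ref{th1} by ``freezing the law'': one shows pathwise uniqueness for the decoupled, distribution-independent equation in which the flow of marginals is prescribed, and then closes the argument by a Yamada--Watanabe type theorem (this is consistent with the scheme announced in the introduction). Fix $\xi\in L^p(\Omega,\mathscr{F}_0,\mathbb{P};\mathbb{H})\cap L^2(\Omega,\mathscr{F}_0,\mathbb{P};\mathbb{V})$ and put $\mu_0:=\mathscr{L}_\xi\in\mathscr{P}_2(\mathbb{V})\cap\mathscr{P}_p(\mathbb{H})$. By Theorem~\ref{th1} there is a martingale solution $\mathbf{P}$ of (\ref{eqSPDE}) with initial law $\mu_0$ obeying (\ref{esq370}); by the martingale representation theorem for cylindrical Wiener processes (on a suitable extension of the probability space) it can be realised as a probabilistically weak solution $(X,W)$ of (\ref{eqSPDE}) in the sense of Definition~\ref{dew}, with the moment bound (\ref{esq370}) carried over, and we write $\mu_t:=\mathscr{L}_{X_t}=\mathbf{P}\circ\pi_t^{-1}$ for its flow of marginals. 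Since $p\ge\vartheta\ge\beta+2>\beta$, H\"older's inequality gives $\sup_{t\in[0,T]}\mu_t(\|\cdot\|_{\mathbb{H}}^p)<\infty$, and Fubini's theorem together with (\ref{esq370}) gives $\int_0^T\mu_t(\|\cdot\|_{\mathbb{V}}^\alpha)\,dt<\infty$; hence $\mu_t\in\mathbb{M}_1\cap\mathbb{M}_2$ for a.e.\ $t\in[0,T]$.

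Next I would freeze this flow, set $\bar{\mathcal{A}}(t,u):=\mathcal{A}(t,u,\mu_t)$ and $\bar{\mathcal{B}}(t,u):=\mathcal{B}(t,u,\mu_t)$, and consider the distribution-independent SPDE
\begin{equation*}
dY_t=\bar{\mathcal{A}}(t,Y_t)\,dt+\bar{\mathcal{B}}(t,Y_t)\,dW_t,\qquad Y_0=\xi .
\end{equation*}
Because $\mu_t\in\mathbb{M}_1\cap\mathbb{M}_2$ for a.e.\ $t$, assumptions $(\mathbf{A_1})$--$(\mathbf{A_3})$ show that $\bar{\mathcal{A}}$ and $\bar{\mathcal{B}}$ are hemicontinuous, coercive and of the required polynomial growth, with time-dependent coefficients integrable on $[0,T]$ (here the $L^1$-integrability of $t\mapsto\mu_t(\|\cdot\|_{\mathbb{V}}^\alpha)$ and the boundedness of $t\mapsto\mu_t(\|\cdot\|_{\mathbb{H}}^p)$ are used), while $(\mathbf{A_5})$ restricted to $\mu=\mu_t$ becomes the classical local monotonicity inequality
\begin{equation*}
2{}_{\mathbb{V}^*}\langle\bar{\mathcal{A}}(t,u)-\bar{\mathcal{A}}(t,v),u-v\rangle_{\mathbb{V}}+\|\bar{\mathcal{B}}(t,u)-\bar{\mathcal{B}}(t,v)\|_{L_2(U,\mathbb{H})}^2\le\big(C+\bar\rho(t,u)+\bar\eta(t,v)\big)\|u-v\|_{\mathbb{H}}^2 ,
\end{equation*}
where $\bar\rho(t,u):=\rho(u,\mu_t)$, $\bar\eta(t,u):=\eta(u,\mu_t)$ satisfy, by (\ref{esq22}), $\bar\rho(t,u)+\bar\eta(t,u)\le C(1+\|u\|_{\mathbb{V}}^\alpha+\mu_t(\|\cdot\|_{\mathbb{V}}^\alpha))(1+\|u\|_{\mathbb{H}}^\beta+\mu_t(\|\cdot\|_{\mathbb{H}}^\beta))$. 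By construction $(X,W)$ is a weak solution of this decoupled equation.

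The key step is pathwise uniqueness for the decoupled equation. Let $Y^1,Y^2$ be two solutions on the same stochastic basis, driven by the same $W$ with $Y^1_0=Y^2_0$, and set $\varphi_t:=Y^1_t-Y^2_t$. Using Definition~\ref{dew}(i) and the $L^1$-integrability of $t\mapsto\mu_t(\|\cdot\|_{\mathbb{V}}^\alpha)$, the stopping times
\begin{equation*}
\tau_R:=\inf\Big\{t\ge0:\;\|Y^1_t\|_{\mathbb{H}}+\|Y^2_t\|_{\mathbb{H}}+\int_0^t\big(\|Y^1_s\|_{\mathbb{V}}^\alpha+\|Y^2_s\|_{\mathbb{V}}^\alpha+\mu_s(\|\cdot\|_{\mathbb{V}}^\alpha)\big)\,ds\ge R\Big\}\wedge T
\end{equation*}
increase to $T$ a.s., and on $[0,\tau_R]$ the integral of $\psi_t:=C+\sum_{i=1}^{2}\big(\bar\rho(t,Y^i_t)+\bar\eta(t,Y^i_t)\big)$ is bounded by a deterministic constant. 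Applying It\^o's formula to $\mathrm{e}^{-\int_0^t\psi_s\mathbf{1}_{\{s\le\tau_R\}}\,ds}\|\varphi_t\|_{\mathbb{H}}^2$, the frozen local monotonicity cancels the drift on $[0,\tau_R]$, leaving a genuine martingale after stopping (its quadratic variation being controlled on $[0,\tau_R]$ via $(\mathbf{A_3})$); taking expectations and using $\varphi_0=0$ gives $\mathbb{E}\|\varphi_{t\wedge\tau_R}\|_{\mathbb{H}}^2=0$, and letting $R\to\infty$ gives $Y^1=Y^2$. Hence the decoupled equation has pathwise uniqueness, and therefore, by the Yamada--Watanabe theorem in the variational framework (cf.\ \cite{LR1,RSZ1}), uniqueness in law as well. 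Combining weak existence (from $(X,W)$) with pathwise uniqueness, the same theorem yields a probabilistically strong solution: on the prescribed stochastic basis carrying $W$ and the $\mathscr{F}_0$-measurable $\xi$, there is an $\{\mathscr{F}_t\}$-adapted $\hat X\in\mathbb{C}_T\cap L^\alpha([0,T];\mathbb{V})$ solving $d\hat X_t=\bar{\mathcal{A}}(t,\hat X_t)\,dt+\bar{\mathcal{B}}(t,\hat X_t)\,dW_t$. By uniqueness in law $\mathscr{L}_{\hat X_t}=\mathscr{L}_{X_t}=\mu_t$ for every $t$, so $\bar{\mathcal{A}}(t,\hat X_t)=\mathcal{A}(t,\hat X_t,\mathscr{L}_{\hat X_t})$ and $\bar{\mathcal{B}}(t,\hat X_t)=\mathcal{B}(t,\hat X_t,\mathscr{L}_{\hat X_t})$; thus $\hat X$ solves (\ref{eqSPDE}) and is a strong solution in the sense of Definition~\ref{de1}, and (\ref{es45}) follows from (\ref{esq370}) because $\hat X$ has the same law as $X$ on path space.

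I expect the main obstacle to be the pathwise uniqueness step: $(\mathbf{A_5})$ is genuinely non-homogeneous, and its ``radius'' functions carry the measure-dependent factors $\mu_t(\|\cdot\|_{\mathbb{V}}^\alpha)$ and $\mu_t(\|\cdot\|_{\mathbb{H}}^\beta)$, so the localisation must absorb simultaneously the pathwise $L^\alpha([0,T];\mathbb{V})$ regularity of the two solutions and the merely $L^1$ time-integrability of $\mu_t(\|\cdot\|_{\mathbb{V}}^\alpha)$ coming from (\ref{esq370}); the exponential-weight Gronwall device is precisely what copes with the ensuing non-integrable product. A secondary technical point is to run the infinite-dimensional Yamada--Watanabe machinery with a time-dependent, merely locally monotone drift, for which the a priori bounds supplied by Theorem~\ref{th1} are exactly what is needed.
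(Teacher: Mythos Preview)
Your proposal is correct and follows essentially the same route as the paper: obtain a weak solution from Theorem~\ref{th1} via martingale representation, freeze the resulting law flow $\mu_t$, prove pathwise uniqueness for the decoupled SPDE using the exponential-weight It\^o argument under $(\mathbf{A_5})$, and then invoke Yamada--Watanabe to upgrade to a strong solution with the same law. The only cosmetic differences are that the paper packages the last step as a modified Yamada--Watanabe lemma (Lemma~\ref{lem112}) and, for pathwise uniqueness, asserts the a~priori bounds (\ref{es45}) for any solution of the decoupled equation rather than using your explicit stopping-time localisation.
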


\subsection{Uniqueness of  solutions}
In this subsection, our objective is to demonstrate the  uniqueness  of both strong and weak solutions to mean field SPDE (\ref{eqSPDE}) under different  assumptions. To this end, we first introduce several new definitions and notations.

Define
\begin{equation}\label{es73}
 \mathbb{S}:=\mathbb{C}_T\cap L^{\alpha}([0,T];\mathbb{V}),
\end{equation}
where $\alpha$ is the same as in $(\mathbf{A_2})$, which is a Polish space under the metric
$$d_T(u,v):=d_T^1(u,v)+d_T^2(u,v):=\sup_{t\in[0,T]}\|u_t-v_t\|_{\mathbb{H}}+\Big(\int_0^T\|u_t-v_t\|_{\mathbb{V}}^{\alpha} dt\Big)^{\frac{1}{\alpha}},~u,v\in\mathbb{S}.$$
For any $t\in[0,T],~R>0,~\xi\in \mathbb{S}$, we define $\xi^t:[0,T]\to\mathbb{H}$ by
$$\xi^t_s:=\xi_{t\wedge s},~s\in[0,T],$$
and map $\pi^t(\xi):=\xi^t$. Then the marginal distribution before time $t$ of a probability measure $\mu\in\mathscr{P}(\mathbb{S})$ is denoted by
$$\mu^t:=\mu\circ (\pi^t)^{-1}.$$
Define a time
\begin{equation*}
\tau_R^\xi:=\inf\Big\{t\geq 0:\|\xi_t\|_{\mathbb{H}}+\int_0^t\|\xi_s\|_{\mathbb{V}}^{\alpha}ds\geq R\Big\}\wedge T.
\end{equation*}
Then for any $\mu,\nu\in\mathscr{P}(\mathbb{S})$, we can define the following ``local'' $L^2$-Wasserstein distance
\begin{equation*}
\mathcal{W}_{2,R}(\mu,\nu):=\inf_{\pi\in\mathscr{C}(\mu,\nu)}\Bigg(\int\|\xi^{\tau_R^\xi\wedge\tau_R^{\eta}}-\eta^{\tau_R^\xi\wedge\tau_R^{\eta}}\|_{T,\mathbb{H}}^2\pi(d\xi,d\eta)\Bigg)^{\frac{1}{2}}.
\end{equation*}

In what follows, we propose  two types of local monotonicity conditions,  which are essential for ensuring the uniqueness of solutions to (\ref{eqSPDE}).

\vspace{2mm}
\begin{enumerate}
\item [$(\mathbf{A'_5})$] $($Local Monotonicity I$)$
 There exists $C>0$ such that for any $t\in[0,T]$, $u,v\in {\mathbb{V}}$, $\mu,\nu\in\mathbb{M}_2$,
\begin{eqnarray*}
&&2_{{\mathbb{V}}^*}\langle \mathcal{A}(t,u,\mu)-\mathcal{A}(t,v,\nu),u-v\rangle_{\mathbb{V}}+\|\mathcal{B}(t,u,\mu)-\mathcal{B}(t,v,\nu)\|_{L_2(U,{\mathbb{H}})}^2
\nonumber\\
&&\leq
(C+\rho(0,\mu)+\eta(0,\nu))\|u-v\|_{\mathbb{H}}^2
+(C+\rho(u,\mu)+\eta(v,\nu))\mathcal{W}_{2,\mathbb{H}}(\mu,\nu)^2,
\end{eqnarray*}
where $\rho,\eta$ are the same as in  $(\mathbf{A_5})$.

\vspace{3mm}
\item [$(\mathbf{A''_5})$] $($Local Monotonicity II$)$
 There exists $R_0,C>0$ such that for any $R>R_0$,  $u,v\in \mathbb{V}$, $\mu,\nu\in\mathscr{P}_{\beta}(\mathbb{C}_T)\cap \mathscr{P}_{\alpha}(L^{\alpha}([0,T];\mathbb{V}))$ and $t\in[0,T]$,
\begin{eqnarray*}
&&_{{\mathbb{V}}^*}\langle \mathcal{A}(t,u,\mu_t)-\mathcal{A}(t,v,\nu_t),u-v\rangle_{\mathbb{V}}
\nonumber\\
&&\leq
(C+\rho(u,\mu_t)+\eta(v,\nu_t))(\|u-v\|_{\mathbb{H}}^2+\mathcal{W}_{2,R}(\mu^t,\nu^t)^2)
\end{eqnarray*}
and
\begin{eqnarray*}
&&\|\mathcal{B}(t,u,\mu_t)-\mathcal{B}(t,v,\nu_t)\|_{L_2(U,{\mathbb{H}})}^2
\nonumber\\
&&\leq
(C+\rho(u,\mu_t)+\eta(v,\nu_t))(\|u-v\|_{\mathbb{H}}^2+\mathcal{W}_{2,R}(\mu^t,\nu^t)^2),
\end{eqnarray*}
where $\rho,\eta$ are the same as in  $(\mathbf{A_5})$.

\end{enumerate}

\begin{definition}
We say that the pathwise uniqueness holds if whenever the stochastic basis $(\Omega,\mathscr{F},\{\mathscr{F}_t\}_{t\in[0,T]},\mathbb{P})$ and the $U$-valued cylindrical Wiener process $W$ are fixed, two solutions $X$, $\tilde{X}$ in the sense of Definition \ref{de1} such that $X_0=\tilde{X}_0$ $\mathbb{P}$-a.s., then $\mathbb{P}$-a.s.
$$X_t=\tilde{X}_t,~t\in[0,T].$$
\end{definition}

We present the pathwise uniqueness result to  mean field SPDE (\ref{eqSPDE}).
\begin{theorem}\label{th2}
Suppose that one of conditions $(\mathbf{A'_5})$-$(\mathbf{A''_5})$ holds.
For any initial data $\xi\in L^p(\Omega,\mathscr{F}_0,\mathbb{P};{\mathbb{H}})\cap L^2(\Omega,\mathscr{F}_0,\mathbb{P};{\mathbb{V}})$ with $p\in[\vartheta,\infty)\cap (2,\infty)$,
then (\ref{eqSPDE}) admits pathwise uniqueness provided satisfying (\ref{es45}).
\end{theorem}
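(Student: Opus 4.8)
The plan is to run the standard pathwise-uniqueness argument for variational SPDEs — apply Itô's formula to $\|X_t-\tilde X_t\|_{\mathbb{H}}^2$, use the local monotonicity bound, and close with a localized Gronwall inequality — but with the essential twist that, because the coefficients depend on the laws $\mathscr{L}_{X_t}$ and $\mathscr{L}_{\tilde X_t}$, one must simultaneously control the Wasserstein distance between those laws. Fix a stochastic basis and a cylindrical Wiener process $W$, and let $X,\tilde X$ be two solutions with $X_0=\tilde X_0$, both satisfying the a priori bound \eref{es45}. Write $Z_t:=X_t-\tilde X_t$. Since $\mathscr{L}_{X_0}=\mathscr{L}_{\tilde X_0}$ and both processes live in $\mathbb{S}$ with the integrability from \eref{es45}, the laws $\mu:=\mathscr{L}_X$, $\nu:=\mathscr{L}_{\tilde X}$ belong to the relevant Wasserstein space, and crucially $\mathcal{W}_{2,\mathbb{H}}(\mu_t,\nu_t)^2\le \mathbb{E}\|Z_t\|_{\mathbb{H}}^2$ (respectively, for the local distance, $\mathcal{W}_{2,R}(\mu^t,\nu^t)^2\le \mathbb{E}\|Z^{t\wedge\tau}\|_{T,\mathbb{H}}^2$ using the synchronous coupling given by the pair $(X,\tilde X)$ itself on the common probability space). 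This is the mechanism that converts the nonlocal, measure-dependent term into something expressible in the same quantity we are estimating.

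For the case $(\mathbf{A'_5})$: by the Itô formula in the Gelfand triple (as in \cite{LR1}),
\[
\|Z_t\|_{\mathbb{H}}^2=2\int_0^t {}_{\mathbb{V}^*}\langle\mathcal{A}(s,X_s,\mu_s)-\mathcal{A}(s,\tilde X_s,\nu_s),Z_s\rangle_{\mathbb{V}}\,ds+\int_0^t\|\mathcal{B}(s,X_s,\mu_s)-\mathcal{B}(s,\tilde X_s,\nu_s)\|_{L_2(U,\mathbb{H})}^2\,ds+M_t,
\]
where $M_t$ is a local martingale. Apply $(\mathbf{A'_5})$ to the integrand: the first group is bounded by $(C+\rho(0,\mu_s)+\eta(0,\nu_s))\|Z_s\|_{\mathbb{H}}^2$ and the second by $(C+\rho(X_s,\mu_s)+\eta(\tilde X_s,\nu_s))\mathcal{W}_{2,\mathbb{H}}(\mu_s,\nu_s)^2$. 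Introduce the stopping time $\sigma_R:=\inf\{t:\|X_t\|_{\mathbb{H}}+\|\tilde X_t\|_{\mathbb{H}}+\int_0^t(\|X_s\|_{\mathbb{V}}^\alpha+\|\tilde X_s\|_{\mathbb{V}}^\alpha)ds\ge R\}$; on $[0,\sigma_R]$ the coefficient $\rho(0,\mu_s)+\eta(0,\nu_s)$ is a deterministic integrable function of $s$ (it depends only on the laws, which are fixed, and on the $\mathbb{V}$-norms of the law, not of the pathwise solution — this is precisely why $(\mathbf{A'_5})$ only allows $\rho(0,\cdot),\eta(0,\cdot)$ in front of $\|Z_s\|_{\mathbb{H}}^2$), while $\rho(X_s,\mu_s)+\eta(\tilde X_s,\nu_s)$ is bounded by a deterministic constant $C_R$. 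Taking expectations, stopping the martingale, and using $\mathcal{W}_{2,\mathbb{H}}(\mu_s,\nu_s)^2\le\mathbb{E}\|Z_{s\wedge\sigma_R}\|_{\mathbb{H}}^2$ (valid because $\sigma_R$ increases to $T$ a.s.\ and one can pass to the limit, or argue more carefully that $\mathcal{W}_{2,\mathbb{H}}(\mu_s,\nu_s)^2\le \mathbb{E}\|Z_s\|_{\mathbb{H}}^2$ directly and absorb the difference), one obtains $\mathbb{E}\|Z_{t\wedge\sigma_R}\|_{\mathbb{H}}^2\le \int_0^t g_R(s)\,\mathbb{E}\|Z_{s\wedge\sigma_R}\|_{\mathbb{H}}^2\,ds$ for an integrable $g_R$. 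Gronwall gives $Z\equiv 0$ on $[0,\sigma_R]$, and letting $R\to\infty$ finishes this case.

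For the case $(\mathbf{A''_5})$: here the drift estimate carries the stronger coefficient $(C+\rho(u,\mu_t)+\eta(v,\nu_t))$ in front of \emph{both} $\|u-v\|_{\mathbb{H}}^2$ and the \emph{local} Wasserstein term $\mathcal{W}_{2,R}(\mu^t,\nu^t)^2$, so one cannot take plain expectations. Instead, localize first: on the common probability space use the synchronous coupling, so that $\mathcal{W}_{2,R}(\mu^t,\nu^t)^2\le \mathbb{E}\big[\sup_{s\le t}\|X_{s\wedge\tau_R^X\wedge\tau_R^{\tilde X}}-\tilde X_{s\wedge\tau_R^X\wedge\tau_R^{\tilde X}}\|_{\mathbb{H}}^2\big]$ with $\tau_R$ as in the definition preceding $(\mathbf{A''_5})$; set $\rho_R:=\tau_R^X\wedge\tau_R^{\tilde X}$. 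Apply Itô to $\|Z_{t\wedge\rho_R}\|_{\mathbb{H}}^2$: on $[0,\rho_R]$ the prefactor $C+\rho(X_s,\mu_s)+\eta(\tilde X_s,\nu_s)$ is bounded by a deterministic $C_R$ (using \eref{esq22} and that both $\|X_s\|_{\mathbb{V}},\|\tilde X_s\|_{\mathbb{V}}$ and the law-norms are controlled up to $\rho_R$ — note one must also check $\int_0^{\rho_R}\|X_s\|_{\mathbb{V}}^\alpha ds$ type terms integrate against the deterministic bound, which they do by construction of $\tau_R$). Take supremum over $t$, then expectation, use the BDG inequality on the martingale part; this yields
\[
\mathbb{E}\Big[\sup_{s\le t}\|Z_{s\wedge\rho_R}\|_{\mathbb{H}}^2\Big]\le C_R\int_0^t \mathbb{E}\Big[\sup_{r\le s}\|Z_{r\wedge\rho_R}\|_{\mathbb{H}}^2\Big]\,ds,
\]
again via $\mathcal{W}_{2,R}(\mu^s,\nu^s)^2\le \mathbb{E}[\sup_{r\le s}\|Z_{r\wedge\rho_R}\|_{\mathbb{H}}^2]$, so Gronwall gives pathwise uniqueness on $[0,\rho_R]$ and then $R\to\infty$ concludes. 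The main obstacle, in both cases, is the self-referential nature of the estimate — the bound for $\mathbb{E}\|Z_t\|_{\mathbb{H}}^2$ involves the law of $Z$ itself, hence $\mathbb{E}\|Z_t\|_{\mathbb{H}}^2$ again — which is why the precise placement of the "good" prefactors $\rho(0,\cdot),\eta(0,\cdot)$ in $(\mathbf{A'_5})$ and the local truncation $\mathcal{W}_{2,R}$ together with the stopping times in $(\mathbf{A''_5})$ is indispensable: one needs the coefficient multiplying the Wasserstein term to be deterministic after localization, so that taking expectations does not reintroduce uncontrolled high-order moments. A secondary technical point is justifying the interchange of the stopping-time localization $\rho_R\uparrow T$ with the law-dependence, i.e.\ that $\rho_R\to T$ a.s.\ follows from \eref{es45}, which ensures the $R\to\infty$ passage is legitimate.
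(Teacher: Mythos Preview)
Your overall strategy is right, but there is a genuine gap in both cases stemming from the same mistaken claim: that on the localized interval the prefactor $\rho(X_s,\mu_s)+\eta(\tilde X_s,\nu_s)$ is \emph{pointwise} bounded by a constant $C_R$. By \eqref{esq22} this prefactor contains $\|X_s\|_{\mathbb{V}}^{\alpha}$, and the stopping time $\tau_R$ (or your $\sigma_R$, $\rho_R$) only controls the \emph{time-integral} $\int_0^{\tau_R}\|X_s\|_{\mathbb{V}}^{\alpha}\,ds\le R$, not the pointwise values. So your passage from the It\^o estimate to an inequality of the form $\mathbb{E}[\sup_{s\le t}\|Z_{s\wedge\rho_R}\|_{\mathbb{H}}^2]\le C_R\int_0^t\mathbb{E}[\sup_{r\le s}\|Z_{r\wedge\rho_R}\|_{\mathbb{H}}^2]\,ds$ is not justified.

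For Case~$(\mathbf{A'_5})$ the paper's fix is much simpler than your localization: no stopping time at all. The point of the structure in $(\mathbf{A'_5})$ is that the coefficient in front of $\|Z_s\|_{\mathbb{H}}^2$ is $\rho(0,\mu_s)+\eta(0,\nu_s)$, which is \emph{deterministic} (it depends only on the laws), while the coefficient in front of $\mathcal{W}_{2,\mathbb{H}}(\mu_s,\nu_s)^2\le\mathbb{E}\|Z_s\|_{\mathbb{H}}^2$ is random but multiplies a deterministic quantity. Taking expectation of the pathwise It\^o inequality therefore immediately yields
\[
\mathbb{E}\|Z_t\|_{\mathbb{H}}^2\le C\int_0^t\big(1+\mathbb{E}\rho(X_s,\mu_s)+\mathbb{E}\eta(\tilde X_s,\nu_s)\big)\,\mathbb{E}\|Z_s\|_{\mathbb{H}}^2\,ds,
\]
and the coefficient is integrable by \eqref{es45} and \eqref{esq22}; ordinary Gronwall finishes. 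This also removes the sketchy step you flag about $\mathcal{W}_{2,\mathbb{H}}(\mu_s,\nu_s)^2$ versus $\mathbb{E}\|Z_{s\wedge\sigma_R}\|_{\mathbb{H}}^2$.

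For Case~$(\mathbf{A''_5})$ the localization via $\tau_R^X\wedge\tau_R^{\tilde X}$ is indeed essential, and your use of the synchronous coupling to bound $\mathcal{W}_{2,R}$ is correct. But after BDG the term $\mathbb{E}\int_0^{t\wedge\rho_R}(1+\rho+\eta)\|Z_s\|_{\mathbb{H}}^2\,ds$ has a random coefficient multiplying a random quantity, and you cannot factor. The paper closes this with a \emph{stochastic} Gronwall lemma (Lemma~\ref{appen2} in the Appendix), whose hypothesis is precisely that $\int_0^{\rho_R}(1+\rho+\eta)\,ds$ is almost surely bounded --- which does follow from \eqref{esq22} and the definition of $\tau_R$ --- rather than pointwise boundedness. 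Applying that lemma first reduces to a deterministic Gronwall in $t\mapsto\mathbb{E}[\sup_{s\le t\wedge\rho_R}\|Z_s\|_{\mathbb{H}}^2]$, and then $R\to\infty$ via Fatou concludes. Inserting this missing step makes your Case~2 argument correct.
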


\begin{remark}\label{re1}
(i) It is worth pointing out that instead of $(\mathbf{A'_5})$, a more natural condition for the (pathwise) uniqueness of solutions to (\ref{eqSPDE}) should be
\begin{eqnarray}\label{unip}
&&2_{{\mathbb{V}}^*}\langle \mathcal{A}(t,u,\mu)-\mathcal{A}(t,v,\nu),u-v\rangle_{\mathbb{V}}+\|\mathcal{B}(t,u,\mu)-\mathcal{B}(t,v,\nu)\|_{L_2(U,{\mathbb{H}})}^2
\nonumber\\
&&\leq
(C+\rho(u,\mu)+\eta(v,\nu))(\|u-v\|_{\mathbb{H}}^2+\mathcal{W}_{2,{\mathbb{H}}}(\mu,\nu)^2).
\end{eqnarray}
However, it is important to note that there exist some counterexamples (cf.~\cite{SCHEUTZOW}) in the case of mean field SDE, e.g.,
\begin{equation}\label{eqcoun1}
\frac{dX_t}{dt}=\mathcal{A}(X_t)+\mathbb{E}X_t,~X_0=\xi,
\end{equation}
and
\begin{equation}\label{eqcoun2}
dX_t=\mathbb{E}\mathcal{A}(X_t)dt+dW_t,~X_0=\xi.
\end{equation}
These counterexamples demonstrate  that if the drift $\mathcal{A}(\cdot)$ is merely locally Lipschitz continuous, the pathwise or distribution uniqueness of solutions to (\ref{eqcoun1}) or (\ref{eqcoun2}) does not hold in general.

Hence, it is both reasonable and necessary to assume a stronger structure, such as the condition $(\mathbf{A'_5})$, on the coefficients in order to obtain the uniqueness of solutions.

\vspace{2mm}
(ii) While condition (\ref{unip}) may not be sufficient to ensure the uniqueness of solutions to (\ref{eqSPDE}), it is possible to construct an  alternative metric between laws, such as a ``local'' Wasserstein distance. By using this modified metric  and introducing a similar condition, i.e. $(\mathbf{A''_5})$, we can also obtain the uniqueness of solutions.
\end{remark}

\begin{remark}
In this remark, we present some examples to compare the assumptions $(\mathbf{A_5})$, $(\mathbf{A_5'})$, $(\mathbf{A_5''})$, and $(\mathbf{A_5^*})$ below (i.e.~(\ref{unip})).

\textbf{Example 1.}  Consider the following mean field equation
\begin{equation}\label{eqa1}
\frac{dX_t}{dt}=b(X_t)+\mathbb{E}X_t,~X_0=\xi,
\end{equation}
where the mapping $b:\mathbb{R}\to\mathbb{R}$ satisfies that there exists a constant $\kappa>0$ such that for any $u,v\in\mathbb{R}$,
\begin{equation}\label{local1}
|b(u)-b(v)|\leq C(1+|u|^{\kappa}+|v|^{\kappa})|u-v|^2.
\end{equation}

It is clear that  $(\mathbf{A_5^*})$ holds for Eq.~(\ref{eqa1}) but not $(\mathbf{A_5'})$.
 In fact, Eq.~(\ref{eqa1}) is a counterexample that the uniqueness of solutions does not hold as stated in Remark \ref{re1}.

 Moreover, we can see that Eq.~(\ref{eqa1}) satisfies $(\mathbf{A_5})$ but not $(\mathbf{A_5''})$.

\vspace{2mm}
\textbf{Example 2.} For the example such that $(\mathbf{A_5})$ holds but not $(\mathbf{A_5^*})$, we set
a function $b:\mathbb{R}\to \mathbb{R}$ satisfying the global monotonicity condition, i.e., there exists a constant $C>0$ such that for any $u,v\in\mathbb{R}$,
\begin{equation}\label{esa4}
\langle b(u)-b(v),u-v\rangle \leq C|u-v|^2.
\end{equation}

Then we consider  the following mean field equation
\begin{equation}\label{eqa2}
dX_t=\tilde{b}(X_t,\mathscr{L}_{X_t})dt+dW_t,~X_0=\xi,
\end{equation}
where the measure dependent function $\tilde{b}:\mathbb{R}\times\mathscr{P}_2(\mathbb{R})\to \mathbb{R}$ is given by
\begin{equation}\label{esa5}
\tilde{b}(u,\mu):=\int_{\mathbb{R}}b(u-y)\mu(dy).
\end{equation}
By (\ref{esa4}), we can see that (\ref{esa5}) satisfies $(\mathbf{A_5})$, namely,
 \begin{eqnarray*}
&&\!\!\!\!\!\!\!\!\langle\tilde{b}(u,\mu)-\tilde{b}(v,\mu),u-v\rangle
\nonumber \\
=&&\!\!\!\!\!\!\!\!\int_{\mathbb{R}}\langle b(u-y)-b(v-y),u-y-(v-y)\rangle\mu(dy)
\nonumber \\
\leq&&\!\!\!\!\!\!\!\!C\int_{\mathbb{R}}|u-v|^2\mu(dy)=C|u-v|^2.
\end{eqnarray*}
However, (\ref{esa5}) does not satisfy $(\mathbf{A_5^*})$.

\vspace{2mm}
Below we also present an example to illustrate the applicability of  $(\mathbf{A_5''})$, which can be viewed as a modification of the counterexample (\ref{eqcoun1}).

\vspace{2mm}
\textbf{Example 3.}  Fix $T_0\in(0,T)$. Consider the following mean field equation
\begin{equation*}
\frac{dX_t}{dt}=b(X_t)+\mathbb{E}\big[\mathbf{1}_{[0,T_0]}(t)X_t\big],~X_0=\xi,
\end{equation*}
where the mapping $b:\mathbb{R}\to\mathbb{R}$ satisfies (\ref{local1}) and is of linear growth. Let us denote
$$\mathcal{A}(t,\mu):=\int_{\mathbb{R}}\mathbf{1}_{[0,T_0]}(t)x\mu(dx).$$
We shall show that $\mathcal{A}(t,\mu)$ satisfies the assumption $(\mathbf{A_5''})$. First, let $\xi,\eta\in C([0,T];\mathbb{R})~\mathbb{P}\text{-a.s.}$. In view of the definition of $\tau_R^\xi,\tau_R^{\eta}$, it is possible to find an $R_0>0$ such that for any $R>R_0$, we have
\begin{equation}\label{stopping1}
\tau_R^{\xi,\eta}:=\tau_R^\xi\wedge\tau_R^{\eta} >T_0~~~~~\mathbb{P}\text{-a.s.}.
\end{equation}
Then we can get
 \begin{eqnarray}\label{stopping2}
\!\!\!\!\!\!\!\!&&\mathbf{1}_{[0,T_0]}(t)\xi_t-\mathbf{1}_{[0,T_0]}(t)\eta_t
\nonumber \\
\!\!\!\!\!\!\!\!&&
=\left\{
 \begin{aligned}
     &\mathbf{1}_{[0,T_0]}(t)(\xi_{t\wedge\tau_R^{\xi,\eta}}-\eta_{t\wedge\tau_R^{\xi,\eta}}),~t\in[0,\tau_R^{\xi,\eta}], \\
     &\mathbf{1}_{[0,T_0]}(t)(\xi_{t}-\eta_{t}),~t>\tau_R^{\xi,\eta}>T_0, \\
     &\mathbf{1}_{[0,T_0]}(t)(\xi_{t}-\eta_{t}),~t>\tau_R^{\xi,\eta},\tau_R^{\xi,\eta}\leq T_0, \\
  \end{aligned}
\right.
\nonumber \\
\!\!\!\!\!\!\!\!&&
\leq\left\{
 \begin{aligned}
     &\sup_{s\in[0,t\wedge\tau_R^{\xi,\eta}]}|\xi_{s}-\eta_{s}|,~t\in[0,\tau_R^{\xi,\eta}], \\
     &\mathbf{1}_{[0,T_0]}(t)\mathbf{1}_{(T_0,T]}(t)(\xi_{t}-\eta_{t})=0,~t>\tau_R^{\xi,\eta}>T_0, \\
     &\mathbf{1}_{[0,T_0]}(t)(\xi_{t}-\eta_{t}),~t>\tau_R^{\xi,\eta},\tau_R^{\xi,\eta}\leq T_0. \\
  \end{aligned}
\right.
\end{eqnarray}
In light of (\ref{stopping1}), taking expectation on both sides of (\ref{stopping2}), we derive
$$\big|\mathbb{E}\big[\mathbf{1}_{[0,T_0]}(t)\xi_t\big]-\mathbb{E}\big[\mathbf{1}_{[0,T_0]}(t)\eta_t\big]\big|\leq \mathbb{E}\Big[\sup_{s\in[0,t\wedge\tau_R^{\xi,\eta}]}|\xi_{s}-\eta_{s}|\Big]=\mathbb{E}\Big[\sup_{s\in[0,t]}\big|\xi_{s}^{\tau_R^{\xi,\eta}}-\eta_{s}^{\tau_R^{\xi,\eta}}\big|\Big],$$
which implies that $\mathcal{A}(t,\mu)$ satisfies the assumption $(\mathbf{A_5''})$.
\end{remark}

As a consequence of Theorems \ref{th3}-\ref{th2}, we eventually derive the well-posedness of mean field SPDE (\ref{eqSPDE}).
\begin{theorem}\label{th8}
Suppose that $\mathbf{(A_0)}$-$\mathbf{(A_4)}$ and one of  $(\mathbf{A'_5})$-$(\mathbf{A''_5})$ hold. For any initial data $\xi\in L^p(\Omega,\mathscr{F}_0,\mathbb{P};{\mathbb{H}})\cap L^2(\Omega,\mathscr{F}_0,\mathbb{P};{\mathbb{V}})$ with $p\in[\vartheta,\infty)\cap (2,\infty)$, then (\ref{eqSPDE}) has a unique strong and weak solution such that (\ref{es45}) holds.
\end{theorem}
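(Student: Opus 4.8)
The plan is to obtain Theorem~\ref{th8} by combining the existence statement of Theorem~\ref{th3} with the pathwise uniqueness of Theorem~\ref{th2}, and then upgrading the latter to uniqueness in law via a frozen-coefficient (Yamada--Watanabe type) argument tailored to the mean field structure. The first step is a reduction: I would check that either of $(\mathbf{A'_5})$, $(\mathbf{A''_5})$ implies $(\mathbf{A_5})$. Indeed, putting $\mu=\nu$ in $(\mathbf{A'_5})$ kills the term $\mathcal{W}_{2,\mathbb{H}}(\mu,\nu)$ and leaves exactly $(\mathbf{A_5})$ with $\rho,\eta$ replaced by $\rho(0,\cdot),\eta(0,\cdot)$ (which still satisfy (\ref{esq22})), while putting $\mu=\nu$ in $(\mathbf{A''_5})$ kills $\mathcal{W}_{2,R}(\mu^t,\nu^t)$ and leaves $(\mathbf{A_5})$ directly. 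Hence $(\mathbf{A_0})$--$(\mathbf{A_5})$ are in force: Theorem~\ref{th3} provides a strong solution $X$ satisfying (\ref{es45}), and Theorem~\ref{th2} provides pathwise uniqueness among solutions satisfying (\ref{es45}). Since pathwise uniqueness already gives uniqueness of the strong solution on any prescribed stochastic basis, it remains only to prove uniqueness in law.

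For that, let $(X^1,W^1)$ and $(X^2,W^2)$ be two weak solutions with the same initial law $\mu_0$, both satisfying (\ref{es45}), and write $\mu^i_t:=\mathscr{L}_{X^i_t}$. On the basis carrying $X^1$ I would introduce the \emph{distribution-free} SPDE obtained by freezing the flow $(\mu^2_t)$,
\begin{equation*}
dZ_t=\mathcal{A}(t,Z_t,\mu^2_t)\,dt+\mathcal{B}(t,Z_t,\mu^2_t)\,dW^1_t,\qquad Z_0=X^1_0 .
\end{equation*}
By $(\mathbf{A_1})$--$(\mathbf{A_3})$ and the (classical, time-dependent) local monotonicity inherited from $(\mathbf{A'_5})$, resp.~$(\mathbf{A''_5})$, with both measure arguments equal to $\mu^2_t$, this equation belongs to the classical variational framework and therefore has a pathwise unique, hence law-unique, solution $Z$. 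Since $(X^2,W^2)$ is itself a weak solution of this same equation with initial law $\mu_0$, I get $\mathscr{L}_Z=\mathscr{L}_{X^2}$; in particular $\mathscr{L}_{Z_t}=\mu^2_t$ for every $t$ and $Z$ inherits (\ref{es45}).

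Now I would compare $X^1$ and $Z$, which are driven by the same noise $W^1$ with identical initial datum. Applying Itô's formula to $\|X^1_t-Z_t\|_{\mathbb{H}}^2$ and invoking $(\mathbf{A'_5})$ gives
\begin{align*}
\|X^1_t-Z_t\|_{\mathbb{H}}^2
\le{}& \int_0^t\big(C+\rho(0,\mu^1_s)+\eta(0,\mu^2_s)\big)\|X^1_s-Z_s\|_{\mathbb{H}}^2\,ds\\
&{}+ \int_0^t\big(C+\rho(X^1_s,\mu^1_s)+\eta(Z_s,\mu^2_s)\big)\mathcal{W}_{2,\mathbb{H}}(\mu^1_s,\mu^2_s)^2\,ds + M_t,
\end{align*}
with $M$ a local martingale. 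The key structural point of $(\mathbf{A'_5})$ is that the possibly large coefficient multiplies only the Wasserstein term, and $\mathcal{W}_{2,\mathbb{H}}(\mu^1_s,\mu^2_s)^2=\mathcal{W}_{2,\mathbb{H}}(\mathscr{L}_{X^1_s},\mathscr{L}_{Z_s})^2\le\mathbb{E}\|X^1_s-Z_s\|_{\mathbb{H}}^2=:\psi(s)$ is deterministic and finite. After a routine localization of $M$ and taking expectations,
\begin{equation*}
\psi(t)\le\int_0^t\Big(C+\rho(0,\mu^1_s)+\eta(0,\mu^2_s)+\mathbb{E}\big[\rho(X^1_s,\mu^1_s)\big]+\mathbb{E}\big[\eta(Z_s,\mu^2_s)\big]\Big)\psi(s)\,ds ,
\end{equation*}
where the bracket is $ds$-integrable on $[0,T]$ by (\ref{esq22}), (\ref{es45}) and $p\ge\vartheta\ge\beta+2$ (e.g.~$\mathbb{E}[\rho(X^1_s,\mu^1_s)]\le C\,\mathbb{E}\big[(1+\|X^1_s\|_{\mathbb{V}}^{\alpha}+\mu^1_s(\|\cdot\|_{\mathbb{V}}^{\alpha}))(1+\|X^1_s\|_{\mathbb{H}}^{p-2}+\mu^1_s(\|\cdot\|_{\mathbb{H}}^{p-2}))\big]$, which integrates against $ds$ by (\ref{es45})). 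Gronwall's lemma then forces $\psi\equiv0$, i.e.~$X^1=Z$ and hence $\mathscr{L}_{X^1}=\mathscr{L}_Z=\mathscr{L}_{X^2}$. Under $(\mathbf{A''_5})$ I would run the same scheme with the localized distance $\mathcal{W}_{2,R}$, stopping at $\tau_R^{X^1}\wedge\tau_R^{Z}$ exactly as in the proof of Theorem~\ref{th2} and letting $R\to\infty$ at the end. Together with the strong solution and pathwise uniqueness from the first step, this yields the unique strong and weak solution, which satisfies (\ref{es45}).

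The main obstacle is closing the stability estimate without any exponential moment control: a priori the large ``local'' coefficient $C+\rho(\cdot,\mu_s)+\eta(\cdot,\nu_s)$ multiplies $\|X^1_s-Z_s\|_{\mathbb{H}}^2$, and a naive Gronwall would require something like $\mathbb{E}\exp\big(C\int_0^T(1+\|X_s\|_{\mathbb{V}}^{\alpha})\,ds\big)<\infty$, which is not available here. Conditions $(\mathbf{A'_5})$ and $(\mathbf{A''_5})$ are engineered precisely to avoid this: the former by shifting the large coefficient onto the Wasserstein term, which becomes deterministic once bounded by an expectation; the latter by localizing the Wasserstein distance so that a stopping-time argument with only polynomial moments closes the loop. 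Carrying out this closure, together with the measurability and integrability bookkeeping for $\rho,\eta$ via (\ref{es45}), is the real content; the final assembly of existence and uniqueness of strong and weak solutions from these ingredients is then standard.
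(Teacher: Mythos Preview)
Your proposal is correct and follows essentially the same route as the paper. The paper's proof of Theorem~\ref{th8} is one line: combine Theorem~\ref{th3}, Theorem~\ref{th2}, and the modified Yamada--Watanabe theorem (Lemma~\ref{lem112}); what you do is reprove Lemma~\ref{lem112} inline via the frozen-coefficient argument. One small simplification: once you know $\mathscr{L}_{Z_t}=\mu^2_t$, your auxiliary process $Z$ is itself a solution of the mean field equation \eqref{eqSPDE} on the same basis, with the same noise and initial datum as $X^1$, so the comparison $X^1=Z$ follows directly from Theorem~\ref{th2} without re-running the It\^{o}/Gronwall computation.
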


\begin{proof}
Combining Theorems \ref{th3}-\ref{th2} and the modified Yamada-Watanabe theorem (see Lemma \ref{lem112}), we conclude that this theorem holds.
\end{proof}


\begin{remark}
In the existing works for the finite-dimensional mean field SDEs (e.g.~\cite{BCC11,E1,GHM}), the authors utilized the Picard iteration method to construct an approximating sequence of
mean field SDEs and obtain the corresponding  limit as a solution of  mean field SDEs, which essentially needs the  exponential moment control in their proof of the existence of solutions. Similarly, in the proof of the uniqueness of solutions, the  exponential moment control is also required under their local Lipschitz assumptions.

Differ from previous works, we first employ the martingale approach and the stopping time technique to prove the existence of martingale solutions to mean field SPDEs, which avoids the use of exponential moment control. On the other hand, we present two distinct local monotonicity conditions (i.e.~$(\mathbf{A'_5})$ and $(\mathbf{A''_5})$) to obtain the uniqueness, which  also  avoid the requirement of the exponential moment control.
Shortly speaking,  $(\mathbf{A'_5})$ is a general local monotonicity condition  with respect to the measure $\mu$ but requires to be monotone with respect to $u$, whereas $(\mathbf{A''_5})$ is a more general local monotonicity condition  with respect to $u$ but imposes restrictions on the measure $\mu$ based on a ``local" Wasserstein distance.
\end{remark}

\subsection{Mean field limit}
 In this subsection, we are interested in the theory of the mean field limit for a class of weakly interacting SPDEs. These SPDEs govern the dynamic of an $N$-particle/field system,  where the interactions among the particles/fields are captured by the dependence of the coefficients in SPDEs on the empirical laws of the particles/fields.

\vspace{1mm}
Specifically, we consider an $N$-interacting system $(X^{N,1},\ldots,X^{N,N})$ given by
 \begin{equation}\label{eqi}
dX^{N,i}_t=\mathcal{A}(t,X^{N,i}_t,\mathcal{S}^N_t)dt+\mathcal{B}(t,X^{N,i}_t,\mathcal{S}^N_t)dW_t^i,~X^{N,i}_0=\xi^i,
\end{equation}
where  $W_t^1,\ldots,W_t^N$ are independent $U$-valued cylindrical Wiener processes defined on a complete filtered probability space $\left(\Omega,\mathscr{F},\{\mathscr{F}_t\}_{t\in[0,T]},\mathbb{P}\right)$, and
$$\mathcal{S}^N_t:=\frac{1}{N}\sum_{j=1}^N\delta_{X^{N,j}_t}$$
is the empirical law of $(X^{N,1},\ldots,X^{N,N})$.

\vspace{1mm}
In order to ensure the uniqueness of solutions to $N$-interacting system (\ref{eqi}), it is sufficient to use the condition (\ref{unip}), i.e.,

\vspace{1mm}
\begin{enumerate}
\item [$(\mathbf{A_5^*})$]
 For any $u,v\in {\mathbb{V}}$, $\mu,\nu\in\mathbb{M}_2$,
\begin{eqnarray*}
&&2_{{\mathbb{V}}^*}\langle \mathcal{A}(t,u,\mu)-\mathcal{A}(t,v,\nu),u-v\rangle_{\mathbb{V}}+\|\mathcal{B}(t,u,\mu)-\mathcal{B}(t,v,\nu)\|_{L_2(U,{\mathbb{H}})}^2
\nonumber\\
&&\leq
(C+\rho(u,\mu)+\eta(v,\nu))(\|u-v\|_{\mathbb{H}}^2+\mathcal{W}_{2,\mathbb{H}}(\mu,\nu)^2).
\end{eqnarray*}
\end{enumerate}

\begin{remark}
We observe that $(\mathbf{A_5^*})$ is a stronger condition compared to $(\mathbf{A_5})$. Additionally, it is worth mentioning that $(\mathbf{A_5'})$ is stronger than $(\mathbf{A_5^*})$, while $(\mathbf{A_5''})$ are not comparable to $(\mathbf{A_5^*})$.

\end{remark}

The existence and uniqueness of (probabilistically) strong solutions to the $N$-interacting system (\ref{eqi}) are given in the following.
\begin{proposition}\label{th5}
Suppose that $\mathbf{(A_0)}$-$\mathbf{(A_4)}$ and $\mathbf{(A_5^*)}$  hold and the embeddings $\mathbb{X}\subset \mathbb{V}\subset \mathbb{H}$ are compact.
For any $N\in\mathbb{N}$ and initial random variables $\xi^i\in L^p(\Omega,\mathscr{F}_0,\mathbb{P};{\mathbb{H}})\cap L^2(\Omega,\mathscr{F}_0,\mathbb{P};{\mathbb{V}})$, with $p\in[\vartheta,\infty)\cap (2,\infty)$, $i=1,\ldots,N$,
there exists a unique strong solution $X^{N}:=(X^{N,1},\ldots,X^{N,N})$ to (\ref{eqi}).
\end{proposition}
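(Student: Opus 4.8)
The plan is to reduce the $N$-particle system \eref{eqi} to a single mean field SPDE of the form \eref{eqSPDE} on the product space, and then invoke Theorem \ref{th8}. Concretely, set $\widehat{\mathbb V}:=\mathbb V^{N}$, $\widehat{\mathbb H}:=\mathbb H^{N}$, $\widehat{\mathbb X}:=\mathbb X^{N}$ with the natural product inner products, and $\widehat U:=U^{N}$; these form a Gelfand triple $\widehat{\mathbb V}\subset\widehat{\mathbb H}\subset\widehat{\mathbb V}^{*}$ with compact dense embeddings inherited from the one–particle spaces. Write $\mathbf X=(X^{1},\dots,X^{N})$ and define $\widehat{\mathcal A}(t,\mathbf X,\Lambda):=\big(\mathcal A(t,X^{1},\pi_{*}\Lambda),\dots,\mathcal A(t,X^{N},\pi_{*}\Lambda)\big)$ and similarly $\widehat{\mathcal B}(t,\mathbf X,\Lambda)$ acting diagonally, where for a law $\Lambda\in\mathscr P(\widehat{\mathbb H})$ the map $\pi_{*}\Lambda$ denotes the empirical-measure functional: if $\mathbf X\sim\Lambda$ then we use $\mathcal S^{N}=\frac1N\sum_{j}\delta_{X^{j}}$, which is itself a measure-valued function of $\mathbf X$, so that $\mathcal A(t,X^{i},\mathcal S^{N})$ is a genuine function of $\mathbf X$ alone (not of $\Lambda$); the dependence on the joint law enters only trivially. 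With this convention the coupled system \eref{eqi} driven by the $\widehat U$-cylindrical Wiener process $\mathbf W=(W^{1},\dots,W^{N})$ is exactly \eref{eqSPDE} for $(\widehat{\mathcal A},\widehat{\mathcal B})$; indeed, since the right-hand side depends on $\mathbf X$ only through its components and their empirical measure, this is in fact a standard (distribution-independent) SPDE on the product space, which is why $\mathbf{(A_5^*)}$ alone suffices.

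The main work is to verify that $(\widehat{\mathcal A},\widehat{\mathcal B})$ satisfy $\mathbf{(A_0)}$--$\mathbf{(A_4)}$ and $\mathbf{(A_5^*)}$ on the product triple, with the same exponents $\alpha,\gamma,\lambda,\beta$. For $\mathbf{(A_0)}$ one takes the orthonormal basis of $\widehat{\mathbb H}$ obtained by placing each $e_k$ in each of the $N$ slots. Demicontinuity $\mathbf{(A_1)}$ follows componentwise together with continuity of $\mathbf X\mapsto\mathcal S^{N}(\mathbf X)$ in $\mathscr P_{2}(\mathbb H)\cap\mathscr P_{\alpha}(\mathbb V)$, which holds because convergence in $\widehat{\mathbb V}$ of $\mathbf X^{(k)}\to\mathbf X$ forces each component to converge and hence $\mathcal W_{2,\mathbb H}(\mathcal S^{N}(\mathbf X^{(k)}),\mathcal S^{N}(\mathbf X))\to0$ and similarly in $\mathcal W_{\alpha,\mathbb V}$. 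For coercivity $\mathbf{(A_2)}$ and growth $\mathbf{(A_3)}$--$\mathbf{(A_4)}$ one sums the one-particle inequalities over $i=1,\dots,N$; the crucial point is that $\mathcal S^{N}(\mathbf X)(\|\cdot\|_{\mathbb H}^{2})=\frac1N\sum_j\|X^{j}\|_{\mathbb H}^{2}\le\|\mathbf X\|_{\widehat{\mathbb H}}^{2}$ and likewise $\mathcal S^{N}(\mathbf X)(\|\cdot\|_{\mathbb V}^{\alpha})\le\|\mathbf X\|_{\widehat{\mathbb V}}^{\alpha}$, $\mathcal S^{N}(\mathbf X)(\|\cdot\|_{\mathbb H}^{\beta})\le\|\mathbf X\|_{\widehat{\mathbb H}}^{\beta}$ (the last using $\beta\ge0$ and Jensen/power-mean when $\beta\ge1$, and subadditivity of $r\mapsto r^{\beta/2}$ when $\beta<2$; in all cases one absorbs harmless dimensional constants $C_{N}$ into the generic constant, which is permitted since $N$ is fixed here). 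Thus each structural bound, after summation, has the same form in the product norms, with the measure-terms dominated by the state-norm terms; the non-homogeneous exponents are preserved because summation does not mix them. Finally, for $\mathbf{(A_5^*)}$: writing $\mathbf X\sim\mu$, $\mathbf Y\sim\nu$ componentwise one gets, after summing,
\begin{align*}
&2{}_{\widehat{\mathbb V}^{*}}\langle\widehat{\mathcal A}(t,\mathbf X)-\widehat{\mathcal A}(t,\mathbf Y),\mathbf X-\mathbf Y\rangle_{\widehat{\mathbb V}}+\|\widehat{\mathcal B}(t,\mathbf X)-\widehat{\mathcal B}(t,\mathbf Y)\|_{L_{2}(\widehat U,\widehat{\mathbb H})}^{2}\\
&\qquad\le\sum_{i=1}^{N}\big(C+\rho(X^{i},\mathcal S^{N}(\mathbf X))+\eta(Y^{i},\mathcal S^{N}(\mathbf Y))\big)\big(\|X^{i}-Y^{i}\|_{\mathbb H}^{2}+\mathcal W_{2,\mathbb H}(\mathcal S^{N}(\mathbf X),\mathcal S^{N}(\mathbf Y))^{2}\big),
\end{align*}
and one estimates $\mathcal W_{2,\mathbb H}(\mathcal S^{N}(\mathbf X),\mathcal S^{N}(\mathbf Y))^{2}\le\frac1N\sum_j\|X^{j}-Y^{j}\|_{\mathbb H}^{2}\le\|\mathbf X-\mathbf Y\|_{\widehat{\mathbb H}}^{2}$ by using the diagonal coupling; together with the bound \eref{esq22} on $\rho,\eta$ (which, summed, is controlled by the product-space version of \eref{esq22} for a suitable $\widehat\rho,\widehat\eta$), this yields $\mathbf{(A_5^*)}$, hence a fortiori the product-space analogue of local monotonicity needed for pathwise uniqueness, on $\widehat{\mathbb V}\times\mathbb M_{2}(\widehat{\mathbb H})$.

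With all hypotheses verified, Theorem \ref{th8} applied on the product Gelfand triple — using the initial datum $\boldsymbol\xi=(\xi^{1},\dots,\xi^{N})\in L^{p}(\Omega,\mathscr F_{0},\mathbb P;\widehat{\mathbb H})\cap L^{2}(\Omega,\mathscr F_{0},\mathbb P;\widehat{\mathbb V})$, which has the required integrability since each $\xi^{i}$ does — gives a unique probabilistically strong solution $\mathbf X^{N}\in C([0,T];\widehat{\mathbb H})\cap L^{\alpha}([0,T];\widehat{\mathbb V})$ satisfying the a priori bound \eref{es45} in the product norms. Reading this componentwise shows each $X^{N,i}\in\mathbb C_{T}\cap L^{\alpha}([0,T];\mathbb V)$ and that $\mathbf X^{N}=(X^{N,1},\dots,X^{N,N})$ solves \eref{eqi}, and uniqueness on the product space is exactly pathwise uniqueness for the $N$-system. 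I expect the only genuinely delicate point to be the bookkeeping in $\mathbf{(A_4)}$ — checking that the mixed term $C\|u\|_{\mathbb V}^{\alpha+2}\|u\|_{\mathbb H}^{\lambda}$ and the product-form bounds survive summation over $i$ with an $N$-independent structure (they do, since the per-particle constants are $N$-independent by hypothesis and we are free to let the generic constant depend on the fixed $N$), so that $\mathbf{(A_0)}$--$\mathbf{(A_4)}$ hold on $\widehat{\mathbb V}\subset\widehat{\mathbb H}\subset\widehat{\mathbb V}^{*}$ and Theorem \ref{th1}/\ref{th3} apply verbatim.
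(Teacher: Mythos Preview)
Your approach is essentially the same as the paper's: lift the $N$-particle system to a single SPDE on the product Gelfand triple $\mathbb V^N\subset\mathbb H^N\subset(\mathbb V^*)^N$, verify $\mathbf{(A_0)}$--$\mathbf{(A_4)}$ there (using $\mathcal S^N(\mathbf X)(\|\cdot\|^q)\le C_N\|\mathbf X\|^q$), and obtain pathwise uniqueness from $\mathbf{(A_5^*)}$ together with the coupling bound $\mathcal W_{2,\mathbb H}(\mathcal S^N(\mathbf X),\mathcal S^N(\mathbf Y))^2\le N^{-1}\|\mathbf X-\mathbf Y\|_{\widehat{\mathbb H}}^2$.

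There is one imprecision at the final step. You invoke Theorem~\ref{th8}, but that theorem requires $(\mathbf{A'_5})$ or $(\mathbf{A''_5})$, not $\mathbf{(A_5^*)}$; and $(\mathbf{A'_5})$ on the product space would force \emph{global} monotonicity in $\mathbf X$ (since the $\|u-v\|_{\mathbb H}^2$ coefficient there is $C+\rho(0,\cdot)+\eta(0,\cdot)$), which you do not have. The paper avoids this by observing that the lifted system is a genuinely \emph{distribution-independent} SPDE on $\widehat{\mathbb H}$: it therefore applies Theorem~\ref{th1} for martingale existence, uses $\mathbf{(A_5^*)}$ directly to get local monotonicity in $\mathbf X$ alone and hence pathwise uniqueness by the standard Gronwall/stopping-time argument (as in the proof of Theorem~\ref{th3}), and then invokes the \emph{classical} infinite-dimensional Yamada--Watanabe theorem \cite{RSZ} rather than the modified Lemma~\ref{lem112}. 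Your route can be repaired by noting that, since the product coefficients are measure-independent, $(\mathbf{A''_5})$ on $\widehat{\mathbb V}$ holds trivially (take $\widehat\rho,\widehat\eta$ independent of the law and drop the non-negative $\mathcal W_{2,R}$ term), after which Theorem~\ref{th8} does apply; but the paper's route through classical Yamada--Watanabe is cleaner and is the intended argument.
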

\begin{proof}
Similar to the finite-dimensional case (cf.~\cite{Lacker}), for the existence of martingale solutions to the coupled system (\ref{eqi}) it suffices to check that the coefficients of  (\ref{eqi}) satisfy  $\mathbf{(A_0)}$-$\mathbf{(A_4)}$ on $N$-product spaces. Moreover, the assumption $\mathbf{(A_5^*)}$ is sufficient to guarantee the pathwise uniqueness of solutions to  (\ref{eqi}), we omit the detailed proof since it is quite standard. Finally, by the classical Yamada-Watanabe theorem in infinite dimensions \cite{RSZ}, we deduce that the coupled system (\ref{eqi}) admits a unique strong solution $X^{N}$.
\end{proof}

For investigating the mean field limit for the weakly interacting system (\ref{eqi}), in this part, we fix an initial random vector $X_0^N:=(\xi^1,\ldots,\xi^N)$ which satisfies the following assumptions:

\begin{enumerate}

\item [$(\mathbf{A_6})$]\label{H6}
 $($Initial values$)$ For any $N\in\mathbb{N}$,
 $$\text{the law of}~X_0^N~\text{is symmetric on}~{\mathbb{V}}\times\cdots\times {\mathbb{V}} ,$$
 and
 $$\mathcal{S}^N_0:=\frac{1}{N}\sum_{i=1}^N\delta_{X_0^{N,i}}\to \mu_0~\text{in probability},$$
where  $\mu_0$ is the initial law of solutions to (\ref{eqSPDE}).

\vspace{2mm}
Moreover, for any $p\in[\vartheta,\infty)\cap (2,\infty)$,
there exists a constant $C_p>0$ such that
\begin{equation}\label{c3}
\sup_{N\in\mathbb{N}}\mathbb{E}\|X_0^{N,1}\|_{{\mathbb{H}}}^p\leq C_p.
\end{equation}
There exists a constant $C>0$ such that
\begin{equation}\label{c4}
\sup_{N\in\mathbb{N}}\mathbb{E}\|X_0^{N,1}\|_{{\mathbb{V}}}^2\leq C.
\end{equation}
\end{enumerate}

\begin{remark}\label{remark1}
Let us mention that due to the symmetry in the law of the initial random vector $X_0^N$ (for any fixed $N\geq 2$), if   solutions of the interacting system (\ref{eqi}) are unique in the sense of  laws, then it follows that the law of $X^N:=(X^{N,1},\ldots,X^{N,N})$ is also symmetric.
\end{remark}

Now, we present the main result of this subsection, which establishes the mean field limit for the weakly interacting SPDE system (\ref{eqi}).
\begin{theorem}\label{th4}
Suppose that $\mathbf{(A_1)}$-$\mathbf{(A_4)}$, $\mathbf{(A_5^*)}$ and $\mathbf{(A_6)}$  hold.
The empirical law  $\{\mathcal{S}^{N},N\in\mathbb{N}\}$ is  tight in $\mathscr{P}_{2}(\mathbb{C}_T)\cap \mathscr{P}_{\alpha}(L^{\alpha}([0,T];\mathbb{V}))$, and  any accumulation point $\mathcal{S}$ is a solution  of martingale problem associated with mean field SPDE (\ref{eqSPDE}) with initial law $\mu_0\in\mathscr{P}_p(\mathbb{H})\cap \mathscr{P}_2(\mathbb{V})$.

\vspace{1mm}
Furthermore, if one of the conditions $(\mathbf{A'_5})$-$(\mathbf{A''_5})$ hold, then  there exists a unique martingale solution $\Gamma$ to the mean field SPDE (\ref{eqSPDE}) such that
\begin{equation}\label{es111}
\lim_{N\to\infty}\mathbb{E}\Big\{\mathcal{W}_{2,\mathbb{C}_T}(\mathcal{S}^{N},\Gamma)^2+\mathcal{W}_{\alpha,L^{\alpha}([0,T];\mathbb{V})}(\mathcal{S}^{N},\Gamma)^{\alpha}\Big\}=0.
\end{equation}
\end{theorem}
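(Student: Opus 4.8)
The plan is to prove Theorem~\ref{th4} in two stages: first the tightness and the identification of accumulation points as martingale solutions, and then the promotion of weak convergence to convergence in Wasserstein distance under the additional uniqueness hypothesis. I would begin with the tightness of $\{\mathcal{S}^N\}_{N\in\mathbb{N}}$ in $\mathscr{P}_2(\mathbb{C}_T)\cap\mathscr{P}_\alpha(L^\alpha([0,T];\mathbb{V}))$. The strategy here is the Galerkin--joint-tightness argument outlined in the introduction: introduce the Galerkin system $X^{(n),N,i}$, use the coercivity $(\mathbf{A_2})$ and the growth bounds $(\mathbf{A_3})$--$(\mathbf{A_4})$ together with $(\mathbf{A_6})$ (in particular \eqref{c3}--\eqref{c4}) to obtain uniform $L^p$ moment estimates $\sup_{n,N}\mathbb{E}[\sup_{t}\|X^{(n),N,1}_t\|_{\mathbb{H}}^p]+\sup_{n,N}\mathbb{E}\int_0^T\|X^{(n),N,1}_t\|_{\mathbb{V}}^\alpha\,dt<\infty$, and then deduce joint tightness of $\{X^{(n),N,1}\}_{n,N}$ in $\mathbb{C}_T\cap L^\alpha([0,T];\mathbb{V})$ using the compact embedding $\mathbb{X}\subset\mathbb{V}\subset\mathbb{H}$ and an Aldous-type criterion (time regularity of the drift part from $(\mathbf{A_3})$ and of the stochastic part via Burkholder--Davis--Gundy and $(\mathbf{A_3})$). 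Taking $n\to\infty$ first identifies the limit with the original $X^{N,i}$ and shows $\{X^{N,1}\}_N$ is tight; since the law of $X^N$ is exchangeable (Remark~\ref{remark1}), tightness of the one-particle marginals upgrades to tightness of $\{\mathcal{S}^N\}_N$ on the path space, and the uniform moment bound gives tightness in the Wasserstein topologies $\mathscr{P}_2(\mathbb{C}_T)\cap\mathscr{P}_\alpha(L^\alpha([0,T];\mathbb{V}))$ (uniform integrability of $\|\cdot\|_{\mathbb{C}_T}^2$ and $\|\cdot\|_{L^\alpha([0,T];\mathbb{V})}^\alpha$ along the sequence).

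The second step is to show that any accumulation point $\mathcal{S}$ of $\{\mathcal{S}^N\}_N$ solves the martingale problem of Definition~\ref{de3} with initial law $\mu_0$. I would pass to a subsequence along which $\mathcal{S}^N\Rightarrow\mathcal{S}$, and use Skorokhod representation to realize the convergence almost surely on a common probability space. The initial condition $(M1)$ for the law follows from $(\mathbf{A_6})$. For $(M2)$, fix $l\in\mathbb{V}$ and consider the functional $\mathcal{M}_l(t,w,\mu)$; one writes the analogous martingale for each particle $X^{N,i}$ (which is a genuine martingale because $X^N$ solves \eqref{eqi}), expresses it via the empirical measure, and passes to the limit. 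The key analytic inputs are the demicontinuity $(\mathbf{A_1})$ of $\mathcal{A},\mathcal{B}$ in $(u,\mu)\in\mathbb{V}\times\mathbb{M}_1$ and the uniform integrability from the $L^p$ bounds, which let me pass the limit through the time integrals $\int_0^t {}_{\mathbb{V}^*}\langle\mathcal{A}(s,w_s,\mu_s),l\rangle_{\mathbb{V}}\,ds$ and through the quadratic variation $\int_0^t\|\mathcal{B}(s,w_s,\mu_s)^*l\|_U^2\,ds$; here I would need to be careful that the measure argument $\mu_s=\mathcal{S}_s$ (the time marginal of the limit) is itself the limit of $\mathcal{S}^N_s$, so the ``self-consistency'' of the nonlinear kernel is respected. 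Standard martingale-problem closedness arguments (testing the martingale property against bounded continuous functionals of the path up to time $s$) then yield that $\mathcal{M}_l$ is a continuous square-integrable $(\mathscr{F}_t)$-martingale under $\mathcal{S}$ with the stated bracket, i.e. $\mathcal{S}\in\mathscr{M}_{\mu_0}^{\mathcal{A},\mathcal{B}}$.

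For the final assertion, assume one of $(\mathbf{A'_5})$--$(\mathbf{A''_5})$. By Theorem~\ref{th8} (equivalently Theorems~\ref{th3}--\ref{th2} and the Yamada--Watanabe lemma), the mean field SPDE \eqref{eqSPDE} has a unique strong solution, hence a unique martingale solution $\Gamma$ with the moment bound \eqref{es45}/\eqref{esq370}. Combined with Step~2, every accumulation point of $\{\mathcal{S}^N\}_N$ equals $\Gamma$, so the whole sequence converges in law to (the deterministic point mass at) $\Gamma$ in $\mathscr{P}_2(\mathbb{C}_T)\cap\mathscr{P}_\alpha(L^\alpha([0,T];\mathbb{V}))$. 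Weak convergence in these Wasserstein spaces together with convergence of the $p$-th moments (which follows from the uniform-in-$N$ bound \eqref{esq370} plus Fatou giving the matching bound for $\Gamma$, and uniform integrability giving convergence of moments of order $2$ and $\alpha$) upgrades to convergence in the Wasserstein metrics, i.e. $\mathbb{E}[\mathcal{W}_{2,\mathbb{C}_T}(\mathcal{S}^N,\Gamma)^2]\to0$ and $\mathbb{E}[\mathcal{W}_{\alpha,L^\alpha([0,T];\mathbb{V})}(\mathcal{S}^N,\Gamma)^\alpha]\to0$, which is \eqref{es111}; the outer expectation disappears because the limit $\Gamma$ is deterministic, so $\mathcal{W}_{2,\mathbb{C}_T}(\mathcal{S}^N,\Gamma)\to0$ in probability and is uniformly integrable in the relevant power.

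The main obstacle I expect is Step~2, the limit passage in the martingale problem in the presence of the \emph{nonlinear} measure dependence: unlike the Lipschitz case, one cannot use a contraction or a quantitative stability estimate, so the identification relies delicately on demicontinuity $(\mathbf{A_1})$ together with just enough uniform integrability (from the $L^p$ bounds with $p>\vartheta=\max\{\beta+2,\lambda\}$) to justify $\mathbb{E}^{\mathcal{S}^N}[\cdots]\to\mathbb{E}^{\mathcal{S}}[\cdots]$ for the non-coercive, polynomially growing terms $\mathcal{A}(s,w_s,\mu_s)$. A secondary technical point is the two-parameter limit in $n$ and $N$: one must make the Galerkin estimates uniform in both parameters (this is exactly what $(\mathbf{A_0})$ and $(\mathbf{A_4})$ are designed to provide) and then interchange the limits correctly, first $n\to\infty$ to recover \eqref{eqi}, then $N\to\infty$.
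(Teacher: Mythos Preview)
Your proposal is correct and follows essentially the same route as the paper: Galerkin approximation with uniform-in-$(n,N)$ estimates (from $(\mathbf{A_2})$--$(\mathbf{A_4})$, $(\mathbf{A_6})$), joint tightness in $\mathbb{S}$ via Aldous plus the compact embeddings, $n\to\infty$ first to recover the true system, exchangeability to pass from one-particle tightness to tightness of $\mathcal{S}^N$ in $\mathbb{K}$, identification of accumulation points as martingale solutions via a truncation $\varpi_R\to\varpi$ and demicontinuity $(\mathbf{A_1})$, and finally uniqueness from Theorem~\ref{th8} plus uniform integrability to get \eqref{es111}. One point worth making explicit in your Step~2 (the paper's Lemma~\ref{lem4}) is the mechanism that forces $\Psi(\mathcal{S})=0$ a.s.: writing $\Psi(\mathcal{S}^N)=\tfrac{1}{N}\sum_i(\mathcal{M}^{N,i}_t-\mathcal{M}^{N,i}_s)\Phi_1\cdots\Phi_m$, the independence of the driving noises $W^i$ makes the cross terms $\mathbb{E}[(\mathcal{M}^{N,i}_t-\mathcal{M}^{N,i}_s)(\mathcal{M}^{N,k}_t-\mathcal{M}^{N,k}_s)]$ vanish for $i\ne k$, giving $\mathbb{E}|\Psi(\mathcal{S}^N)|^2=O(1/N)$, which combined with your continuity step $\mathbb{E}|\Psi(\mathcal{S}^N)|\to\mathbb{E}|\Psi(\mathcal{S})|$ yields the conclusion.
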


\begin{remark}
In contrast to the related works \cite{BCC11} and \cite{E1} in the finite-dimensional case, where the  mean field limit for interacting particle systems with locally Lipschitz coefficients but with exponential moment controls and satisfying the linear growth condition is established. In our work,  we do not need to impose the exponential moment condition and allow the interacting kernels to be of polynomial growth. This broader setting enables to apply the results to a wide range of interesting models in fields of the data science and the machine learning, which has its own interest.

\end{remark}

\begin{remark} \label{rem2.6}
The existing works \cite{BKK,CKS,C1,ES1} for the infinite-dimensional interacting systems  are needed to satisfy the linear growth and Lipschitz type conditions. Different from them, we allow for drift coefficients satisfying more general local monotonicity conditions within the generalized variational framework by leveraging the Galerkin method and the joint tightness argument, which extend the applicability to the highly nonlinear interacting systems, such as the stochastic climate models, the interacting Allen-Cahn equation, and the interacting Burgers type equations.

To  our knowledge, this is the first time to employ
the Galerkin method and utilize the joint tightness argument to prove the mean field limit of $N$-stochastic PDEs, which has its own interest.
\end{remark}
\section{Applications in finite dimensions}\label{sec5.1}
In this section, we apply our abstract setting   to the particle systems derived in the fields of data science and machine learning. These results seem also new compared to the existing works (cf.~\cite{CD22a,DNS23,E1,GHM,LW16}  and the references within). Here, we denote by $|\cdot|$, $\langle\cdot,\cdot\rangle$ the Euclidean norm and inner product, respectively.

\subsection{Particle system in  data sciences}
For $\mu\in\mathscr{P}_2(\mathbb{R})$, we first denote the variance and  mean functions as follows
$$\text{Var}[\mu]:=\int_{\mathbb{R}}(x-m[\mu])^2\mu(dx),~~m[\mu]:=\int_{\mathbb{R}}x\mu(dx).$$

The following interacting particle system (called the ensemble in this context) is derived by  the {\it{ Metropolis
 Adjusted Langevin Algorithm}} (cf.~Section 5.3.1 in \cite{CD22a}):
\begin{equation}\label{var1}
dX_t^{N,i}=-\nabla\Phi(X_t^{N,i})\text{Var}[\mathcal{S}^N_t]dt
+\sigma(X_t^{N,i},\mathcal{S}^N_t)dW_t^i,~~X^{N,i}_0=\xi_i,
\end{equation}
where $\mathcal{S}^N:=\frac{1}{N}\sum_{j=1}^N\delta_{X^{N,j}}$, the function $\Phi$ satisfies certain assumptions that are given later, $ \text{Var}[\mathcal{S}^N_t]$ depends non-linearly on all ensemble members, which is
 the empirical variance between samplers, i.e.,
 $$\text{Var}[\mathcal{S}^N_t]=\frac{1}{N}\sum_{j=1}^{N}\big(X_t^{N,j}-\bar{X}_t^N\big)^{2}.                 $$
Here, $\bar{X}^N$ denotes the sample mean, i.e.,
$\bar{X}^N:=\frac{1}{N}\sum_{i=1}^{N} X_t^{N,i}$, $\sigma:\mathbb{R}\times\mathscr{P}_2(\RR)\to \mathbb{R}$ satisfies the Lipschitz continuity condition.

\begin{remark}
For simplicity,  we only consider the case of dimension $d=1$ here. Nevertheless, we  remark that the main results (cf.~Theorem \ref{th9} below) are also applicable to the general dimension $d$, and, in this case, the empirical variance is replaced by the  empirical covariance.
\end{remark}
\begin{remark}The system \eref{var1} is related to the Ensemble Kalman Sampler, which is an algorithm introduced by Garbuno-Inigo et al.~in \cite{GHLS20} to find approximately independent and identically distributed samples from a target distribution.  The continuous version of the algorithm is a set of coupled SDEs, which form the interacting particle system \eref{var1}. The particle $X^{N,i}_t$ in \eref{var1} approximates the target distribution in the sense that the scaling limit of \eref{var1}, as $N\rightarrow\infty$, is given by a nonlinear Fokker-Planck equation.
  \end{remark}
In the following, we present the conditions on $\sigma$ and  $\Phi$.
\begin{enumerate}

\item [$({\mathbf{A_\sigma}})$]\label{sig}
$\sigma:\mathbb{R}\times\mathscr{P}_2(\RR)\to \mathbb{R}$ satisfies the Lipschitz continuity condition, i.e., there exists a constant $C>0$ such that
$$|\sigma(u,\mu)-\sigma(v,\nu)|\leq C\big(|u-v|+\mathcal{W}_{2,\mathbb{R}}(\mu,\nu)\big).$$
\end{enumerate}
\begin{enumerate}
\item [$(\mathbf{A}^1_{\Phi})$]  The mapping $u\mapsto\nabla\Phi(u)$ is continuous.

\vspace{1mm}
\item [$(\mathbf{A}^2_{\Phi})$]  For any $u\in\mathbb{R}$
$$u\nabla\Phi(u)\geq 0.$$

\vspace{1mm}
\item [$(\mathbf{A}^3_{\Phi})$]  There exist constants $C>0$ and $n\geq 1$ such that for any $u\in\mathbb{R}$
$$|\nabla\Phi(u)|\leq C(1+|u|^n).$$

\vspace{1mm}
\item [$(\mathbf{A}^4_{\Phi})$]  There exists constant $C>0$ such that for any $u,v\in\mathbb{R}$
$$(\nabla\Phi(u)-\nabla\Phi(v))(u-v)\geq -C|u-v|^2.$$

\end{enumerate}

\begin{remark}
 A typical example of $\Phi$ is
$$\Phi(u):=\frac{1}{2n}u^{2n},~n\geq 1.$$
In the existing works (cf.~e.g.~\cite{CD22a,DL21,GHLS20}), the function $\Phi$ is generally chosen as $\Phi(u):=\frac{1}{2}u^{2}$ (i.e.~$\nabla\Phi(u)$ is linear), then the particle system (\ref{var1}) is called the {\it{linear forward model}}.  As mentioned in Page 143 in \cite{CD22a}, since the variance function is a quadratic quantity in (\ref{var1}), the Lipschitz assumptions of McKean's theorem do not hold. Moreover, it is a challenging problem to deal with the case that $\nabla\Phi$ is a polynomial  function.

However, basing on our general result, we can obtain  the  mean field limit of the particle system (\ref{var1}) and the well-posedness of the associated mean field SDE (see \eref{eqas} blow) with polynomial  potential $\nabla\Phi(u)$ directly, which should be of independent interest.
\end{remark}

Let $N\to \infty$, the formal mean field limit of particle system (\ref{var1}) will be given by the following mean field SDE
\begin{equation}\label{eqas}
dX_t=-\nabla\Phi(X_t)\text{Var}[\mathscr{L}_{X_t}]dt+\sigma(X_t,\mathscr{L}_{X_t})dW_t,~~X_0\sim\mu_0.
\end{equation}

\vspace{2mm}
In this section, we suppose the following initial values conditions.
\begin{enumerate}

\item [$({\mathbf{C}})$]\label{C5}
 For any $N\in\mathbb{N}$, the law of $X_0^N$ is symmetric on $\RR\times\cdots\times {\RR}$,
 and
 $$\mathcal{S}^N_0:=\frac{1}{N}\sum_{i=1}^N\delta_{X_0^{N,i}}\to \mu_0~~\text{in}~~\mathscr{P}_2(\RR)~\text{in probability},$$
where $\mu_0$ is the initial law of solution  to (\ref{eqas}).

Moreover, for any $p\in[\vartheta,\infty)\cap (2,\infty)$,
there exists a constant $C_p>0$ such that
\begin{equation}\label{c33}
\sup_{N\in\mathbb{N}}\mathbb{E}|X_0^{N,1}|^p\leq C_p.
\end{equation}

\end{enumerate}

\begin{theorem}\label{th9}
Suppose that $(\mathbf{C})$ holds with $\vartheta:=\max\{4n,8\}$  and the assumptions $({\mathbf{A_\sigma}})$, $(\mathbf{A}^1_{\Phi})$-$(\mathbf{A}^4_{\Phi})$ hold.  Let the initial law $\mu_0\in\mathscr{P}_p(\mathbb{R})$, where $p\in(2,\infty)$. Then (\ref{eqas}) has a unique strong and weak (martingale) solution.

Furthermore, let $\Gamma$ be the unique  martingale solution to (\ref{eqas}) with initial law $\mu_0$, then
\begin{equation*}
\lim_{N\to\infty}\mathbb{E}\Big[\mathcal{W}_{2,\mathbb{C}_T}(\mathcal{S}^N,\Gamma)^2\Big]=0.
\end{equation*}

\end{theorem}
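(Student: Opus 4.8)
The plan is to recognize \eref{eqas} and the interacting system \eref{var1} as instances of the abstract mean field SPDE \eref{eqSPDE} and its $N$-particle counterpart, and then to invoke Theorem~\ref{th8} for well-posedness and Theorem~\ref{th4} for the mean field limit. Concretely, I would work with the degenerate Gelfand triple $\mathbb{V}=\mathbb{H}=\mathbb{X}=\mathbb{R}$ (all embeddings being the identity, hence trivially compact), $U=\mathbb{R}$, $\alpha=2$, orthonormal basis $\{e_1\}=\{1\}$ (so $(\mathbf{A_0})$ is immediate), and set
\[
\mathcal{A}(t,u,\mu):=-\nabla\Phi(u)\,\text{Var}[\mu],\qquad \mathcal{B}(t,u,\mu):=\sigma(u,\mu).
\]
The proof then reduces to verifying the structural hypotheses $(\mathbf{A_1})$--$(\mathbf{A_4})$, $(\mathbf{A_5'})$ (which, being stronger than $(\mathbf{A_5^*})$, also covers what Theorem~\ref{th4} requires), and $(\mathbf{A_6})$, for the above $\mathcal{A},\mathcal{B}$; this is where all the assumptions on $\Phi$ and $\sigma$ enter. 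A convenient choice of exponents is $\alpha=2$, $\lambda=1$, $\gamma$ arbitrarily small, and $\beta=\max\{4n-2,6\}$, so that $\vartheta=\max\{\beta+2,\lambda\}=\max\{4n,8\}$, matching the moment requirement in $(\mathbf{C})$.

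For $(\mathbf{A_1})$--$(\mathbf{A_4})$: demicontinuity $(\mathbf{A_1})$ follows from the continuity of $u\mapsto\nabla\Phi(u)$ assumed in $(\mathbf{A}^1_{\Phi})$, from the continuity of $\mu\mapsto\text{Var}[\mu]$ on $\mathscr{P}_2(\mathbb{R})$ (a $\mathcal{W}_{2,\mathbb{R}}$-convergent sequence has convergent first and second moments), and from the Lipschitz continuity of $\sigma$ in $(\mathbf{A_\sigma})$. For coercivity $(\mathbf{A_2})$ the crucial observation is $2\langle\mathcal{A}(t,u,\mu),u\rangle=-2\,u\nabla\Phi(u)\,\text{Var}[\mu]\le0$ by $(\mathbf{A}^2_{\Phi})$, so $(\mathbf{A_2})$ reduces to the linear growth of $\sigma$; the same sign makes $(\mathbf{A_4})$ trivial in finite dimensions, where $\|\cdot\|_\mathbb{X}=\|\cdot\|_\mathbb{V}=\|\cdot\|_\mathbb{H}$, upon taking $\lambda=1$ and $\gamma$ small. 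For the growth bound $(\mathbf{A_3})$ I would combine $|\nabla\Phi(u)|\le C(1+|u|^n)$ from $(\mathbf{A}^3_{\Phi})$ with $\text{Var}[\mu]\le\mu(|\cdot|^2)$ and the elementary interpolation $\mu(|\cdot|^2)\le1+\mu(|\cdot|^\beta)$ (valid for $\beta\ge2$), so that $\|\mathcal{A}(t,u,\mu)\|^2\le C(1+|u|^{2n})\,\text{Var}[\mu]^2$ is absorbed into the right-hand side of \eref{conA3} once $\beta$ is large enough; the bound for $\mathcal{B}$ in $(\mathbf{A_3})$ is again immediate from the growth of $\sigma$.

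The main work, and the main obstacle, is the monotonicity hypothesis $(\mathbf{A_5'})$, which demands a measure-only prefactor in front of $\|u-v\|_\mathbb{H}^2$. Decomposing
\[
\langle\mathcal{A}(t,u,\mu)-\mathcal{A}(t,v,\nu),u-v\rangle
=-\text{Var}[\mu]\,\langle\nabla\Phi(u)-\nabla\Phi(v),u-v\rangle-(\text{Var}[\mu]-\text{Var}[\nu])\,\langle\nabla\Phi(v),u-v\rangle,
\]
the first term is $\le C\,\text{Var}[\mu]\,|u-v|^2\le C\mu(|\cdot|^2)|u-v|^2$ thanks to the one-sided Lipschitz condition $(\mathbf{A}^4_{\Phi})$ --- a coefficient of exactly the type that $(\mathbf{A_5'})$ allows in front of $|u-v|^2$ (this is the ``$\mathcal{A}$ monotone in $u$'' requirement). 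For the non-local term I would first prove, by a coupling computation, that
\[
|\text{Var}[\mu]-\text{Var}[\nu]|\le C\,\mathcal{W}_{2,\mathbb{R}}(\mu,\nu)\bigl(1+\mu(|\cdot|^2)^{1/2}+\nu(|\cdot|^2)^{1/2}\bigr),
\]
and then apply Young's inequality in the form $|\nabla\Phi(v)|\,|\text{Var}[\mu]-\text{Var}[\nu]|\,|u-v|\le\varepsilon^{-1}|u-v|^2+\varepsilon\,|\nabla\Phi(v)|^2|\text{Var}[\mu]-\text{Var}[\nu]|^2$: the delicate point is to throw the polynomial factor $|\nabla\Phi(v)|^2\le C(1+|v|^{2n})$ entirely onto the $\mathcal{W}_{2,\mathbb{R}}(\mu,\nu)^2$-term while keeping the coefficient of $|u-v|^2$ constant. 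The resulting $\mathcal{W}_{2,\mathbb{R}}^2$-coefficient $\lesssim(1+|v|^{2n})(1+\mu(|\cdot|^2)+\nu(|\cdot|^2))\lesssim 1+|v|^{4n}+\mu(|\cdot|^2)^2+\nu(|\cdot|^2)^2$ is absorbed into $\rho(u,\mu)+\eta(v,\nu)$ with, e.g., $\rho(u,\mu):=C(\mu(|\cdot|^2)+\mu(|\cdot|^2)^2)$ and $\eta(v,\nu):=C(\nu(|\cdot|^2)+\nu(|\cdot|^2)^2+|v|^{4n})$, for which the admissibility condition \eref{esq22} holds precisely because $\beta\ge4n-2$ --- this is what pins down $\vartheta\ge4n$. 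The diffusion part of $(\mathbf{A_5'})$ follows at once from $(\mathbf{A_\sigma})$, and since $(\mathbf{A_5'})\Rightarrow(\mathbf{A_5^*})$, all monotonicity hypotheses needed below are in place.

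Finally, $(\mathbf{A_6})$ follows from $(\mathbf{C})$: the symmetry of the law of $X_0^N$ and the convergence $\mathcal{S}^N_0\to\mu_0$ in probability are part of $(\mathbf{C})$ (convergence in $\mathscr{P}_2(\mathbb{R})$ entails convergence in the weak topology), \eref{c3} is \eref{c33}, and \eref{c4}, i.e.\ $\sup_N\mathbb{E}|X_0^{N,1}|^2<\infty$, follows from \eref{c33} with any $p>2$; moreover, the lower semicontinuity of $\mu\mapsto\int|x|^q\mu(dx)$ together with \eref{c33} gives $\mu_0\in\mathscr{P}_q(\mathbb{R})$ for every $q\ge\vartheta=\max\{4n,8\}$. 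Consequently, Theorem~\ref{th8} --- applied with $p=\max\{4n,8\}$ and any $\xi\sim\mu_0$ in $L^p(\Omega;\mathbb{R})$ --- yields the unique strong and weak (martingale) solution of \eref{eqas}, and Theorem~\ref{th4}, whose hypotheses $(\mathbf{A_1})$--$(\mathbf{A_4})$, $(\mathbf{A_5^*})$, $(\mathbf{A_6})$ and $(\mathbf{A_5'})$ have been verified, gives the tightness of $\{\mathcal{S}^N\}$ and the quantitative limit \eref{es111}. Since $\mathbb{C}_T=C([0,T];\mathbb{R})\hookrightarrow L^2([0,T];\mathbb{R})$ continuously, the $L^\alpha([0,T];\mathbb{V})$-term in \eref{es111} is dominated by the $\mathbb{C}_T$-term, so the conclusion reads $\lim_{N\to\infty}\mathbb{E}\bigl[\mathcal{W}_{2,\mathbb{C}_T}(\mathcal{S}^N,\Gamma)^2\bigr]=0$.
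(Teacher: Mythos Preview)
Your proposal is correct and follows essentially the same route as the paper: verify $(\mathbf{A_1})$--$(\mathbf{A_4})$ and $(\mathbf{A_5'})$ for $\mathcal{A}(u,\mu)=-\nabla\Phi(u)\,\text{Var}[\mu]$ in the degenerate triple $\mathbb{V}=\mathbb{H}=\mathbb{X}=\mathbb{R}$, then appeal to Theorems~\ref{th8} and~\ref{th4}. One small slip: the framework requires $\gamma\ge\alpha$, so you cannot take $\gamma$ ``arbitrarily small''; take $\gamma=2$ (as the paper does, with $\alpha=\lambda=\gamma=2$), which is harmless since $\langle\mathcal{A}(u,\mu),u\rangle\le0$ already absorbs $\delta_2|u|^2$ into the right-hand side of $(\mathbf{A_4})$.
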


\begin{proof}
We only need to show
$$\mathcal{A}(u,\mu):=-\nabla\Phi(u)\text{Var}[\mu]$$
satisfies the conditions $\mathbf{(A_1)}$-$\mathbf{(A_4)}$ and $\mathbf{(A_5')}$.
First, it is clear that the continuity condition $\mathbf{(A_1)}$ holds. Moreover, by $(\mathbf{A}^2_{\Phi})$, $(\mathbf{A}^3_{\Phi})$ and the definition of $ \text{Var}[\cdot]$  we have
\begin{eqnarray*}
\langle \mathcal{A}(u,\mu),u\rangle
=-u\nabla\Phi(u)\text{Var}[\mu]
\leq0,
\end{eqnarray*}
and
$$|\mathcal{A}(u,\mu)|\leq C(1+|u|^{n})\mu(|\cdot|^{2})\leq C(1+|u|^{2n}+(\mu(|\cdot|^{2}))^2),$$
thus,  $\mathbf{(A_2)}$, $\mathbf{(A_4)}$ and $\mathbf{(A_3)}$ hold with $\alpha=\lambda=\gamma=2$ and $\beta=\max\{4n-2,6\}$.

Next, we show that the condition $\mathbf{(A_5')}$ is satisfied. For any coupling $\pi\in\mathscr{C}(\mu,\nu)$,
\begin{eqnarray*}
&&\langle \mathcal{A}(u,\mu)-\mathcal{A}(v,\nu),u-v\rangle
\nonumber \\
=&&\!\!\!\!\!\!\!\!-\langle(\nabla\Phi(u)-\nabla\Phi(v))\text{Var}[\mu],u-v\rangle-
\langle\nabla\Phi(u)(\text{Var}[\mu]-\text{Var}[\nu]),u-v\rangle
\nonumber \\
\leq
&&\!\!\!\!\!\!\!\!C|\nabla\Phi(u)|\cdot|u-v|\Bigg(\int_{\mathbb{R}\times \mathbb{R}}(x-m[\mu])^2-(y-m[\mu])^2\pi(dx,dy)\Bigg)
\nonumber \\
\leq
&&\!\!\!\!\!\!\!\!C(1+|u|^{n})|u-v|
\Bigg(\int_{\mathbb{R}\times \mathbb{R}}(|x|+|y|+|m[\mu]|)|x-y|\pi(dx,dy)\Bigg)
\nonumber \\
\leq&&\!\!\!\!\!\!\!\!C(1+|u|^{n})((\mu(|\cdot|^2))^{\frac{1}{2}}+(\nu(|\cdot|^2))^{\frac{1}{2}})|u-v| \Bigg(\int_{\mathbb{R}\times \mathbb{R}}|x-y|^2\pi(dx,dy)\Bigg)^{\frac{1}{2}}
\nonumber \\
\leq&&\!\!\!\!\!\!\!\!C(1+|u|^{2n}) \Bigg(\int_{\mathbb{R}\times \mathbb{R}}|x-y|^2\pi(dx,dy)\Bigg)+C(\mu(|\cdot|^2)+\nu(|\cdot|^2))|u-v|^2.\label{ese10}
\end{eqnarray*}
 Taking infimum for couplings $\pi\in\mathscr{C}(\mu,\nu)$ on both sides of above inequality, we obtain
\begin{eqnarray*}
&&\langle \mathcal{A}(u,\mu)-\mathcal{A}(v,\nu),u-v\rangle
\nonumber \\
\leq&&\!\!\!\!\!\!\!\!C(\mu(|\cdot|^{2})+\nu(|\cdot|^{2}))|u-v|^2
+ C(1+|u|^{2n})
\mathcal{W}_{2,\mathbb{R}}(\mu,\nu)^2.
\end{eqnarray*}
Thus, $\mathbf{(A_5')}$ holds. We complete the proof.
\end{proof}

\subsection{Particle system in machine learning}
Recently, there has been interest in  {\it{particle optimisation technique}} combined with {\it{(stochastic) Stein variational gradient
 descent}} methodology (cf.~e.g.~\cite{LW16,DNS23}), which relies on iterated steepest descent steps.
This construction leads to the following interacting particle system
\begin{equation}\label{mean3}
dX^{N,i}_t=-\frac{1}{N}\sum_{j=1}^N\nabla_y\kappa(X^{N,i}_t,X^{N,j}_t)dt-\frac{1}{N}\sum_{j=1}^N\kappa(X^{N,i}_t,X^{N,j}_t)\nabla \Phi(X^{N,j}_t)dt
+\sigma(X_t^{N,i},\mathcal{S}^N_t)dW_t^i,
\end{equation}
with $X^{N,i}_0=\xi_i$, $W_t^1,\ldots,W_t^N$ are independent $\RR$-valued standard Wiener processes defined on a complete filtered probability space $\left(\Omega,\mathscr{F},\{\mathscr{F}_t\}_{t\in[0,T]},\mathbb{P}\right)$.

One interest of considering the particle system \eref{mean3} is mainly motivated
by the recent works in machine learning (cf.~\cite{DNS23,LW16}), where a time-discretized form of \eref{mean3}
was introduced as an algorithm called the (stochastic) SVGD. The
idea of the algorithm is to transport a set of $N$ particles so that their empirical measure $\mathcal{S}^N$ approximates the target probability measure.

In \cite[Lemma 45]{DNS23}, the authors mentioned the following polynomial kernels
\begin{equation}\label{kers1}
\kappa(x,y):=xy,~\Phi(y):=\frac{1}{2}y^2.
\end{equation}
However, as stated in \cite[Remark 46]{DNS23}, the polynomial kernels \eref{kers1} do not satisfy the integrally strictly positive-definite assumption proposed in \cite{DNS23}, thus their main results cannot be applied directly.

In the present work,  we consider the following more general form
\begin{equation}\label{ml1}
\kappa(x,y):=x^{2k-1}y^{2m-1},~\Phi(y):=\frac{1}{2n}y^{2n},
\end{equation}
where $k,m,n\geq 1$, and $\sigma:\mathbb{R}\times\mathscr{P}_2(\RR)\to \mathbb{R}$ satisfies the Lipschitz continuity condition. In particular, by taking $k=m=n=1$ and $\sigma=0$, we can cover the model discussed in the Section 7 of \cite{DNS23}.

By applying our main results, we will show that the mean field limit of (\ref{mean3}) is given by the following mean field equation
\begin{equation}\label{eqex1}
dX_t=-\int_{\mathbb{R}}\big(\nabla_y\kappa(X_t,y)+\kappa(X_t,y)\nabla\Phi(y)\big)\mu_t(dy)dt+\sigma(X_t,\mu_t)dW_t,~~X_0\sim\mu_0,
\end{equation}
where $\mu_t:=\mathscr{L}_{X_t}$, the potentials $\kappa$ and $\Phi$ are given by (\ref{ml1}).

\begin{theorem}\label{th7}
Suppose that the assumptions $({\mathbf{A_\sigma}})$ and $(\mathbf{C})$ hold with $\vartheta:=\max\{8k-4,8(m+n)-8\}$ and the initial law $\mu_0\in\mathscr{P}_p(\mathbb{R})$, where $p\in[\vartheta,\infty)\cap (2,\infty)$. Then (\ref{eqex1}) has a unique strong and weak (martingale) solution.

Furthermore, let $\Gamma$ be the unique  martingale solution to (\ref{eqex1}) with initial law $\mu_0$, then
\begin{equation*}
\lim_{N\to\infty}\mathbb{E}\Big[\mathcal{W}_{2,\mathbb{C}_T}(\mathcal{S}^N,\Gamma)^2\Big]=0.
\end{equation*}

\end{theorem}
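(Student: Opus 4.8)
The plan is to derive Theorem~\ref{th7} from the abstract machinery by specializing to the finite--dimensional Gelfand triple $\mathbb{X}=\mathbb{V}=\mathbb{H}=\mathbb{R}$, so that $\alpha=2$, $\mathbb{H}_n=\mathbb{R}$ for every $n$, and $\mathbf{(A_0)}$ is trivially satisfied with $e_1=1$. Writing $\mathcal{B}(u,\mu):=\sigma(u,\mu)$ and
\[
\mathcal{A}(u,\mu):=-\int_{\mathbb{R}}\big(\nabla_y\kappa(u,y)+\kappa(u,y)\nabla\Phi(y)\big)\mu(dy),
\]
we would check that the pair $(\mathcal{A},\mathcal{B})$ satisfies $\mathbf{(A_1)}$--$\mathbf{(A_4)}$ together with $\mathbf{(A_5')}$ (which is stronger than $\mathbf{(A_5^*)}$), and that $(\mathbf{C})$ implies $\mathbf{(A_6)}$ with $\mathbb{V}=\mathbb{R}$ (the $L^2(\mathbb{V})$--bound on the initial data following from the $p$--th moment bound with $p>2$). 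Granting this, Theorem~\ref{th8} yields the existence and uniqueness of strong and weak solutions to \eqref{eqex1}, Proposition~\ref{th5} gives well-posedness of the $N$--particle system \eqref{mean3}, and Theorem~\ref{th4} yields $\mathbb{E}[\mathcal{W}_{2,\mathbb{C}_T}(\mathcal{S}^N,\Gamma)^2]\to 0$ (the $L^\alpha([0,T];\mathbb{V})$--component of the Wasserstein convergence in Theorem~\ref{th4} being automatic here since $\mathbb{V}=\mathbb{R}$ and $\alpha=2$).

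The structural identity driving the argument is that, since $\nabla_y\kappa(u,y)=(2m-1)u^{2k-1}y^{2m-2}$ and $\kappa(u,y)\nabla\Phi(y)=u^{2k-1}y^{2m+2n-2}$ involve only even powers of $y$,
\[
\mathcal{A}(u,\mu)=-u^{2k-1}\,g(\mu),\qquad g(\mu):=(2m-1)\,\mu(|\cdot|^{2m-2})+\mu(|\cdot|^{2m+2n-2})\ \ge\ 0 .
\]
Because $t\mapsto t^{2k-1}$ is nondecreasing on $\mathbb{R}$ and $g(\mu)\ge 0$, we get $\langle\mathcal{A}(u,\mu),u\rangle=-u^{2k}g(\mu)\le 0$; combined with the linear growth of $\sigma$ coming from $(\mathbf{A_\sigma})$, this gives the coercivity $\mathbf{(A_2)}$ and the bound $\mathbf{(A_4)}$ with $\alpha=\lambda=\gamma=2$ (the $\delta_1\|u\|_{\mathbb{V}}^{\alpha}$ and $\delta_2\|u\|_{\mathbb{X}}^{\gamma}$ terms are harmless in finite dimensions). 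For $\mathbf{(A_3)}$ one uses $|\mathcal{A}(u,\mu)|\le C|u|^{2k-1}(1+\mu(|\cdot|^{2m+2n-2}))$ together with Jensen's inequality $\mu(|\cdot|^q)^2\le\mu(|\cdot|^{2q})$ and Young's inequality; tracking the exponents through this estimate (recalling $\alpha/(\alpha-1)=2$) is what pins down $\beta$ and hence $\vartheta=\beta+2=\max\{8k-4,\,8(m+n)-8\}$. Demicontinuity $\mathbf{(A_1)}$ holds because $\mathcal{A}$ depends on $\mu$ only through the two moments $\mu(|\cdot|^{2m-2})$ and $\mu(|\cdot|^{2m+2n-2})$, which are continuous along $\mathcal{W}_2$--convergent sequences whose moments of some order $>2m+2n-2$ remain bounded, precisely the regime in which $\mathbf{(A_1)}$ is applied in Theorems~\ref{th1} and \ref{th4}.

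The heart of the matter is $\mathbf{(A_5')}$. We would split
\[
\big\langle\mathcal{A}(u,\mu)-\mathcal{A}(v,\nu),u-v\big\rangle=-g(\mu)\big(u^{2k-1}-v^{2k-1}\big)(u-v)-v^{2k-1}\big(g(\mu)-g(\nu)\big)(u-v).
\]
The first term is $\le 0$ by monotonicity of $t\mapsto t^{2k-1}$ and $g\ge 0$, which supplies exactly the ``monotone in $u$'' structure that $\mathbf{(A_5')}$ requires. For the second term, for any coupling $\pi\in\mathscr{C}(\mu,\nu)$ and any $q\ge 1$ one has $|\mu(|\cdot|^q)-\nu(|\cdot|^q)|\le q\int(|x|^{q-1}+|y|^{q-1})|x-y|\,\pi(dx,dy)$; Cauchy--Schwarz and then the infimum over $\pi$ give
\[
|g(\mu)-g(\nu)|\le C\big(1+\mu(|\cdot|^{4m+4n-6})+\nu(|\cdot|^{4m+4n-6})\big)^{1/2}\,\mathcal{W}_{2,\mathbb{R}}(\mu,\nu),
\]
and Young's inequality then bounds $|v|^{2k-1}|g(\mu)-g(\nu)|\,|u-v|$ by $(C+\rho(0,\mu)+\eta(0,\nu))\|u-v\|_{\mathbb{H}}^2+(C+\rho(u,\mu)+\eta(v,\nu))\mathcal{W}_{2,\mathbb{H}}(\mu,\nu)^2$ with, e.g., $\rho(u,\mu)=\eta(u,\mu):=C(1+|u|^{4k-2}+\mu(|\cdot|^{4m+4n-6}))$, which is compatible with \eqref{esq22} for the same $\beta=\vartheta-2$. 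The $\mathcal{B}$--part of $\mathbf{(A_5')}$ is immediate from the Lipschitz property of $\sigma$ in $(\mathbf{A_\sigma})$. The main (and essentially the only) technical point is the moment bookkeeping needed to realize $\mathbf{(A_3)}$, \eqref{esq22} and $\mathbf{(A_5')}$ simultaneously with one common exponent $\beta=\vartheta-2$; once the sign structure forced by the even powers of $y$ and the odd power $u^{2k-1}$ is exploited as above, the remaining verifications run in parallel with the proof of Theorem~\ref{th9}.
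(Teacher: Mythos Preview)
Your proposal is correct and follows essentially the same route as the paper: both verify $\mathbf{(A_1)}$--$\mathbf{(A_4)}$ and $\mathbf{(A_5')}$ for the drift by exploiting the factorization $\mathcal{A}(u,\mu)=-u^{2k-1}g(\mu)$ with $g(\mu)\ge 0$, use the monotonicity of $t\mapsto t^{2k-1}$ to kill the first term in the $\mathbf{(A_5')}$ splitting, and control the measure-difference term via a coupling plus Cauchy--Schwarz and Young's inequality to land on the exponents $|v|^{4k-2}$ and $\mu(|\cdot|^{4(m+n)-6})$. The only cosmetic difference is that the paper treats $\mathcal{A}_1$ and $\mathcal{A}_2$ separately while you bundle them into a single $g(\mu)$; your remark on the scope of $\mathbf{(A_1)}$ is in fact slightly more careful than the paper's ``obvious''.
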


\begin{proof}
Let
$$\mathcal{A}_1(u,\mu):=-\int_{\mathbb{R}}\nabla_y\kappa(u,y) \mu(dy)=-(2m-1)u^{2k-1}\int_{\mathbb{R}} y^{2m-2}\mu(dy)$$
and
$$\mathcal{A}_2(u,\mu):=-\int_{\mathbb{R}}\kappa(u,y)\nabla\Phi(y) \mu(dy)=-u^{2k-1}\int_{\mathbb{R}} y^{2(m+n)-2}\mu(dy).$$
We only need to show that $\mathcal{A}_2(u,\mu)$ satisfies the conditions $\mathbf{(A_1)}$-$\mathbf{(A_4)}$ and $\mathbf{(A_5')}$, since $\mathcal{A}_1(u,\mu)$ is exactly similar.
The continuity $\mathbf{(A_1)}$ is obvious. Moreover, we can see
\begin{eqnarray*}
\langle \mathcal{A}_2(u,\mu),u\rangle
=-u^{2k}\mu(|\cdot|^{2(m+n)-2})
\leq0,
\end{eqnarray*}
and
$$|\mathcal{A}_2(u,\mu)|\leq|u|^{2k-1}\mu(|\cdot|^{2(m+n)-2})\leq C(|u|^{4k-2}+(\mu(|\cdot|^{2(m+n)-2}))^2),$$
thus,  $\mathbf{(A_2)}$, $\mathbf{(A_4)}$ and $\mathbf{(A_3)}$ hold with $\alpha=\lambda=\gamma=2$ and $\beta=\max\{8k-6,8(m+n)-10\}$.  Next we show that the condition $\mathbf{(A_5')}$ is satisfied. Specifically, for any coupling $\pi\in\mathscr{C}(\mu,\nu)$,
\begin{eqnarray*}
&&\!\!\!\!\!\!\!\!\langle \mathcal{A}_2(u,\mu)-\mathcal{A}_2(v,\nu),u-v\rangle
\nonumber \\
\leq&&\!\!\!\!\!\!\!\!-\langle\int_{\mathbb{R}}
(u^{2k-1}-v^{2k-1})y_1^{2(m+n)-2}\mu(dy_1),u-v\rangle
\nonumber \\
&&\!\!\!\!\!\!\!\!-
\langle\int_{\mathbb{R}\times \mathbb{R}}v^{2k-1}(y_1^{2(m+n)-2}-y_2^{2(m+n)-2})\pi(dy_1,dy_2),u-v\rangle
\nonumber \\
\leq
&&\!\!\!\!\!\!\!\!C|v|^{2k-1}|u-v|\Bigg(\int_{\mathbb{R}\times \mathbb{R}}(|y_1|^{2(m+n)-3}+|y_2|^{2(m+n)-3})|y_1-y_2|\pi(dy_1,dy_2)\Bigg)
\nonumber \\
\leq
&&\!\!\!\!\!\!\!\!C|v|^{2k-1}|u-v| \Big((\mu(|\cdot|^{4(m+n)-6}))^{\frac{1}{2}}+(\nu(|\cdot|^{4(m+n)-6}))^{\frac{1}{2}}\Big)
\Bigg(\int_{\mathbb{R}\times \mathbb{R}}|y_1-y_2|^2\pi(dy_1,dy_2)\Bigg)^{\frac{1}{2}}
\nonumber \\
\leq&&\!\!\!\!\!\!\!\!C\Big(\mu(|\cdot|^{4(m+n)-6})+\nu(|\cdot|^{4(m+n)-6})\Big)|u-v|^2
\nonumber \\
&&\!\!\!\!\!\!\!\!+ C|v|^{4k-2}
\Bigg(\int_{\mathbb{R}\times \mathbb{R}}|y_1-y_2|^2\pi(dy_1,dy_2)\Bigg).\label{ese10}
\end{eqnarray*}
 Taking infimum for couplings $\pi\in\mathscr{C}(\mu,\nu)$ on both sides of above inequality, we obtain
\begin{eqnarray*}
&&\!\!\!\!\!\!\!\!\langle \mathcal{A}_2(u,\mu)-\mathcal{A}_2(v,\nu),u-v\rangle
\nonumber \\
\leq&&\!\!\!\!\!\!\!\!C\Big(\mu(|\cdot|^{4(m+n)-6})+\nu(|\cdot|^{4(m+n)-6})\Big)|u-v|^2
+ C|v|^{4k-2}
\mathcal{W}_{2,\mathbb{R}}(\mu,\nu)^2.
\end{eqnarray*}
Thus, $\mathbf{(A_5')}$ holds. We complete the proof.
\end{proof}

\section{Applications in infinite dimensions}\label{example}

The main results of the paper can be applied to deal with the well-posedness and the mean field limit for a large class of weakly interacting SPDEs and the associated mean field SPDEs. Here, we illustrate our main results by the stochastic climate models, the weakly interacting Allen-Cahn equations and the weakly interacting Burgers type equations.

Note that in the infinite dimensional settings, existing framework mainly considered the interacting SPDEs under the linear growth and Lipschitz type conditions, which cannot be applied to the general nonlinear interaction models  (see e.g. \cite{BKK,CKS,C1}). By developing the variational framework, we can deal with the mean field SPDEs and infinite dimensional interacting systems with coefficients of polynomial growth and with nonlinear kernels,  including but not limited to the models presented in this section, which are much more general than previous works.

As preparations, we first recall some definitions and notations which are commonly used in the analysis of infinite dimensions.

Let $\mathcal{O}\subset\mathbb{R}^d$, $d\geq 1$, be a bounded domain with smooth boundary.
Let $(L^p(\mathcal{O},\mathbb{R}^d),\|\cdot\|_{L^p})$ be the space of all $L^p$-integrable functions on $\mathcal{O}$. For any $m\geq0$, we denote by $(W^{m,p}_0(\mathcal{O},\mathbb{R}^d),\|\cdot\|_{m,p})$ the Sobolev space with Dirichlet boundary, whose differentials belong to $L^p(\mathcal{O},\mathbb{R}^d)$ up to the order $m$. We use the following norm in $W^{m,p}_0(\mathcal{O},\mathbb{R}^d)$, i.e.,
$$ \|f\|_{m,p} := \left( \sum_{0\le |\alpha|\le m} \int_{\mathcal{O}} |D^\alpha f|^pd x \right)^\frac{1}{p}.$$
For convenience,  we use $\|\cdot\|_{m}$ to denote the norm on $W^{m,2}_0(\mathcal{O},\mathbb{R}^d)$. As usual, we also use $\mathcal{C}_{c}^{\infty}(\mathcal{O}, \mathbb{R}^{d})$ to denote the space of all infinitely differentiable $d$-dimensional vector fields with compact support on domain $\mathcal{O}$. Let $\mathcal{C}_b^1(\mathbb{R}^d;\mathbb{R}^d)$ be the space of bounded functions on $\mathbb{R}^d$ with bounded derivatives of order one.

\vspace{3mm}
We recall the standard Sobolev inequality.
\begin{lemma}\label{lem77}
Let $q>1$, $p\in[q, \infty)$ and
$$\frac{1}{q}+\frac{m}{d}=\frac{1}{p}.$$
Suppose that $f\in W_0^{m,p}(\mathcal{O},\mathbb{R}^d)$, then $f\in L^q(\mathcal{O},\mathbb{R}^d)$ and there exists $C>0$ such that the following inequality holds
$$\|f\|_{L^q}\leq C\|f\|_{m,p}.$$
\end{lemma}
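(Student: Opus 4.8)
This is the classical Sobolev embedding theorem in the subcritical range, so the plan is simply to recall the standard textbook argument (cf.\ the monographs of Adams, Evans, or Brezis, or any standard reference on Sobolev spaces), specialised to the zero-boundary spaces on a bounded domain. First I would reduce to the whole space: by definition $f\in W^{m,p}_0(\mathcal{O},\mathbb{R}^d)$ is an $\|\cdot\|_{m,p}$-limit of functions in $C_c^\infty(\mathcal{O},\mathbb{R}^d)$, so its extension by zero, call it $\tilde f$, belongs to $W^{m,p}(\mathbb{R}^d,\mathbb{R}^d)$ with $\|\tilde f\|_{W^{m,p}(\mathbb{R}^d)}=\|f\|_{m,p}$, and it suffices to bound $\|\tilde f\|_{L^q(\mathbb{R}^d)}$. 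The scaling identity $\tfrac1q+\tfrac md=\tfrac1p$ pins $q$ down to the critical exponent $q=\tfrac{dp}{d-mp}$; since $q$ is finite we have $\tfrac1q=\tfrac1p-\tfrac md>0$, i.e.\ $mp<d$, so we remain strictly subcritical throughout, and on the bounded set $\mathcal{O}$ any smaller exponent is then covered for free by Hölder's inequality.

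The heart of the matter is the first-order case $m=1$. For $p=1$ and $u\in C_c^\infty(\mathbb{R}^d)$ one uses $|u(x)|\le\int_{\mathbb{R}}|\partial_i u|\,dt$ in each of the $d$ coordinate directions $i=1,\dots,d$, takes the product over $i$ of these estimates each raised to the power $\tfrac1{d-1}$, and integrates the variables $x_1,\dots,x_d$ successively, applying the generalized Hölder inequality at each stage; this yields the Gagliardo--Nirenberg--Sobolev inequality $\|u\|_{L^{d/(d-1)}(\mathbb{R}^d)}\le C\|\nabla u\|_{L^1(\mathbb{R}^d)}$. For $1\le p<d$ one applies the latter to $v=|u|^{\gamma}$ with $\gamma=\tfrac{p(d-1)}{d-p}>1$, estimates $|\nabla v|\le\gamma|u|^{\gamma-1}|\nabla u|$ and uses Hölder with exponents $\tfrac{p}{p-1}$ and $p$, then rearranges to get $\|u\|_{L^{p^\ast}(\mathbb{R}^d)}\le C(d,p)\|\nabla u\|_{L^p(\mathbb{R}^d)}$ with $p^\ast=\tfrac{dp}{d-p}$; density of $C_c^\infty$ extends this to all of $W^{1,p}(\mathbb{R}^d)$, hence to the $\tilde f$ arising from $W^{1,p}_0(\mathcal{O})$.

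Finally I would iterate. If $\tilde f\in W^{m,p}(\mathbb{R}^d)$ then every derivative $D^\alpha\tilde f$ with $|\alpha|\le m-1$ lies in $W^{1,p}(\mathbb{R}^d)$, so the first-order estimate gives $\tilde f\in W^{m-1,p_1}(\mathbb{R}^d)$ with $\tfrac1{p_1}=\tfrac1p-\tfrac1d$ and norm controlled by $C\|\tilde f\|_{W^{m,p}}$. Repeating $m$ times produces exponents $\tfrac1{p_k}=\tfrac1p-\tfrac kd$, and since $\tfrac1{p_k}=\tfrac1q+\tfrac{m-k}{d}>\tfrac1d$ for every $k\le m-1$, each intermediate exponent satisfies $p_k<d$, so each step is legitimate; after $m$ steps $\tfrac1{p_m}=\tfrac1p-\tfrac md=\tfrac1q$, giving $\|\tilde f\|_{L^q(\mathbb{R}^d)}\le C\|\tilde f\|_{W^{m,p}(\mathbb{R}^d)}$ with $C$ the (finite) product of the one-step constants, and undoing the zero-extension returns the claimed bound $\|f\|_{L^q}\le C\|f\|_{m,p}$. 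I expect the only genuinely technical point to be the $p=1$, $m=1$ base case --- the Loomis--Whitney type iterated-Hölder argument --- together with checking that the exponents $p_k$ stay in the admissible subcritical range at each stage of the iteration; the rest is bookkeeping.
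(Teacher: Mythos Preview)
Your argument is the standard textbook proof of the Sobolev embedding and is correct. The paper, however, does not prove this lemma at all: it is simply stated as a recalled standard inequality, with no argument given. So there is nothing to compare against beyond noting that your sketch supplies exactly the kind of classical proof (Gagliardo--Nirenberg--Sobolev for $m=1$ via the Loomis--Whitney iterated H\"older trick, then induction on $m$) that the cited references would contain.

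One minor remark: the hypothesis ``$p\in[q,\infty)$'' in the stated lemma is evidently a typo, since $\tfrac1q=\tfrac1p-\tfrac md$ forces $q\ge p$; you have (correctly) worked with the intended relation $q=\tfrac{dp}{d-mp}\ge p$ and the subcritical condition $mp<d$, so this does not affect your argument.
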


The following is the Gagliardo-Nirenberg interpolation inequality (cf. \cite[Theorem 2.1.5]{Taira}).
\begin{lemma}\label{lem88}
If for any $m,n\in\mathbb{N}$ and $q\in[1,\infty]$ satisfying
$$\frac{1}{q}=\frac{1}{2}+\frac{n}{d}-\frac{m \tilde{\theta}}{d},\ \frac{n}{m}\le\tilde{\theta}\le1,$$
then there is a constant $C>0$ such that
\begin{equation}
\|f\|_{n,q}\le C\|f\|_{m}^{\tilde{\theta}}\|f\|_{L^2}^{1-\tilde{\theta}},\ \ f\in W_0^{m,2}(\mathcal{O}, \mathbb{R}^d).\label{GN_inequality}
\end{equation}
\end{lemma}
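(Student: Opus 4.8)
The lemma is the classical Gagliardo--Nirenberg interpolation inequality in the case where the highest-order factor and the low-order factor are both measured in $L^2$, so the natural ``proof'' is the reference already indicated before the statement, namely \cite[Theorem 2.1.5]{Taira}. If one instead wants a self-contained argument, the plan is the following.

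First I would pass to the whole space: since $\mathcal{O}$ is bounded with smooth boundary, extension by zero sends $W^{m,2}_0(\mathcal{O},\mathbb{R}^d)$ into $W^{m,2}(\mathbb{R}^d,\mathbb{R}^d)$, preserving the $L^2$ norm and giving a norm equivalent to the Bessel-potential norm $\|\cdot\|_{H^m(\mathbb{R}^d)}$, and $\mathcal{C}^\infty_c(\mathcal{O},\mathbb{R}^d)$ is dense; so it suffices to prove the estimate for $f\in\mathcal{C}^\infty_c(\mathbb{R}^d,\mathbb{R}^d)$, and since $\|f\|_{n,q}^q=\sum_{0\le|\alpha|\le n}\|D^\alpha f\|_{L^q}^q$ it is enough to bound each $\|D^\alpha f\|_{L^q}$ with $|\alpha|=j\le n$. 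The two ingredients are: (i) the fractional Sobolev embedding $H^s(\mathbb{R}^d)\hookrightarrow L^q(\mathbb{R}^d)$ for $0\le s<d/2$ with $\tfrac1q=\tfrac12-\tfrac sd$ (which for integer $s$ follows by iterating the integer-order embedding of Lemma \ref{lem77}), and (ii) the interpolation inequality $\|g\|_{H^{\theta m}}\le\|g\|_{H^m}^{\theta}\|g\|_{L^2}^{1-\theta}$ for $0\le\theta\le1$, immediate from Plancherel and Hölder's inequality on the Fourier side. For a multi-index with $|\alpha|=n$, the hypothesis $\tfrac1q=\tfrac12+\tfrac nd-\tfrac{m\tilde\theta}{d}$ is exactly $\tfrac1q=\tfrac12-\tfrac sd$ with $s:=m\tilde\theta-n\ge0$ (using $\tilde\theta\ge n/m$), whence $\|D^\alpha f\|_{L^q}\le C\|D^\alpha f\|_{H^s}\le C\|f\|_{H^{n+s}}=C\|f\|_{H^{m\tilde\theta}}\le C\|f\|_{H^m}^{\tilde\theta}\|f\|_{L^2}^{1-\tilde\theta}$, and transferring norms back to $\mathcal{O}$ gives the claimed bound. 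For $|\alpha|=j<n$ the same chain produces the estimate with $\tilde\theta$ replaced by $\theta_j:=\tilde\theta-\tfrac{n-j}{m}\in[0,\tilde\theta]$, and since $\|f\|_m\ge\|f\|_{L^2}$ one has $\|f\|_m^{\theta_j}\|f\|_{L^2}^{1-\theta_j}=\|f\|_{L^2}\bigl(\|f\|_m/\|f\|_{L^2}\bigr)^{\theta_j}\le\|f\|_m^{\tilde\theta}\|f\|_{L^2}^{1-\tilde\theta}$; summing over the finitely many $\alpha$ then yields \eqref{GN_inequality}.

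The hard part is not the interpolation step but the borderline regime $s=m\tilde\theta-n=d/2$, i.e. $q=\infty$: there $H^{d/2}$ fails to embed into $L^\infty$, so $\tilde\theta$ near $1$ must be handled with the Morrey/BMO refinements of the Sobolev embedding, and the classical Gagliardo--Nirenberg statement in fact excludes $\tilde\theta=1$ precisely when $m-n-d/2$ is a nonnegative integer. This is the delicate part of the theorem, which is why it is cleanest simply to cite \cite{Taira}; in every application in the present paper the parameters $d,m,n$ keep $\tilde\theta$ strictly in the interior of the admissible range, so these borderline cases never arise.
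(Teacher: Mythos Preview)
The paper does not prove this lemma at all: it is stated as a classical fact with the citation \cite[Theorem~2.1.5]{Taira} and nothing more. Your proposal is therefore consistent with the paper (you yourself note that citing Taira is the natural ``proof''), and your self-contained sketch via zero extension, the Bessel-potential embedding $H^s\hookrightarrow L^q$, and Fourier-side interpolation is correct and goes beyond what the paper offers; the endpoint discussion for $q=\infty$ is also an appropriate caveat.
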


\subsection{Stochastic climate models}\label{SecNS}
Recently, Crisan, Holm and Korn \cite{CHK} developed the {\it{Stochastic Advection by Lie Transport}} method, which constructed the idealised climate model couples a stochastic PDE for the atmospheric circulation to a deterministic PDE for the circulation of the ocean.
The abstract form of such model is given by
 \begin{eqnarray}\label{eqcm}
\left\{
 \begin{aligned}
&dX_t+(B(X_t,X_t)-\mathcal{R}(X_t)+\kappa\mathbb{E}X_t)dt=\nu\Delta X_tdt+\sigma(X_t)dW_t,~X_0\sim\mu_0,\\
    &X_t=0, ~~\text{on}~\partial\mathcal{O},
  \end{aligned}
\right.
\end{eqnarray}
where $X:=(X^a,X^o)$ is a state vector with the atmospheric component $X^a$ and the ocean component $X^o$,
 $B$ is the usual bilinear transport operator, $\mathcal{R}$ is a linear operator, $\nu\Delta X:=(\nu_1\Delta^a X^a,\nu_2\Delta^o X^o)$ denotes the dissipation/diffusion operator for velocity and temperature, $\nu_1, \nu_2>0$ denote the dissipation constant, $\kappa\in\mathbb{R}$. Here, $\sigma$ satisfies some appropriate assumptions and $W_t$ is a cylindrical Wiener process defined on $\left(\Omega,\mathscr{F},\{\mathscr{F}_t\}_{t\in[0,T]},\mathbb{P}\right)$ taking values in a separable Hilbert space $U$.

With this application in mind, in the current work, we consider the more general case of (\ref{eqcm}). To this end, we first collect some necessary notations and definitions.

\vspace{1mm}
Let $\mathbb{H}$ be a separable Hilbert space equipped with norm $\|\cdot\|_{\mathbb{H}}$.  Let $L$ be an (unbounded) positive linear self-adjoint operator on $\mathbb{H}$. There exists an orthonormal basis $\{e_k\}_{k\geq 1}$ on $\mathbb{H}$ of eigenfunctions of $L$ and the increasing eigenvalue sequence
$$0<\lambda_1\leq\lambda_2\leq...\leq\lambda_n\leq...\uparrow\infty.$$

Define
$\mathbb{V}=\mathcal{D}(L^{1/2})$ and $\mathbb{X}=\mathcal{D}(L)$, the associated norms are given by
$$\|v\|_{\mathbb{V}}=\|L^{1/2}v\|_{\mathbb{H}},~\|v\|_{\mathbb{X}}=\|Lv\|_{\mathbb{H}}.$$
 Let $\mathbb{V}^*$ be the dual space of $\mathbb{V}$ with respect to the scalar product
$(\cdot,\cdot)$ on $\mathbb{H}$.
Then we can consider a Gelfand triple
$$\mathbb{V} \subset \mathbb{H} \subset \mathbb{V}^*.$$
Let us denote by $\langle{u},v\rangle$ the dualization between $u\in \mathbb{V}$ and $v\in \mathbb{V}^*$.

Let $B:\mathbb{V}\times \mathbb{V} \to \mathbb{V}^*$ be a continuous map satisfying the following conditions
\begin{enumerate}
\item [$(\mathbf{A}^1_{B})$]  $B: \mathbb{V} \times \mathbb{V} \to \mathbb{V}^*$ is a continuous bilinear map.

\item [$(\mathbf{A}^2_{B})$] For all $u_i \in \mathbb{V}, i=1,2,3$,
\begin{equation*}
\langle B(u_1,u_2),u_3 \rangle = -\langle B(u_1,u_3),u_2\rangle,~~\langle B(u_1,u_2),u_2 \rangle=0.
\end{equation*}

\item[$(\mathbf{A}^3_{B})$] There exists a Banach space $\mathcal{H}$ such that
\\
(i) $\mathbb{V} \subset \mathcal{H} \subset \mathbb{H};$
\\
(ii) there exists a constant $a_0>0$ such that
\begin{equation*}
\|u\|_{\mathcal{H}}^2 \le a_0 \|u\|_{\mathbb{H}}\|u\|_{\mathbb{V}}~~\text{for all} ~ u\in \mathbb{V};
\end{equation*}
(iii) for every $\eta>0$ there exists a constant $C_\eta>0$ such that
\begin{equation*}
|\langle B(u_1,u_2),u_3 \rangle| \leq \eta\|u_3\|_{\mathbb{V}}^2+C_\eta\|u_1\|_{\mathcal{H}}^2\|u_2\|_{\mathcal{H}}^2
~~\text{for all}~ u_i\in \mathbb{V},  i=1,2,3,
\end{equation*}
and
\begin{equation*}
|(B(u,u),Lu)| \leq \eta\|u\|_{\mathbb{X}}^2+C_\eta\|u\|_{\mathbb{V}}^4\|u\|_{\mathbb{H}}^2
~~\text{for all}~ u\in \mathbb{X}.
\end{equation*}
\end{enumerate}
For simplicity, we denote $B(u)=B(u,u)$.

Let $\mathcal{R}:\mathbb{V}\times \mathbb{V} \to \mathbb{V}^*$ be a continuous map satisfying the following conditions
\begin{enumerate}
\item[$(\mathbf{A}_{\mathcal{R}})$] There exist constants $\epsilon<1$ and $C>0$ such that
\\
(i)  for any $u\in \mathbb{V}$
\begin{eqnarray*}
2_{\mathbb{V}^*}\langle \mathcal{R}(u),u\rangle_{\mathbb{V}}\leq \epsilon\|u\|_{\mathbb{V}}^2+C(1+\|u\|_{\mathbb{H}}^2)
\end{eqnarray*}
and
\begin{equation*}
\|\mathcal{R}(u)\|_{{\mathbb{V}}^*}^{2}\leq C(1+\|u\|_{\mathbb{V}}^{2})(1+\|u\|_{{\mathbb{H}}}^{2});
\end{equation*}
(ii) for any $u,v\in {\mathbb{V}}$,
\begin{eqnarray*}
&&2_{{\mathbb{V}}^*}\langle \mathcal{R}(u)-\mathcal{R}(v),u-v\rangle_{\mathbb{V}}
\nonumber\\
&&\leq\epsilon\|u-v\|_{\mathbb{V}}^2+
C(1+\|u\|_{\mathbb{V}}^{2})(1+\|u\|_{{\mathbb{H}}}^{2})\|u-v\|_{\mathbb{H}}^2;
\end{eqnarray*}
(iii) $\mathcal R:\mathbb{H}_n\rightarrow\mathbb{V}$  and there exists $C>0$ (independent of $n$) such that
$$\langle \mathcal{R}(u),u\rangle_{\mathbb{V}}\leq \epsilon\|u\|_{\mathbb{X}}^2+C(1+\|u\|_{\mathbb{V}}^2),~u\in\mathbb{H}_n, $$
where $\mathbb{H}_n:=\text{span}\{e_1,\ldots,e_n\}$.
\end{enumerate}

Assume that $\sigma:\mathbb{V}\to L_2(U,\mathbb{H})$ satisfies the following conditions
\begin{enumerate}
\item[$(\tilde{\mathbf{A}}_{\mathcal{\sigma}})$]

(i) there exists a constant $C>0$ such that
$$\|\sigma(u)-\sigma(v)\|_{L_2(U,\mathbb{H})}\leq C\|u-v\|_{\mathbb{H}},~u,v\in\mathbb{V};$$

    (ii) $\sigma:\mathbb{H}_n\rightarrow L_2({U},{\mathbb{V}})$ and there exists $C>0$ (independent of $n$) such that
$$\|\sigma(u)\|_{L_2(U,\mathbb{V})}^2\leq C(1+\|u\|_{\mathbb{V}}^2),~u\in\mathbb{H}_n. $$
\end{enumerate}

Now, we consider
the following mean field  systems
\begin{equation}\label{hys}
dX_t+\big(L X_t+B(X_t)-\mathcal{R}(X_t)+\kappa\mathbb{E}X_t\big)dt=\sigma(X_t)dW_t.
\end{equation}

\begin{theorem}\label{thn0}
Suppose that $(\mathbf{A}^1_{B})$-$(\mathbf{A}^3_{B})$, $(\mathbf{A}_{\mathcal{R}})$ and $(\tilde{\mathbf{A}}_{\mathcal{\sigma}})$ hold. Let the initial value $X_0\in L^p(\Omega;\mathbb{H})\cap L^2(\Omega;\mathbb{V})$, $p\geq 4$. Then (\ref{hys}) has a strong and weak (martingale) solution.
\end{theorem}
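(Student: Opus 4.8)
The strategy is to recognise (\ref{hys}) as a special case of the abstract mean field SPDE (\ref{eqSPDE}) and then to verify the structural hypotheses $(\mathbf{A_0})$--$(\mathbf{A_5})$, so that Theorem \ref{th3} produces a (probabilistically) strong solution, which is in particular a weak solution in the sense of Definition \ref{dew} and whose law is a martingale solution in the sense of Definition \ref{de3} (alternatively, Theorem \ref{th1} already yields the latter from $(\mathbf{A_0})$--$(\mathbf{A_4})$). Concretely, moving the drift to the right-hand side and writing $\mathbb{E}X_t=\int v\,\mathscr{L}_{X_t}(dv)$, we take
\[
\mathcal{A}(t,u,\mu):=-Lu-B(u)+\mathcal{R}(u)-\kappa\int v\,\mu(dv),\qquad \mathcal{B}(t,u,\mu):=\sigma(u),
\]
on the Gelfand triple $\mathbb{V}=\mathcal{D}(L^{1/2})\subset\mathbb{H}\subset\mathbb{V}^*$ with $\mathbb{X}=\mathcal{D}(L)$. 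Hypothesis $(\mathbf{A_0})$ holds because the eigenfunctions $\{e_k\}$ of $L$ lie in $\mathbb{X}$, are orthogonal in $\mathbb{V}$ (as $\langle e_i,e_j\rangle_{\mathbb{V}}=\lambda_i^{1/2}\lambda_j^{1/2}\delta_{ij}$) and orthonormal in $\mathbb{H}$. Demicontinuity $(\mathbf{A_1})$ follows from the continuity of the bilinear map $B$ ($(\mathbf{A}^1_B)$), of $\mathcal{R}$ and of $L:\mathbb{V}\to\mathbb{V}^*$, the Lipschitz property of $\sigma$ from $(\tilde{\mathbf{A}}_\sigma)$(i), and the fact that $\mu\mapsto\int v\,\mu(dv)\in\mathbb{H}\hookrightarrow\mathbb{V}^*$ is $1$-Lipschitz for the $L^1$-Wasserstein distance over $\mathbb{H}$.

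Next I would check the analytic hypotheses with the exponents $\alpha=\gamma=\lambda=2$ and $\beta=2$, for which $\vartheta=\max\{\beta+2,\lambda\}=4$, matching the assumption $p\ge 4$. For coercivity $(\mathbf{A_2})$ one uses ${}_{\mathbb{V}^*}\langle -Lu,u\rangle_{\mathbb{V}}=-\|u\|_{\mathbb{V}}^2$, the cancellation ${}_{\mathbb{V}^*}\langle B(u),u\rangle_{\mathbb{V}}=0$ from $(\mathbf{A}^2_B)$, the bound $2_{\mathbb{V}^*}\langle\mathcal{R}(u),u\rangle_{\mathbb{V}}\le\epsilon\|u\|_{\mathbb{V}}^2+C(1+\|u\|_{\mathbb{H}}^2)$ from $(\mathbf{A}_{\mathcal{R}})$(i), Young's and Jensen's inequalities for the nonlocal term (producing $\mu(\|\cdot\|_{\mathbb{H}}^2)$), and $\|\sigma(u)\|_{L_2(U,\mathbb{H})}^2\le C(1+\|u\|_{\mathbb{H}}^2)$; since $\epsilon<1$, a strictly positive multiple of $\|u\|_{\mathbb{V}}^2$ remains, which is $\delta_1\|u\|_{\mathbb{V}}^{\alpha}$ with $\alpha=2$. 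For the growth condition $(\mathbf{A_3})$ one uses $\|Lu\|_{\mathbb{V}^*}\le\|u\|_{\mathbb{V}}$, the estimate $\|B(u)\|_{\mathbb{V}^*}\le C\|u\|_{\mathcal{H}}^2\le Ca_0\|u\|_{\mathbb{H}}\|u\|_{\mathbb{V}}$ obtained by rescaling the test vector in $(\mathbf{A}^3_B)$(iii), the polynomial bound $\|\mathcal{R}(u)\|_{\mathbb{V}^*}^2\le C(1+\|u\|_{\mathbb{V}}^2)(1+\|u\|_{\mathbb{H}}^2)$ from $(\mathbf{A}_{\mathcal{R}})$(i), and $\|\kappa\int v\,\mu(dv)\|_{\mathbb{V}^*}^2\le C\mu(\|\cdot\|_{\mathbb{H}}^2)$, all of which fit the required product form with $\beta=2$; the Lipschitz bound on $\sigma$ handles $\mathcal{B}$. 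The delicate point is $(\mathbf{A_4})$: on $\mathbb{H}_n=\mathrm{span}\{e_1,\dots,e_n\}$ one has $\langle -Lu,u\rangle_{\mathbb{V}}=-\|u\|_{\mathbb{X}}^2$, the second estimate in $(\mathbf{A}^3_B)$(iii) gives $|\langle B(u),u\rangle_{\mathbb{V}}|=|(B(u,u),Lu)|\le\eta\|u\|_{\mathbb{X}}^2+C_\eta\|u\|_{\mathbb{V}}^4\|u\|_{\mathbb{H}}^2$, and $(\mathbf{A}_{\mathcal{R}})$(iii) gives $\langle\mathcal{R}(u),u\rangle_{\mathbb{V}}\le\epsilon\|u\|_{\mathbb{X}}^2+C(1+\|u\|_{\mathbb{V}}^2)$; choosing the free parameter $\eta$ so small that $\eta+\epsilon<1$, all the $\|u\|_{\mathbb{X}}^2$ contributions are absorbed while a positive $\delta_2\|u\|_{\mathbb{X}}^2$ survives, and the residual $C_\eta\|u\|_{\mathbb{V}}^4\|u\|_{\mathbb{H}}^2$ is exactly of the admissible shape $C\|u\|_{\mathbb{V}}^{\alpha+2}\|u\|_{\mathbb{H}}^{\lambda}$ with $\alpha=\lambda=2$; the nonlocal term is dominated by Cauchy--Schwarz and Young (producing only $\|u\|_{\mathbb{V}}^2$ and $\mu(\|\cdot\|_{\mathbb{V}}^2)$, both admissible), and the bound on $\sigma:\mathbb{H}_n\to L_2(U,\mathbb{V})$ is precisely $(\tilde{\mathbf{A}}_\sigma)$(ii).

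Finally, for the local monotonicity $(\mathbf{A_5})$ one exploits that with a common measure $\mu$ the nonlocal term $-\kappa\int v\,\mu(dv)$ cancels in the difference $\mathcal{A}(t,u,\mu)-\mathcal{A}(t,v,\mu)$. Then ${}_{\mathbb{V}^*}\langle -L(u-v),u-v\rangle_{\mathbb{V}}=-\|u-v\|_{\mathbb{V}}^2$; the decomposition $B(u)-B(v)=B(u-v,u)+B(v,u-v)$ together with $(\mathbf{A}^2_B)$ (which annihilates $\langle B(v,u-v),u-v\rangle$ and rewrites $\langle B(u-v,u),u-v\rangle=-\langle B(u-v,u-v),u\rangle$), with $(\mathbf{A}^3_B)$(iii), and with $\|u-v\|_{\mathcal{H}}^2\le a_0\|u-v\|_{\mathbb{H}}\|u-v\|_{\mathbb{V}}$ gives $2\,|{}_{\mathbb{V}^*}\langle B(u)-B(v),u-v\rangle_{\mathbb{V}}|\le\eta\|u-v\|_{\mathbb{V}}^2+C_\eta\|u\|_{\mathbb{V}}^2\|u-v\|_{\mathbb{H}}^2$; and $(\mathbf{A}_{\mathcal{R}})$(ii) together with $(\tilde{\mathbf{A}}_\sigma)$(i) supply the remaining contributions. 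Since $\epsilon<1$ one may choose $\eta$ so that the coefficient of $\|u-v\|_{\mathbb{V}}^2$ is $\le 0$, whence $(\mathbf{A_5})$ holds with the pair of functions $\rho(u,\mu):=C(1+\|u\|_{\mathbb{V}}^2)(1+\|u\|_{\mathbb{H}}^2)$ and the zero function, which obviously satisfies (\ref{esq22}) with $\alpha=\beta=2$. Granting $(\mathbf{A_0})$--$(\mathbf{A_5})$ and $p\ge 4=\vartheta$, Theorems \ref{th1} and \ref{th3} then deliver the martingale solution and the strong (hence also weak) solution, together with the estimate (\ref{es45}). I expect the main obstacle to be the verification of $(\mathbf{A_4})$: one must track the three competing $\|u\|_{\mathbb{X}}^2$-contributions coming from $L$, $B$ and $\mathcal{R}$ and choose the free parameters so that their combination is still coercive, and then observe that the leftover transport nonlinearity $\|u\|_{\mathbb{V}}^4\|u\|_{\mathbb{H}}^2$ fits exactly into the non-homogeneous growth permitted by $(\mathbf{A_4})$ --- precisely the structural feature that lets the abstract framework accommodate Burgers-, Navier--Stokes-, and climate-type quadratic nonlinearities.
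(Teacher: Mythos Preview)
Your proposal is correct and follows essentially the same approach as the paper: recast (\ref{hys}) in the abstract form with $\mathcal{A}(t,u,\mu)=-Lu-B(u)+\mathcal{R}(u)-\kappa\int v\,\mu(dv)$, verify $(\mathbf{A_0})$--$(\mathbf{A_5})$ with $\alpha=\beta=\gamma=\lambda=2$, and invoke Theorems \ref{th1} and \ref{th3}. The paper's proof is terser---it simply refers to \cite{LR1} for $(\mathbf{A_1})$--$(\mathbf{A_3})$ and $(\mathbf{A_5})$ and only spells out the $(\mathbf{A_4})$ computation (with the same choice $\eta<1-\epsilon$ that you describe)---whereas you have written out the individual verifications in more detail, but the substance is the same.
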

\begin{proof}
It suffices to check that $\mathbf{(A_1)}$-$\mathbf{(A_5)}$ hold for
$$\mathcal{A}(t,u,\mu):=-\big(L u+B(u)-\mathcal{R}(u)+\kappa\int y\mu(dy)\big).$$
Note that similar with \cite{LR1} we can see that $\tilde{\mathcal{A}}(u):=-L u-B(u)+\mathcal{R}(u)$ satisfy $\mathbf{(A_1)}$-$\mathbf{(A_3)}$ and $\mathbf{(A_5)}$ with $\alpha=\beta=2$. By $(\mathbf{A}^3_{B})$ (iii) and $(\mathbf{A}_{\mathcal{R}})$ (iii), for $\eta<1-\epsilon$ we obtain
 \begin{eqnarray*}
\langle \tilde{\mathcal{A}}(u),u\rangle_{\mathbb{V}}\leq &&-(Lu,Lu)+|(B(u,u),Lu)|+\langle \mathcal{R}(u),u\rangle_{\mathbb{V}}
 \nonumber\\
\leq&&-(1-\eta-\epsilon)\|u\|_{\mathbb{X}}^2+C(1+\|u\|_{\mathbb{V}}^4\|u\|_{\mathbb{H}}^2+\|u\|_{\mathbb{V}}^2), ~\text {for}~u\in\mathbb{H}_n,
 \end{eqnarray*}
which implies that $\mathbf{(A_4)}$ holds with $\lambda=2$.
\end{proof}

In addition to the existence of weak and strong solutions to (\ref{hys}), we also investigate the corresponding weakly interacting systems
 \begin{eqnarray}\label{eqiNS}
~~~~\left\{
 \begin{aligned}
     &dX^{N,i}_t+\Big(L X^{N,i}_t+B(X^{N,i}_t)-\mathcal{R}(X^{N,i}_t)+\kappa\frac{1}{N}\sum_{j=1}^NX^{N,j}_t\Big)dt=\sigma(X^{N,i}_t)dW_t^i, \\
     &X^{N,i}_0=\xi_i,
  \end{aligned}
\right.
\end{eqnarray}
where $i=1,\ldots,N$, $W_t^1,\ldots,W_t^N$ are  independent  cylindrical Wiener processes defined on a complete filtered probability space $\left(\Omega,\mathscr{F},\{\mathscr{F}_t\}_{t\in[0,T]},\mathbb{P}\right)$. Here, the system \eref{eqiNS} is defined by a family of interacting stochastic climate models that characterize the dynamics of atmosphere-ocean interactions.

Let $N\to\infty$, we demonstrate that the large $N$ limit of (\ref{eqiNS}) is governed by (\ref{hys}).
\begin{theorem}\label{thn1} Suppose that $(\mathbf{A}^1_{B})$-$(\mathbf{A}^3_{B})$, $(\mathbf{A}_{\mathcal{R}})$, $(\tilde{\mathbf{A}}_{\mathcal{\sigma}})$ and the initial value condition $(\mathbf{A_6})$ hold with $\vartheta=4$, then the empirical law  $\{\mathcal{S}^{N},N\in\mathbb{N}\}$ is  tight in $\mathscr{P}_{2}(\mathbb{C}_T)\cap \mathscr{P}_{2}(L^{2}([0,T];\mathbb{V}))$, and any accumulation point $\mathcal{S}$ is a solution  of martingale problem associated with (\ref{hys}) with initial law $\mu_0\in\mathscr{P}_p(\mathbb{H})\cap \mathscr{P}_2(\mathbb{V})$, where $p\in[4,\infty)$.
\end{theorem}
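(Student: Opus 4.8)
The plan is to read Theorem~\ref{thn1} as a specialization of the first assertion of the abstract mean field limit result, Theorem~\ref{th4}, applied to the operators $\mathcal{A}(t,u,\mu):=-\big(L u+B(u)-\mathcal{R}(u)+\kappa\int y\,\mu(dy)\big)$ and $\mathcal{B}(t,u,\mu):=\sigma(u)$. Since the tightness of $\{\mathcal{S}^N\}$ in $\mathscr{P}_2(\mathbb{C}_T)\cap\mathscr{P}_\alpha(L^\alpha([0,T];\mathbb{V}))$ and the identification of its accumulation points as martingale solutions of the limit SPDE require only $\mathbf{(A_1)}$--$\mathbf{(A_4)}$, $\mathbf{(A_5^*)}$ and $\mathbf{(A_6)}$ within the standing framework (in particular $\mathbf{(A_0)}$ and the compact embeddings), the whole task reduces to verifying these for the climate operators with $\alpha=\beta=\lambda=\gamma=2$, so that $\vartheta=\max\{\beta+2,\lambda\}=4$, in agreement with the hypothesis and with the admissible range $p\in[4,\infty)$.

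First I would record that $\mathbf{(A_0)}$ holds because the eigenfunctions $\{e_k\}$ of $L$ lie in $\mathbb{X}=\mathcal{D}(L)$, form an orthonormal basis of $\mathbb{H}$, and are orthogonal in $\mathbb{V}=\mathcal{D}(L^{1/2})$ (one has $\langle e_i,e_j\rangle_{\mathbb{V}}=\lambda_i\delta_{ij}$), while $\lambda_n\uparrow\infty$ makes $\mathbb{X}\subset\mathbb{V}\subset\mathbb{H}$ compact. Conditions $\mathbf{(A_1)}$--$\mathbf{(A_4)}$ are already contained in the proof of Theorem~\ref{thn0}: the $\mu$-independent part $\tilde{\mathcal{A}}(u)=-Lu-B(u)+\mathcal{R}(u)$ satisfies $\mathbf{(A_1)}$--$\mathbf{(A_4)}$ under $(\mathbf{A}^1_{B})$--$(\mathbf{A}^3_{B})$ and $(\mathbf{A}_{\mathcal{R}})$ (cf.~\cite{LR1}), and the added term contributes only the barycenter $\bar\mu:=\int y\,\mu(dy)$, which by Jensen's inequality obeys $\|\bar\mu\|_{\mathbb{H}}^2\le\mu(\|\cdot\|_{\mathbb{H}}^2)$ and $\|\bar\mu\|_{\mathbb{V}}^2\le\mu(\|\cdot\|_{\mathbb{V}}^2)$; hence it is absorbed into the right-hand sides of $\mathbf{(A_2)}$--$\mathbf{(A_4)}$ without changing any exponent, while $\mathbf{(A_1)}$ and $\mathbf{(A_3)}$--$\mathbf{(A_4)}$ for $\mathcal{B}=\sigma$ follow from $(\tilde{\mathbf{A}}_{\sigma})$. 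The initial-value assumption $\mathbf{(A_6)}$ with $\vartheta=4$ is imposed outright.

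The only genuinely new ingredient is $\mathbf{(A_5^*)}$. Here I would split $\mathcal{A}(t,u,\mu)-\mathcal{A}(t,v,\nu)=\big(\tilde{\mathcal{A}}(u)-\tilde{\mathcal{A}}(v)\big)-\kappa(\bar\mu-\bar\nu)$. Pairing the first difference with $u-v$ and combining with $\|\sigma(u)-\sigma(v)\|_{L_2(U,\mathbb{H})}^2\le C\|u-v\|_{\mathbb{H}}^2$ from $(\tilde{\mathbf{A}}_{\sigma})$, the sum is controlled by the local monotonicity $\mathbf{(A_5)}$ already verified for $\tilde{\mathcal{A}}$ in Theorem~\ref{thn0}, with $\mu$-independent auxiliary functions $\rho(u),\eta(v)$ satisfying \eref{esq22} for $\alpha=\beta=2$. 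For the barycenter term, writing $\bar\mu-\bar\nu=\int(y-y')\,\pi(dy,dy')$ for any coupling $\pi\in\mathscr{C}(\mu,\nu)$, using Jensen and then taking the infimum over couplings gives $\|\bar\mu-\bar\nu\|_{\mathbb{H}}\le\mathcal{W}_{2,\mathbb{H}}(\mu,\nu)$, whence
$$-2\kappa\,{}_{\mathbb{V}^*}\langle\bar\mu-\bar\nu,u-v\rangle_{\mathbb{V}}=-2\kappa\langle\bar\mu-\bar\nu,u-v\rangle_{\mathbb{H}}\le|\kappa|\big(\mathcal{W}_{2,\mathbb{H}}(\mu,\nu)^2+\|u-v\|_{\mathbb{H}}^2\big).$$
Adding the two bounds yields $\mathbf{(A_5^*)}$ with $\rho,\eta$ inherited from $\mathbf{(A_5)}$ for $\tilde{\mathcal{A}}$ and the constant enlarged by $|\kappa|$. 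With $\mathbf{(A_0)}$--$\mathbf{(A_4)}$, $\mathbf{(A_5^*)}$ and $\mathbf{(A_6)}$ in hand, Proposition~\ref{th5} supplies the well-posedness of the $N$-interacting system \eref{eqiNS}, and the first part of Theorem~\ref{th4} delivers precisely the claimed tightness in $\mathscr{P}_2(\mathbb{C}_T)\cap\mathscr{P}_2(L^2([0,T];\mathbb{V}))$ together with the martingale-problem property of the accumulation points. I do not expect a real obstacle here, since every step is either an appeal to the abstract theory or the elementary barycenter estimate above; the only point needing a little care is checking that the locally monotone structure of $B-\mathcal{R}$, whose auxiliary functions carry powers of $\|u\|_{\mathcal{H}}$ and $\|u\|_{\mathbb{V}}$, still fits the growth bound \eref{esq22} with $\alpha=\beta=2$ --- precisely the verification already carried out for Theorem~\ref{thn0}.
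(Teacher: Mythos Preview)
Your proposal is correct and follows essentially the same approach as the paper: reduce Theorem~\ref{thn1} to the first assertion of the abstract Theorem~\ref{th4} by verifying $\mathbf{(A_5^*)}$ for $\mathcal{A}(t,u,\mu)=-\big(Lu+B(u)-\mathcal{R}(u)+\kappa\int y\,\mu(dy)\big)$, with $\mathbf{(A_0)}$--$\mathbf{(A_4)}$ inherited from the proof of Theorem~\ref{thn0}. The paper's own proof is terser---it simply asserts that $\mathbf{(A_5^*)}$ follows from $(\mathbf{A}^2_{B})$, $(\mathbf{A}^3_{B})$ and $(\mathbf{A}_{\mathcal{R}})$ ``similar to \cite{LR1}''---whereas you spell out the barycenter estimate $\|\bar\mu-\bar\nu\|_{\mathbb{H}}\le\mathcal{W}_{2,\mathbb{H}}(\mu,\nu)$ explicitly, but the logic is identical.
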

\begin{proof}
It suffices to prove that $\mathbf{(A_5^*)}$ holds for $$\mathcal{A}(t,u,\mu):=-\big(L u+B(u)-\mathcal{R}(u)+\kappa\int y\mu(dy)\big).$$
By $(\mathbf{A}^2_{B})$, $(\mathbf{A}^3_{B})$ and $(\mathbf{A}_{\mathcal{R}})$, it is easy to see that $\mathbf{(A_5^*)}$ holds, whose proof is similar to \cite{LR1}.
\end{proof}

\subsection{Weakly interacting Allen-Cahn equations}\label{ex01}
The Allen-Cahn equation was introduced in \cite{AC79} as a model describing the evolution of antiphase boundaries, the reader can refer to \cite{HLR23,WH10}
and references therein for the study of the  stochastic Allen-Cahn equation.

 Let $\mathcal{O} := [0,1]$. In this part, we consider the following weakly interacting Allen-Cahn equations,
 \begin{eqnarray}\label{Secex1}
\left\{
 \begin{aligned}
     &dX^{N,i}_t=\Big\{\Delta X^{N,i}_t+X_t^{N,i}-\frac{1}{N}\sum_{j=1}^{N}(X_t^{N,j})^2X_t^{N,i}\Big\}dt+dW_t^i,~~X^{N,i}_0=\xi_i, \\
    & X^{N,i}_t(0) =X^{N,i}_t(1)=0,
  \end{aligned}
\right.
\end{eqnarray}
where $i=1,\ldots,N$, $W_t^1,\ldots,W_t^N$ are independent $Q$-Wiener processes (with $Q$ being  nonnegative, symmetric and with finite trace throughout this section) defined on a complete filtered probability space $\left(\Omega,\mathscr{F},\{\mathscr{F}_t\}_{t\in[0,T]},\mathbb{P}\right)$.

By applying our abstract result, we will show that the large $N$ limit of  (\ref{Secex1}) is
 the following mean field type Allen-Cahn equations,
  \begin{eqnarray}\label{eqSem}
\left\{
 \begin{aligned}
     &dX_t=\big(\Delta X_t+X_t-(\mathbb{E}X_t^2)X_t\big)dt+dW_t,~~X_0\sim\mu_0, \\
    & X_t(0) =X_t(1)=0,
  \end{aligned}
\right.
\end{eqnarray}
where $W_t$ is also an $Q$-Wiener process defined on $\left(\Omega,\mathscr{F},\{\mathscr{F}_t\}_{t\in[0,T]},\mathbb{P}\right)$.

\begin{remark}
One of motivations for considering the stochastic interacting Allen-Cahn equations with the quadratic interacting kernel is related to the stochastic quantization theory. For instance, Shen et al.~\cite{SSZZ,SZZ22} utilized the dynamic (\ref{Secex1}) with more singular noise to investigate the large $N$ limit of the linear sigma model, where the term $\mathbb{E}[X_t^2]X_t$ appears in the mean field limit equation (cf.~\cite[(1.5)]{SSZZ}). Another interesting observation is that the mean field Allen-Cahn equation with such type of interacting kernel is related to the scaling limit of the classical Allen-Cahn equation with a rough random initial datum, see e.g.~\cite[Proposition 1.2]{GRZ}.
\end{remark}

Set
\begin{equation*}
\mathbb{H}:=L^2(\mathcal{O},\mathbb{R}),~~\mathbb{V}:=W_0^{1,2}(\mathcal{O}, \mathbb{R}),~~\mathbb{X}:=W_0^{2,2}(\mathcal{O}, \mathbb{R}).
\end{equation*}
In particular, we use the following (equivalent) Sobolev norm on $\mathbb{V}$ and $\mathbb{X}$, respectively,
\begin{eqnarray*}
&&\|u\|_{1}:=\|\nabla u\|_{L^2}=\left(\int_\mathcal{O}|\nabla u|^2 dx\right)^\frac{1}{2},\nonumber\\
&&
\|u\|_{2}:=\|\Delta u\|_{L^2}=\left(\int_\mathcal{O}|\Delta u|^2 dx\right)^\frac{1}{2}.
\end{eqnarray*}
Identifying $\mathbb H$ with its dual space by the Riesz isomorphism, then we have
\begin{equation*}\label{g1Sem}
\mathbb V\subset \mathbb H(\simeq \mathbb H^*)\subset \mathbb V^*.
\end{equation*}

\begin{theorem}\label{thsem2}
Suppose that the initial value condition $(\mathbf{A_6})$ holds with $\vartheta=8$ and $\mu_0\in\mathscr{P}_p(\mathbb{H})\cap \mathscr{P}_2(\mathbb{V})$ with $p\in[8,\infty)$, then (\ref{eqSem}) has a unique strong and weak (martingale) solution such that
\begin{equation}\label{essem2}
\lim_{N\to\infty}\mathbb{E}\Big\{\mathcal{W}_{2,\mathbb{C}_T}(\mathcal{S}^{N},\Gamma)^2+\mathcal{W}_{2,L^{2}([0,T];\mathbb{V})}(\mathcal{S}^{N},\Gamma)^{2}\Big\}=0,
\end{equation}
where $\Gamma$ is the unique martingale solution to (\ref{eqSem}) with initial law $\mu_0$.

\end{theorem}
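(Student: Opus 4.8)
The plan is to obtain Theorem~\ref{thsem2} as a direct application of the abstract results of Section~\ref{sec4}: the well-posedness from Theorem~\ref{th8} and the mean field limit from Theorem~\ref{th4}, once the coefficients of \eref{eqSem} are shown to fit the generalized variational framework. Concretely, I would work with the Gelfand triple for $\mathbb{H}=L^2(\mathcal{O})$, $\mathbb{V}=W_0^{1,2}(\mathcal{O})$, $\mathbb{X}=W_0^{2,2}(\mathcal{O})$ with $\mathcal{O}=[0,1]$, take $\alpha=2$, $U=\mathbb{H}$, write the drift as $\mathcal{A}(t,u,\mu)=\Delta u+u-g_\mu\,u$ with $g_\mu:=\int_{\mathbb{H}}y^{2}\,\mu(dy)\ge0$ understood as a pointwise square, and take $\mathcal{B}\equiv Q^{1/2}$, a constant Hilbert--Schmidt operator since $Q$ is trace class. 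The embeddings $\mathbb{X}\subset\mathbb{V}\subset\mathbb{H}$ are compact and dense by Rellich, and $\mathbf{(A_0)}$ holds for the Dirichlet eigenbasis of $-\Delta$ on $\mathcal{O}$. The one-dimensionality of $\mathcal{O}$ will be used repeatedly through $\mathbb{V}\hookrightarrow L^\infty(\mathcal{O})$ and the interpolation $\|w\|_{L^\infty}^{2}\le C\|w\|_{\mathbb{H}}\|w\|_{\mathbb{V}}$ on $W_0^{1,2}$.

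First I would dispose of the coercivity- and growth-type conditions, which are the routine part. The decisive observation for $\mathbf{(A_2)}$ is that $g_\mu\ge0$ makes the cubic interaction dissipative, $_{\mathbb{V}^*}\langle-g_\mu u,u\rangle_{\mathbb{V}}=-\int_{\mathcal{O}}g_\mu u^{2}\le0$, so that $2_{\mathbb{V}^*}\langle\mathcal{A}(t,u,\mu),u\rangle_{\mathbb{V}}+2\|u\|_{\mathbb{V}}^{2}\le2\|u\|_{\mathbb{H}}^{2}$, and adding $\|\mathcal{B}\|_{L_2(U,\mathbb{H})}^{2}=\mathrm{tr}\,Q$ finishes it; the demicontinuity $\mathbf{(A_1)}$ follows from the continuity of $\mu\mapsto g_\mu$. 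For $\mathbf{(A_3)}$ I would use $L^1(\mathcal{O})\hookrightarrow\mathbb{V}^*$, $\|g_\mu\|_{L^1}=\mu(\|\cdot\|_{\mathbb{H}}^{2})$ and $\mathbb{V}\hookrightarrow L^\infty$ to get $\|g_\mu u\|_{\mathbb{V}^*}\le C\mu(\|\cdot\|_{\mathbb{H}}^{2})\|u\|_{\mathbb{V}}$, which by Jensen's inequality gives $\mathbf{(A_3)}$ with $\beta=6$, hence $\vartheta=\beta+2=8$ as in the statement; the bound on $\mathcal{B}$ is trivial. The Galerkin estimate $\mathbf{(A_4)}$ is checked on each $\mathbb{H}_n$ by testing against $-\Delta u$: the linear pieces $(\Delta u+u,-\Delta u)$ give $-\|u\|_{\mathbb{X}}^{2}+\|u\|_{\mathbb{V}}^{2}$, while an integration by parts in $(-g_\mu u,-\Delta u)=-\int_{\mathcal{O}}g_\mu|\nabla u|^{2}-\int_{\mathcal{O}}u\,\nabla g_\mu\cdot\nabla u$ bounds the last term by $C\|u\|_{\mathbb{V}}^{2}\mu(\|\cdot\|_{\mathbb{V}}^{2})$ (using $\|\nabla g_\mu\|_{L^2}\le C\mu(\|\cdot\|_{\mathbb{V}}^{2})$ and $\mathbb{V}\hookrightarrow L^\infty$), so $\mathbf{(A_4)}$ holds with $\gamma=2$ and $\lambda\le8$, the estimate on $\mathcal{B}$ following from the assumed regularity of $Q$.

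The heart of the argument, and the step I expect to be the main obstacle, is the local monotonicity $(\mathbf{A'_5})$ (which then also yields $\mathbf{(A_5)}$ and $(\mathbf{A_5^*})$, as needed by Theorems~\ref{th3} and \ref{th4}). Writing $-g_\mu u+g_\nu v=-g_\mu(u-v)-(g_\mu-g_\nu)v$ one finds
\[
2_{\mathbb{V}^*}\langle\mathcal{A}(t,u,\mu)-\mathcal{A}(t,v,\nu),u-v\rangle_{\mathbb{V}}=-2\|u-v\|_{\mathbb{V}}^{2}+2\|u-v\|_{\mathbb{H}}^{2}-2\int_{\mathcal{O}}g_\mu|u-v|^{2}-2\int_{\mathcal{O}}(g_\mu-g_\nu)v(u-v),
\]
and $\mathcal{B}$ being constant contributes nothing; the term $-2\int g_\mu|u-v|^{2}\le0$ supplies the monotonicity in $u$. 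The delicate term is the last one, where the Allen--Cahn nonlinearity and the nonlocal dependence on the law genuinely interact. The plan is to control it via the Lipschitz-type bound for the second-moment functional, $\|g_\mu-g_\nu\|_{L^1}\le C\,\mathcal{W}_{2,\mathbb{H}}(\mu,\nu)(\mu(\|\cdot\|_{\mathbb{H}}^{2})^{1/2}+\nu(\|\cdot\|_{\mathbb{H}}^{2})^{1/2})$, obtained by coupling $\mu,\nu$ and using $\|y_1^{2}-y_2^{2}\|_{L^1}\le\|y_1-y_2\|_{L^2}\|y_1+y_2\|_{L^2}$, together with $\mathbb{V}\hookrightarrow L^\infty$ to estimate $|\int(g_\mu-g_\nu)v(u-v)|$, and then absorbing the resulting power of $\|u-v\|_{\mathbb{V}}$ into $-2\|u-v\|_{\mathbb{V}}^{2}$ by Young's inequality; the surviving $\mathcal{W}_{2,\mathbb{H}}(\mu,\nu)^{2}$-term must finally be fit under a coefficient of the polynomial form \eref{esq22} with $\beta=6$. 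The subtlety --- and precisely what makes the one-dimensional Sobolev structure indispensable --- is to interpolate the various norms so that the bookkeeping of which moments attach to the spatial variable and which to the measure never produces an inadmissible term (such as $\|v\|_{\mathbb{V}}^{4}$) that the allowed $\rho,\eta$ cannot absorb; should $(\mathbf{A'_5})$ prove too rigid for the pointwise square, the fallback is $(\mathbf{A''_5})$, whose ``local'' Wasserstein distance restricts attention to $\mathbb{H}$-bounded path segments and thereby renders $g_{\mu_t}-g_{\nu_t}$ Lipschitz with a dimensionless constant. Granting all of this, Theorem~\ref{th8} yields the unique strong and weak solution of \eref{eqSem} for $\mu_0\in\mathscr{P}_p(\mathbb{H})\cap\mathscr{P}_2(\mathbb{V})$, $p\in[8,\infty)$, and, since $(\mathbf{A'_5})\Rightarrow(\mathbf{A_5^*})$ and the initial data satisfy $(\mathbf{A_6})$ with $\vartheta=8$, Theorem~\ref{th4} applies and gives \eref{essem2} with $\Gamma$ the unique martingale solution.
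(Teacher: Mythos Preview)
Your overall strategy is exactly that of the paper: set up the Gelfand triple $W_0^{2,2}\subset W_0^{1,2}\subset L^2$ on $[0,1]$, verify $(\mathbf{A_0})$--$(\mathbf{A_4})$ and $(\mathbf{A_5'})$ for $\mathcal{A}(u,\mu)=\Delta u+u-g_\mu u$ with $\alpha=2$, $\beta=6$, and then invoke Theorems~\ref{th8} and~\ref{th4}. Your treatment of $(\mathbf{A_0})$--$(\mathbf{A_4})$ is fine (your integration-by-parts route to $(\mathbf{A_4})$ is in fact a bit cleaner than the paper's Cauchy--Schwarz route and gives an even smaller $\lambda$).

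There is, however, a real gap at precisely the point you flagged. Your route to the cross term in $(\mathbf{A_5'})$ first collapses $g_\mu-g_\nu$ to its $L^1$-norm and then puts $v(u-v)$ in $L^\infty$; after Young's inequality this inevitably leaves a coefficient of the form $(\mu(\|\cdot\|_{\mathbb{H}}^2)+\nu(\|\cdot\|_{\mathbb{H}}^2))\|v\|_{\mathbb{V}}^2$ in front of $\mathcal{W}_{2,\mathbb{H}}(\mu,\nu)^2$, and the mixed piece $\mu(\|\cdot\|_{\mathbb{H}}^2)\|v\|_{\mathbb{V}}^2$ cannot be split into $\rho(u,\mu)+\eta(v,\nu)$ satisfying \eref{esq22} without producing the forbidden $\|v\|_{\mathbb{V}}^4$. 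The paper avoids this by \emph{not} taking the $L^1$-norm first: keep the coupling $\pi$ pointwise in $x$, use $|y_1^2-y_2^2|\le |y_1-y_2|\,|y_1+y_2|$ and Cauchy--Schwarz in $\pi$ at each $x$, and then apply Young's inequality \emph{inside} the spatial integral to split
\[
\int_{\mathcal{O}}|v|\,|u-v|\Big(\!\int|y_1-y_2|^2\pi\Big)^{1/2}(g_\mu+g_\nu)^{1/2}\,dx
\ \le\ C\!\int_{\mathcal{O}}(u-v)^2(g_\mu+g_\nu)\,dx+C\!\int_{\mathcal{O}}v^2\!\int|y_1-y_2|^2\pi\,dx.
\]
Now $\|g_\mu+g_\nu\|_{L^\infty}\le \mu(\|\cdot\|_{\mathbb{V}}^2)+\nu(\|\cdot\|_{\mathbb{V}}^2)$ gives the first piece as $(\rho(0,\mu)+\eta(0,\nu))\|u-v\|_{\mathbb{H}}^2$, and $\|v\|_{L^\infty}^2\le C\|v\|_{\mathbb{V}}^2$ gives the second as $C\|v\|_{\mathbb{V}}^2\,\mathcal{W}_{2,\mathbb{H}}(\mu,\nu)^2$, which fits $\eta(v,\nu)$ cleanly. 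That pointwise-Young step is the missing ingredient.

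Your proposed fallback to $(\mathbf{A''_5})$ would not rescue the argument: the nonlinearity depends on the time-$t$ marginal $\mu_t$ through $g_{\mu_t}=\int y^2\,\mu_t(dy)$, which is not controlled by the local Wasserstein distance $\mathcal{W}_{2,R}(\mu^t,\nu^t)$ once $t>\tau_R$, so $(\mathbf{A''_5})$ fails for this model. You must make $(\mathbf{A_5'})$ work, and the pointwise-in-$x$ splitting above is the way to do it.
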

\begin{proof}
Note that  all eigenvectors $\{e_1,e_2,\ldots\}\subset W_0^{2,2}(\mathcal{O}, \mathbb{R})$ of $\Delta$ constitute an orthonormal basis of $L^2(\mathcal{O},\mathbb{R})$ and an orthogonal set in $W_0^{1,2}(\mathcal{O}, \mathbb{R})$, i.e.~the condition $\mathbf{(A_0)}$ holds. We intend to show
$$\mathcal{A}(u,\mu):=\Delta u+u-u\int y^2\mu(dy),$$
satisfies the conditions $\mathbf{(A_1)}$-$\mathbf{(A_4)}$ and $\mathbf{(A_5')}$.

It is easy to see
\begin{eqnarray}\label{exSem02}
_{\mathbb{V}^*}\langle \mathcal{A}(u,\mu),u\rangle_{\mathbb{V}}
=&&\!\!\!\!\!\!\!\!-\|u\|_{1}^{2}+\|u\|_{L^2}^{2}-\int_{\mathcal{O}}u(x)^2\Big(\int y(x)^2\mu(dy)\Big)dx
\nonumber \\
\leq&&\!\!\!\!\!\!\!\!-\|u\|_{1}^{2}+\|u\|_{L^2}^{2},
\end{eqnarray}
and
\begin{eqnarray}\label{prSem02}
|_{\mathbb{V}^*}\langle \mathcal{A}(u,\mu),v\rangle_{\mathbb{V}}|\leq&&\!\!\!\!\!\!\!\!|\int_{\mathcal{O}}\Delta u(x)v(x)dx|+|\int_{\mathcal{O}} u(x)v(x)dx|
\nonumber \\
&&\!\!\!\!\!\!\!\!
+\int_{\mathcal{O}}\Big(|u(x)v(x)|\int y(x)^2\mu(dy)\Big)dx
\nonumber \\
\leq&&\!\!\!\!\!\!\!\!\|u\|_1\|v\|_1+\|u\|_{L^2}\|v\|_{L^2}+\|v\|_{L^\infty} \|u\|_{L^2}\int\|y\|_{L^4}^2\mu(dy) \nonumber \\
\leq&&\!\!\!\!\!\!\!\!C\|v\|_1\Big(\|u\|_1+\|u\|_{L^2}\int(\|y\|_{1}+\|y\|_{L^2}^3)\mu(dy)\Big),
\end{eqnarray}
where we have used Young's inequality and interpolation inequality \eref{GN_inequality} with $n=0, q=4, m=1,
\tilde{\theta}=\frac{1}{4}$ in the last step. Then
$$\| \mathcal{A}(u,\mu)\|_{\mathbb{V}^*}^2\leq C\big(\|u\|_{1}^2+\|u\|_{L^2}^2\big(
\mu(\|\cdot\|_{1}^2)+\mu(\|\cdot\|_{L^2}^6)\big)\big),$$
thus  $\mathbf{(A_2)}$ and $\mathbf{(A_3)}$ hold for $\alpha=2,\beta=6$.

For any $u_1,u_2,v\in {\mathbb{V}}$, $\mu,\nu\in\mathbb{M}_1$ and coupling $\pi\in\mathscr{C}(\mu,\nu)$, we have
\begin{eqnarray}\label{demi01}
&&_{\mathbb{V}^*}\langle \mathcal{A}(u_1,\mu)-\mathcal{A}(u_2,\nu),v\rangle_{\mathbb{V}}
\nonumber \\ \leq&&\!\!\!\!\!\!\!\!\|u_1-u_2\|_{1}\|v\|_1+\|u_1-u_2\|_{L^2}\|v\|_{L^2}+\int_{\mathcal{O}}|(u_1-u_2)(x)v(x)|\Big(\int y(x)^2\mu(dy)\Big)dx
\nonumber \\
&&\!\!\!\!\!\!\!\!+\int_{\mathcal{O}}|u_2(x)v(x)|\Big(\int |y_1^2-y_2^2|\pi(dy_1,dy_2)\Big)dx
\nonumber \\ \leq&&C\|u_1-u_2\|_{1}\|v\|_1+\|u_1-u_2\|_{L^\infty}\|v\|_{L^\infty}\int_{\mathcal{O}}\Big(\int y(x)^2\mu(dy)\Big)dx
\nonumber \\
&&\!\!\!\!\!\!\!\!+C\|u_2\|_{L^\infty}\|v\|_{L^\infty}\big(\mu(\|\cdot\|_{L^2}^2)^{1/2}
+\nu(\|\cdot\|_{L^2}^2)^{1/2}\big)
\Big(\int \|y_1-y_2\|_{L^2}^2\pi(dy_1,dy_2)\Big)^{1/2}
\nonumber \\
\leq&&\!\!\!\!\!\!\!\!C\|u_1-u_2\|_{1}\|v\|_1+C\|u_1-u_2\|_{1}\|v\|_1\mu(\|\cdot\|_{L^2}^2)^2
\nonumber \\
&&\!\!\!\!\!\!\!\!+C\|u_2\|_{1}\|v\|_1\big(\mu(\|\cdot\|_{L^2}^2)^{1/2}
+\nu(\|\cdot\|_{L^2}^2)^{1/2}\big)\Big(\int \|y_1-y_2\|_{L^2}^2\pi(dy_1,dy_2)\Big)^{1/2}.\nonumber
\end{eqnarray}
Taking infimum for couplings $\pi\in\mathscr{C}(\mu,\nu)$, we obtain
\begin{eqnarray*}
&&\!\!\!\!\!\!\!\!_{\mathbb{V}^*}\langle \mathcal{A}(u_1,\mu)-\mathcal{A}(u_2,\nu),v\rangle_{\mathbb{V}}
\nonumber \\
\leq&&\!\!\!\!\!\!\!\!C\|u_1-u_2\|_{1}\|v\|_1+C\|u_1-u_2\|_{1}\|v\|_1\mu(\|\cdot\|_{L^2}^2)^2
\nonumber \\
&&+C\|u_2\|_{1}\|v\|_1\big(\mu(\|\cdot\|_{L^2}^2)^{1/2}
+\nu(\|\cdot\|_{L^2}^2)^{1/2}\big)\mathcal{W}_{2,L^2}(\mu,\nu),\nonumber
\end{eqnarray*}
which tends to $0$ if $(u_1,\mu)$ tends to $(u_2,\nu)$ in ${\mathbb{V}}\times\mathbb{M}_1$. Thus, the demicontinuity $\mathbf{(A_1)}$ holds.

Moreover,
\begin{eqnarray}\label{prSem04}
\langle \mathcal{A}(u,\mu),u\rangle_{\mathbb{V}}=&&\!\!\!\!\!\!\!\!-\|u\|_2^2+\|u\|_1^2-\langle u\int y^2\mu(dy),u\rangle_{\mathbb{V}}
\nonumber \\
\leq&&\!\!\!\!\!\!\!\!-\|u\|_2^2+\|u\|_1^2+\frac{1}{2}\|u\|_2^2+C\|u\|_{1}^2\int\|y^2\|_{L^2}^2\mu(dy)\nonumber
\nonumber \\
\leq&&\!\!\!\!\!\!\!\!-\frac{1}{2}\|u\|_2^2+\|u\|_1^2+C\|u\|_{1}^2\int\|y\|_{L^4}^4\mu(dy)\nonumber
\\
\leq&&\!\!\!\!\!\!\!\!-\frac{1}{2}\|u\|_2^2+\|u\|_1^2+C\|u\|_{1}^2\int(\|y\|_{1}^2+\|y\|_{L^2}^6)\mu(dy)
,
\end{eqnarray}
which shows that the condition $(\mathbf{A_4})$ holds with $\gamma=2$ and $\lambda=6$.

Now, we show the condition $\mathbf{(A_5')}$ is satisfied. For any  $u,v\in {\mathbb{V}}$, $\mu,\nu\in\mathbb{M}_2$ and coupling $\pi\in\mathscr{C}(\mu,\nu)$,
\begin{eqnarray*}\label{ese1}
&&\!\!\!\!\!\!\!\!_{\mathbb{V}^*}\langle \mathcal{A}(u,\mu)-\mathcal{A}(v,\nu),u-v\rangle_{\mathbb{V}}
\nonumber \\
=&&\!\!\!\!\!\!\!\!-\|u-v\|_{1}^2+\|u-v\|_{L^2}^2-\int_{\mathcal{O}}(u-v)^2\Big(\int y_1^2\mu(dy_1)\Big)dx
\nonumber \\
&&\!\!\!\!\!\!\!\!
-\int_{\mathcal{O}}v(u-v)\Big(\int (y_1^2-y_2^2)\pi(dy_1,dy_2)\Big)dx
\nonumber \\
\leq
&&\!\!\!\!\!\!\!\!\|u-v\|_{L^2}^2+C\int_{\mathcal{O}}\Bigg(|v|(u-v)\Big(\int (y_1-y_2)^2\pi(dy_1,dy_2)\Big)^{1/2}
\nonumber \\
&&\!\!\!\!\!\!\!\!\cdot
\Big(\int y_1^2\mu(dy_1)+\int y_2^2\nu(dy_2)\Big)^{1/2}\Bigg)dx
\nonumber \\\leq
&&\!\!\!\!\!\!\!\!\|u-v\|_{L^2}^2+C\int_{\mathcal{O}}(u-v)^2\Big(\int y_1^2\mu(dy_1)+\int y_2^2\nu(dy_2)\Big)dx
\nonumber \\
&&\!\!\!\!\!\!\!\!
+C\int_{\mathcal{O}}v^2\Big(\int (y_1-y_2)^2\pi(dy_1,dy_2)\Big)dx
\nonumber \\ \leq
&&\!\!\!\!\!\!\!\!C\|u-v\|_{L^2}^2\Big(1+\int \|y_1\|_{L^\infty}^2\mu(dy_1)+\int \|y_2\|_{L^\infty}^2\nu(dy_2)\Big)
\nonumber \\
&&\!\!\!\!\!\!\!\!+C\|v\|_{L^\infty}^2\int \|y_1-y_2\|_{L^2}^2\pi(dy_1,dy_2)
\nonumber \\ \leq
&&\!\!\!\!\!\!\!\!C\big(1+\mu(\|\cdot\|_{1}^2)+\nu(\|\cdot\|_{1}^2)\big)\|u-v\|_{L^2}^2+
C\|v\|_{1}^2\int \|y_1-y_2\|_{L^2}^2\pi(dy_1,dy_2).
\end{eqnarray*}
Taking infimum for couplings $\pi\in\mathscr{C}(\mu,\nu)$, we obtain
\begin{eqnarray*}
&&_{\mathbb{V}^*}\langle \mathcal{A}(u,\mu)-\mathcal{A}(v,\nu),u-v\rangle_{\mathbb{V}}
\nonumber \\
\leq&& C\big(1+\mu(\|\cdot\|_{1}^2)+\nu(\|\cdot\|_{1}^2)\big)\|u-v\|_{L^2}^2+
C\|v\|_{1}^2\mathcal{W}_{2,L^2}(\mu,\nu)^2.
\end{eqnarray*}
Therefore, the results hold by applying Theorems \ref{th8}-\ref{th4}.
\end{proof}

\begin{remark}
 In this subsection, we restrict our analysis to the case of additive noise for the sake of simplicity. Note that all the theorems and results presented in this section hold for general multiplicative noise as well.

\end{remark}

\subsection{Weakly interacting Burgers type equations}\label{SecBer}
In this part, we are interested in the weakly interacting stochastic Burgers type equations. The Burgers equation is used to describe both nonlinear propagation effects and diffusive effects, occurring in various areas of applied mathematics, such as gas dynamics, fluid mechanics, nonlinear acoustics, and (more recently) traffic flow, which was studied mathematically by Burgers (cf. \cite{Bur39}) in the 1940s. The Burgers equations perturbed by random noise serve as a suitable model for a wide range of problems in statistical physics, see e.g. \cite{BCJ94, EKMS20} and references therein for the study of stochastic Burgers equations.

Let $\mathcal{O} := [0,1]$. In this part, we consider the following system
 \begin{eqnarray}\label{eqiBer}
\left\{
 \begin{aligned}
     &dX^{N,i}_t=\Big\{\Delta X^{N,i}_t+\frac{1}{N}\sum_{j=1}^N\partial_x \big(X^{N,i}_t\cdot\varphi(X^{N,j}_t)\big)\Big\}dt+dW^i_t,~~X^{N,i}_0=\xi_i, \\
    & X^{N,i}_t(0) =X^{N,i}_t(1)=0,
  \end{aligned}
\right.
\end{eqnarray}
where $i=1,\ldots,N$, $W_t^1,\ldots,W_t^N$ are  independent $U$-valued $Q$-Wiener processes defined on a complete filtered probability space $\left(\Omega,\mathscr{F},\{\mathscr{F}_t\}_{t\in[0,T]},\mathbb{P}\right)$. The system \eqref{eqiBer} investigated in this context comprises a family of interacting stochastic Burgers-type equations, which model the interactive dynamics of fluid systems.

In this work, we can apply our general results  to study the large $N$ limit of the weakly interacting Burgers type equations (\ref{eqiBer}), which is governed by the following mean field Burgers type equations,
\begin{eqnarray}\label{eqBer}
\left\{
 \begin{aligned}
&dX_t=\big[\Delta X_t+\big(\mathbb{E}\varphi(X_t)\big)\partial_xX_t
+X_t\big(\partial_x\mathbb{E}\varphi(X_t)\big)\big]dt+dW_t,~X_0\sim\mu_0,\\
    &X_t(0) =X_t(1)=0,
  \end{aligned}
\right.
\end{eqnarray}
where $W_t$ is an $Q$-Wiener process defined on $\left(\Omega,\mathscr{F},\{\mathscr{F}_t\}_{t\in[0,T]},\mathbb{P}\right)$.

\vspace{1mm}
Set
\begin{equation*}
\mathbb{H}:=L^2(\mathcal{O},\mathbb{R}),~~\mathbb{V}:=W_0^{1,2}(\mathcal{O}, \mathbb{R}),~~\mathbb{X}:=W_0^{2,2}(\mathcal{O}, \mathbb{R}).
\end{equation*}
Identifying $\mathbb H$ with its dual space by the Riesz isomorphism, then we have
\begin{equation}\label{g1Ber}
\mathbb V\subset \mathbb H(\simeq \mathbb H^*)\subset \mathbb V^*.
\end{equation}

The large $N$ limit result to (\ref{eqiBer}) is given by the following theorem.
\begin{theorem}\label{thber1}
Suppose that  $\varphi\in \mathcal{C}_b^1(\mathbb{R};\mathbb{R})$ and  the initial value condition $(\mathbf{A_6})$ holds with $\vartheta=4$. There exists a unique martingale solution $\Gamma$  to  (\ref{eqBer}) with initial law $\mu_0\in\mathscr{P}_p(\mathbb{H})\cap \mathscr{P}_2(\mathbb{V})$, where $p\in[4,\infty)$,  such that
\begin{equation}\label{esb1}
\lim_{N\to\infty}\mathbb{E}\Big\{\mathcal{W}_{2,\mathbb{C}_T}(\mathcal{S}^{N},\Gamma)^2+\mathcal{W}_{2,L^{2}([0,T];\mathbb{V})}(\mathcal{S}^{N},\Gamma)^{2}\Big\}=0.
\end{equation}

\end{theorem}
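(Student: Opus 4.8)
The plan is to realize (\ref{eqBer}) as a particular case of the abstract mean field SPDE (\ref{eqSPDE}) in the Gelfand triple $\mathbb{V}=W_0^{1,2}(\mathcal{O},\mathbb{R})\subset\mathbb{H}=L^2(\mathcal{O},\mathbb{R})\subset\mathbb{V}^*$ with $\mathbb{X}=W_0^{2,2}(\mathcal{O},\mathbb{R})$, and then to invoke Theorems \ref{th8} and \ref{th4} (together with Proposition \ref{th5} for the interacting system). For $\mu\in\mathscr{P}(\mathbb{H})$ write $g_\mu(x):=\int_{\mathcal{O}}\varphi(y(x))\,\mu(dy)$, so that the drift is $\mathcal{A}(t,u,\mu):=\Delta u+\partial_x(u\,g_\mu)$; expanding $\partial_x(u\,g_\mu)=g_\mu\,\partial_x u+u\,\partial_x g_\mu$ and using $g_{\mathscr{L}_{X_t}}=\mathbb{E}\varphi(X_t)$ recovers exactly the bracket in (\ref{eqBer}), while the diffusion $\mathcal{B}$ is the constant canonical embedding of the $Q$-Wiener process, so every difference $\mathcal{B}(t,u,\mu)-\mathcal{B}(t,v,\nu)$ vanishes and the $\mathcal{B}$-estimates in $(\mathbf{A_2})$--$(\mathbf{A_4})$ reduce to the finite-trace assumption on $Q$. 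It then remains to check $(\mathbf{A_0})$--$(\mathbf{A_4})$ and $(\mathbf{A_5'})$ (which is stronger than $(\mathbf{A_5^*})$) with $\alpha=2$; condition $(\mathbf{A_6})$ is imposed in the statement, and $(\mathbf{A_0})$ holds because the Dirichlet eigenfunctions of $\Delta$ on $[0,1]$ lie in $\mathbb{X}$, form an orthonormal basis of $\mathbb{H}$ and are orthogonal in $\mathbb{V}$.

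Two structural facts, both coming from $\varphi\in\mathcal{C}_b^1$, drive all the estimates: (i) $\|g_\mu\|_{L^\infty}\le\|\varphi\|_\infty$ uniformly in $\mu$; and (ii) for any coupling $\pi\in\mathscr{C}(\mu,\nu)$ one has $|g_\mu(x)-g_\nu(x)|\le\|\varphi'\|_\infty\int|y(x)-z(x)|\,\pi(dy,dz)$, whence by Minkowski's integral inequality and optimization over $\pi$, $\|g_\mu-g_\nu\|_{L^2}\le\|\varphi'\|_\infty\,\mathcal{W}_{2,\mathbb{H}}(\mu,\nu)$, and similarly $\|\partial_x g_\mu\|_{L^2}\le\|\varphi'\|_\infty\,\mu(\|\cdot\|_{\mathbb{V}})$. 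Using (i) and integration by parts, ${}_{\mathbb{V}^*}\langle\mathcal{A}(t,u,\mu),u\rangle_{\mathbb{V}}=-\|\nabla u\|_{L^2}^2-\int_{\mathcal{O}}u\,g_\mu\,\partial_x u\,dx\le-\|\nabla u\|_{L^2}^2+\|\varphi\|_\infty\|u\|_{\mathbb{H}}\|\nabla u\|_{L^2}$, which after Young's inequality yields the coercivity $(\mathbf{A_2})$ with $\alpha=2$ and $\delta_1<2$; the same bound gives $\|\mathcal{A}(t,u,\mu)\|_{\mathbb{V}^*}\le C\|u\|_{\mathbb{V}}$, hence the growth condition $(\mathbf{A_3})$ with $\beta=0$. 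Demicontinuity $(\mathbf{A_1})$ is clear in $u$ by linearity; in $\mu$ it reduces, by Fubini, to weak continuity of $\mu\mapsto\mu(F)$ for $F(y):=\int_{\mathcal{O}}u\,(\partial_x w)\,\varphi(y(\cdot))\,dx$, and $F$ is bounded and continuous on $\mathbb{H}$ by dominated convergence (an $L^2$-convergent sequence has an a.e.-convergent subsequence and $|\varphi|\le\|\varphi\|_\infty$).

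For $(\mathbf{A_4})$ on $\mathbb{H}_n$ one computes $\langle\mathcal{A}(t,u,\mu),u\rangle_{\mathbb{V}}=-\|\Delta u\|_{L^2}^2-(\partial_x(u\,g_\mu),\Delta u)_{L^2}$ and estimates $|(\partial_x(u\,g_\mu),\Delta u)_{L^2}|\le(\|\varphi\|_\infty\|u\|_{\mathbb{V}}+\|u\|_{L^\infty}\|\partial_x g_\mu\|_{L^2})\|u\|_{\mathbb{X}}$; the one-dimensional Sobolev embedding $\|u\|_{L^\infty}\le C\|u\|_{\mathbb{V}}$ and (ii) then give, after absorbing a fraction of $\|u\|_{\mathbb{X}}^2$ by Young, $2\langle\mathcal{A}(t,u,\mu),u\rangle_{\mathbb{V}}+\|u\|_{\mathbb{X}}^2\le C\|u\|_{\mathbb{V}}^2(1+\mu(\|\cdot\|_{\mathbb{V}}^2))$, which matches $(\mathbf{A_4})$ with $\gamma=2$ (and with exponents for which $\vartheta=4$ is admissible). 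For $(\mathbf{A_5'})$ I would split $\mathcal{A}(t,u,\mu)-\mathcal{A}(t,v,\nu)=\Delta(u-v)+\partial_x((u-v)g_\mu)+\partial_x(v(g_\mu-g_\nu))$ and pair with $u-v$: the Laplacian contributes $-\|\nabla(u-v)\|_{L^2}^2$; the first cross term equals $-\int_{\mathcal{O}}(u-v)\,g_\mu\,\partial_x(u-v)\,dx$, bounded by $\|\varphi\|_\infty\|u-v\|_{\mathbb{H}}\|\nabla(u-v)\|_{L^2}\le\frac{1}{4}\|\nabla(u-v)\|_{L^2}^2+C\|u-v\|_{\mathbb{H}}^2$; the second cross term equals $-\int_{\mathcal{O}}v\,(g_\mu-g_\nu)\,\partial_x(u-v)\,dx$, bounded via $\|v\|_{L^\infty}\le C\|v\|_{\mathbb{V}}$ by $C\|v\|_{\mathbb{V}}\|g_\mu-g_\nu\|_{L^2}\|\nabla(u-v)\|_{L^2}\le\frac{1}{4}\|\nabla(u-v)\|_{L^2}^2+C\|v\|_{\mathbb{V}}^2\|g_\mu-g_\nu\|_{L^2}^2$, and (ii) turns $\|g_\mu-g_\nu\|_{L^2}^2$ into $C\,\mathcal{W}_{2,\mathbb{H}}(\mu,\nu)^2$. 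Collecting terms yields ${}_{\mathbb{V}^*}\langle\mathcal{A}(t,u,\mu)-\mathcal{A}(t,v,\nu),u-v\rangle_{\mathbb{V}}\le C\|u-v\|_{\mathbb{H}}^2+C\|v\|_{\mathbb{V}}^2\,\mathcal{W}_{2,\mathbb{H}}(\mu,\nu)^2$, i.e. $(\mathbf{A_5'})$ holds with $\rho\equiv0$ and $\eta(v,\nu)=C\|v\|_{\mathbb{V}}^2$, both dominated as in (\ref{esq22}) since $\beta=0$.

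With $(\mathbf{A_0})$--$(\mathbf{A_4})$, $(\mathbf{A_5'})$ (hence $(\mathbf{A_5^*})$) and $(\mathbf{A_6})$ verified, Proposition \ref{th5} gives well-posedness of the $N$-particle system (\ref{eqiBer}), Theorem \ref{th8} gives the unique strong and weak solution $\Gamma$ of (\ref{eqBer}), and Theorem \ref{th4} yields the mean field limit (\ref{esb1}) (the $L^{\alpha}([0,T];\mathbb{V})$-part with $\alpha=2$). The step I expect to be most delicate is the $\mathbb{X}$-level a priori estimate contained in $(\mathbf{A_4})$: controlling the contribution $\|u\|_{L^\infty}\|\partial_x g_\mu\|_{L^2}\|u\|_{\mathbb{X}}$ is exactly where the regularity $\varphi'\in L^\infty$ and the one-dimensionality of $\mathcal{O}$ are genuinely used; a secondary nuisance is that point evaluations are not continuous on $L^2$, so the $\mu$-continuity in $(\mathbf{A_1})$ must be routed through the Fubini and dominated-convergence argument above rather than through naive continuity of $y\mapsto g_{\delta_y}$.
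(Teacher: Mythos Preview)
Your proposal is correct and follows essentially the same route as the paper: verify $(\mathbf{A_0})$--$(\mathbf{A_4})$ and $(\mathbf{A_5'})$ for $\mathcal{A}(u,\mu)=\Delta u+\partial_x(u\,g_\mu)$ in the triple $W_0^{1,2}\subset L^2\subset (W_0^{1,2})^*$ with $\mathbb{X}=W_0^{2,2}$, then invoke Theorems~\ref{th8} and~\ref{th4}. The only (minor) differences are organizational: you obtain the growth bound $\|\mathcal{A}(u,\mu)\|_{\mathbb{V}^*}\le C\|u\|_{\mathbb V}$ with $\beta=0$ (the paper records $\beta=2$, which is why the statement carries $\vartheta=4$; your tighter choice would even allow $\vartheta=2$), you check $(\mathbf{A_1})$ via the observation that $y\mapsto\int_{\mathcal O}u\,\partial_x v\,\varphi(y(\cdot))\,dx$ is bounded and continuous on $\mathbb H$ (the paper instead bounds ${}_{\mathbb V^*}\langle\mathcal A(u_1,\mu)-\mathcal A(u_2,\nu),v\rangle_{\mathbb V}$ directly through a coupling, arriving at a Wasserstein-controlled estimate), and your verification of $(\mathbf{A_5'})$ uses the three-term split $\Delta(u-v)+\partial_x((u-v)g_\mu)+\partial_x(v(g_\mu-g_\nu))$ where the paper further expands the product rule into four pieces $I$--$IV$. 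All of these lead to the same bounds, and your identification of the $(\mathbf{A_4})$ step as the place where $\varphi'\in L^\infty$ and the one-dimensional embedding $W_0^{1,2}\hookrightarrow L^\infty$ are genuinely needed matches the paper's computation exactly.
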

\begin{proof}
We denote
$$\mathcal{A}(t,u,\mu):=\Delta u+\int\partial_x(u\cdot\varphi(y))\mu(dy).$$
Then it suffices to check that $\mathbf{(A_1)}$-$\mathbf{(A_4)}$ and $\mathbf{(A_5')}$ hold for $\mathcal{A}$.

By the integration of parts formula, we have
\begin{eqnarray*}
_{\mathbb{V}^*}\langle \mathcal{A}(u,\mu),u\rangle_{\mathbb{V}}\leq
&&\!\!\!\!\!\!\!\!-\|u\|_{1}^{2}-\int\int_{\mathcal{O}}\varphi(y)u\cdot\partial_xudx\mu(dy)
\nonumber \\
\leq&&\!\!\!\!\!\!\!\!-\frac{1}{2}\|u\|_{1}^{2}+C\|u\|_{L^2}^2,
\end{eqnarray*}
and
\begin{eqnarray*}
|_{\mathbb{V}^*}\langle \mathcal{A}(u,\mu),v\rangle_{\mathbb{V}}|\leq&&\!\!\!\!\!\!\!\!\|u\|_1\|v\|_1
+C(\|u\|_{1}+\|u\|_{L^2})\|v\|_{L^2}.
\end{eqnarray*}
Then
$$\|\mathcal{A}(u,\mu)\|_{\mathbb{V}^*}^2\leq C\(\|u\|_{1}^2+\|u\|_{L^2}^2),$$
thus  $\mathbf{(A_2)}$ and $\mathbf{(A_3)}$ hold for $\alpha=2,\beta=2$.

For any $u_1,u_2,v\in {\mathbb{V}}$, $\mu,\nu\in\mathbb{M}_1$ and coupling $\pi\in\mathscr{C}(\mu,\nu)$, we have
\begin{eqnarray}\label{demi02}
&&\!\!\!\!\!\!\!\!_{\mathbb{V}^*}\langle \mathcal{A}(u_1,\mu)-\mathcal{A}(u_2,\nu),v\rangle_{\mathbb{V}}
\nonumber \\ \leq&&\!\!\!\!\!\!\!\!\|u_1-u_2\|_{1}\|v\|_1+{_{\mathbb{V}^*}}\langle \Big(\int\varphi(y_1)\mu(dy_1)-\int\varphi(y_2)\nu(dy_2)\Big)\partial_xu_2,v\rangle_{\mathbb{V}}
\nonumber \\
&&\!\!\!\!\!\!\!\!+{_{\mathbb{V}^*}}\langle \int\varphi(y_1)\mu(dy_1)(\partial_xu_1-\partial_xu_2),v\rangle_{\mathbb{V}}
+{_{\mathbb{V}^*}}\langle \int\partial_x\varphi(y_1)\mu(dy_1)(u_1-u_2),v\rangle_{\mathbb{V}}
\nonumber \\
&&\!\!\!\!\!\!\!\!
+{_{\mathbb{V}^*}}\langle \Big(\int\partial_x\varphi(y_1)\mu(dy_1)-\int\partial_x\varphi(y_2)\nu(dy_2)\Big)u_2,v\rangle_{\mathbb{V}}
\nonumber \\ \leq&&\!\!\!\!\!\!\!\!\|u_1-u_2\|_{1}\|v\|_1+C\|v\|_{L^\infty}\|u_2\|_{1}
\int\|y_1-y_2\|_{L^2}\pi(dy_1,dy_2)dx+C\|u_1-u_2\|_{1}\|v\|_{L^2}
\nonumber \\
&&\!\!\!\!\!\!\!\!+C\|u_1-u_2\|_{L^2}\|v\|_{1}
+C(\|u_2\|_{L^\infty}\|v\|_{1}+\|u_2\|_{1}\|v\|_{L^\infty})
\int\|y_1-y_2\|_{L^2}\pi(dy_1,dy_2)
\nonumber \\
\leq&&\!\!\!\!\!\!\!\!C\|u_1-u_2\|_{1}\|v\|_1+C\|u_2\|_{1}\|v\|_1\Big(\int \|y_1-y_2\|_{L^2}^2\pi(dy_1,dy_2)\Big)^{1/2}.\nonumber
\end{eqnarray}
Taking infimum for couplings $\pi\in\mathscr{C}(\mu,\nu)$, we obtain
\begin{eqnarray*}
_{\mathbb{V}^*}\langle \mathcal{A}(u_1,\mu)-\mathcal{A}(u_2,\nu),v\rangle_{\mathbb{V}}
\leq&&C\|u_1-u_2\|_{1}\|v\|_1+C\|u_2\|_{1}\|v\|_1\mathcal{W}_{2,L^2}(\mu,\nu),\nonumber
\end{eqnarray*}
which tends to $0$ if $(u_1,\mu)$ tends to $(u_2,\nu)$ in ${\mathbb{V}}\times\mathbb{M}_1$. Thus, the demicontinuity $\mathbf{(A_1)}$ holds.

On the other hand,
\begin{equation}\label{CHL01a}
\langle \Delta u,u\rangle_{\mathbb{V}}=-\|u\|_2^2
\end{equation}
and
\begin{eqnarray}\label{CHL02b}
&&\!\!\!\!\!\!\!\!\langle\int\varphi(y)\mu(dy)\partial_xu+\int\partial_x\varphi(y)\mu(dy)u,u\rangle_{\mathbb{V}}
\nonumber\\
=&&\!\!\!\!\!\!\!\!\int\int_{\mathcal{O}}\varphi(y)\partial_xu\cdot\partial^2_xudx \mu(dy) +\int\int_{\mathcal{O}}\partial_x\varphi(y)u\cdot\partial^2_xudx\mu(dy)
\nonumber\\
=&&\!\!\!\!\!\!\!\!\int\int_{\mathcal{O}}\varphi(y)\partial_xu\cdot\partial^2_xudx \mu(dy) +\int\int_{\mathcal{O}}\partial_y\varphi(y)\partial_xy\cdot u\partial^2_xudx\mu(dy)
\nonumber \\
\leq&&\!\!\!\!\!\!\!\!C\|u\|_{2}(\|u\|_{1}+\|u\|_{1}\int\|y\|_1\mu(dy))\nonumber \\
\leq&&\!\!\!\!\!\!\!\!\frac{1}{2}\|u\|_2^2+C\big(\|u\|_{1}^2+\|u\|_{1}^2\mu(\|\cdot\|_1^2)\big),
\end{eqnarray}
which  implies the the condition $\mathbf{(A_4)}$ hold for $\gamma=\lambda=2$.

Next we show the condition $\mathbf{(A_5')}$ is satisfied. Note that
\begin{eqnarray}\label{ese1a1}
&&\!\!\!\!\!\!\!\!_{\mathbb{V}^*}\langle \mathcal{A}(u,\mu)-\mathcal{A}(v,\nu),u-v\rangle_{\mathbb{V}}
\nonumber \\
=&&\!\!\!\!\!\!\!\!-\|u-v\|_{1}^2+{_{\mathbb{V}^*}}\langle \big(\int\varphi(y_1)\mu(dy_1)-\int\varphi(y_2)\nu(dy_2)\big)\partial_xv,u-v\rangle_{\mathbb{V}}
\nonumber \\
&&\!\!\!\!\!\!\!\!+{_{\mathbb{V}^*}}\langle \int\varphi(y_1)\mu(dy_1)(\partial_xu-\partial_xv),u-v\rangle_{\mathbb{V}}
\nonumber \\
&&\!\!\!\!\!\!\!\!+{_{\mathbb{V}^*}}\langle \int\partial_x\varphi(y_1)\mu(dy_1)(u-v),u-v\rangle_{\mathbb{V}}
\nonumber \\
&&\!\!\!\!\!\!\!\!
+{_{\mathbb{V}^*}}\langle \big(\int\partial_x\varphi(y_1)\mu(dy_1)-\int\partial_x\varphi(y_2)\nu(dy_2)\big)v,u-v\rangle_{\mathbb{V}}
\nonumber \\
=:
&&\!\!\!\!\!\!\!\!-\|u-v\|_{1}^2+I+II+III+IV.
\end{eqnarray}
For $I$, there exists a constant $C>0$ such that
\begin{eqnarray}\label{ese1a02}
I\leq
&&\!\!\!\!\!\!\!\!\|u-v\|_{L^\infty}\int_{\mathcal{O}}\int|y_1-y_2|\pi(dy_1,dy_2)\partial_xvdx
\nonumber \\\leq
&&\!\!\!\!\!\!\!\!\frac{1}{4}\|u-v\|_{1}^2+
C\|v\|_{1}^2\int \|y_1-y_2\|_{L^2}^2\pi(dy_1,dy_2),
\end{eqnarray}
with any coupling $\pi\in\mathscr{C}(\mu,\nu)$.

Using the boundedness of $\varphi$, it can be inferred that
\begin{eqnarray}\label{ese1a02}
II\leq
&&\!\!\!\!\!\!\!\!C\|u-v\|_{1}\|u-v\|_{L^2}
\nonumber \\\leq
&&\!\!\!\!\!\!\!\!\frac{1}{4}\|u-v\|_{1}^2+C\|u-v\|_{L^2}^2.
\end{eqnarray}
Similarly, we obtain
\begin{eqnarray}\label{ese1a03}
III=
&&\!\!\!\!\!\!\!\!-\int\int_{\mathcal{O}}\varphi(y_1)\partial_x(u-v)^2dx\mu(dy_1)
\nonumber \\\leq
&&\!\!\!\!\!\!\!\!C\int_{\mathcal{O}}|\partial_x(u-v)|\cdot|u-v|dx
\nonumber \\\leq
&&\!\!\!\!\!\!\!\!\frac{1}{4}\|u-v\|_{1}^2+C\|u-v\|_{L^2}^2
\end{eqnarray}
and
\begin{eqnarray}\label{ese1a04}
IV=
&&\!\!\!\!\!\!\!\!-\int\int_{\mathcal{O}}(\varphi(y_1)-\varphi(y_2))\partial_x((v(u-v))dx\pi(dy_1,dy_2)
\nonumber \\\leq
&&\!\!\!\!\!\!\!\!C\int\int_{\mathcal{O}}|y_1-y_2|\big((u-v)\partial_xv+v(\partial_xu-\partial_xv)\big)dx\pi(dy_1,dy_2)
\nonumber \\\leq
&&\!\!\!\!\!\!\!\!\frac{1}{4}\|u-v\|_{1}^2+
C\|v\|_{1}^2\int \|y_1-y_2\|_{L^2}^2\pi(dy_1,dy_2).
\end{eqnarray}
Combining \eref{ese1a1}-\eref{ese1a04} and taking infimum for couplings $\pi\in\mathscr{C}(\mu,\nu)$, we obtain
\begin{eqnarray*}
&&\!\!\!\!\!\!\!\!_{\mathbb{V}^*}\langle \mathcal{A}(u,\mu)-\mathcal{A}(v,\nu),u-v\rangle_{\mathbb{V}}
\leq C\|u-v\|_{L^2}^2+
C\|v\|_{1}^2\mathcal{W}_{2,L^2}(\mu,\nu).
\end{eqnarray*}
Therefore, the results hold by applying Theorems \ref{th8}-\ref{th4}.
\end{proof}

\begin{remark}
 To the best of our knowledge, this is the first result showing the mean field limit for the weakly interacting SPDEs, especially for stochastic Burgers type equations, with high-order nonlinear interactions.
\end{remark}

\section{Proof of well-posedness}\label{well-posed}

In this part, we shall prove the well-posedness for the mean field SPDE (\ref{eqSPDE}). More precisely, in Subsection \ref{sub5.1}, we first consider the Galerkin approximating sequences of Eq.~(\ref{eqSPDE}) and establish some a priori estimates. In Subsection \ref{sub5.2}, we  establish the tightness of the approximation sequence in $\mathbb{C}_T(\mathbb{H})$ and $L^\alpha([0,T];\mathbb{V})$. Leveraging the martingale approach, we give the proof of the existence of martingale  solutions to mean field SPDE (\ref{eqSPDE}) and the existence of strong solutions by means of the modified Yamada-Watanabe theorem in Subsections \ref{sec2.4}. Finally, we complete the proof of Theorem \ref{th2} in Subsection \ref{proof2} by showing the pathwise uniqueness of solutions to Eq.~(\ref{eqSPDE}) under two distinct types of local monotonicity conditions.

\subsection{Galerkin scheme}\label{sub5.1}
Choose $\{e_1,e_2,\cdots\}\subset {\mathbb{X}}$ defined in the condition $(\mathbf{A_0})$ as an orthonormal basis (ONB) on ${\mathbb{H}}$. Consider the maps
$$\pi_n:{\mathbb{V}}^{*}\rightarrow {\mathbb{H}}_n,~n\in\mathbb{N},$$
by
$$\pi_n x:=\sum\limits_{i=1}^{n}{}_{{\mathbb{V}}^*}\langle x,e_i\rangle_{{\mathbb{V}}
}e_i,~x\in \mathbb{V}
^*.$$
It is straightforward that if we restrict $\pi_n$ to ${\mathbb{H}}$, denoted by $\pi_n|_{{\mathbb{H}}}$, then it is an orthogonal projection onto ${\mathbb{H}}_n$ on ${\mathbb{H}}$, and we know
$$_{\mathbb{V}^*}\langle \pi_n\mathcal{A}(t,u,\mu),v\rangle_{\mathbb{V}}=\langle \pi_n\mathcal{A}(t,u,\mu),v\rangle_{\mathbb{H}}={_{\mathbb{V}^*}}\langle \mathcal{A}(t,u,\mu),v\rangle_{\mathbb{V}},~u\in\mathbb{V},v\in \mathbb{H}_n.$$
Due to $\mathbf{(A_0)}$,  for each $v\in \mathbb{H}_n$ we can get that
$$\langle e_i,v\rangle_{\mathbb{V}}=0,~i\geq n+1.$$
In light of $\mathbf{(A_4)}$, for each $t\in[0,T],u\in \mathbb{H}_n,\mu\in\mathscr{P}(\mathbb{H}_n)$ we know that $\mathcal{A}(t,u,\mu)\in\mathbb{V}$ and thus
$$\sum_{i=1}^{\infty}{}_{\mathbb{V}^*}\langle \mathcal{A}(t,u,\mu),e_i\rangle_{\mathbb{V}}e_i=\sum_{i=1}^{\infty}\langle \mathcal{A}(t,u,\mu),e_i\rangle_{\mathbb{H}}e_i=\mathcal{A}(t,u,\mu).$$
Then for each $u,v\in \mathbb{H}_n,\mu\in\mathscr{P}(\mathbb{H}_n)$, it follows that
\begin{eqnarray}\label{es7}
\langle \pi_n\mathcal{A}(t,u,\mu),v\rangle_{\mathbb{V}}=&&\!\!\!\!\!\!\!\!\Big\langle \sum_{i=1}^{n}{}_{\mathbb{V}^*}\langle \mathcal{A}(t,u,\mu),e_i\rangle_{\mathbb{V}}e_i,v\Big\rangle_{\mathbb{V}}
=\Big\langle \sum_{i=1}^{\infty}{}_{\mathbb{V}^*}\langle \mathcal{A}(t,u,\mu),e_i\rangle_{\mathbb{V}}e_i,v\Big\rangle_{\mathbb{V}}
\nonumber \\
=&&\!\!\!\!\!\!\!\!\langle \mathcal{A}(t,u,\mu),v\rangle_{\mathbb{V}}.
\end{eqnarray}

 Denote by $\{g_1,g_2,\cdots\}$ the ONB of $U$. Let
\begin{equation*}
W^{(n)}_t:=\tilde{\pi}_nW_t=\sum\limits_{i=1}^{n}\langle W_t,g_i\rangle_{U}g_i,~n\in\mathbb{N},
\end{equation*}
where $\tilde{\pi}_n$ is an orthonormal projection onto $U^n:=\text{span}\{g_1,g_2,\cdots,g_n\}$ on $U$.

For any $n\in\mathbb{N}$, we consider the following stochastic equation on ${\mathbb{H}}_n$
\begin{eqnarray}\label{eqf}
dX^{(n)}_t=\pi_n\mathcal{A}(t,X^{(n)}_t,\mathscr{L}_{X^{(n)}_t})dt
+\pi_n\mathcal{B}(t,X^{(n)}_t,\mathscr{L}_{X^{(n)}_t})dW^{(n)}_t,
\end{eqnarray}
with initial value $X^{(n)}_0=\pi_n\xi$.  Setting
$$\Omega^{(n)}:=C([0,T];\mathbb{H}_n)$$
and
$$\mathscr{F}_t^{(n)}:=\mathscr{B}(C([0,t];\mathbb{H}_n)),~\mathscr{F}^{(n)}:=\vee_{t\in[0,T]}\mathscr{B}(C([0,t];\mathbb{H}_n)).$$
Then by \cite[Theorem 3.1]{HLL23}, under $\mathbf{(A_1)}$-$\mathbf{(A_3)}$, we know that (\ref{eqf}) admits a martingale solution, i.e., there exists a probability measure $\mathbf{P}_n\in\mathscr{P}(\Omega^{(n)})$ such that $(M1)$ and $(M2)$ in Definition \ref{de3} hold. Throughout this section, the generic point in $\Omega^{(n)}$ is still denoted by  $X^{n}$.

\vspace{2mm}
We have the following a priori estimates.
\begin{lemma}\label{lem3.0}
Suppose that the assumptions in Theorem \ref{th1} hold. For any $T>0,q\geq 2$ there exists a constant $C_{q,T}>0$ such that for any $n\in\mathbb{N}$,
\begin{eqnarray*}
\mathbb{E}^{\mathbf{P}_n}\Big[\sup_{t\in[0,T]}\|X^{(n)}_t\|_{\mathbb{H}}^q\Big]+\mathbb{E}^{\mathbf{P}_n}\int_0^T\|X^{(n)}_t\|_{\mathbb{V}}^{\alpha}\|X^{(n)}_t\|_{\mathbb{H}}^{q-2}dt
\leq C_{q,T}\big(1+\mathbb{E}\|\xi\|_{\mathbb{H}}^q\big).
\end{eqnarray*}
\end{lemma}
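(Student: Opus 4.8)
The plan is to apply It\^o's formula to $t\mapsto\|X^{(n)}_t\|_{\mathbb{H}}^{q}$ on the finite-dimensional space $\mathbb{H}_n$, to use the coercivity $(\mathbf{A_2})$ and the bound $(\mathbf{A_3})$ on $\|\mathcal{B}\|_{L_2(U,\mathbb{H})}$, and to close a Gronwall estimate whose constant does not depend on $n$; since the right-hand side is infinite when $\mathbb{E}\|\xi\|_{\mathbb{H}}^{q}=\infty$, we may assume $q\ge 2$ and $\mathbb{E}\|\xi\|_{\mathbb{H}}^{q}<\infty$. I will use throughout the identities recorded in Section~\ref{sub5.1}: for $u\in\mathbb{H}_n$ one has $\langle\pi_n\mathcal{A}(s,u,\mu),u\rangle_{\mathbb{H}}={}_{\mathbb{V}^*}\langle\mathcal{A}(s,u,\mu),u\rangle_{\mathbb{V}}$ and $\|\pi_n\mathcal{B}(s,u,\mu)\|_{L_2(U,\mathbb{H})}\le\|\mathcal{B}(s,u,\mu)\|_{L_2(U,\mathbb{H})}$; moreover, since $\|\cdot\|_{\mathbb{H}}$ and $\|\cdot\|_{\mathbb{V}}$ are equivalent on $\mathbb{H}_n$, $\mathscr{L}_{X^{(n)}_s}\in\mathbb{M}_1$ for each fixed $n$, so that $(\mathbf{A_2})$ and $(\mathbf{A_3})$ apply with $\mu=\mathscr{L}_{X^{(n)}_s}$, and $\mathscr{L}_{X^{(n)}_s}(\|\cdot\|_{\mathbb{H}}^{2})=\mathbb{E}^{\mathbf{P}_n}\|X^{(n)}_s\|_{\mathbb{H}}^{2}$. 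I will also use that the solution $\mathbf{P}_n$ constructed in Section~\ref{sub5.1} satisfies $\mathbb{E}^{\mathbf{P}_n}\big[\sup_{t\le T}\|X^{(n)}_t\|_{\mathbb{H}}^{q}\big]<\infty$ for each fixed $n$ (available from the construction, or, if need be, after a preliminary truncation of the coefficients); this legitimizes the manipulations below, the point being that the resulting bound is uniform in $n$.

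First I would apply It\^o's formula to $\|X^{(n)}_t\|_{\mathbb{H}}^{q}=(\|X^{(n)}_t\|_{\mathbb{H}}^{2})^{q/2}$, bound $\tfrac{q}{2}\|X^{(n)}_s\|_{\mathbb{H}}^{q-2}\big(2\langle\pi_n\mathcal{A},X^{(n)}_s\rangle_{\mathbb{H}}+\|\pi_n\mathcal{B}\|_{L_2(U,\mathbb{H})}^{2}\big)$ by $(\mathbf{A_2})$, and the It\^o correction term $\tfrac{q(q-2)}{2}\|X^{(n)}_s\|_{\mathbb{H}}^{q-4}\|(\pi_n\mathcal{B})^{*}X^{(n)}_s\|_{U}^{2}\le\tfrac{q(q-2)}{2}\|X^{(n)}_s\|_{\mathbb{H}}^{q-2}\|\mathcal{B}\|_{L_2(U,\mathbb{H})}^{2}$ by $(\mathbf{A_3})$, to obtain, with $\delta_1$ from $(\mathbf{A_2})$,
\begin{align*}
\|X^{(n)}_t\|_{\mathbb{H}}^{q}+\frac{q\delta_1}{2}\int_0^{t}\|X^{(n)}_s\|_{\mathbb{H}}^{q-2}\|X^{(n)}_s\|_{\mathbb{V}}^{\alpha}\,ds
&\le\|\pi_n\xi\|_{\mathbb{H}}^{q}+M^{(n)}_t\\
&\quad+C_q\int_0^{t}\|X^{(n)}_s\|_{\mathbb{H}}^{q-2}\big(1+\|X^{(n)}_s\|_{\mathbb{H}}^{2}+\mathbb{E}^{\mathbf{P}_n}\|X^{(n)}_s\|_{\mathbb{H}}^{2}\big)\,ds,
\end{align*}
where $M^{(n)}_t:=q\int_0^{t}\|X^{(n)}_s\|_{\mathbb{H}}^{q-2}\langle X^{(n)}_s,\pi_n\mathcal{B}(s,X^{(n)}_s,\mathscr{L}_{X^{(n)}_s})\,dW^{(n)}_s\rangle_{\mathbb{H}}$ is a local martingale. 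Young's and Jensen's inequalities turn the last integrand into $C_q\big(1+\|X^{(n)}_s\|_{\mathbb{H}}^{q}\big)+\tfrac{2}{q}\,\mathbb{E}^{\mathbf{P}_n}\|X^{(n)}_s\|_{\mathbb{H}}^{q}$ (the mean-field part via $\|X^{(n)}_s\|_{\mathbb{H}}^{q-2}\mathbb{E}^{\mathbf{P}_n}\|X^{(n)}_s\|_{\mathbb{H}}^{2}\le\tfrac{q-2}{q}\|X^{(n)}_s\|_{\mathbb{H}}^{q}+\tfrac{2}{q}(\mathbb{E}^{\mathbf{P}_n}\|X^{(n)}_s\|_{\mathbb{H}}^{2})^{q/2}$ and $(\mathbb{E}^{\mathbf{P}_n}\|X^{(n)}_s\|_{\mathbb{H}}^{2})^{q/2}\le\mathbb{E}^{\mathbf{P}_n}\|X^{(n)}_s\|_{\mathbb{H}}^{q}$).

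Next I would close the estimate in two stages, localizing with $\tau^{(n)}_R:=\inf\{t\ge0:\|X^{(n)}_t\|_{\mathbb{H}}\ge R\}\wedge T$, which increases to $T$ since the paths in $\Omega^{(n)}$ are continuous. Stage one: dropping the dissipation term, replacing $t$ by $t\wedge\tau^{(n)}_R$ (after which $M^{(n)}_{\cdot\wedge\tau^{(n)}_R}$ is a true martingale, its bracket being deterministically bounded), taking expectations, and then letting $R\to\infty$ (using the fixed-$n$ finiteness above to pass to the limit), I get $m_n(t):=\mathbb{E}^{\mathbf{P}_n}\|X^{(n)}_t\|_{\mathbb{H}}^{q}\le\mathbb{E}\|\xi\|_{\mathbb{H}}^{q}+C_q\int_0^{t}(1+m_n(s))\,ds$, whence by Gronwall's lemma $\sup_{t\le T}m_n(t)\le C_{q,T}(1+\mathbb{E}\|\xi\|_{\mathbb{H}}^{q})$ with $C_{q,T}$ independent of $n$ (note $\mathbb{E}\|\pi_n\xi\|_{\mathbb{H}}^{q}\le\mathbb{E}\|\xi\|_{\mathbb{H}}^{q}$). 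Stage two: now $\mathbb{E}^{\mathbf{P}_n}\|X^{(n)}_s\|_{\mathbb{H}}^{2}\le(\sup_{t\le T}m_n(t))^{2/q}$ is a constant bounded uniformly in $n$, so the last integrand of the displayed It\^o inequality is $\le C_{q,T}(1+\|X^{(n)}_s\|_{\mathbb{H}}^{q})$; taking $\sup_{s\le t\wedge\tau^{(n)}_R}$ and then expectations, estimating $\mathbb{E}^{\mathbf{P}_n}[\sup_{s\le t\wedge\tau^{(n)}_R}|M^{(n)}_s|]$ by the Burkholder--Davis--Gundy inequality together with $\langle M^{(n)}\rangle_{t}\le C_{q,T}\int_0^{t}(1+\|X^{(n)}_s\|_{\mathbb{H}}^{2q})\,ds$ and the factorisation $\langle M^{(n)}\rangle^{1/2}_{t\wedge\tau^{(n)}_R}\le C_{q,T}\Big(1+\big(\sup_{s\le t\wedge\tau^{(n)}_R}\|X^{(n)}_s\|_{\mathbb{H}}^{q}\big)^{1/2}\big(\int_0^{t\wedge\tau^{(n)}_R}\|X^{(n)}_s\|_{\mathbb{H}}^{q}\,ds\big)^{1/2}\Big)$, absorbing $\tfrac12\mathbb{E}^{\mathbf{P}_n}[\sup_{s\le t\wedge\tau^{(n)}_R}\|X^{(n)}_s\|_{\mathbb{H}}^{q}]$ into the left side by Young's inequality, and applying Gronwall's lemma (legitimate since this quantity is $\le R^q<\infty$), I obtain the asserted bound with $t$ replaced by $t\wedge\tau^{(n)}_R$, uniformly in $R$ and $n$. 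Letting $R\to\infty$, with Fatou's lemma on the supremum term and monotone convergence on the dissipation integral, finishes the proof.

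I expect the main obstacle to be the self-referential mean-field term. The coercivity $(\mathbf{A_2})$ feeds the \emph{global} quantity $\mathscr{L}_{X^{(n)}_s}(\|\cdot\|_{\mathbb{H}}^{2})=\mathbb{E}^{\mathbf{P}_n}\|X^{(n)}_s\|_{\mathbb{H}}^{2}$ into an inequality that is otherwise $\mathbf{P}_n$-pathwise, so localization by $\tau^{(n)}_R$ does not control it directly --- the same difficulty flagged in the Introduction. The device above circumvents it: one first closes a self-consistent Gronwall inequality for the plain moment $m_n$, using $\mathbb{E}^{\mathbf{P}_n}\|X^{(n)}_s\|_{\mathbb{H}}^{2}\le(\mathbb{E}^{\mathbf{P}_n}\|X^{(n)}_s\|_{\mathbb{H}}^{q})^{2/q}$ together with Young's inequality to bring the mean-field term back into the Gronwall loop, and only afterwards runs the pathwise-supremum estimate with that moment frozen as an $n$-independent constant. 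The remaining care is bookkeeping: the constants generated by $(\mathbf{A_2})$, $(\mathbf{A_3})$, the Burkholder--Davis--Gundy inequality, and the two Gronwall applications must all be seen to be independent of $n$, which they are, since none of these ingredients involves the dimension.
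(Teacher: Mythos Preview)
Your proposal is correct and follows essentially the same approach as the paper: It\^o's formula for $\|X^{(n)}_t\|_{\mathbb{H}}^{q}$, the coercivity $(\mathbf{A_2})$ and the growth bound $(\mathbf{A_3})$ to control the drift and correction terms, Young--Jensen to absorb the mean-field contribution $\mathscr{L}_{X^{(n)}_s}(\|\cdot\|_{\mathbb{H}}^{2})=\mathbb{E}^{\mathbf{P}_n}\|X^{(n)}_s\|_{\mathbb{H}}^{2}$ into $\mathbb{E}^{\mathbf{P}_n}\|X^{(n)}_s\|_{\mathbb{H}}^{q}$, localization by $\tau^{(n)}_R$, the Burkholder--Davis--Gundy inequality for the supremum of the martingale part, and Gronwall.

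The only organizational difference is that you split the argument into two stages (first close a Gronwall loop for the fixed-time moment $m_n(t)=\mathbb{E}^{\mathbf{P}_n}\|X^{(n)}_t\|_{\mathbb{H}}^{q}$, then freeze $m_n$ as an $n$-independent constant and run the pathwise-supremum estimate), whereas the paper does everything in a single pass: it combines the localized It\^o inequality with the BDG estimate to obtain directly
\[
\mathbb{E}^{\mathbf{P}_n}\Big[\sup_{t\le T\wedge\tau^{(n)}_R}\|X^{(n)}_t\|_{\mathbb{H}}^{q}\Big]+q\delta_1\,\mathbb{E}^{\mathbf{P}_n}\!\int_0^{T\wedge\tau^{(n)}_R}\!\|X^{(n)}_t\|_{\mathbb{H}}^{q-2}\|X^{(n)}_t\|_{\mathbb{V}}^{\alpha}\,dt
\le C_{q,T}\big(1+\mathbb{E}\|\xi\|_{\mathbb{H}}^{q}\big)+C_q\,\mathbb{E}^{\mathbf{P}_n}\!\int_0^{T}\!\|X^{(n)}_t\|_{\mathbb{H}}^{q}\,dt,
\]
lets $R\to\infty$ by monotone convergence, and then applies Gronwall to the supremum moment itself. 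Your two-stage version makes more explicit how the mean-field self-reference is decoupled, while the paper's version is shorter; both rely on the same fixed-$n$ finiteness to justify Gronwall, and the resulting constants are independent of $n$ for the same reasons you give.
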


\begin{proof}
Applying It\^{o}'s formula, for any $t\in[0,T]$ we have
\begin{eqnarray}\label{es241}
&&\!\!\!\!\!\!\!\!\|X^{(n)}_t\|_{\mathbb{H}}^q
\nonumber \\
=&&\!\!\!\!\!\!\!\!\|X^{(n)}_0\|_{\mathbb{H}}^q+q\int_0^t\|X^{(n)}_s\|_{\mathbb{H}}^{q-2}\langle X^{(n)}_s,\pi_n\mathcal{B}(s,X^{(n)}_s,\mathscr{L}_{X^{(n)}_s})dW^{(n)}_s\rangle_{\mathbb{H}}
\nonumber \\
&&\!\!\!\!\!\!\!\!
\frac{q(q-2)}{2}  \int_0^t\|X^{(n)}_s\|_{\mathbb{H}}^{q-4}
\cdot\|\big(\pi_n \mathcal{B}(s,X^{(n)}_s,\mathscr{L}_{X^{(n)}_s})\tilde{\pi}_n\big)^*X^{(n)}_s\|_{U}^2ds
\nonumber \\
&&\!\!\!\!\!\!\!\!+\frac{q}{2}\int_0^t\|X^{(n)}_s\|_{\mathbb{H}}^{q-2}\big(2{}_{\mathbb{V}^*}\langle \pi_n\mathcal{A}(s,X^{(n)}_s,\mathscr{L}_{X^{(n)}_s}),X^{(n)}_s\rangle_{\mathbb{V}}
\nonumber \\
&&\!\!\!\!\!\!\!\!
+\|\pi_n\mathcal{B}(s,X^{(n)}_s,\mathscr{L}_{X^{(n)}_s})\tilde{\pi}_n\|_{L_2(U,\mathbb{H})}^2\big)ds
\nonumber \\
=:&&\!\!\!\!\!\!\!\!\|X^{(n)}_0\|_{\mathbb{H}}^q+q\int_0^t\|X^{(n)}_s\|_{\mathbb{H}}^{q-2}\langle X^{(n)}_s,\pi_n\mathcal{B}(s,X^{(n)}_s,\mathscr{L}_{X^{(n)}_s})dW^{(n)}_s\rangle_{\mathbb{H}}+I+II.
\end{eqnarray}
For the term $I$, by assumption $\mathbf{(A_3)}$ it follows that
 \begin{eqnarray}\label{es36}
I\leq&&\!\!\!\!\!\!\!\!C_q\int_0^t\|X^{(n)}_s\|_{\mathbb{H}}^{q-2}\big(1+\|X^{(n)}_s\|_{\mathbb{H}}^2+\mathbb{E}^{\mathbf{P}_n}\|X^{(n)}_s\|_{\mathbb{H}}^2\big)ds
\nonumber \\
\leq&&\!\!\!\!\!\!\!\!C_q\int_0^t\Big(1+\|X^{(n)}_s\|_{\mathbb{H}}^q+(\mathbb{E}^{\mathbf{P}_n}\|X^{(n)}_s\|_{\mathbb{H}}^2)^{\frac{q}{2}}\Big)ds
\nonumber \\
\leq&&\!\!\!\!\!\!\!\!C_q\int_0^t\Big(1+\|X^{(n)}_s\|_{\mathbb{H}}^q+\mathbb{E}^{\mathbf{P}_n}\|X^{(n)}_s\|_{\mathbb{H}}^q\Big)ds.
\end{eqnarray}
For the term $II$, by assumption $\mathbf{(A_3)}$  and using similar arguments as in (\ref{es36}), we can get
 \begin{eqnarray}\label{es37}
II\leq&&\!\!\!\!\!\!\!\!-\frac{q\delta_1}{2}\int_0^t\|X^{(n)}_s\|_{\mathbb{H}}^{q-2}\|X^{(n)}_s\|_{\mathbb{V}}^{\alpha}ds
\nonumber \\
&&\!\!\!\!\!\!\!\!
+C_q\int_0^t\|X^{(n)}_s\|_{\mathbb{H}}^{q-2}\big(1+\|X^{(n)}_s\|_{\mathbb{H}}^2+\mathbb{E}^{\mathbf{P}_n}\|X^{(n)}_s\|_{\mathbb{H}}^2\big)ds
\nonumber \\
\leq&&\!\!\!\!\!\!\!\!-\frac{q\delta_1}{2}\int_0^t\|X^{(n)}_s\|_{\mathbb{H}}^{q-2}\|X^{(n)}_s\|_{\mathbb{V}}^{\alpha}ds+C_q\int_0^t\Big(1+\|X^{(n)}_s\|_{\mathbb{H}}^q+\mathbb{E}^{\mathbf{P}_n}\|X^{(n)}_s\|_{\mathbb{H}}^q\Big)ds.
\end{eqnarray}

For any $n\in\mathbb{N}$, we set a stopping time
$$\tau_R^{(n)}:=\inf\Big\{t\in[0,T]:\|X^{(n)}_t\|_{\mathbb{H}}> R\Big\}\wedge T,~R>0,$$
where we take $\inf{\emptyset}=\infty$.  It's easy to see that
$$\lim_{R\to\infty}\tau_R^{(n)}=T,~\mathbf{P}_n\text{-a.s.},~n\in\mathbb{N}.$$
Using  Burkholder-Davis-Gundy's inequality, we obtain
\begin{eqnarray}\label{es251}
&&\!\!\!\!\!\!\!\!q\mathbb{E}^{\mathbf{P}_n}\Bigg\{\sup_{t\in[0,T\wedge \tau_{R}^{(n)}]}\Big|\int_0^t\|X^{(n)}_s\|_{\mathbb{H}}^{q-2}\langle X^{(n)}_s,\pi_n\mathcal{B}(s,X^{(n)}_s,\mathscr{L}_{X^{(n)}_s})dW^{(n)}_s\rangle_{\mathbb{H}}\Big|\Bigg\}
\nonumber \\
&&\!\!\!\!\!\!\!\!\leq C_q\mathbb{E}^{\mathbf{P}_n}\Bigg\{\int_0^{T\wedge\tau_{R}^{(n)}}\|X^{(n)}_s\|_{\mathbb{H}}^{2q-2}\|\mathcal{B}(s,X^{(n)}_s,\mathscr{L}_{X^{(n)}_s})\|_{L_2(U,\mathbb{H})}^2ds\Bigg\}^{\frac{1}{2}}
\nonumber \\
&&\!\!\!\!\!\!\!\!\leq C_q\mathbb{E}^{\mathbf{P}_n}\Bigg\{\int_0^{T\wedge\tau_{R}^{(n)}}\big(\|X^{(n)}_t\|_{\mathbb{H}}^q+\mathbb{E}^{\mathbf{P}_n}\|X^{(n)}_t\|_{\mathbb{H}}^q+1\big)dt\Bigg\}
\nonumber \\
&&\!\!\!\!\!\!\!\!
+\frac{1}{2}\mathbb{E}^{\mathbf{P}_n}\Big[\sup_{t\in[0,T\wedge\tau_{R}^{(n)}]} \|X^{(n)}_t\|_{\mathbb{H}}^q\Big].
\end{eqnarray}
Combining (\ref{es241})-(\ref{es251})  we have
\begin{eqnarray}\label{es261}
&&\!\!\!\!\!\!\!\!\mathbb{E}^{\mathbf{P}_n}\Big[\sup_{t\in[0,T\wedge\tau_{R}^{(n)}]}\|X^{(n)}_t\|_{\mathbb{H}}^q\Big]+q\delta_1\mathbb{E}^{\mathbf{P}_n}\int_0^{T\wedge\tau_{R}^{(n)}}\|X^{(n)}_t\|_{\mathbb{H}}^{q-2}\|X^{(n)}_t\|_{\mathbb{V}}^{\alpha}dt
\nonumber \\
&&\!\!\!\!\!\!\!\!\leq C_{q,T}(1+\mathbb{E}^{\mathbf{P}_n}\|X^{(n)}_0\|_{\mathbb{H}}^q)+C_q\mathbb{E}^{\mathbf{P}_n}\int_0^{T}\|X^{(n)}_t\|_{\mathbb{H}}^qdt.
\end{eqnarray}
Taking $R\to \infty$ on both sides of (\ref{es261}) and then applying the monotone convergence theorem and Gronwall's lemma, it leads to
$$\mathbb{E}^{\mathbf{P}_n}\Big[\sup_{t\in[0,T]}\|X^{(n)}_t\|_{\mathbb{H}}^q\Big]+q\delta_1\mathbb{E}^{\mathbf{P}_n}\int_0^{T}\|X^{(n)}_t\|_{\mathbb{H}}^{q-2}\|X^{(n)}_t\|_{\mathbb{V}}^{\alpha}dt\leq C_{q,T}\big(1+\mathbb{E}\|\xi\|_{\mathbb{H}}^q\big).$$
We complete the proof.
\end{proof}

\begin{lemma}\label{lem5}
Suppose that the assumptions in Theorem \ref{th1} hold. For each $r>1$ and  $n\in\mathbb{N}$,
\begin{eqnarray}\label{es83}
\mathbb{E}^{\mathbf{P}_n}\Bigg\{\int_0^T\|X^{(n)}_t\|_{\mathbb{V}}^{\alpha}dt\Bigg\}^r
\leq C_{r,T}\big(1+\mathbb{E}\|\xi\|_{\mathbb{H}}^{2r}\big).
\end{eqnarray}
\end{lemma}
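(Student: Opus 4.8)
\textbf{Proof proposal for Lemma \ref{lem5}.}

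The plan is to revisit the It\^{o} expansion from Lemma \ref{lem3.0}, but instead of estimating $\sup_{t}\|X^{(n)}_t\|_{\mathbb{H}}^q$ first and then deducing the time integral of $\|X^{(n)}_t\|_{\mathbb{V}}^{\alpha}$ as a by-product, I will treat the coercive term as the leading quantity and raise everything to the $r$-th power. Concretely, take $q=2$ in the It\^{o} formula for $\|X^{(n)}_t\|_{\mathbb{H}}^2$ and rearrange to isolate the coercive term: by $(\mathbf{A_2})$,
\begin{eqnarray*}
\delta_1\int_0^T\|X^{(n)}_t\|_{\mathbb{V}}^{\alpha}dt
&\le& \|X^{(n)}_0\|_{\mathbb{H}}^2
+ C\int_0^T\big(1+\|X^{(n)}_t\|_{\mathbb{H}}^2+\mathbb{E}^{\mathbf{P}_n}\|X^{(n)}_t\|_{\mathbb{H}}^2\big)dt\\
&&+2\Bigg|\int_0^T\langle X^{(n)}_t,\pi_n\mathcal{B}(t,X^{(n)}_t,\mathscr{L}_{X^{(n)}_t})dW^{(n)}_t\rangle_{\mathbb{H}}\Bigg|.
\end{eqnarray*}
Then I raise both sides to the power $r>1$ and take expectation under $\mathbf{P}_n$, using the elementary inequality $(a+b+c)^r\le 3^{r-1}(a^r+b^r+c^r)$.

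The first two resulting terms are controlled directly by Lemma \ref{lem3.0}: $\mathbb{E}^{\mathbf{P}_n}\|X^{(n)}_0\|_{\mathbb{H}}^{2r}\le \mathbb{E}\|\xi\|_{\mathbb{H}}^{2r}$ (since $\pi_n$ is a contraction on $\mathbb{H}$), and by H\"older's inequality in time together with the moment bound of Lemma \ref{lem3.0} applied with exponent $2r$,
$$\mathbb{E}^{\mathbf{P}_n}\Bigg(\int_0^T\big(1+\|X^{(n)}_t\|_{\mathbb{H}}^2+\mathbb{E}^{\mathbf{P}_n}\|X^{(n)}_t\|_{\mathbb{H}}^2\big)dt\Bigg)^r
\le C_{r,T}\Big(1+\mathbb{E}^{\mathbf{P}_n}\sup_{t\in[0,T]}\|X^{(n)}_t\|_{\mathbb{H}}^{2r}\Big)
\le C_{r,T}\big(1+\mathbb{E}\|\xi\|_{\mathbb{H}}^{2r}\big).$$
For the stochastic integral term I apply the Burkholder--Davis--Gundy inequality (in the $r$-th moment form), bounding it by
$$C_r\,\mathbb{E}^{\mathbf{P}_n}\Bigg(\int_0^T\|X^{(n)}_t\|_{\mathbb{H}}^2\|\mathcal{B}(t,X^{(n)}_t,\mathscr{L}_{X^{(n)}_t})\|_{L_2(U,\mathbb{H})}^2dt\Bigg)^{r/2}
\le C_r\,\mathbb{E}^{\mathbf{P}_n}\Bigg(\int_0^T\|X^{(n)}_t\|_{\mathbb{H}}^2\big(1+\|X^{(n)}_t\|_{\mathbb{H}}^2+\mathbb{E}^{\mathbf{P}_n}\|X^{(n)}_t\|_{\mathbb{H}}^2\big)dt\Bigg)^{r/2},$$
where I used the growth bound \eqref{conb} on $\mathcal{B}$. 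Pulling out $\sup_{t}\|X^{(n)}_t\|_{\mathbb{H}}^2$, this is at most $C_r\,\mathbb{E}^{\mathbf{P}_n}\big[\sup_{t}\|X^{(n)}_t\|_{\mathbb{H}}^{r}\cdot(1+\sup_{t}\|X^{(n)}_t\|_{\mathbb{H}}^{r}+\sup_t\mathbb{E}^{\mathbf{P}_n}\|X^{(n)}_t\|_{\mathbb{H}}^2)^{r/2}\cdot T^{r/2}\big]$, which after Cauchy--Schwarz and Lemma \ref{lem3.0} (applied with exponent $2r$) is again bounded by $C_{r,T}(1+\mathbb{E}\|\xi\|_{\mathbb{H}}^{2r})$.

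Since none of the constants in these steps depend on $n$ (the key uniform moment estimate comes from Lemma \ref{lem3.0}, which is already uniform in $n$, and $\pi_n,\tilde\pi_n$ are contractions), combining the three estimates yields \eqref{es83}. I do not anticipate a genuine obstacle here; the only mild care needed is to make sure every use of H\"older/BDG keeps track of the correct exponent so that one always lands inside the range where Lemma \ref{lem3.0} applies (namely moments of order $2r\ge 2$), and that the $\mathbb{E}^{\mathbf{P}_n}\|X^{(n)}_t\|_{\mathbb{H}}^2$ contributions from the mean-field dependence are absorbed using $\sup_t\mathbb{E}^{\mathbf{P}_n}\|X^{(n)}_t\|_{\mathbb{H}}^2\le \mathbb{E}^{\mathbf{P}_n}\sup_t\|X^{(n)}_t\|_{\mathbb{H}}^2$.
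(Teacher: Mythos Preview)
Your proposal is correct and follows essentially the same approach as the paper: apply It\^{o}'s formula to $\|X^{(n)}_t\|_{\mathbb{H}}^2$, use $(\mathbf{A_2})$ to isolate $\delta_1\int_0^T\|X^{(n)}_t\|_{\mathbb{V}}^{\alpha}dt$, raise to the $r$-th power, and control the stochastic integral via BDG together with the $2r$-moment bound from Lemma~\ref{lem3.0}. The only cosmetic difference is that the paper estimates the BDG term by a direct Young-type splitting into $\mathbb{E}^{\mathbf{P}_n}\sup_t\|X^{(n)}_t\|_{\mathbb{H}}^{2r}$ plus $\int_0^T\mathbb{E}^{\mathbf{P}_n}\|X^{(n)}_t\|_{\mathbb{H}}^{2r}dt$, whereas you pull out the sup and use Cauchy--Schwarz; both routes land on Lemma~\ref{lem3.0} with exponent $2r$.
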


\begin{proof}
Applying It\^{o}'s formula we have
\begin{eqnarray*}
&&\!\!\!\!\!\!\!\!\|X^{(n)}_t\|_{\mathbb{H}}^2
\nonumber \\
=&&\!\!\!\!\!\!\!\!\|X^{(n)}_0\|_{\mathbb{H}}^2
+\int_0^t\big(2{}_{\mathbb{V}^*}\langle \pi_n\mathcal{A}(s,X^{(n)}_s,\mathscr{L}_{X^{(n)}_s}),X^{(n)}_s\rangle_{\mathbb{V}}
\nonumber \\
&&\!\!\!\!\!\!\!\!
+\|\pi_n\mathcal{B}(s,X^{(n)}_s,\mathscr{L}_{X^{(n)}_s})\tilde{\pi}_n\|_{L_2(U,\mathbb{H})}^2\big)ds
\nonumber \\
&&\!\!\!\!\!\!\!\!+2\int_0^t\langle X^{(n)}_s,\pi_n\mathcal{B}(s,X^{(n)}_s,\mathscr{L}_{X^{(n)}_s})dW^{(n)}_s\rangle_{\mathbb{H}}
\nonumber \\
\leq&&\!\!\!\!\!\!\!\!\|X^{(n)}_0\|_{\mathbb{H}}^2-\delta_1\int_0^t\|X^{(n)}_s\|_{\mathbb{V}}^{\alpha}ds
\nonumber \\
&&\!\!\!\!\!\!\!\!
+C\int_0^t\big(\|X^{(n)}_s\|_{\mathbb{H}}^2+\mathbb{E}^{\mathbf{P}_n}\|X^{(n)}_s\|_{\mathbb{H}}^2+1\big)ds
\nonumber \\
&&\!\!\!\!\!\!\!\!+2\int_0^t\langle X^{(n)}_s,\pi_n\mathcal{B}(s,X^{(n)}_s,\mathscr{L}_{X^{(n)}_s})dW^{(n)}_s\rangle_{\mathbb{H}}.
\end{eqnarray*}
Then we obtain
\begin{eqnarray*}
&&\!\!\!\!\!\!\!\!\|X^{(n)}_t\|_{\mathbb{H}}^2+\delta_1\int_0^t\|X^{(n)}_s\|_{\mathbb{V}}^{\alpha}ds
\nonumber \\
\leq&&\!\!\!\!\!\!\!\!\|X^{(n)}_0\|_{\mathbb{H}}^2
+C\int_0^t\big(\|X^{(n)}_s\|_{\mathbb{H}}^2+\mathbb{E}^{\mathbf{P}_n}\|X^{(n)}_s\|_{\mathbb{H}}^2+1\big)ds
\nonumber \\
&&\!\!\!\!\!\!\!\!+2\int_0^t\langle X^{(n)}_s,\pi_n\mathcal{B}(s,X^{(n)}_s,\mathscr{L}_{X^{(n)}_s})dW^{(n)}_s\rangle_{\mathbb{H}}.
\end{eqnarray*}
Thus for each $r>1$,
\begin{eqnarray}\label{es81}
&&\!\!\!\!\!\!\!\!\mathbb{E}^{\mathbf{P}_n}\Bigg\{\int_0^T\|X^{(n)}_s\|_{\mathbb{V}}^{\alpha}ds\Bigg\}^r
\nonumber \\
\leq&&\!\!\!\!\!\!\!\!C_{r}\mathbb{E}^{\mathbf{P}_n}\|X^{(n)}_0\|_{\mathbb{H}}^{2r}
+C_r\mathbb{E}^{\mathbf{P}_n}\Bigg\{\int_0^T\big(\|X^{(n)}_s\|_{\mathbb{H}}^2+\mathbb{E}^{\mathbf{P}_n}\|X^{(n)}_s\|_{\mathbb{H}}^2+1\big)ds\Bigg\}^r
+C_r\text{(I)}
\nonumber \\
\leq&&\!\!\!\!\!\!\!\!C_{r}\mathbb{E}\|\xi\|_{\mathbb{H}}^{2r}+ C_{r,T}\int_0^T\mathbb{E}^{\mathbf{P}_n}\|X^{(n)}_s\|_{\mathbb{H}}^{2r}ds+C_r\text{(I)},
\end{eqnarray}
where
$$\text{(I)}:=\mathbb{E}^{\mathbf{P}_n}\Bigg\{\sup_{t\in[0,T]}\Big|\int_0^t\langle X^{(n)}_s,\pi_n\mathcal{B}(s,X^{(n)}_s,\mathscr{L}_{X^{(n)}_s})dW^{(n)}_s\rangle_{\mathbb{H}}\Big|\Bigg\}^r.  $$
Using Burkholder-Davis-Gundy's inequality, it follows that
\begin{eqnarray}\label{es82}
\text{(I)}\leq&&\!\!\!\!\!\!\!\!\mathbb{E}^{\mathbf{P}_n}\Bigg\{\int_0^T\|X^{(n)}_s\|_{\mathbb{H}}^2\|\pi_n\mathcal{B}(s,X^{(n)}_s,\mathscr{L}_{X^{(n)}_s})\tilde{\pi}_n\|_{L_2(U,\mathbb{H})}^2ds\Bigg\}^{\frac{r}{2}}
\nonumber \\
\leq&&\!\!\!\!\!\!\!\!\mathbb{E}^{\mathbf{P}_n}\Big[\sup_{s\in[0,T]}\|X^{(n)}_s\|_{\mathbb{H}}^{2r}\Big]+C_{r,T}\int_0^T\mathbb{E}^{\mathbf{P}_n}\|X^{(n)}_s\|_{\mathbb{H}}^{2r}ds.
\end{eqnarray}
Combining (\ref{es81}), (\ref{es82}) and Lemma \ref{lem3.0} implies that (\ref{es83}) holds.
\end{proof}

For any~$M>0$, set a stopping time $\tau_M^{(n)}$ given by
\begin{equation}\label{destop}
\tau_M^{(n)}:=\inf\Big\{t\geq0:\|X^{(n)}_t\|_{\mathbb{H}}>M\Big\}\wedge\inf\Big\{t\geq0:\int_0^t \|X^{(n)}_s\|_{\mathbb{V}}^{\alpha}ds>M\Big\}\wedge T.
\end{equation}

\vspace{1mm}
We proceed to derive the following improved bounds before stopping time, which are crucial for obtaining the tightness of $X^{(n)}$ below.
\begin{lemma}\label{sec3lem4}
Suppose that the assumptions in Theorem \ref{th1} hold. For any $M>0$ there exists a constant $C_{\lambda,M,T}>0$, where $\lambda$ is defined in $(\mathbf{A_4})$, such that for any $n\in\mathbb{N}$,
\begin{eqnarray*}
\mathbb{E}^{\mathbf{P}_n}\Big[\sup_{t\in[0,T\wedge\tau_M^{(n)}]}\|X^{(n)}_t\|_{\mathbb{V}}^2\Big]+\mathbb{E}^{\mathbf{P}_n}\int_0^{T\wedge\tau_M^{(n)}}\|X^{(n)}_t\|_{\mathbb{X}}^\gamma dt
\leq C_{\lambda,M,T}\big(1+\mathbb{E}\|\xi\|_{\mathbb{V}}^2\big).
\end{eqnarray*}
\end{lemma}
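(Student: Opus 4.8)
The plan is to apply It\^o's formula to $\|X^{(n)}_t\|_{\mathbb{V}}^2$ for the Galerkin equation \eref{eqf}, which is a bona fide finite-dimensional SDE on $\mathbb{H}_n$, and then to combine condition $\mathbf{(A_4)}$ with the constraints carried by the stopping time $\tau_M^{(n)}$ of \eref{destop}. Since by $\mathbf{(A_0)}$ the family $\{e_i\}$ is orthogonal in $\mathbb{V}$, the projection $\pi_n$ is a contraction on $\mathbb{V}$, so $\|X^{(n)}_0\|_{\mathbb{V}}=\|\pi_n\xi\|_{\mathbb{V}}\le\|\xi\|_{\mathbb{V}}$ and $\|\pi_n\mathcal{B}(\cdots)\tilde{\pi}_n\|_{L_2(U,\mathbb{V})}\le\|\mathcal{B}(\cdots)\|_{L_2(U,\mathbb{V})}$; moreover \eref{es7} identifies $\langle\pi_n\mathcal{A}(s,X^{(n)}_s,\mathscr{L}_{X^{(n)}_s}),X^{(n)}_s\rangle_{\mathbb{V}}$ with $\langle\mathcal{A}(s,X^{(n)}_s,\mathscr{L}_{X^{(n)}_s}),X^{(n)}_s\rangle_{\mathbb{V}}$, so $\mathbf{(A_4)}$ applies directly to the drift pairing. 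It\^o's formula thus yields, for $t\le\tau_M^{(n)}$,
$$\|X^{(n)}_t\|_{\mathbb{V}}^2+\delta_2\int_0^t\|X^{(n)}_s\|_{\mathbb{X}}^\gamma\,ds\le\|\xi\|_{\mathbb{V}}^2+\int_0^t\Theta(s)\,ds+2\int_0^t\langle X^{(n)}_s,\pi_n\mathcal{B}(s,X^{(n)}_s,\mathscr{L}_{X^{(n)}_s})\,dW^{(n)}_s\rangle_{\mathbb{V}},$$
where $\Theta(s)$ is the sum of the right-hand sides of the two estimates in $\mathbf{(A_4)}$ (drift plus $\mathcal{B}$-trace), evaluated at $u=X^{(n)}_s$, $\mu=\mathscr{L}_{X^{(n)}_s}$.

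The decisive step is the treatment of $\Theta(s)$ for $s\le\tau_M^{(n)}$, where $\|X^{(n)}_s\|_{\mathbb{H}}\le M$. The genuinely superlinear contribution $C\|u\|_{\mathbb{V}}^{\alpha+2}\|u\|_{\mathbb{H}}^\lambda$ then becomes $\le CM^\lambda\|X^{(n)}_s\|_{\mathbb{V}}^\alpha\cdot\|X^{(n)}_s\|_{\mathbb{V}}^2$, and since $\int_0^{s\wedge\tau_M^{(n)}}\|X^{(n)}_r\|_{\mathbb{V}}^\alpha\,dr\le M$ holds pathwise, the prefactor $CM^\lambda\|X^{(n)}_s\|_{\mathbb{V}}^\alpha$ acts as a coefficient whose pathwise time-integral is $\le CM^{\lambda+1}$. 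Expanding the products in $\mathbf{(A_4)}$ and again using $\|X^{(n)}_s\|_{\mathbb{H}}\le M$, every resulting term is either (i) of the form $g_n(s)\|X^{(n)}_s\|_{\mathbb{V}}^2$ with $\int_0^{\tau_M^{(n)}}g_n(s)\,ds$ bounded almost surely by a constant depending only on $\lambda,M,T$ and the fixed data, or (ii) free of $\|X^{(n)}_s\|_{\mathbb{V}}$ with finite time-integral; here the deterministic moments $\mathscr{L}_{X^{(n)}_s}(\|\cdot\|_{\mathbb{H}}^\lambda)=\mathbb{E}^{\mathbf{P}_n}\|X^{(n)}_s\|_{\mathbb{H}}^\lambda$ are bounded uniformly in $n,s$ by Lemma \ref{lem3.0} (recall $p\ge\vartheta\ge\lambda$), while $\int_0^T\mathscr{L}_{X^{(n)}_s}(\|\cdot\|_{\mathbb{V}}^\alpha)\,ds=\mathbb{E}^{\mathbf{P}_n}\int_0^T\|X^{(n)}_s\|_{\mathbb{V}}^\alpha\,ds$ is bounded uniformly in $n$ by Lemma \ref{lem3.0} with $q=2$ (or Lemma \ref{lem5}). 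The $\mathcal{B}$-trace contribution splits in the same way.

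One thereby reaches a pathwise inequality $\|X^{(n)}_{t\wedge\tau_M^{(n)}}\|_{\mathbb{V}}^2\le h_n(t)+\int_0^{t\wedge\tau_M^{(n)}}g_n(s)\,\|X^{(n)}_{s\wedge\tau_M^{(n)}}\|_{\mathbb{V}}^2\,ds$ with $\int_0^{\tau_M^{(n)}}g_n(s)\,ds\le C_{\lambda,M,T}$ almost surely, where $h_n(t)$ collects $\|\xi\|_{\mathbb{V}}^2$, the type-(ii) terms, and the stochastic integral. The integral form of Gronwall's lemma then gives $\|X^{(n)}_{t\wedge\tau_M^{(n)}}\|_{\mathbb{V}}^2\le C_{\lambda,M,T}\,h_n(t)$; taking $\sup_{t\le T}$ and then $\mathbb{E}^{\mathbf{P}_n}$, the martingale part is dominated via the Burkholder--Davis--Gundy and Young inequalities, which, after bounding $\|\mathcal{B}(\cdots)\|_{L_2(U,\mathbb{V})}^2$ by $\mathbf{(A_4)}$ on $\{s\le\tau_M^{(n)}\}$, permits moving $\tfrac12\,\mathbb{E}^{\mathbf{P}_n}[\sup_{t\le T\wedge\tau_M^{(n)}}\|X^{(n)}_t\|_{\mathbb{V}}^2]$ to the left-hand side; this quantity is a priori finite because all norms on $\mathbb{H}_n$ are equivalent and Lemma \ref{lem3.0} holds. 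The remaining terms are controlled by the uniform bounds just quoted, and retaining the left-hand term $\delta_2\,\mathbb{E}^{\mathbf{P}_n}\int_0^{T\wedge\tau_M^{(n)}}\|X^{(n)}_t\|_{\mathbb{X}}^\gamma\,dt$ from the It\^o inequality produces the asserted estimate with an $n$-independent constant.

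\textbf{Main obstacle.} The heart of the matter is the superlinear drift term $\|u\|_{\mathbb{V}}^{\alpha+2}\|u\|_{\mathbb{H}}^\lambda$ in $\mathbf{(A_4)}$: it is hopeless for any Gronwall-type argument without localization, and the stopping time \eref{destop} is built precisely so that it simultaneously caps $\|X^{(n)}\|_{\mathbb{H}}$ and turns $\int\|X^{(n)}\|_{\mathbb{V}}^\alpha$ into a pathwise-bounded ``clock'', thereby converting the dangerous term into a legitimate (pathwise integrable) Gronwall coefficient. The only other delicate point is the $n$-uniformity of the final constant, which is inherited from the $n$-free constant in $\mathbf{(A_4)}$, the contractivity of $\pi_n$ on $\mathbb{V}$, and the already-established $n$-uniform bounds of Lemmas \ref{lem3.0} and \ref{lem5}.
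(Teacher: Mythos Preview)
Your strategy coincides with the paper's: It\^o's formula for $\|X^{(n)}_t\|_{\mathbb{V}}^2$, the identification \eref{es7} to invoke $\mathbf{(A_4)}$, the stopping time to tame $\|u\|_{\mathbb{V}}^{\alpha+2}\|u\|_{\mathbb{H}}^\lambda$ into a pathwise-integrable Gronwall coefficient, a pathwise Gronwall, and then BDG after taking sup and expectation. One small point: after BDG and bounding $\|\mathcal{B}\|_{L_2(U,\mathbb{V})}^2$ via $\mathbf{(A_4)}$ you are left with a term $C_M\,\mathbb{E}^{\mathbf{P}_n}\!\int_0^{T\wedge\tau_M^{(n)}}\!\|X^{(n)}_t\|_{\mathbb{V}}^2\,dt$, which is not among the ``uniform bounds just quoted'' (Lemmas \ref{lem3.0}--\ref{lem5} control $\int\|\cdot\|_{\mathbb{V}}^\alpha$, not $\int\|\cdot\|_{\mathbb{V}}^2$, and $\alpha$ may be $<2$); the paper closes this with one more Gronwall step in expectation, bounding $\mathbb{E}^{\mathbf{P}_n}\!\int_0^{T\wedge\tau_M^{(n)}}\!\|X^{(n)}_t\|_{\mathbb{V}}^2\,dt\le\int_0^T\mathbb{E}^{\mathbf{P}_n}\big[\sup_{s\le t\wedge\tau_M^{(n)}}\|X^{(n)}_s\|_{\mathbb{V}}^2\big]dt$.
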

\begin{proof}
By It\^{o}'s formula, (\ref{es7}) and $\mathbf{(A_4)}$, it follows that for any $t\in[0, T\wedge\tau_M^{(n)}]$,
\begin{eqnarray}\label{esq24}
&&\!\!\!\!\!\!\!\!\|X^{(n)}_t\|_{\mathbb{V}}^2
\nonumber \\
=&&\!\!\!\!\!\!\!\!\|X^{(n)}_0\|_{\mathbb{V}}^2+\int_0^t\big(2\langle \pi_n\mathcal{A}(s,X^{(n)}_s,\mathscr{L}_{X^{(n)}_s}),X^{(n)}_s\rangle_{\mathbb{V}}
\nonumber \\
&&\!\!\!\!\!\!\!\!
+\|\pi_n\mathcal{B}(s,X^{(n)}_s,\mathscr{L}_{X^{(n)}_s})\tilde{\pi}_n\|_{L_2(U,{\mathbb{V}})}^2\big)ds
\nonumber \\
&&\!\!\!\!\!\!\!\!+2\int_0^t\langle X^{(n)}_s,\pi_n\mathcal{B}(s,X^{(n)}_s,\mathscr{L}_{X^{(n)}_s})dW^{(n)}_s\rangle_{\mathbb{V}}
\nonumber \\
\leq&&\!\!\!\!\!\!\!\!\|X^{(n)}_r\|_{\mathbb{V}}^2-\delta_2\int_0^t\|X^{(n)}_s\|_{\mathbb{X}}^{\gamma}ds
\nonumber \\
&&\!\!\!\!\!\!\!\!
+2\int_0^t\langle X^{(n)}_s,\pi_n\mathcal{B}(s,X^{(n)}_s,\mathscr{L}_{X^{(n)}_s})dW^{(n)}_s\rangle_{\mathbb{V}}
\nonumber \\
&&\!\!\!\!\!\!\!\!+C\int_0^t\big\|X^{(n)}_s\|_{\mathbb{V}}^{\alpha+2}\|X^{(n)}_s\|_{\mathbb{H}}^{\lambda}ds
\nonumber \\
&&\!\!\!\!\!\!\!\!+C\int_0^t\big((1+\|X^{(n)}_s\|_{\mathbb{V}}^{\alpha}+\mathbb{E}^{\mathbf{P}_n}\|X^{(n)}_s\|_{\mathbb{V}}^{\alpha})
\nonumber \\
&&\!\!\!\!\!\!\!\!
~~~\cdot(1+\|X^{(n)}_s\|_{\mathbb{V}}^{2}
+\|X^{(n)}_s\|_{\mathbb{H}}^{\lambda}+\mathbb{E}^{\mathbf{P}_n}\|X^{(n)}_s\|_{\mathbb{H}}^{\lambda})\big)ds.
\end{eqnarray}
It follows from (\ref{esq24})  that
\begin{eqnarray*}
&&\!\!\!\!\!\!\!\! \|X^{(n)}_t\|_{\mathbb{V}}^2+\delta_2\int_0^{t}\|X^{(n)}_s\|_{\mathbb{X}}^{\gamma}ds
\nonumber \\
\leq&&\!\!\!\!\!\!\!\! C_T(1+\|X_0^{(n)}\|_{\mathbb{V}}^2)+2\int_0^t\langle X^{(n)}_s,\pi_n\mathcal{B}(s,X^{(n)}_s,\mathscr{L}_{X^{(n)}_s})dW^{(n)}_s\rangle_{\mathbb{V}}
\nonumber \\
&&\!\!\!\!\!\!\!\!+C\int_0^t\big\|X^{(n)}_s\|_{\mathbb{V}}^{\alpha+2}\|X^{(n)}_s\|_{\mathbb{H}}^{\lambda}ds
\nonumber \\
&&\!\!\!\!\!\!\!\!+C\int_0^{t}(\|X^{(n)}_s\|_{\mathbb{V}}^{2}
+\|X^{(n)}_s\|_{\mathbb{H}}^{\lambda}+\mathbb{E}^{\mathbf{P}_n}\|X^{(n)}_s\|_{\mathbb{H}}^{\lambda})ds
\nonumber \\
&&\!\!\!\!\!\!\!\!+C\int_0^{t}(\|X^{(n)}_s\|_{\mathbb{V}}^{\alpha+2}+\|X^{(n)}_s\|_{\mathbb{V}}^{\alpha}\|X^{(n)}_s\|_{\mathbb{H}}^{\lambda}+
\|X^{(n)}_s\|_{\mathbb{V}}^{\alpha} \cdot\mathbb{E}^{\mathbf{P}_n}\|X^{(n)}_s\|_{\mathbb{H}}^{\lambda}) ds
\nonumber \\
&&\!\!\!\!\!\!\!\!
+C\int_0^{t}(\mathbb{E}^{\mathbf{P}_n}\|X^{(n)}_s\|_{\mathbb{V}}^{\alpha}+\|X^{(n)}_s\|_{\mathbb{V}}^{2}\cdot\mathbb{E}^{\mathbf{P}_n}\|X^{(n)}_s\|_{\mathbb{V}}^{\alpha}
\nonumber \\
&&\!\!\!\!\!\!\!\!
+\|X^{(n)}_s\|_{\mathbb{H}}^{\lambda}\cdot\mathbb{E}^{\mathbf{P}_n}\|X^{(n)}_s\|_{\mathbb{V}}^{\alpha}+\mathbb{E}^{\mathbf{P}_n}\|X^{(n)}_s\|_{\mathbb{H}}^{\lambda}\cdot\mathbb{E}^{\mathbf{P}_n}\|X^{(n)}_s\|_{\mathbb{V}}^{\alpha})ds
\nonumber \\
\leq&&\!\!\!\!\!\!\!\! C_{\lambda,M,T}(1+\|X_0^{(n)}\|_{\mathbb{V}}^2)+2\Big|\int_0^t\langle X^{(n)}_s,\pi_n\mathcal{B}(s,X^{(n)}_s,\mathscr{L}_{X^{(n)}_s})dW^{(n)}_s\rangle_{\mathbb{V}}\Big|
\nonumber \\
&&\!\!\!\!\!\!\!\!
+C_M\int_0^{t}\|X^{(n)}_s\|_{\mathbb{V}}^{\alpha+2}ds+C_M\int_0^{t}(\mathbb{E}^{\mathbf{P}_n}\|X^{(n)}_s\|_{\mathbb{V}}^{\alpha}+\|X^{(n)}_s\|_{\mathbb{V}}^{2}\cdot\mathbb{E}^{\mathbf{P}_n}\|X^{(n)}_s\|_{\mathbb{V}}^{\alpha})ds,
\end{eqnarray*}
where we have used the definition of $\tau_{M}^{(n)}$ and  Lemma \ref{lem3.0} and Young's inequality in the last step.

Then we can get
\begin{eqnarray*}
&&\!\!\!\!\!\!\!\! \|X^{(n)}_t\|_{\mathbb{V}}^2+\delta_2\int_0^{t}\|X^{(n)}_s\|_{\mathbb{X}}^{\gamma}ds
\nonumber \\
\leq&& \!\!\!\!\!\!\!\! C_M\int_0^{t}\big(\|X^{(n)}_s\|_{\mathbb{V}}^{\alpha}+\mathbb{E}^{\mathbf{P}_n}\|X^{(n)}_s\|_{\mathbb{V}}^{\alpha}\big)\|X^{(n)}_s\|_{\mathbb{V}}^{2}ds
\nonumber \\
&&\!\!\!\!\!\!\!\!+C_{\lambda,M,T}
\Bigg\{1+\|X_0^{(n)}\|_{\mathbb{V}}^2+\Big|\int_0^t\langle X^{(n)}_s,\pi_n\mathcal{B}(s,X^{(n)}_s,\mathscr{L}_{X^{(n)}_s})dW^{(n)}_s\rangle_{\mathbb{V}}\Big|
\nonumber \\
&&\!\!\!\!\!\!\!\!
+\mathbb{E}^{\mathbf{P}_n}\int_0^{t}\|X^{(n)}_s\|_{\mathbb{V}}^{\alpha}ds\Bigg\}.
\end{eqnarray*}
Therefore, by  Gronwall's lemma we have  for any $t\in[0, T\wedge\tau_M^{(n)}]$,
\begin{eqnarray}\label{esq26}
&&\!\!\!\!\!\!\!\!\|X^{(n)}_t\|_{\mathbb{V}}^2+\delta_2\int_0^{t}\|X^{(n)}_t\|_{\mathbb{X}}^{\gamma}dt
\nonumber \\
\leq &&\!\!\!\!\!\!\!\! C_{\lambda,M,T}\Bigg\{1+\|X_0^{(n)}\|_{\mathbb{V}}^2+\Big|\int_0^t\langle X^{(n)}_s,\pi_n\mathcal{B}(s,X^{(n)}_s,\mathscr{L}_{X^{(n)}_s})dW^{(n)}_s\rangle_{\mathbb{V}}\Big|
\nonumber \\
&&\!\!\!\!\!\!\!\!+\mathbb{E}^{\mathbf{P}_n}\int_0^{T}\|X^{(n)}_s\|_{\mathbb{V}}^{\alpha}ds\Bigg\}\cdot \exp\Bigg\{C_M\int_0^{t}\|X^{(n)}_s\|_{\mathbb{V}}^{\alpha}ds+C_M\mathbb{E}^{\mathbf{P}_n}\int_0^{T}\|X^{(n)}_s\|_{\mathbb{V}}^{\alpha}ds\Bigg\}
\nonumber \\
\leq &&\!\!\!\!\!\!\!\!C_{\lambda,M,T}\Bigg\{1+\|X_0^{(n)}\|_{\mathbb{V}}^2+\Big|\int_0^t\langle X^{(n)}_s,\pi_n\mathcal{B}(s,X^{(n)}_s,\mathscr{L}_{X^{(n)}_s})dW^{(n)}_s\rangle_{\mathbb{V}}\Big|\Bigg\}.
\end{eqnarray}
Applying Burkholder-Davis-Gundy's inequality implies
\begin{eqnarray}\label{esq25}
&&\!\!\!\!\!\!\!\!\mathbb{E}^{\mathbf{P}_n}\Bigg\{\sup_{t\in[0, T\wedge\tau_M^{(n)}]}\Big|\int_{0}^t\langle X^{(n)}_s,\pi_n\mathcal{B}(s,X^{(n)}_s,\mathscr{L}_{X^{(n)}_s})dW^{(n)}_s\rangle_{\mathbb{V}}\Big|\Bigg\}
\nonumber \\
\leq &&\!\!\!\!\!\!\!\! C\mathbb{E}^{\mathbf{P}_n}\Bigg\{\int_{0}^{T\wedge\tau_M^{(n)}}\|X^{(n)}_t\|_{\mathbb{V}}^{2}\|\mathcal{B}(t,X^{(n)}_t,\mathscr{L}_{X^{(n)}_t})\|_{L_2(U,{\mathbb{V}})}^2dt\Bigg\}^{\frac{1}{2}}
\nonumber \\
\leq&&\!\!\!\!\!\!\!\! \mathbb{E}^{\mathbf{P}_n}\Bigg\{\sup_{t\in[0,T\wedge\tau_M^{(n)}]}\|X^{(n)}_t\|_{\mathbb{V}}^{2}\cdot C\int_{0}^{T\wedge\tau_M^{(n)}} \|\mathcal{B}(t,X^{(n)}_t,\mathscr{L}_{X^{(n)}_t})\|_{L_2(U,{\mathbb{V}})}^2dt\Bigg\}^{\frac{1}{2}}
\nonumber \\
\leq &&\!\!\!\!\!\!\!\! \frac{1}{2}\mathbb{E}^{\mathbf{P}_n}\Big[\sup_{t\in[0,T\wedge\tau_M^{(n)}]} \|X^{(n)}_t\|_{\mathbb{V}}^2\Big]
+C\mathbb{E}^{\mathbf{P}_n} \int_{0}^{T\wedge\tau_M^{(n)}} \|\mathcal{B}(t,X^{(n)}_t,\mathscr{L}_{X^{(n)}_t})\|_{L_2(U,{\mathbb{V}})}^2dt
\nonumber \\
\leq &&\!\!\!\!\!\!\!\! \frac{1}{2}\mathbb{E}^{\mathbf{P}_n}\Big[\sup_{t\in[0,T\wedge\tau_M^{(n)}]} \|X^{(n)}_t\|_{\mathbb{V}}^2\Big]
\nonumber \\
&&\!\!\!\!\!\!\!\!
+C\mathbb{E}^{\mathbf{P}_n}\Bigg\{\int_{0}^{T\wedge\tau_M^{(n)}}\|X^{(n)}_t\|_{\mathbb{V}}^{2}(1+\|X^{(n)}_t\|_{\mathbb{H}}^{\lambda}+\mathbb{E}^{\mathbf{P}_n}\|X^{(n)}_t\|_{\mathbb{H}}^{\lambda})
\nonumber \\
&&\!\!\!\!\!\!\!\!
+(1+\mathbb{E}^{\mathbf{P}_n}\|X^{(n)}_t\|_{\mathbb{V}}^{\alpha})(1+\|X^{(n)}_t\|_{\mathbb{H}}^{\lambda}+\mathbb{E}^{\mathbf{P}_n}\|X^{(n)}_t\|_{\mathbb{H}}^{\lambda})dt\Bigg\}
\nonumber \\
\leq &&\!\!\!\!\!\!\!\! C_{\lambda,M,T}
+\frac{1}{2}\mathbb{E}^{\mathbf{P}_n}\Big[\sup_{t\in[0,T\wedge\tau_M^{(n)}]} \|X^{(n)}_t\|_{\mathbb{V}}^2\Big]
\nonumber \\
&&\!\!\!\!\!\!\!\!+C_M\mathbb{E}^{\mathbf{P}_n}\int_{0}^{T\wedge\tau_M^{(n)}}\|X^{(n)}_t\|_{\mathbb{V}}^2dt+C_M\mathbb{E}^{\mathbf{P}_n}\int_{0}^{T\wedge\tau_M^{(n)}}\big(1+\mathbb{E}^{\mathbf{P}_n}\|X^{(n)}_t\|_{\mathbb{V}}^{\alpha}\big)dt
\nonumber \\
\leq &&\!\!\!\!\!\!\!\! C_{\lambda,M,T}
+\frac{1}{2}\mathbb{E}^{\mathbf{P}_n}\Big[\sup_{t\in[0,T\wedge\tau_M^{(n)}]} \|X^{(n)}_t\|_{\mathbb{V}}^2\Big]+C_M\mathbb{E}^{\mathbf{P}_n}\int_{0}^{T\wedge\tau_M^{(n)}}\|X^{(n)}_t\|_{\mathbb{V}}^2dt,
\end{eqnarray}
where we have used the definition of $\tau_{M}^{(n)}$ in the fifth step and  Lemma \ref{lem3.0}  in the fifth and last steps.

Combining (\ref{esq26})-(\ref{esq25}), it follows that
\begin{eqnarray*}
&&\!\!\!\!\!\!\!\!\mathbb{E}^{\mathbf{P}_n}\Big[\sup_{t\in[0,T\wedge\tau_M^{(n)}]}\|X^{(n)}_t\|_{\mathbb{V}}^2\Big]+2\delta_2\mathbb{E}^{\mathbf{P}_n}\int_{0}^{T\wedge\tau_M^{(n)}}\|X^{(n)}_t\|_{\mathbb{X}}^{\gamma}dt
\nonumber \\
\leq &&\!\!\!\!\!\!\!\! C_{\lambda,M,T}\mathbb{E}^{\mathbf{P}_n}\big(1+\|X_{0}^{(n)}\|_{\mathbb{V}}^2\big)+C_{\lambda,M,T}\mathbb{E}^{\mathbf{P}_n}\int_{0}^{T\wedge\tau_M^{(n)}}\|X^{(n)}_t\|_{\mathbb{V}}^2dt
\nonumber \\
\leq &&\!\!\!\!\!\!\!\! C_{\lambda,M,T}\mathbb{E}^{\mathbf{P}_n}\big(1+\|X_{0}^{(n)}\|_{\mathbb{V}}^2\big)+C_{\lambda,M,T}\int_{0}^{T}\mathbb{E}^{\mathbf{P}_n}\Big[\sup_{s\in[0,t\wedge\tau_M^{(n)}]}\|X^{(n)}_s\|_{\mathbb{V}}^2\Big]dt.
\end{eqnarray*}
Hence, by  Gronwall's lemma we obtain
$$\mathbb{E}^{\mathbf{P}_n}\Big[\sup_{t\in[0,T\wedge\tau_{M}^{(n)}]}\|X^{(n)}_t\|_{\mathbb{V}}^2\Big]+2\delta_2\mathbb{E}^{\mathbf{P}_n}\int_0^{T\wedge\tau_{M}^{(n)}}\|X^{(n)}_t\|_{\mathbb{X}}^{\gamma}dt
\leq C_{\lambda,M,T}\big(1+\mathbb{E}\|\xi\|_{\mathbb{V}}^2\big).$$
We complete the proof.
\end{proof}

\subsection{Tightness}\label{sub5.2}
 Note that $\Omega^{n}=C([0,T];\mathbb{H}_n)$ is a closed subset of $\Omega$. We could extend $\mathbf{P}_n$ to a probability measure $\hat{\mathbf{P}}_n$ on $(\Omega,\mathscr{F})$ by
$$\hat{\mathbf{P}}_n(A):=\mathbf{P}_n(A\cap \Omega^{n}),~A\in\mathscr{F}.$$

Throughout the section, the generic point in $\Omega$ is denoted by  $X$.
Recall the space (\ref{es73}) and the associated metric.

In this subsection, we intend to show the tightness of  $\hat{\mathbf{P}}_n$  in $\mathscr{P}(\mathbb{S})$. Firstly, we shall prove the tightness of  $\hat{\mathbf{P}}_n$ in $\mathscr{P}(\mathbb{C}_T)$ by the Aldous criterion \cite{A1}. Secondly, we  shall prove the tightness of  $\hat{\mathbf{P}}_n$ in $\mathscr{P}(L^\alpha([0,T];\mathbb{V}))$ by the  criterion presented in \cite[Theorem 5]{S3} (see also \cite[Lemma 5.2]{RSZ1}).
Then we conclude this subsection by combining Lemmas \ref{lem6} and \ref{lem7} to obtain the tightness of  $\hat{\mathbf{P}}_n$  in $\mathscr{P}(\mathbb{S})$.
\begin{lemma}\label{lem6}
The set of measures $\{\hat{\mathbf{P}}_n\}_{n\geq 1}$ is tight in $\mathscr{P}(\mathbb{C}_T)$.
\end{lemma}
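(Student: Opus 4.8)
The plan is to establish tightness of $\{\hat{\mathbf{P}}_n\}_{n\geq 1}$ in $\mathscr{P}(\mathbb{C}_T)$ via the Aldous criterion: it suffices to verify (i) a uniform moment bound $\sup_n \mathbb{E}^{\mathbf{P}_n}[\sup_{t\in[0,T]}\|X^{(n)}_t\|_{\mathbb{H}}^q]<\infty$ for some $q>0$, which together with compactness of the embedding $\mathbb{V}\subset\mathbb{H}$ handles the compact-containment part through the refined bound of Lemma~\ref{sec3lem4}; and (ii) the Aldous stochastic equicontinuity condition, namely that for every $\varepsilon>0$,
$$
\lim_{\theta\downarrow 0}\ \sup_{n}\ \sup_{\substack{\sigma,\tau\ \text{stopping times}\\ \sigma\leq\tau\leq(\sigma+\theta)\wedge T}} \mathbf{P}_n\big(\|X^{(n)}_\tau-X^{(n)}_\sigma\|_{\mathbb{H}}>\varepsilon\big)=0.
$$
The uniform moment bound in (i) is already supplied by Lemma~\ref{lem3.0} (with $q$ the exponent for which $\mathbb{E}\|\xi\|_{\mathbb{H}}^q<\infty$), so the real work is (ii).

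To verify (ii), I would split the increment along the equation~\eqref{eqf}:
$$
X^{(n)}_\tau-X^{(n)}_\sigma=\int_\sigma^\tau \pi_n\mathcal{A}(s,X^{(n)}_s,\mathscr{L}_{X^{(n)}_s})\,ds+\int_\sigma^\tau \pi_n\mathcal{B}(s,X^{(n)}_s,\mathscr{L}_{X^{(n)}_s})\,dW^{(n)}_s=:I^{(n)}_{\sigma,\tau}+J^{(n)}_{\sigma,\tau}.
$$
For the drift term $I^{(n)}_{\sigma,\tau}$, I would estimate its $\mathbb{V}^*$-norm using the growth condition $\mathbf{(A_3)}$, which gives $\|\mathcal{A}(s,u,\mu)\|_{\mathbb{V}^*}^{\alpha/(\alpha-1)}\leq C(1+\|u\|_{\mathbb{V}}^\alpha+\mu(\|\cdot\|_{\mathbb{V}}^\alpha))(1+\|u\|_{\mathbb{H}}^\beta+\mu(\|\cdot\|_{\mathbb{H}}^\beta))$; after applying Hölder's inequality in time together with Lemmas~\ref{lem3.0} and~\ref{lem5} (the latter controlling higher moments of $\int_0^T\|X^{(n)}_t\|_{\mathbb{V}}^\alpha dt$), one obtains $\mathbb{E}^{\mathbf{P}_n}\|I^{(n)}_{\sigma,\tau}\|_{\mathbb{V}^*}\leq C\,\theta^{\kappa}$ for some $\kappa>0$ uniformly in $n$ and in the stopping times; since $\mathbb{V}^*$-smallness plus the compact-containment bound in $\mathbb{V}$ upgrades to $\mathbb{H}$-smallness (interpolation/compactness), this is enough. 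Here it is cleanest to first localize: on the event $\{\tau_M^{(n)}>T\}$ (with $\tau_M^{(n)}$ as in~\eqref{destop}) use the sharp bounds, and control $\mathbf{P}_n(\tau_M^{(n)}\leq T)$ uniformly by Chebyshev from Lemmas~\ref{lem3.0}–\ref{lem5}, letting $M\to\infty$ at the end. For the stochastic term $J^{(n)}_{\sigma,\tau}$, the Itô isometry and the bound $\mathbf{(A_3)}$ on $\mathcal{B}$ give
$$
\mathbb{E}^{\mathbf{P}_n}\|J^{(n)}_{\sigma,\tau}\|_{\mathbb{H}}^2=\mathbb{E}^{\mathbf{P}_n}\int_\sigma^\tau\|\pi_n\mathcal{B}(s,X^{(n)}_s,\mathscr{L}_{X^{(n)}_s})\tilde\pi_n\|_{L_2(U,\mathbb{H})}^2\,ds\leq C\,\mathbb{E}^{\mathbf{P}_n}\int_\sigma^\tau\big(1+\|X^{(n)}_s\|_{\mathbb{H}}^2+\mathbb{E}^{\mathbf{P}_n}\|X^{(n)}_s\|_{\mathbb{H}}^2\big)\,ds\leq C\,\theta,
$$
again uniformly in $n$ thanks to Lemma~\ref{lem3.0}. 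Combining the two pieces and applying Chebyshev's inequality yields the Aldous condition, and hence tightness in $\mathscr{P}(\mathbb{C}_T)$.

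The main obstacle I anticipate is bookkeeping around the $\mathbb{V}^*$-versus-$\mathbb{H}$ gap for the drift: the natural estimate for $I^{(n)}_{\sigma,\tau}$ lives in $\mathbb{V}^*$, not $\mathbb{H}$, so to feed it into the Aldous criterion in $\mathbb{C}_T=C([0,T];\mathbb{H})$ one must either argue tightness in $C([0,T];\mathbb{V}^*)$ first and then upgrade to $C([0,T];\mathbb{H})$ using the uniform $\sup_t\|X^{(n)}_t\|_{\mathbb{V}}$-type bound of Lemma~\ref{sec3lem4} and the compact embedding $\mathbb{V}\hookrightarrow\mathbb{H}$ (an Arzelà–Ascoli / interpolation argument: bounded in $C([0,T];\mathbb{V})$ on large-probability sets plus equicontinuous in $\mathbb{V}^*$ forces equicontinuity in $\mathbb{H}$ on a weaker scale), or work directly with the Aldous condition stated in $\mathbb{V}^*$ and convert at the end. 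Either route is standard but requires care to make all constants independent of both $n$ and the localization level $M$ before passing to the limit; this is where the uniform-in-$n$ a priori estimates of Lemmas~\ref{lem3.0}, \ref{lem5}, and~\ref{sec3lem4} are indispensable.
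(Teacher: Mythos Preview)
Your strategy is correct, but it diverges from the paper's route precisely at the point you flag as the ``main obstacle.'' The paper avoids the $\mathbb{V}^*$-versus-$\mathbb{H}$ gap altogether by invoking Jakubowski's tightness criterion \cite{Ja86}: once compact containment in $\mathbb{V}$ is established (your step (i), via Lemma~\ref{sec3lem4} and the stopping times $\tau_M^{(n)}$, exactly as you outline), it suffices to show tightness in $C([0,T];\mathbb{R})$ of the one-dimensional projections $\langle X^{(n)},e\rangle_{\mathbb{H}}$ for each $e$ in the orthonormal basis. Since $e\in\mathbb{H}_m\subset\mathbb{V}$, the drift increment pairs as ${}_{\mathbb{V}^*}\langle\pi_n\mathcal{A}(\cdot),e\rangle_{\mathbb{V}}$, which is a real number controlled directly by $\mathbf{(A_3)}$ and the a priori bounds, and the Aldous condition for these scalar processes follows without any interpolation step.

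Your proposed workaround---control the full increment in $\mathbb{V}^*$, then upgrade to $\mathbb{H}$ via the interpolation $\|x\|_{\mathbb{H}}^2\leq\|x\|_{\mathbb{V}^*}\|x\|_{\mathbb{V}}$ together with the $\sup_t\|X^{(n)}_t\|_{\mathbb{V}}$ bound from Lemma~\ref{sec3lem4}---is valid, but it costs an extra Chebyshev layer (the $\mathbb{V}$-bound in Lemma~\ref{sec3lem4} is in expectation, not pathwise) and some additional bookkeeping to keep all constants uniform in $n$ and $M$. The paper's projection trick is cleaner because it never asks for an $\mathbb{H}$-norm estimate on the drift at all; the price is that one must know Jakubowski's result. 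Both arguments rest on the same a priori estimates (Lemmas~\ref{lem3.0}, \ref{lem5}, \ref{sec3lem4}) and the same localization via $\tau_M^{(n)}$, so conceptually they are close cousins.
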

\begin{proof}
Recall the definition (\ref{destop}) of stopping time $\tau_M^{(n)}$. By Lemma \ref{lem3.0}, we have
 \begin{eqnarray}\label{v110}
&&\sup_{n\in\mathbb{N}}\hat{\mathbf{P}}_n\big(\tau_M^{(n)}<T\big)
\nonumber \\
= &&\!\!\!\!\!\!\!\!
\sup_{n\in\mathbb{N}}\hat{\mathbf{P}}_n\left(\Bigg\{\int_0^T\|X_s\|_{\mathbb{V}}^{\alpha}ds>M\Bigg\}\cup\Big\{\sup_{0\leq s\leq T}\|X_s\|_{\mathbb{H}}>M\Big\}\right)
\nonumber \\
\leq &&\!\!\!\!\!\!\!\!
\sup_{n\in\mathbb{N}}\mathbf{P}_n\left(\int_0^T\|X^{(n)}_s\|_{\mathbb{V}}^{\alpha}ds>M\right)
+\sup_{n\in\mathbb{N}}\mathbf{P}_n\left(\sup_{0\leq s\leq T}\|X^{(n)}_s\|_{\mathbb{H}}>M\right)
\nonumber \\
\leq &&\!\!\!\!\!\!\!\!
\frac{1}{M}\sup_{n\in\mathbb{N}}\mathbb{E}^{\mathbf{P}_n}\int_0^T\|X^{(n)}_s\|_{\mathbb{V}}^{\alpha}ds
+\frac{1}{M^2}\sup_{n\in\mathbb{N}}\mathbb{E}^{\mathbf{P}_n}\Big[\sup_{0\leq s\leq T}\|X^{(n)}_s\|_{\mathbb{H}}^2\Big]
\nonumber \\
\leq &&\!\!\!\!\!\!\!\!C_T\Big(\frac{1}{M}+\frac{1}{M^2}\Big),
\end{eqnarray}
where the constant $C_{T}$ is independent of $M$.

\vspace{1mm}
For any $R>0$, by Lemma \ref{sec3lem4} and (\ref{v110}),we have
  \begin{eqnarray}\label{v111}
&&\!\!\!\!\!\!\!\!\sup_{n\in\mathbb{N}}\hat{\mathbf{P}}_n\Big(\sup_{0\leq t\leq T}\|X_t\|_{\mathbb{V}}>R\Big)
\nonumber \\
\leq &&\!\!\!\!\!\!\!\!
\sup_{n\in\mathbb{N}}\mathbf{P}_n\Big(\sup_{0\leq t\leq T}\|X^{(n)}_t\|_{\mathbb{V}}>R,\tau_M^{(n)}\geq T\Big)
\nonumber \\
 &&\!\!\!\!\!\!\!\!+\sup_{n\in\mathbb{N}}\mathbf{P}_n\Big(\sup_{0\leq t\leq T}\|X^{(n)}_t\|_{\mathbb{V}}>R,\tau_M^{(n)}<T\Big)
\nonumber \\
\leq &&\!\!\!\!\!\!\!\!
\sup_{n\in\mathbb{N}}\mathbf{P}_n\Big(\sup_{0\leq t\leq T\wedge\tau_M^{(n)}}\|X^{(n)}_t\|_{\mathbb{V}}>R\Big)+\sup_{n\in\mathbb{N}}\mathbf{P}_n\big(\tau_M^{(n)}<T\big)
\nonumber \\
\leq &&\!\!\!\!\!\!\!\!\frac{1}{R^2}\sup_{n\in\mathbb{N}}\mathbb{E}^{\mathbf{P}_n}\Big[\sup_{0\leq t\leq T\wedge\tau_M^{(n)}}\|X^{(n)}_t\|_{\mathbb{V}}^2\Big]+C_T\Big(\frac{1}{M}+\frac{1}{M^2}\Big)
\nonumber \\
\leq &&\!\!\!\!\!\!\!\!\frac{C_M}{R^2}+C_T\Big(\frac{1}{M}+\frac{1}{M^2}\Big).
\end{eqnarray}
Let $R\rightarrow\infty$ first, then let $M\rightarrow\infty$, it follows  that
$$\lim_{R\rightarrow\infty}\sup_{n\in\mathbb{N}}\hat{\mathbf{P}}_n\left(\sup_{0\leq t\leq T}\|X_t\|_{\mathbb{V}}>R\right)=0.$$

Since the embedding $\mathbb{V}\subset \mathbb{H}$ is compact, by \cite[Theorem 3.1]{Ja86}, it suffices to
show that for every $e\in \mathbb{H}_m$, $m\geq 1$, $\{\langle X^{(n)},e\rangle_{\mathbb{H}}\}_{n\in\mathbb{N}}$ is tight in the space $C([0,T],\mathbb{R})$. Note that by Aldous's criterion \cite{A1} and (\ref{v111}), it remains to prove that for any stopping time $0\leq \zeta_n\leq T$ and $\varepsilon>0$,
\begin{equation}\label{escon1}
\lim_{\Delta\to 0}\sup_{n\in\mathbb{N}}\hat{\mathbf{P}}_n\Big(|\langle X_{\overline{\zeta_n+\Delta}}-X_{\zeta_n},e\rangle_{\mathbb{H}}|>\varepsilon\Big)=0,
\end{equation}
where $\overline{\zeta_n+\Delta}:=T\wedge (\zeta_n+\Delta) \vee 0$. Similar to (\ref{v111}), we know
\begin{eqnarray}\label{sec3es2}
&&\!\!\!\!\!\!\!\!\hat{\mathbf{P}}_n\Big(|\langle X_{\overline{\zeta_n+\Delta}}-X_{\zeta_n},e\rangle_{\mathbb{H}}|>\varepsilon\Big)
\nonumber \\
\leq &&\!\!\!\!\!\!\!\!\mathbf{P}_n\Big(|\langle X^{(n)}_{\overline{\zeta_n+\Delta}}-X^{(n)}_{\zeta_n},e\rangle_{\mathbb{H}}|>\varepsilon,\tau_M^{(n)}\geq T\Big)+\mathbf{P}_n\big(\tau_M^{(n)}<T\big)
\nonumber \\
\leq &&\!\!\!\!\!\!\!\!\frac{1}{\varepsilon^{\frac{\alpha}{\alpha-1}}}\mathbb{E}^{\mathbf{P}_n}\big|\langle X^{(n)}_{\overline{\zeta_n+\Delta}\wedge  \tau_M^{(n)} }-X^{(n)}_{\zeta_n\wedge \tau_M^{(n)}},e\rangle_{\mathbb{H}}\big|^{\frac{\alpha}{\alpha-1}}+\mathbf{P}_n\big(\tau_M^{(n)}<T\big).
\end{eqnarray}

Now we estimate the first term on the right hand side of (\ref{sec3es2}). By (\ref{eqf}) and applying Burkholder-Davis-Gundy's inequality, it follows that
\begin{eqnarray}\label{esq1}
&&\!\!\!\!\!\!\!\!\mathbb{E}^{\mathbf{P}_n}\big|\langle X^{(n)}_{\overline{\zeta_n+\Delta}\wedge  \tau_M^{(n)} }-X^{(n)}_{\zeta_n\wedge \tau_M^{(n)}},e\rangle_{\mathbb{H}}\big|^{\frac{\alpha}{\alpha-1}}
\nonumber \\
\leq &&\!\!\!\!\!\!\!\!C\mathbb{E}^{\mathbf{P}_n}\Bigg\{\int_{\zeta_n\wedge \tau_M^{(n)}}^{\overline{\zeta_n+\Delta}\wedge  \tau_M^{(n)}}|\langle \pi_n \mathcal{A}(s,X^{(n)}_s,\mathscr{L}_{X^{(n)}_s}), e\rangle_{\mathbb{H}}|ds\Bigg\}^{\frac{\alpha}{\alpha-1}}
\nonumber \\
 &&\!\!\!\!\!\!\!\!+C\mathbb{E}^{\mathbf{P}_n}\Bigg\{\int_{\zeta_n\wedge \tau_M^{(n)}}^{\overline{\zeta_n+\Delta}\wedge  \tau_M^{(n)}}\| \pi_n \mathcal{B}(s,X^{(n)}_s,\mathscr{L}_{X^{(n)}_s})\tilde{\pi}_n\|_{L_2(U, \mathbb{H})}^2\|e\|_{\mathbb{H}}^2ds\Bigg\}^{\frac{\alpha}{2(\alpha-1)}}
\nonumber \\
= &&\!\!\!\!\!\!\!\!C\mathbb{E}^{\mathbf{P}_n}\Bigg\{\int_{\zeta_n\wedge \tau_M^{(n)}}^{\overline{\zeta_n+\Delta}\wedge  \tau_M^{(n)}}|{}_{\mathbb{V}^*}\langle \pi_n \mathcal{A}(s,X^{(n)}_s,\mathscr{L}_{X^{(n)}_s}), e\rangle_{\mathbb{V}}|ds\Bigg\}^{\frac{\alpha}{\alpha-1}}
\nonumber \\
 &&\!\!\!\!\!\!\!\!+C\mathbb{E}^{\mathbf{P}_n}\Bigg\{\int_{\zeta_n\wedge \tau_M^{(n)}}^{\overline{\zeta_n+\Delta}\wedge  \tau_M^{(n)}}\| \pi_n \mathcal{B}(s,X^{(n)}_s,\mathscr{L}_{X^{(n)}_s})\tilde{\pi}_n\|_{L_2(U, \mathbb{H})}^2\|e\|_{\mathbb{H}}^2ds\Bigg\}^{\frac{\alpha}{2(\alpha-1)}}
 \nonumber \\
=: &&\!\!\!\!\!\!\!\!\text{(I)}+\text{(II)},
\end{eqnarray}
where we used the fact that $e\in\mathbb{H}_m\subset\mathbb{V}$ in the second step.

\vspace{1mm}
We mention that by Lemma \ref{lem3.0}, it implies
\begin{equation}\label{es11}
\mathscr{L}_{X^{(n)}_t}\in\mathscr{P}_{\alpha}(\mathbb{V})\cap\mathscr{P}_{p}(\mathbb{H}),~dt\text{-a.e.}.
\end{equation}
Thus for $\text{(I)}$, by $\mathbf{(A_3)}$, (\ref{es11}) and Jensen's inequality, we have
\begin{eqnarray*}
\text{(I)}\leq  &&\!\!\!\!\!\!\!\!C |\Delta|^{\frac{1}{\alpha-1}}\cdot \mathbb{E}^{\mathbf{P}_n}\Bigg\{\int_{\zeta_n\wedge \tau_M^{(n)}}^{\overline{\zeta_n+\Delta}\wedge  \tau_M^{(n)}}|{}_{\mathbb{V}^*}\langle \pi_n \mathcal{A}(s,X^{(n)}_s,\mathscr{L}_{X^{(n)}_s}), e\rangle_{\mathbb{V}}|^{\frac{\alpha}{\alpha-1}}ds\Bigg\}
\nonumber \\
\leq &&\!\!\!\!\!\!\!\!C |\Delta|^{\frac{1}{\alpha-1}}\cdot\mathbb{E}^{\mathbf{P}_n}\Bigg\{\int_{0}^{T\wedge \tau_M^{(n)}}\|e\|_{\mathbb{V}}^{\frac{\alpha}{\alpha-1}}\big(1+\|X_{s}^{(n)}\|_{\mathbb{V}}^\alpha+\mathscr{L}_{X^{(n)}_s}(\|\cdot\|_{\mathbb{V}}^{\alpha})\big)
\nonumber \\
 &&\!\!\!\!\!\!\!\! \cdot
\big(1+\|X_{s}^{(n)}\|_{\mathbb{H}}^\beta+\mathscr{L}_{X^{(n)}_s}(\|\cdot\|_{\mathbb{H}}^{\beta})\big)ds\Bigg\}
\nonumber \\
\leq &&\!\!\!\!\!\!\!\!C_{M,T,\|e\|_{\mathbb{V}}} |\Delta|^{\frac{1}{\alpha-1}}\cdot\mathbb{E}^{\mathbf{P}_n}\Bigg\{\int_{0}^{T\wedge \tau_M^{(n)}}\big(1+\|X_{s}^{(n)}\|_{\mathbb{V}}^\alpha+\mathscr{L}_{X^{(n)}_s}(\|\cdot\|_{\mathbb{V}}^{\alpha})\big)ds\Bigg\}.
\end{eqnarray*}
Note that due to Lemma \ref{lem3.0}, we have
$$\mathbb{E}^{\mathbf{P}_n}\Bigg\{\int_{0}^{T\wedge \tau_M^{(n)}}\mathscr{L}_{X^{(n)}_s}(\|\cdot\|_{\mathbb{V}}^{\alpha})ds\Bigg\}\leq \mathbb{E}^{\mathbf{P}_n}\int_{0}^{T}\|X^{(n)}_s\|_{\mathbb{V}}^{\alpha}ds\leq C_{T}\big(1+\mathbb{E}\|\xi\|_{\mathbb{H}}^2\big).$$
Therefore,
\begin{equation}\label{esq2}
\text{(I)}
 \leq C_{M,T,\|e\|_{\mathbb{V}}} |\Delta|^{\frac{1}{\alpha-1}}\big(1+\mathbb{E}\|\xi\|_{\mathbb{H}}^p\big).
\end{equation}
By the growth assumption $(\mathbf{A_3})$, we can get
\begin{eqnarray}\label{esq3}
\text{(II)}\leq  &&\!\!\!\!\!\!\!\!\mathbb{E}^{\mathbf{P}_n}\Bigg\{\int_{\zeta_n\wedge \tau_M^{(n)}}^{(\zeta_n+\Delta)\wedge  \tau_M^{(n)}}\|  \mathcal{B}(s,X^{(n)}_s,\mathscr{L}_{X^{(n)}_s})\|_{L_2(U, \mathbb{H})}^2\|e\|_{\mathbb{H}}^2ds\Bigg\}^{\frac{\alpha}{2(\alpha-1)}}
\nonumber \\
\leq &&\!\!\!\!\!\!\!\!C\mathbb{E}^{\mathbf{P}_n}\Bigg\{\int_{\zeta_n\wedge \tau_M^{(n)}}^{(\zeta_n+\Delta)\wedge  \tau_M^{(n)}}\|e\|_{\mathbb{H}}^2\big(1+\|X_{s}^{(n)}\|_{\mathbb{H}}^2+\mathbb{E}^{\mathbf{P}_n}\|X_{s}^{(n)}\|_{\mathbb{H}}^2\big)ds\Bigg\}^{\frac{\alpha}{2(\alpha-1)}}
\nonumber \\
\leq  &&\!\!\!\!\!\!\!\!C_{M,\|e\|_{\mathbb{H}}}\mathbb{E}^{\mathbf{P}_n}\Bigg\{\int_{\zeta_n\wedge \tau_M^{(n)}}^{(\zeta_n+\Delta)\wedge  \tau_M^{(n)}}\big(1+\mathbb{E}^{\mathbf{P}_n}\|X_{s}^{(n)}\|_{\mathbb{H}}^2\big)ds\Bigg\}^{\frac{\alpha}{2(\alpha-1)}}
\nonumber \\
\leq &&\!\!\!\!\!\!\!\!C_{M,T,\|e\|_{\mathbb{H}}}\mathbb{E}^{\mathbf{P}_n}\Bigg\{\int_{\zeta_n\wedge \tau_M^{(n)}}^{(\zeta_n+\Delta)\wedge  \tau_M^{(n)}}\big(1+\mathbb{E}\|\xi\|_{\mathbb{H}}^2\big)ds\Bigg\}^{\frac{\alpha}{2(\alpha-1)}}
\nonumber \\
 \leq&&\!\!\!\!\!\!\!\!C_{M,T,\|e\|_{\mathbb{H}}}|\Delta|^{\frac{\alpha}{2(\alpha-1)}}\Big(1+\big(\mathbb{E}\|\xi\|_{\mathbb{H}}^2\big)^{\frac{\alpha}{2(\alpha-1)}}\Big),
\end{eqnarray}
where we used Lemma \ref{lem3.0} in the fourth step.

Combining (\ref{esq1})-(\ref{esq3}) gives
\begin{eqnarray}\label{esq4}
\lim_{\Delta\to 0}\sup_{n\in\mathbb{N}}\mathbb{E}^{\mathbf{P}_n}\big|\langle X^{(n)}_{\overline{\zeta_n+\Delta}\wedge  \tau_M^{(n)} }-X^{(n)}_{\zeta_n\wedge \tau_M^{(n)}},e\rangle_{\mathbb{H}}\big|^{\alpha}=0.
\end{eqnarray}
Finally, taking (\ref{v110}), (\ref{sec3es2}) and (\ref{esq4}) into account and  letting $\Delta\to 0$ and then $M\to\infty$ in (\ref{sec3es2}), we conclude (\ref{escon1}) holds.
The proof is completed.
\end{proof}

\begin{lemma}\label{lem7}
The set of measures $\{\hat{\mathbf{P}}_n\}_{n\in\mathbb{N}}$ is tight in $\mathscr{P}(L^\alpha([0,T];\mathbb{V}))$.
\end{lemma}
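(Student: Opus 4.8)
The plan is to establish tightness of $\{\hat{\mathbf{P}}_n\}_{n\in\mathbb{N}}$ in $\mathscr{P}(L^\alpha([0,T];\mathbb{V}))$ by invoking the compactness criterion of \cite[Theorem 5]{S3} (equivalently \cite[Lemma 5.2]{RSZ1}), which characterizes relatively compact sets in $L^\alpha([0,T];\mathbb{V})$ in terms of a uniform bound together with a uniform fractional-in-time (or translation) estimate, using that the embedding $\mathbb{X}\subset\mathbb{V}$ is compact. Concretely, it suffices to verify two things uniformly in $n$: first, that $\sup_n \mathbb{E}^{\mathbf{P}_n}\int_0^T\|X^{(n)}_t\|_{\mathbb{V}}^\alpha\,dt<\infty$, which is immediate from Lemma~\ref{lem3.0} (with $q=2$); and second, a uniform control of time increments of the form
\begin{equation*}
\lim_{h\to 0}\sup_{n\in\mathbb{N}}\mathbb{E}^{\mathbf{P}_n}\int_0^{T-h}\|X^{(n)}_{t+h}-X^{(n)}_t\|_{\mathbb{V}^*}^{?}\,dt=0,
\end{equation*}
or, in the version best suited here, a uniform bound on a Sobolev--Slobodeckij norm $\mathbb{E}^{\mathbf{P}_n}\|X^{(n)}\|_{W^{\sigma,r}([0,T];\mathbb{V}^*)}$ for some $\sigma\in(0,1)$, $r>1$, combined with the compact embedding $\mathbb{X}\subset\mathbb{V}\subset\mathbb{V}^*$ and interpolation.

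The order of steps I would carry out: (i) state the precise form of the S. Simon / Flandoli--G\k{a}tarek type criterion being used, isolating the two hypotheses (a uniform $L^\alpha_tL^1_\omega$ bound in $\mathbb{V}$, and a uniform fractional time-regularity bound in the weaker space $\mathbb{V}^*$); (ii) recall from Lemma~\ref{lem3.0} that $\sup_n\mathbb{E}^{\mathbf{P}_n}\int_0^T\|X^{(n)}_t\|_{\mathbb{V}}^\alpha dt\le C_T(1+\mathbb{E}\|\xi\|_{\mathbb{H}}^2)<\infty$, which gives hypothesis one; (iii) for hypothesis two, split $X^{(n)}$ using equation \eqref{eqf} into the drift part $\int_0^t\pi_n\mathcal{A}(s,X^{(n)}_s,\mathscr{L}_{X^{(n)}_s})\,ds$ and the stochastic part $\int_0^t\pi_n\mathcal{B}(s,X^{(n)}_s,\mathscr{L}_{X^{(n)}_s})\,dW^{(n)}_s$. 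For the drift, use the growth bound $(\mathbf{A_3})$ together with the moment estimates of Lemmas~\ref{lem3.0} and \ref{lem5} to bound $\mathbb{E}^{\mathbf{P}_n}\big(\int_0^T\|\pi_n\mathcal{A}(s,X^{(n)}_s,\mathscr{L}_{X^{(n)}_s})\|_{\mathbb{V}^*}^{\alpha/(\alpha-1)}ds\big)^r$ uniformly in $n$, which by Jensen gives Hölder-$1$-in-time continuity of the drift integral valued in $\mathbb{V}^*$ with a uniform $L^r(\Omega)$ norm, hence a uniform bound on any $W^{\sigma,r}$-seminorm with $\sigma<1$. For the stochastic convolution, use the factorization method (or a direct Burkholder--Davis--Gundy plus Kolmogorov-type argument exactly as in the proof of Lemma~\ref{lem6}, estimates \eqref{esq3}) together with $(\mathbf{A_3})$ \eqref{conb} and Lemma~\ref{lem3.0} to obtain $\mathbb{E}^{\mathbf{P}_n}\|M^{(n)}_{t}-M^{(n)}_s\|_{\mathbb{V}^*}^{2r}\le C|t-s|^{r}$ uniformly in $n$, yielding the required fractional Sobolev regularity in $\mathbb{V}^*$ by the Kolmogorov--Garsia--Rodemich--Rumsey lemma; (iv) combine (ii) and (iii): a uniform $L^\alpha_tL^1_\omega$ bound in $\mathbb{V}$ plus a uniform bound in $L^r(\Omega;W^{\sigma,r}([0,T];\mathbb{V}^*))$, via the compact embedding $\mathbb{V}\hookrightarrow\mathbb{V}^*$ (which follows from $\mathbb{X}\subset\mathbb{V}\subset\mathbb{H}$ being compact and dense, hence $\mathbb{V}\subset\mathbb{V}^*$ compact after identification), and the Aubin--Lions--Simon compactness theorem in the form of \cite[Theorem 5]{S3}, gives tightness of $\{\hat{\mathbf{P}}_n\}$ in $\mathscr{P}(L^\alpha([0,T];\mathbb{V}))$ by the usual Markov-inequality argument (relatively compact sets of arbitrarily large measure).

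The main obstacle I anticipate is handling the drift term in $(\mathbf{A_3})$ cleanly: the bound \eqref{conA3} on $\|\mathcal{A}\|_{\mathbb{V}^*}^{\alpha/(\alpha-1)}$ is a \emph{product} of a factor with $\|\cdot\|_{\mathbb{V}}^\alpha$-growth and a factor with $\|\cdot\|_{\mathbb{H}}^\beta$-growth (including the measure contributions $\mu(\|\cdot\|_{\mathbb{V}}^\alpha)$ and $\mu(\|\cdot\|_{\mathbb{H}}^\beta)$), so to integrate in time and take expectations one must interpolate carefully and exploit that $\mathscr{L}_{X^{(n)}_s}=\mathbf{P}_n\circ (X^{(n)}_s)^{-1}$ so that $\mathscr{L}_{X^{(n)}_s}(\|\cdot\|_{\mathbb{H}}^\beta)=\mathbb{E}^{\mathbf{P}_n}\|X^{(n)}_s\|_{\mathbb{H}}^\beta$ is controlled by Lemma~\ref{lem3.0}, while the $\|X^{(n)}_s\|_{\mathbb{V}}^\alpha$ factor is only in $L^1_t$, not $L^\infty_t$ — this forces one to work with $L^r(\Omega)$-valued (rather than pathwise) time regularity, which is precisely why the Simon-type criterion with an $L^r(\Omega;W^{\sigma,r})$ hypothesis is the right tool, and why Lemma~\ref{lem5} (the $r$-th moment of $\int_0^T\|X^{(n)}\|_{\mathbb{V}}^\alpha dt$) is needed. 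The stochastic term is routine by the same computations already performed in Lemma~\ref{lem6}.
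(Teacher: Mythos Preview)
There is a genuine gap in your plan: your ``hypothesis one'' is too weak for the target space. Simon's criterion for relative compactness in $L^\alpha([0,T];\mathbb{V})$ requires a uniform bound in $L^\alpha([0,T];\mathbb{X}_0)$ for some space $\mathbb{X}_0$ \emph{compactly} embedded in $\mathbb{V}$; a bound in $L^\alpha([0,T];\mathbb{V})$ itself is not enough, since the identity $\mathbb{V}\hookrightarrow\mathbb{V}$ is not compact. Your combination --- a uniform $L^\alpha_t$ bound in $\mathbb{V}$ together with fractional time regularity in $\mathbb{V}^*$ --- yields (via the compact embedding $\mathbb{V}\hookrightarrow\mathbb{V}^*$) compactness only in $L^\alpha([0,T];B)$ for intermediate spaces $B$ with $\mathbb{V}\subset B$ compactly, e.g.\ $B=\mathbb{H}$, but \emph{not} in $L^\alpha([0,T];\mathbb{V})$. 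Lemma~\ref{lem3.0} alone cannot close this; you must supply a bound in the smaller space $\mathbb{X}$.

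This is precisely the role of assumption $(\mathbf{A_4})$ and Lemma~\ref{sec3lem4}, which you do not invoke. The paper's proof uses Lemma~\ref{sec3lem4} to obtain $\mathbb{E}^{\mathbf{P}_n}\int_0^{T\wedge\tau_M^{(n)}}\|X^{(n)}_t\|_{\mathbb{X}}^\gamma\,dt\le C_M$ (with $\gamma\ge\alpha$), which gives the needed spatial compactness. Because this $\mathbb{X}$-estimate is available only \emph{up to the stopping time} $\tau_M^{(n)}$, the argument cannot proceed via uniform moment bounds as you propose; instead one works in probability, splitting each tightness condition into $\{\tau_M^{(n)}\ge T\}$ (controlled by Lemma~\ref{sec3lem4}) and $\{\tau_M^{(n)}<T\}$ (small by \eqref{v110}), and letting $M\to\infty$ after the relevant limit. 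The time-increment condition is likewise verified in $\mathbb{H}$ (not $\mathbb{V}^*$) via It\^o's formula applied to $\|X^{(n)}_{t\wedge\tau_M^{(n)}}-X^{(n)}_{(t-\Delta)\wedge\tau_M^{(n)}}\|_{\mathbb{H}}^2$ (or $\|\cdot\|_{\mathbb{H}}^\alpha$ when $\alpha>2$), again localized by the stopping time. Your drift and stochastic-term estimates in $\mathbb{V}^*$ are correct as far as they go, but they feed into the wrong conclusion.
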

\begin{proof}
Since the embedding $\mathbb{X}\subset \mathbb{V}$ is compact, by the tightness criterion (cf. \cite[Theorem 5]{S3} and \cite[Lemma 5.2]{RSZ1}), we only need to show the following two claims hold.

(i)
\begin{eqnarray}\label{vv88}
\lim_{\kappa\rightarrow\infty}\sup_{n\in\mathbb{N}}\hat{\mathbf{P}}_n\left(\int_0^T\|X_t\|_{\mathbb{X}}^\alpha dt>\kappa\right)=0.
\end{eqnarray}

(ii) For any $\epsilon>0$,
\begin{eqnarray}\label{vv99}
\lim_{\Delta\rightarrow0^+}\sup_{n\in\mathbb{N}}\hat{\mathbf{P}}_n
\left(\int_0^{T}\|X_{t}-X_{t-\Delta}\|_{\mathbb{H}}^{\alpha}dt>\epsilon\right)=0.
\end{eqnarray}

We first prove claim (i). By Lemma \ref{sec3lem4}, we have
\begin{eqnarray*}
&&\!\!\!\!\!\!\!\!\sup_{n\in\mathbb{N}}\hat{\mathbf{P}}_n\Big(\int_0^T\|X_t\|_{\mathbb{X}}^\alpha dt>\kappa\Big)
\nonumber \\
= &&\!\!\!\!\!\!\!\!
\sup_{n\in\mathbb{N}}\mathbf{P}_n\Big(\int_0^T\|X^{(n)}_t\|_{\mathbb{X}}^\alpha dt>\kappa,\tau_M^{(n)}\geq T\Big)+\sup_{n\in\mathbb{N}}\mathbf{P}_n\Big(\int_0^T\|X^{(n)}_t\|_{\mathbb{X}}^\alpha dt>\kappa,\tau_M^{(n)}<T\Big)
\nonumber \\
\leq &&\!\!\!\!\!\!\!\!
\sup_{n\in\mathbb{N}}\mathbf{P}_n\Big(\int_0^{T\wedge\tau_M^{(n)}}\|X^{(n)}_t\|_{\mathbb{X}}^\alpha dt>\kappa\Big)+\sup_{n\in\mathbb{N}}\mathbf{P}_n\Big(\tau_M^{(n)}<T\Big)
\nonumber \\
\leq &&\!\!\!\!\!\!\!\!\frac{1}{\kappa}\sup_{n\in\mathbb{N}}\mathbb{E}^{\mathbf{P}_n}\int_0^{T\wedge\tau_M^{(n)}}(1+\|X^{(n)}_t\|_{\mathbb{X}}^{\gamma}) dt+C_T\Big(\frac{1}{M}+\frac{1}{M^2}\Big)
\nonumber \\
\leq &&\!\!\!\!\!\!\!\!\frac{C_M}{\kappa}+C_T\Big(\frac{1}{M}+\frac{1}{M^2}\Big).
\end{eqnarray*}
Let $\kappa\rightarrow\infty$ first and then  $M\rightarrow\infty$, we complete the proof of claim (i).

\vspace{2mm}
We now prove claim (ii). Note that
\begin{eqnarray}\label{esq5}
&&\!\!\!\!\!\!\!\!\hat{\mathbf{P}}_n
\left(\int_0^{T}\|X_{t}-X_{t-\Delta}\|_{\mathbb{H}}^{\alpha}dt>\epsilon\right)
\nonumber \\
\leq &&\!\!\!\!\!\!\!\!\mathbf{P}_n
\left(\int_0^{T}\|X^{(n)}_{t}-X^{(n)}_{t-\Delta}\|_{\mathbb{H}}^{\alpha}dt>\epsilon,\tau_M^{(n)}\geq T\right)+\mathbf{P}_n\big(\tau_M^{(n)}<T\big)
\nonumber \\
\leq &&\!\!\!\!\!\!\!\!\frac{1}{\epsilon}\mathbb{E}^{\mathbf{P}_n}
\Bigg\{\int_0^{T}\|X^{(n)}_{t\wedge \tau_M^{(n)}}-X^{(n)}_{(t-\Delta)\wedge \tau_M^{(n)}}\|_{\mathbb{H}}^{\alpha}dt\Bigg\}+\mathbf{P}_n\big(\tau_M^{(n)}<T\big).
\end{eqnarray}
If we can prove
\begin{equation}\label{esq6}
\lim_{\Delta\rightarrow0^+}\sup_{n\in\mathbb{N}}\mathbb{E}^{\mathbf{P}_n}
\Bigg\{\int_0^{T}\|X^{(n)}_{t\wedge \tau_M^{(n)}}-X^{(n)}_{(t-\Delta)\wedge \tau_M^{(n)}}\|_{\mathbb{H}}^{\alpha}dt\Bigg\}=0,
\end{equation}
then by (\ref{v110}), letting $\Delta\to 0^+$ and then $M\to \infty$ in (\ref{esq5}) yields claim (ii). The proof of (\ref{esq6}) is divided into two cases, i.e., $1<\alpha\leq 2$ and $\alpha> 2$.

\vspace{2mm}
\textbf{Case 1.} Let $1<\alpha\leq 2$.  Applying It\^{o}'s formula yields that
\begin{eqnarray}
&&\!\!\!\!\!\!\!\!\|X_{t\wedge \tau_M^{(n)}}^{(n)}-X_{(t-\Delta)\wedge \tau_M^{(n)}}^{(n)}\|_{\mathbb{H}}^{2}
 \nonumber \\
=&&\!\!\!\!\!\!\!\!2\int_{(t-\Delta)\wedge \tau_M^{(n)}} ^{t\wedge \tau_M^{(n)}}{}_{\mathbb{V}^*}\langle \pi_n \mathcal{A}(s,X^{(n)}_s,\mathscr{L}_{X^{(n)}_s}), X_{s}^{(n)}-X_{(t-\Delta)\wedge \tau_M^{(n)}}^{(n)}\rangle_{\mathbb{V}} ds
 \nonumber \\
 &&\!\!\!\!\!\!\!\!
 +\int_{(t-\Delta)\wedge \tau_M^{(n)}} ^{t\wedge \tau_M^{(n)}}\|\pi_n\mathcal{B}(s,X^{(n)}_s,\mathscr{L}_{X^{(n)}_s})\tilde{\pi}_n\|_{L_2(U,\mathbb{H})}^2ds
 \nonumber \\
 &&\!\!\!\!\!\!\!\!+2\int_{(t-\Delta)\wedge \tau_M^{(n)}} ^{t\wedge \tau_M^{(n)}}\langle \pi_n\mathcal{B}(s,X^{(n)}_s,\mathscr{L}_{X^{(n)}_s})dW^{(n)}_s, X_{s}^{(n)}-X_{(t-\Delta)\wedge \tau_M^{(n)}}^{(n)}\rangle_{\mathbb{H}} \nonumber\\
=:&&\!\!\!\!\!\!\!\!\text{(I)}+\text{(II)}+\text{(III)}.  \label{F3.9}
\end{eqnarray}
For the first term $\text{(I)}$, by $\mathbf{(A_3)}$ and (\ref{es11}), there exists a constant $C>0$ such that
\begin{eqnarray}  \label{REGX1}
&&\!\!\!\!\!\!\!\!\mathbb{E}^{\mathbf{P}_n}\int^{T}_{\Delta}\text{(I)}dt\nonumber\\
\leq&&\!\!\!\!\!\!\!\! C\mathbb{E}^{\mathbf{P}_n}\Bigg\{\int^{T}_{\Delta}\int_{(t-\Delta)\wedge \tau_M^{(n)}} ^{t\wedge \tau_M^{(n)}}\|\mathcal{A}(s,X^{(n)}_s,\mathscr{L}_{X^{(n)}_s})\|_{\mathbb{V}^*}
\|X_{s}^{(n)}-X_{(t-\Delta)\wedge \tau_M^{(n)}}^{(n)}\|_{\mathbb{V}} ds dt\Bigg\}\nonumber\\
\leq&&\!\!\!\!\!\!\!\!C\Bigg\{\mathbb{E}^{\mathbf{P}_n}\int^{T}_{\Delta}\int_{(t-\Delta)\wedge \tau_M^{(n)}} ^{t\wedge \tau_M^{(n)}}\|\mathcal{A}(s,X^{(n)}_s,\mathscr{L}_{X^{(n)}_s})\|_{\mathbb{V}^*}^{\frac{\alpha}{\alpha-1}}dsdt\Bigg\}^{\frac{\alpha-1}{\alpha}}
\nonumber\\
&&\!\!\!\!\!\!\!\!\cdot
\Bigg\{\mathbb{E}^{\mathbf{P}_n}\int^{T}_{\Delta}\int_{(t-\Delta)\wedge \tau_M^{(n)}} ^{t\wedge \tau_M^{(n)}}\|X_{s}^{(n)}-X_{(t-\Delta)\wedge \tau_M^{(n)}}^{(n)}\|^\alpha_{\mathbb{V}} dsdt\Bigg\}^{\frac{1}{\alpha}}\nonumber\\
\leq&&\!\!\!\!\!\!\!\!C\Bigg\{\Delta\mathbb{E}^{\mathbf{P}_n}\int^{T\wedge \tau_M^{(n)}}_0\big(1+\|X_{s}^{(n)}\|_{\mathbb{V}}^\alpha+\mathscr{L}_{X^{(n)}_s}(\|\cdot\|_{\mathbb{V}}^{\alpha})\big)
\nonumber\\
&&\!\!\!\!\!\!\!\!\cdot
\big(1+\|X_{s}^{(n)}\|_{\mathbb{H}}^\beta+\mathscr{L}_{X^{(n)}_s}(\|\cdot\|_{\mathbb{H}}^{\beta})\big)ds\Bigg\}^{\frac{\alpha-1}{\alpha}}
\cdot\Bigg\{\Delta\mathbb{E}^{\mathbf{P}_n}\int^{T\wedge \tau_M^{(n)}}_0\|X_{s}^{(n)}\|^\alpha_{\mathbb{V}}ds\Bigg\}^{\frac{1}{\alpha}}\nonumber\\
\leq&&\!\!\!\!\!\!\!\!C_{p,M,T}\Delta\big(1+\mathbb{E}\|\xi\|_{\mathbb{H}}^p\big),
\end{eqnarray}
where we used Lemma \ref{lem3.0} in the  fourth inequality.

\vspace{1mm}
Similarly, for $\text{(II)}$, by condition $\mathbf{(A_3)}$,  we get
\begin{eqnarray}\label{REGX2a}
\mathbb{E}^{\mathbf{P}_n}\int^{T}_{\Delta}\text{(II)}dt\leq&&\!\!\!\!\!\!\!\!C\mathbb{E}^{\mathbf{P}_n}\Bigg\{\int^{T}_{\Delta}\int_{(t-\Delta)\wedge \tau_M^{(n)}} ^{t\wedge \tau_M^{(n)}}\big(1+\|X_{s}^{(n)}\|_{\mathbb{H}}^2+\mathscr{L}_{X^{(n)}_s}(\|\cdot\|_{\mathbb{H}}^2)\big)ds dt\Bigg\}\nonumber\\
\leq&&\!\!\!\!\!\!\!\!C_T\Delta\cdot\mathbb{E}^{\mathbf{P}_n}\Big(\sup_{s\in[0,T]}\big(1+\|X_{s}^{(n)}\|_{\mathbb{H}}^2+\mathbb{E}^{\mathbf{P}_n}\|X_{s}^{(n)}\|_{\mathbb{H}}^2\big)\Big)\nonumber\\
\leq&&\!\!\!\!\!\!\!\!C_{T}\Delta\big(1+\mathbb{E}\|\xi\|_{\mathbb{H}}^2\big).
\end{eqnarray}
For $\text{(III)}$, due to Lemma \ref{lem3.0}, it is easy to see that
\begin{eqnarray}  \label{REGX3}
&&\!\!\!\!\!\!\!\!\mathbb{E}^{\mathbf{P}_n}\int^{T}_{\Delta}\text{(III)}dt
\nonumber\\
=&&\!\!\!\!\!\!\!\!\int^{T}_{\Delta}\mathbb{E}^{\mathbf{P}_n}\Bigg\{\int_{(t-\Delta)\wedge \tau_M^{(n)}} ^{t\wedge \tau_M^{(n)}}\langle \pi_n\mathcal{B}(s,X^{(n)}_s,\mathscr{L}_{X^{(n)}_s})dW^{(n)}_s,X_{s}^{(n)}-X_{(t-\Delta)\wedge \tau_M^{(n)}}^{(n)}\rangle_{\mathbb{H}}\Bigg\}dt \nonumber\\
=&&\!\!\!\!\!\!\!\!0.
\end{eqnarray}
Combining estimates \eref{F3.9}-\eref{REGX3}, we get that
\begin{eqnarray}
\mathbb{E}^{\mathbf{P}_n}\Bigg\{\int^{T}_{\Delta}\|X_{t\wedge \tau_M^{(n)}}^{(n)}-X_{(t-\Delta)\wedge \tau_M^{(n)}}^{(n)}\|_{\mathbb{H}}^2dt\Bigg\}\leq&&\!\!\!\!\!\!\!\!C_{p,M,T}\Delta(1+\mathbb{E}\|\xi\|_{\mathbb{H}}^p). \label{F3.13}
\end{eqnarray}
Note that due to (\ref{F3.13}),
\begin{eqnarray}\label{esq7}
&&\!\!\!\!\!\!\!\!\mathbb{E}^{\mathbf{P}_n}\Bigg\{\int^{T}_0\|X_{t\wedge \tau_M^{(n)}}^{(n)}-X_{(t-\Delta)\wedge \tau_M^{(n)}}^{(n)}\|_{\mathbb{H}}^2dt\Bigg\}\nonumber\\
=&&\!\!\!\!\!\!\!\! \mathbb{E}^{\mathbf{P}_n}\Bigg\{\int^{\Delta}_0\|X_{t\wedge \tau_M^{(n)}}^{(n)}-X_{(t-\Delta)\wedge \tau_M^{(n)}}^{(n)}\|_{\mathbb{H}}^2dt\Bigg\}
\nonumber \\
&&\!\!\!\!\!\!\!\!+\mathbb{E}^{\mathbf{P}_n}\Bigg\{\int^{T}_{\Delta}\|X_{t\wedge \tau_M^{(n)}}^{(n)}-X_{(t-\Delta)\wedge \tau_M^{(n)}}^{(n)}\|_{\mathbb{H}}^2dt\Bigg\}\nonumber\\
\leq&&\!\!\!\!\!\!\!\! C\big(1+\mathbb{E}\|\xi\|_{\mathbb{H}}^2\big)\Delta
+\mathbb{E}^{\mathbf{P}_n}\Bigg\{\int^{T}_{\Delta}\|X_{t\wedge \tau_M^{(n)}}^{(n)}-X_{(t-\Delta)\wedge \tau_M^{(n)}}^{(n)}\|_{\mathbb{H}}^2dt\Bigg\}
\nonumber\\
\leq&&\!\!\!\!\!\!\!\! C_{p,M,T}\Delta\big(1+\mathbb{E}\|\xi\|_{\mathbb{H}}^p\big).
\end{eqnarray}
Therefore, by H\"{o}lder's inequality, we have
\begin{eqnarray*}
&&\!\!\!\!\!\!\!\!\lim_{\Delta\rightarrow0^+}\sup_{n\in\mathbb{N}}\mathbb{E}^{\mathbf{P}_n}
\Bigg\{\int_0^{T}\|X^{(n)}_{t\wedge \tau_M^{(n)}}-X^{(n)}_{(t-\Delta)\wedge \tau_M^{(n)}}\|_{\mathbb{H}}^{\alpha}dt\Bigg\}
\nonumber\\
\leq&&\!\!\!\!\!\!\!\!C_T \Bigg\{\lim_{\Delta\rightarrow0^+}\sup_{n\in\mathbb{N}}\mathbb{E}^{\mathbf{P}_n}
\Bigg\{\int_0^{T}\|X^{(n)}_{t\wedge \tau_M^{(n)}}-X^{(n)}_{(t-\Delta)\wedge \tau_M^{(n)}}\|_{\mathbb{H}}^{2}dt\Bigg\}\Bigg\}^{\frac{\alpha}{2}}=0.
\end{eqnarray*}

\textbf{Case 2.} Let $\alpha> 2$.  Applying It\^{o}'s formula to $\|\cdot\|_{\mathbb{H}}^{\alpha}$ yields
\begin{eqnarray}
&&\!\!\!\!\!\!\!\!\|X_{t\wedge \tau_M^{(n)}}^{(n)}-X_{(t-\Delta)\wedge \tau_M^{(n)}}^{(n)}\|_{\mathbb{H}}^{\alpha}
 \nonumber \\
=&&\!\!\!\!\!\!\!\!\alpha\int_{(t-\Delta)\wedge \tau_M^{(n)}} ^{t\wedge \tau_M^{(n)}}\Big(\|X_{s}^{(n)}-X_{(t-\Delta)\wedge \tau_M^{(n)}}^{(n)}\|_{\mathbb{H}}^{\alpha-2}
\nonumber \\
 &&\!\!\!\!\!\!\!\!\cdot
{}_{\mathbb{V}^*}\langle \pi_n\mathcal{A}(s,X^{(n)}_s,\mathscr{L}_{X^{(n)}_s}), X_{s}^{(n)}-X_{(t-\Delta)\wedge \tau_M^{(n)}}^{(n)}\rangle_{\mathbb{V}} \Big)ds
 \nonumber \\
 &&\!\!\!\!\!\!\!\!
 +\frac{\alpha}{2}\int_{(t-\Delta)\wedge \tau_M^{(n)}} ^{t\wedge \tau_M^{(n)}}\Big(\|X_{s}^{(n)}-X_{(t-\Delta)\wedge \tau_M^{(n)}}^{(n)}\|_{\mathbb{H}}^{\alpha-2}
 \nonumber \\
 &&\!\!\!\!\!\!\!\!
 \cdot\|\pi_n\mathcal{B}(s,X^{(n)}_s,\mathscr{L}_{X^{(n)}_s})\tilde{\pi}_n\|_{L_2(U,\mathbb{H})}^2\Big)ds
 \nonumber \\
 &&\!\!\!\!\!\!\!\!+
 \frac{\alpha(\alpha-2)}{2}  \int_{(t-\Delta)\wedge \tau_M^{(n)}}^{t\wedge \tau_M^{(n)}}\Big(\|X_{s}^{(n)}-X_{(t-\Delta)\wedge \tau_M^{(n)}}^{(n)}\|_{\mathbb{H}}^{\alpha-4}
 \nonumber \\
 &&\!\!\!\!\!\!\!\!\cdot
\|\big(\pi_n \mathcal{B}(s,X^{(n)}_s,\mathscr{L}_{X^{(n)}_s})\tilde{\pi}_n\big)^*(X_{s}^{(n)}-X_{(t-\Delta)\wedge \tau_M^{(n)}}^{(n)})\|_{U}^2\Big)ds
 \nonumber \\
 &&\!\!\!\!\!\!\!\!+\alpha\int_{(t-\Delta)\wedge \tau_M^{(n)}} ^{t\wedge \tau_M^{(n)}}\|X_{s}^{(n)}-X_{(t-\Delta)\wedge \tau_M^{(n)}}^{(n)}\|_{\mathbb{H}}^{\alpha-2}
 \nonumber \\
 &&\!\!\!\!\!\!\!\!\cdot
 \langle \pi_n\mathcal{B}(s,X^{(n)}_s,\mathscr{L}_{X^{(n)}_s})dW^{(n)}_s, X_{s}^{(n)}-X_{(t-\Delta)\wedge \tau_M^{(n)}}^{(n)}\rangle_{\mathbb{H}} \nonumber\\
=:&&\!\!\!\!\!\!\!\!\text{(I)}+\text{(II)}+\text{(III)}+\text{(IV)}.  \label{F3.0}
\end{eqnarray}
For the first term $\text{(I)}$, similar to the proof of \textbf{Case 1}, we can get
\begin{eqnarray*}
&&\!\!\!\!\!\!\!\!\mathbb{E}^{\mathbf{P}_n}\int^{T}_{\Delta}\text{(I)}dt\nonumber\\
\leq&&\!\!\!\!\!\!\!\! C\mathbb{E}^{\mathbf{P}_n}\Bigg\{\int^{T}_{\Delta}\int_{(t-\Delta)\wedge \tau_M^{(n)}} ^{t\wedge \tau_M^{(n)}}\|X_{s}^{(n)}-X_{(t-\Delta)\wedge \tau_M^{(n)}}^{(n)}\|_{\mathbb{H}}^{\alpha-2}
\nonumber\\
&&\!\!\!\!\!\!\!\!\cdot
\|\mathcal{A}(s,X^{(n)}_s,\mathscr{L}_{X^{(n)}_s})\|_{\mathbb{V}^*}
\|X_{s}^{(n)}-X_{(t-\Delta)\wedge \tau_M^{(n)}}^{(n)}\|_{\mathbb{V}} ds dt\Bigg\}\nonumber\\
\leq&&\!\!\!\!\!\!\!\!C\Bigg\{\mathbb{E}^{\mathbf{P}_n}\int^{T}_{\Delta}\int_{(t-\Delta)\wedge \tau_M^{(n)}} ^{t\wedge \tau_M^{(n)}}\|\mathcal{A}(s,X^{(n)}_s,\mathscr{L}_{X^{(n)}_s})\|_{\mathbb{V}^*}^{\frac{\alpha}{\alpha-1}}dsdt\Bigg\}^{\frac{\alpha-1}{\alpha}}
\nonumber\\
&&\!\!\!\!\!\!\!\!\cdot
\Bigg\{\mathbb{E}^{\mathbf{P}_n}\int^{T}_{\Delta}\int_{(t-\Delta)\wedge \tau_M^{(n)}} ^{t\wedge \tau_M^{(n)}}\|X_{s}^{(n)}-X_{(t-\Delta)\wedge \tau_M^{(n)}}^{(n)}\|^\alpha_{\mathbb{V}} dsdt\Bigg\}^{\frac{1}{\alpha}}
\nonumber\\
\leq&&\!\!\!\!\!\!\!\!C_{p,M,T}\Delta\big(1+\mathbb{E}\|\xi\|_{\mathbb{H}}^p\big).
\end{eqnarray*}
Similarly, we easily deduce that
$$\mathbb{E}^{\mathbf{P}_n}\int^{T}_{\Delta}\text{(II)}+\text{(III)}dt\leq C_{p,M,T}\Delta\big(1+\mathbb{E}\|\xi\|_{\mathbb{H}}^p\big)$$
and
$$\mathbb{E}^{\mathbf{P}_n}\int^{T}_{\Delta}\text{(IV)}dt=0.$$
Hence, in view of (\ref{esq7}), we can conclude
$$\sup_{n\in\mathbb{N}}\mathbb{E}^{\mathbf{P}_n}
\Bigg\{\int_0^{T}\|X^{(n)}_{t\wedge \tau_M^{(n)}}-X^{(n)}_{(t-\Delta)\wedge \tau_M^{(n)}}\|_{\mathbb{H}}^{\alpha}dt\Bigg\}\leq C_{p,M,T}\Delta\big(1+\mathbb{E}\|\xi\|_{\mathbb{H}}^p\big),$$
which implies that (\ref{esq6}) holds.
We complete the proof of this lemma.
\end{proof}

\vspace{2mm}
We now have the following corollary.
\begin{corollary}\label{coro1}
The set of measures $\{\hat{\mathbf{P}}_n\}_{n\in\mathbb{N}}$ is tight in $\mathscr{P}(\mathbb{S})$.
\end{corollary}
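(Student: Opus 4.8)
The plan is to obtain the statement by patching together the two tightness results already at our disposal: Lemma \ref{lem6}, giving tightness of $\{\hat{\mathbf{P}}_n\}_{n\in\mathbb{N}}$ in $\mathscr{P}(\mathbb{C}_T)$, and Lemma \ref{lem7}, giving tightness in $\mathscr{P}(L^\alpha([0,T];\mathbb{V}))$. First I would record that each $\hat{\mathbf{P}}_n$ actually charges $\mathbb{S}$ with full mass: it is supported on $\Omega^n=C([0,T];\mathbb{H}_n)\subset\mathbb{C}_T$, while Lemma \ref{lem3.0} yields $\mathbb{E}^{\mathbf{P}_n}\int_0^T\|X_t\|_{\mathbb{V}}^\alpha\,dt<\infty$, so that $\hat{\mathbf{P}}_n(\mathbb{S})=1$. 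Since the $d_T$-topology on $\mathbb{S}$ is finer than the one inherited from $\mathbb{C}_T$ and $(\mathbb{S},d_T)$ is Polish, $\mathbb{S}$ is a Borel subset of $\mathbb{C}_T$, and the restriction of $\hat{\mathbf{P}}_n$ is a Borel probability measure on $(\mathbb{S},d_T)$; it is this measure that we show to be tight.

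Next I would exploit the product structure of $d_T$. Writing $\iota(u):=(u,u)$, the map $\iota$ is an isometry from $(\mathbb{S},d_T)$ onto the diagonal $\mathcal{D}:=\{(u,u):u\in\mathbb{S}\}$ in $\mathbb{C}_T\times L^\alpha([0,T];\mathbb{V})$ endowed with the sum metric, precisely because $d_T=d_T^1+d_T^2$ is that product metric restricted to $\mathcal{D}$. Moreover $\mathcal{D}$ is closed in the product: if $u^{(k)}\to\phi$ in $\mathbb{C}_T$ and $u^{(k)}\to\psi$ in $L^\alpha([0,T];\mathbb{V})$, then, using the continuous embedding $\mathbb{V}\subset\mathbb{H}$, both convergences hold in $L^\alpha([0,T];\mathbb{H})$ as well, forcing $\phi=\psi$ a.e.\ on $[0,T]$; hence $\psi$ admits the continuous representative $\phi$, so $\psi\in\mathbb{S}$ and $(\phi,\psi)\in\mathcal{D}$. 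Consequently, for compact sets $K_1\subset\mathbb{C}_T$ and $K_2\subset L^\alpha([0,T];\mathbb{V})$ the set $K:=(K_1\cap K_2)\cap\mathbb{S}$ is compact in $(\mathbb{S},d_T)$: indeed $\iota(K)\subset(K_1\times K_2)\cap\mathcal{D}$ is a closed subset of the compact set $K_1\times K_2$, hence compact, and $\iota$ is a homeomorphism onto $\mathcal{D}$.

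Finally, given $\varepsilon>0$, Lemma \ref{lem6} supplies a compact $K_1\subset\mathbb{C}_T$ with $\sup_{n}\hat{\mathbf{P}}_n(X\notin K_1)<\varepsilon/2$ and Lemma \ref{lem7} a compact $K_2\subset L^\alpha([0,T];\mathbb{V})$ with $\sup_{n}\hat{\mathbf{P}}_n(X\notin K_2)<\varepsilon/2$; taking $K:=(K_1\cap K_2)\cap\mathbb{S}$ as above, a union bound gives
$$\sup_{n\in\mathbb{N}}\hat{\mathbf{P}}_n(X\notin K)\le\sup_{n\in\mathbb{N}}\hat{\mathbf{P}}_n(X\notin K_1)+\sup_{n\in\mathbb{N}}\hat{\mathbf{P}}_n(X\notin K_2)<\varepsilon,$$
which is exactly tightness of $\{\hat{\mathbf{P}}_n\}_{n\in\mathbb{N}}$ in $\mathscr{P}(\mathbb{S})$. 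I do not expect any genuine obstacle here — all the analytic work has already been carried out in Lemmas \ref{lem6} and \ref{lem7}; the only point deserving a line of justification is the closedness of the diagonal $\mathcal{D}$ in the product space, equivalently that a sequence converging simultaneously in $\mathbb{C}_T$ and in $L^\alpha([0,T];\mathbb{V})$ necessarily has coinciding limits, which is where the embedding $\mathbb{V}\subset\mathbb{H}$ enters.
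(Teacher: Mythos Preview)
Your proof is correct and follows essentially the same approach as the paper: both combine the tightness results of Lemmas~\ref{lem6} and~\ref{lem7} to obtain tightness in $\mathscr{P}(\mathbb{S})$. The paper's argument is terser---it simply extracts a common weakly convergent subsequence and asserts the conclusion---whereas you spell out the underlying topological fact (closedness of the diagonal in $\mathbb{C}_T\times L^\alpha([0,T];\mathbb{V})$, via the embedding $\mathbb{V}\subset\mathbb{H}$) that makes the combination work; this is exactly the point the paper leaves implicit.
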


\begin{proof}
According to Lemmas \ref{lem6} and \ref{lem7}, for any sequence $\{\hat{\mathbf{P}}_n\}_{n\in\mathbb{N}}$ we can find a subsequence, still denoted by $\{\hat{\mathbf{P}}_n\}_{n\in\mathbb{N}}$, such that $\{\hat{\mathbf{P}}_n\}_{n\in\mathbb{N}}$ converges weakly in $\mathscr{P}(\mathbb{S})$. Therefore, the assertion follows.
\end{proof}

\subsection{Proof of Theorems \ref{th1} and \ref{th3}}\label{sec2.4}
Since $\{\hat{\mathbf{P}}_n\}_{n\in\mathbb{N}}$ is tight in $\mathscr{P}(\mathbb{S})$, by Skorohod's representation theorem, there exists a probability space $(\tilde{\Omega},\tilde{\mathscr{F}},\tilde{\mathbb{P}})$, and on this space, $\mathbb{S}$-valued random variables $\tilde{X}^{(n)},\tilde{X}$ (here choosing a subsequence if necessary) such that

\vspace{2mm}
(i) the law of $\tilde{X}^{(n)}$ under $\tilde{\mathbb{P}}$ is equal to $\hat{\mathbf{P}}_n$.

\vspace{2mm}
(ii) the following convergence holds
\begin{equation}\label{es80}
\tilde{X}^{(n)}\to \tilde{X}~\text{in}~\mathbb{S},~\tilde{\mathbb{P}}\text{-a.s.},~\text{as}~n\to\infty,
\end{equation}
and $\tilde{X}$ has the law $\mathbf{P}$.

\vspace{2mm}

To prove Theorem \ref{th1}, we intend to show that  $\mathbf{P}$ is a martingale solution  of Eq.~(\ref{eqSPDE})  in the sense of Definition \ref{de3}. Before starting the proof, we first briefly provide a roadmap concerning this lengthy proof. In what follows, our objective is to prove $(M1)$ and $(M2)$ in Definition \ref{de3} hold, which implies $\mathbf{P}\in\mathscr{M}_{\mu_0}^{\mathcal{A},\mathcal{B}}$. By the a priori estimates and applying Fatou's Lemma, we have the moment estimate \eref{es119}, then due to the growth condition $\mathbf{(A_3)}$ we can see that $(M1)$ holds. Next, in order to show that $\mathcal{M}_l(t,X,\mu)$ is a continuous $(\mathscr{F}_t)$-martingale, we first truncate the nonlinear term in $\mathcal{M}_l(t,X,\mu)$ and prove the continuity of the truncated term in Lemma \ref{lem8}.
Then, by identifying the martingale property of $\mathcal{M}_l(t,X,\mu)$ and characterizing the associated quadratic variation process (see Lemmas \ref{lem9} and \ref{lem10}), we are able to show that $(M2)$ holds.

\vspace{2mm}

Note that by Lemmas \ref{lem3.0} and \ref{lem5}, we obtain
\begin{equation}\label{es16}
\mathbb{E}^{\tilde{\mathbb{P}}}\Big[\sup_{t\in[0,T]}\|\tilde{X}^{(n)}_t\|_{\mathbb{H}}^p\Big]+\mathbb{E}^{\tilde{\mathbb{P}}}\Bigg\{\int_0^T\|\tilde{X}^{(n)}_t\|_{\mathbb{V}}^{\alpha}dt\Bigg\}^{r}+\mathbb{E}^{\tilde{\mathbb{P}}}\int_0^T\|\tilde{X}^{(n)}_t\|_{\mathbb{V}}^{\alpha}\|\tilde{X}^{(n)}_t\|_{\mathbb{H}}^{p-2}dt
\leq C_{p,T},
\end{equation}
for some constants $r>1$, where $C_{p,T}$ is independent of $n$.

\vspace{1mm}
Since
$$\mathscr{L}_{\tilde{X}^{(n)}|_{\tilde{\mathbb{P}}}}\to\mathscr{L}_{\tilde{X}|_{\tilde{\mathbb{P}}}}~\text{weakly in}~ \mathscr{P}(\mathbb{S})~ (\text{here selecting a subsequence if necessary}),$$
from \cite[Theorem 5.5]{CD1} and (\ref{es16}), it leads to the following convergence
\begin{equation}\label{es18}
\mathscr{L}_{\tilde{X}^{(n)}|_{\tilde{\mathbb{P}}}}\to\mathscr{L}_{\tilde{X}|_{\tilde{\mathbb{P}}}}~\text{in}~\mathscr{P}_{2}(\mathbb{C}_T)\cap \mathscr{P}_{\alpha}(L^{\alpha}([0,T];\mathbb{V})).
\end{equation}

Now we denote
\begin{equation}\label{eqs2}
\mathbb{K}:=\mathscr{P}_{2}(\mathbb{C}_T)\cap \mathscr{P}_{\alpha}(L^{\alpha}([0,T];\mathbb{V})),
\end{equation}
and
$$\Psi(T,X):=\sup_{t\in[0,T]}\|X_t\|_{\mathbb{H}}^p+\Bigg\{\int_0^{T}\|X_t\|_{\mathbb{V}}^{\alpha}dt\Bigg\}^r+\int_0^{T}\|X_t\|_{\mathbb{V}}^{\alpha}\|X_t\|_{\mathbb{H}}^{p-2}dt.$$
First,  by (\ref{es16}) and Fatou's lemma, we obtain
\begin{eqnarray*}
\mathbb{E}^{\mathbf{P}}\Big[\sup_{t\in[0,T]}\|X_t\|_{\mathbb{H}}^p\Big]=&&\!\!\!\!\!\!\!\!\mathbb{E}^{\tilde{\mathbb{P}}}\Big[\sup_{t\in[0,T]}\|\tilde{X}_t\|_{\mathbb{H}}^p\Big]
\nonumber\\
\leq&&\!\!\!\!\!\!\!\!\liminf_{n\to\infty}\mathbb{E}^{\tilde{\mathbb{P}}}\Big[\sup_{t\in[0,T]}\|\tilde{X}^{(n)}_t\|_{\mathbb{H}}^p\Big]
\nonumber\\
<&&\!\!\!\!\!\!\!\!\infty.
\end{eqnarray*}
On the other hand, we note that  $\|\cdot\|_{\mathbb{V}}$ are  lower semicontinuous in $\mathbb{H}$,
thus we also have
\begin{eqnarray}\label{eslow}
\mathbb{E}^{\mathbf{P}}\Bigg\{\int_0^{T}\|X_t\|_{\mathbb{V}}^{\alpha}dt\Bigg\}^r=&&\!\!\!\!\!\!\!\!\mathbb{E}^{\tilde{\mathbb{P}}}\Bigg\{\int_0^{T}\|\tilde{X}_t\|_{\mathbb{V}}^{\alpha}dt\Bigg\}^r
\nonumber\\
\leq&&\!\!\!\!\!\!\!\!\mathbb{E}^{\tilde{\mathbb{P}}}\Bigg\{\int_0^{T}\liminf_{n\to\infty}\|\tilde{X}^{(n)}_t\|_{\mathbb{V}}^{\alpha}dt\Bigg\}^r
\nonumber\\
\leq&&\!\!\!\!\!\!\!\!\liminf_{n\to\infty}\mathbb{E}^{\tilde{\mathbb{P}}}\Bigg\{\int_0^{T}\|\tilde{X}^{(n)}_t\|_{\mathbb{V}}^{\alpha}dt\Bigg\}^r
\nonumber\\
<&&\!\!\!\!\!\!\!\!\infty.
\end{eqnarray}
Similarly, we deduce
\begin{equation*}
\mathbb{E}^{\mathbf{P}}\int_0^{T}\|X_t\|_{\mathbb{V}}^{\alpha}\|X_t\|_{\mathbb{H}}^{p-2}dt
<\infty.
\end{equation*}
We can conclude that
\begin{equation}\label{es119}
\mathbb{E}^{\mathbf{P}}\big[\Psi(T,X)\big]
<\infty.
\end{equation}

By the growth condition $(\mathbf{A_3})$, we have
\begin{eqnarray*}
&&\!\!\!\!\!\!\!\!\int_0^T\|\mathcal{A}(s,X_s,\mu_s)\|_{\mathbb{V}^*}ds
\nonumber \\
\leq  &&\!\!\!\!\!\!\!\!C_T\Bigg(\int_0^T\|\mathcal{A}(s,X_s,\mu_s)\|_{\mathbb{V}^*}^{\frac{\alpha}{\alpha-1}}ds\Bigg)^{\frac{\alpha-1}{\alpha}}
\nonumber \\
\leq  &&\!\!\!\!\!\!\!\!C_T\Bigg(\int_0^T\big(1+\|X_s\|_{\mathbb{V}}^{\alpha}+\mu_s(\|\cdot\|_{\mathbb{V}}^{\alpha})\big)\big(1+\|X_s\|_{\mathbb{H}}^{\beta}+\mu_s(\|\cdot\|_{\mathbb{H}}^{\beta})\big)ds\Bigg)^{\frac{\alpha-1}{\alpha}}
\nonumber \\
\leq  &&\!\!\!\!\!\!\!\!C_T+\Bigg(\int_0^T\|X_s\|_{\mathbb{V}}^{\alpha}ds+\int_0^T\mathbb{E}^{\mathbf{P}}\|X_s\|_{\mathbb{V}}^{\alpha}ds+\int_0^T\|X_s\|_{\mathbb{V}}^{\alpha}\|X_s\|_{\mathbb{H}}^{\beta}ds
\nonumber \\
 &&\!\!\!\!\!\!\!\!+\int_0^T\|X_s\|_{\mathbb{V}}^{\alpha}\mathbb{E}^{\mathbf{P}}\|X_s\|_{\mathbb{H}}^{\beta}ds+\int_0^T\|X_s\|_{\mathbb{H}}^{\beta}\mathbb{E}^{\mathbf{P}}\|X_s\|_{\mathbb{V}}^{\alpha}ds
 \nonumber \\
&&\!\!\!\!\!\!\!\!
  +
  \int_0^T\mathbb{E}^{\mathbf{P}}\|X_s\|_{\mathbb{H}}^{\beta}\cdot\mathbb{E}^{\mathbf{P}}\|X_s\|_{\mathbb{V}}^{\alpha}ds\Bigg)^{\frac{\alpha-1}{\alpha}}
\end{eqnarray*}
and
\begin{eqnarray*}
\int_0^T\|\mathcal{B}(s,X_s,\mu_s)\|_{L_2(U;{\mathbb{H}})}^2ds
\leq C\int_0^T\big(1+\|X_s\|_{\mathbb{H}}^{2}+\mathbb{E}^{\mathbf{P}}\|X_s\|_{\mathbb{H}}^{2}\big)ds.
\end{eqnarray*}
Consequently, in view of (\ref{es119}) we deduce that
$$\int_0^T\|\mathcal{A}(s,X_s,\mu_s)\|_{\mathbb{V}^*}ds+\int_0^T\|\mathcal{B}(s,X_s,\mu_s)\|_{L_2(U;{\mathbb{H}})}^2ds<\infty~~\mathbf{P}\text{-a.s.},$$
namely, $(M1)$ in Definition \ref{de3} holds.

\vspace{2mm}
Next, we intend to prove $(M2)$ for  $\mathbf{P}$. Fix $l\in\mathbb{V}$. Specifically,
 we would like to show
$$\mathcal{M}_l(t,X,\mu):={}_{\mathbb{V}^*}\langle X_t,l\rangle_{\mathbb{V}}-{}_{\mathbb{V}^*}\langle X_0,l\rangle_{\mathbb{V}}-\int_0^t{}_{\mathbb{V}^*}\langle \mathcal{A}(s,X_s,\mu_s),l\rangle_{\mathbb{V}}ds,~t\in[0,T],$$
where $\mu_s=\mathbf{P}\circ \pi_s^{-1}$,  is a continuous $(\mathscr{F}_t)$-martingale w.r.t.~$\mathbf{P}$, whose quadratic variation process
is given by
$$\langle \mathcal{M}_l\rangle(t,X,\mu)=\int_0^t\|\mathcal{B}(s,X_s,\mu_s)^*l\|_U^2ds,~t\in[0,T].$$

To this end, for any $(t,w,\nu)\in[0,T]\times\mathbb{S}\times\mathbb{K}$, we denote
\begin{eqnarray*}
\varpi(t,w,\nu):=\varpi^{(1)}(t,w)+\varpi^{(2)}(t,w,\nu),
\end{eqnarray*}
where
$$\varpi^{(1)}(t,w):={}_{\mathbb{V}^*}\langle w_t,l\rangle_{\mathbb{V}}-{}_{\mathbb{V}^*}\langle w_0,l\rangle_{\mathbb{V}},$$
$$\varpi^{(2)}(t,w,\nu):=-\int_0^t{}_{\mathbb{V}^*}\langle \mathcal{A}(s,w_s,\nu_s),l\rangle_{\mathbb{V}}ds.$$
Moreover,
for any $R>0$ and $(t,w,\nu)\in[0,T]\times\mathbb{S}\times\mathbb{K}$, we  set
\begin{eqnarray}\label{es20}
\varpi_R(t,w,\nu)=\varpi^{(1)}_R(t,w)+\varpi^{(2)}_R(t,w,\nu),
\end{eqnarray}
where
$$\varpi^{(1)}_R(t,w):={}_{\mathbb{V}^*}\langle w_t,l\rangle_{\mathbb{V}}\cdot\chi_R({}_{\mathbb{V}^*}\langle w_t,l\rangle_{\mathbb{V}})-{}_{\mathbb{V}^*}\langle w_0,l\rangle_{\mathbb{V}}\cdot\chi_R({}_{\mathbb{V}^*}\langle w_0,l\rangle_{\mathbb{V}}),$$
$$\varpi^{(2)}_R(t,w,\nu):=-\int_0^t{}_{\mathbb{V}^*}\langle \mathcal{A}(s,w_s,\nu_s),l\rangle_{\mathbb{V}}\cdot\chi_R({}_{\mathbb{V}^*}\langle \mathcal{A}(s,w_s,\nu_s),l\rangle_{\mathbb{V}})ds.$$
Here  $\chi_R\in C^{\infty}_c(\mathbb{R})$ is a cut-off function with
$$\chi_R(r)=\begin{cases} 1,~~~~|r|\leq R&\quad\\
0,~~~~|r|>2R.&\quad\end{cases}$$

We have the following continuity property of $\varpi_R$.
\begin{lemma}\label{lem8} For any $t\in[0,T]$, $w^{(n)},w\in \mathbb{S}$ and $\nu^{(n)},\nu\in\mathbb{K}$ with $(w^{(n)},\nu^{(n)})\to(w,\nu)$ in $\mathbb{S}\times\mathbb{K}$, as $n\to \infty$,
we have
$$\lim_{n\to \infty} \varpi_R(t,w^{(n)},\nu^{(n)})=\varpi_R(t,w,\nu).$$
\end{lemma}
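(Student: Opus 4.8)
The plan is to treat the two summands $\varpi^{(1)}_R$ and $\varpi^{(2)}_R$ separately. For $\varpi^{(1)}_R$ the argument is elementary: the hypothesis $(w^{(n)},\nu^{(n)})\to(w,\nu)$ in $\mathbb{S}\times\mathbb{K}$ gives in particular $d_T^1(w^{(n)},w)=\sup_{s\in[0,T]}\|w^{(n)}_s-w_s\|_{\mathbb{H}}\to0$, and since $l\in\mathbb{V}\subset\mathbb{H}$ and the coordinates $w_t,w^{(n)}_t,w_0,w^{(n)}_0$ all lie in $\mathbb{H}$ we have ${}_{\mathbb{V}^*}\langle w_t,l\rangle_{\mathbb{V}}=\langle w_t,l\rangle_{\mathbb{H}}$, so ${}_{\mathbb{V}^*}\langle w^{(n)}_t,l\rangle_{\mathbb{V}}\to{}_{\mathbb{V}^*}\langle w_t,l\rangle_{\mathbb{V}}$ and likewise at $t=0$. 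As $r\mapsto r\chi_R(r)$ is continuous on $\mathbb{R}$, it follows at once that $\varpi^{(1)}_R(t,w^{(n)})\to\varpi^{(1)}_R(t,w)$.

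For $\varpi^{(2)}_R$ I would argue via the Urysohn subsequence principle: it suffices to show that every subsequence of $(w^{(n)},\nu^{(n)})$ has a further subsequence along which $\varpi^{(2)}_R(t,w^{(n)},\nu^{(n)})\to\varpi^{(2)}_R(t,w,\nu)$. Fix such a subsequence. First, from $d_T^2(w^{(n)},w)=\big(\int_0^T\|w^{(n)}_s-w_s\|_{\mathbb{V}}^{\alpha}ds\big)^{1/\alpha}\to0$ and the fact that $L^\alpha$-convergence implies a.e.\ convergence along a subsequence, pass to a further subsequence so that $w^{(n)}_s\to w_s$ in $\mathbb{V}$ for a.e.\ $s\in[0,T]$. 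Next I need the time marginals $\nu^{(n)}_s$ (the law under $\nu^{(n)}$ of the evaluation $w\mapsto w_s$) to converge to $\nu_s$ in $\mathbb{M}_1=\mathscr{P}_2(\mathbb{H})\cap\mathscr{P}_{\alpha}(\mathbb{V})$ for a.e.\ $s$. Since the evaluation map $\mathbb{C}_T\ni w\mapsto w_s\in\mathbb{H}$ is $1$-Lipschitz, pushing any coupling forward gives $\mathcal{W}_{2,\mathbb{H}}(\nu^{(n)}_s,\nu_s)\le\mathcal{W}_{2,\mathbb{C}_T}(\nu^{(n)},\nu)\to0$ for \emph{every} $s$; for the $\mathbb{V}$-component I would take near-optimal couplings for $\mathcal{W}_{\alpha,L^{\alpha}([0,T];\mathbb{V})}(\nu^{(n)},\nu)$, apply Fubini to $\iint_0^T\|\xi_s-\eta_s\|_{\mathbb{V}}^{\alpha}ds$ against the coupling, and use that the push-forward of the coupling under $(\xi,\eta)\mapsto(\xi_s,\eta_s)$ couples $\nu^{(n)}_s$ and $\nu_s$, to deduce $\int_0^T\mathcal{W}_{\alpha,\mathbb{V}}(\nu^{(n)}_s,\nu_s)^{\alpha}ds\to0$; extracting once more yields $\mathcal{W}_{\alpha,\mathbb{V}}(\nu^{(n)}_s,\nu_s)\to0$ for a.e.\ $s$ (a further Fubini argument also shows $\nu^{(n)}_s,\nu_s\in\mathbb{M}_1$ for a.e.\ $s$). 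Combining, along this subsequence $(w^{(n)}_s,\nu^{(n)}_s)\to(w_s,\nu_s)$ in $\mathbb{V}\times\mathbb{M}_1$ for a.e.\ $s$.

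With this pointwise-in-$s$ convergence in hand, the demicontinuity $(\mathbf{A_1})$ gives ${}_{\mathbb{V}^*}\langle\mathcal{A}(s,w^{(n)}_s,\nu^{(n)}_s),l\rangle_{\mathbb{V}}\to{}_{\mathbb{V}^*}\langle\mathcal{A}(s,w_s,\nu_s),l\rangle_{\mathbb{V}}$ for a.e.\ $s$. Here the cutoff is essential: since $|r\chi_R(r)|\le 2R$, the integrands defining $\varpi^{(2)}_R$ are dominated by the constant $2R\in L^1([0,t])$, so dominated convergence yields $\varpi^{(2)}_R(t,w^{(n)},\nu^{(n)})\to\varpi^{(2)}_R(t,w,\nu)$ along the subsequence, and since the limit is independent of the subsequence the full sequence converges. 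I expect the \textbf{main obstacle} to be the passage from Wasserstein convergence on the path space $\mathbb{K}$ to a.e.-in-time convergence of the time marginals in $\mathbb{M}_1$ — in particular the $\mathscr{P}_{\alpha}(\mathbb{V})$ part, which needs the Fubini/coupling argument above together with a careful diagonal extraction so that $w^{(n)}_s$ and $\nu^{(n)}_s$ converge on one common conull set of times; the $\varpi^{(1)}_R$ part and the dominated convergence step are then routine.
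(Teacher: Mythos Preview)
Your proposal is correct and follows the same approach as the paper's proof: deduce a.e.-in-time convergence of $(w^{(n)}_s,\nu^{(n)}_s)$ in $\mathbb{V}\times\mathbb{M}_1$, invoke $(\mathbf{A_1})$, and apply dominated convergence using the boundedness of $r\chi_R(r)$. You are in fact more careful than the paper, which asserts the marginal convergence $\nu^{(n)}_s\to\nu_s$ in $\mathscr{P}_{\alpha}(\mathbb{V})$ for $dt$-a.e.\ $s$ without supplying the Fubini/coupling argument and glosses over the subsequence extraction you make explicit via the Urysohn principle.
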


\begin{proof}
In fact,
if  $w^{(n)},w\in \mathbb{S}$ and $\nu^{(n)},\nu\in\mathbb{K}$ with $(w^{(n)},\nu^{(n)})\to(w,\nu)$ in $\mathbb{S}\times\mathbb{K}$, as $n\to \infty$, we have
\begin{eqnarray}
&&w^{(n)}\to w~~\text{in}~\mathbb{C}_T;
\nonumber \\
&&
w^{(n)}\to w~~\text{in}~L^{\alpha}([0,T];\mathbb{V}), \label{es34}
\end{eqnarray}
and
\begin{eqnarray}
&&\nu^{(n)}_t\to \nu_t~~\text{in}~\mathscr{P}_{2}(\mathbb{H}),~~t\in[0,T];\label{es35}
 \\
&&\nu^{(n)}_t\to \nu_t~~\text{in}~\mathscr{P}_{\alpha}(\mathbb{V}),~~dt\text{-a.e.}.\label{es14}
\end{eqnarray}
Notice that (\ref{es35})-(\ref{es14}) imply
\begin{equation}\label{es15}
\nu^{(n)}_t\to \nu_t~~\text{in}~\mathbb{M}_1,~~dt\text{-a.e.},
\end{equation}
where we recall $\mathbb{M}_1=\mathscr{P}_{2}(\mathbb{H})\cap \mathscr{P}_{\alpha}(\mathbb{V})$.

\vspace{1mm}
Then it is straightforward that
$$\lim_{n\to\infty}\varpi^{(1)}_R(t,w^{(n)})=\varpi^{(1)}_R(t,w).$$
Since $(x,\mu)\mapsto{}_{\mathbb{V}^*}\langle A(t,x,\mu),l\rangle_{\mathbb{V}}\cdot\chi_R({}_{\mathbb{V}^*}\langle A(t,x,\mu),l\rangle_{\mathbb{V}})$ is  bounded and $\chi_R(\cdot)$ is continuous, then by $(\mathbf{A_1})$, (\ref{es34}), (\ref{es15}) and the dominated convergence theorem, we have for any $t\in[0,T]$,
$$\lim_{n\to\infty}\varpi^{(2)}_R(t,w^{(n)},\nu^{(n)})=\varpi^{(2)}_R(t,w,\nu).$$
Therefore, the lemma follows.
\end{proof}

\vspace{2mm}
Now we give the following two important lemmas.

\begin{lemma}\label{lem9} Under the assumptions in Theorem \ref{th1}, for any $0\leq s<t\leq T$,
\begin{equation}\label{es43}
\mathbb{E}^{\mathbf{P}}\big[\mathcal{M}_l(t,X,\mu)\big|\mathscr{F}_s\big]=\mathcal{M}_l(s,X,\mu).
\end{equation}
\end{lemma}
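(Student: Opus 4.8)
The plan is to show that $\mathcal{M}_l(t,X,\mu)$ is a martingale under $\mathbf{P}$ by passing to the limit in the corresponding martingale property for the Galerkin approximations $\hat{\mathbf{P}}_n$, using the Skorohod representation $\tilde X^{(n)}\to\tilde X$ in $\mathbb{S}$ together with the uniform estimates (\ref{es16}). The standard criterion is: it suffices to check that for every bounded continuous $\mathscr{F}_s$-measurable functional $\Phi$ on $\mathbb{S}$ (more precisely, of the path up to time $s$), one has
\begin{equation*}
\mathbb{E}^{\mathbf{P}}\big[(\mathcal{M}_l(t,X,\mu)-\mathcal{M}_l(s,X,\mu))\Phi\big]=0.
\end{equation*}
By the equality of laws in (i) of the Skorohod setup and the fact that each $\hat{\mathbf{P}}_n$ is (the extension of) a martingale solution of the Galerkin SDE (\ref{eqf}) in the sense of Definition \ref{de3} with $\mathcal{A},\mathcal{B}$ replaced by $\pi_n\mathcal{A}(\cdot,\cdot,\mathscr{L}_{X^{(n)}}),\pi_n\mathcal{B}(\cdot,\cdot,\mathscr{L}_{X^{(n)}})$, the analogous identity holds for $\tilde X^{(n)}$ with $\mathcal{M}_l$ built from the projected coefficients and the law $\mathscr{L}_{\tilde X^{(n)}}$; recall here that $_{\mathbb{V}^*}\langle\pi_n\mathcal{A}(s,u,\nu),l\rangle_{\mathbb{V}}={}_{\mathbb{V}^*}\langle\mathcal{A}(s,u,\nu),l\rangle_{\mathbb{V}}$ for $l\in\mathbb{H}_n$, and a truncation of $l$ onto $\mathbb{H}_n$ plus density lets us assume $l\in\bigcup_n\mathbb{H}_n$ at the cost of an $o(1)$ error controlled by $\|l-\pi_nl\|_{\mathbb{V}}$ and the $\mathbb{V}^*$-growth bound on $\mathcal{A}$.

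The key steps, in order, are: (1) write $\mathcal{M}_l(t,w,\nu)=\varpi(t,w,\nu)$ in the notation introduced before the lemma, and decompose $\varpi=\varpi_R+(\varpi-\varpi_R)$ where $\varpi_R$ is the truncated functional from (\ref{es20}); (2) for the truncated part, use Lemma \ref{lem8} (continuity of $\varpi_R$ on $\mathbb{S}\times\mathbb{K}$), the a.s.\ convergence $\tilde X^{(n)}\to\tilde X$ in $\mathbb{S}$, the convergence $\mathscr{L}_{\tilde X^{(n)}}\to\mathscr{L}_{\tilde X}$ in $\mathbb{K}$ from (\ref{es18}), and boundedness of $\varpi_R$ together with the bounded convergence theorem, to pass to the limit inside $\mathbb{E}^{\tilde{\mathbb{P}}}[\cdot\,\Phi]$; (3) for the tail part $\varpi-\varpi_R$, use the growth condition $(\mathbf{A_3})$ and the uniform moment bound (\ref{es16}) to show $\sup_n\mathbb{E}^{\tilde{\mathbb{P}}}|\varpi(t,\tilde X^{(n)},\mathscr{L}_{\tilde X^{(n)}})-\varpi_R(t,\tilde X^{(n)},\mathscr{L}_{\tilde X^{(n)}})|\to 0$ as $R\to\infty$, so that the truncation error is uniformly negligible; and similarly for the limit $\tilde X$ using (\ref{es119}); (4) combine (2) and (3): send $n\to\infty$ first and then $R\to\infty$ to conclude $\mathbb{E}^{\tilde{\mathbb{P}}}[(\mathcal{M}_l(t,\tilde X,\mu)-\mathcal{M}_l(s,\tilde X,\mu))\Phi]=0$, hence (\ref{es43}) by transferring back to $\mathbf{P}$ and $X$. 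The $L^1$-integrability of $\mathcal{M}_l(t,X,\mu)$ needed to make (\ref{es43}) meaningful follows from $(M1)$ (already established above) and (\ref{es119}).

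The main obstacle I expect is step (3): controlling the drift contribution $\varpi^{(2)}-\varpi^{(2)}_R=\int_0^t {}_{\mathbb{V}^*}\langle\mathcal{A}(s,w_s,\nu_s),l\rangle_{\mathbb{V}}(1-\chi_R(\cdots))\,ds$ uniformly in $n$. Here $(\mathbf{A_3})$ gives only $\|\mathcal{A}(s,w_s,\nu_s)\|_{\mathbb{V}^*}^{\alpha/(\alpha-1)}\le C(1+\|w_s\|_{\mathbb{V}}^\alpha+\nu_s(\|\cdot\|_{\mathbb{V}}^\alpha))(1+\|w_s\|_{\mathbb{H}}^\beta+\nu_s(\|\cdot\|_{\mathbb{H}}^\beta))$, so the integrand is not dominated by an $n$-uniform $L^1(dt\times d\tilde{\mathbb{P}})$ function unless we exploit the stronger bound (\ref{es16}), namely $\mathbb{E}^{\tilde{\mathbb{P}}}\{\int_0^T\|\tilde X^{(n)}_t\|_{\mathbb{V}}^\alpha dt\}^r<\infty$ for some $r>1$ and $\mathbb{E}^{\tilde{\mathbb{P}}}\int_0^T\|\tilde X^{(n)}_t\|_{\mathbb{V}}^\alpha\|\tilde X^{(n)}_t\|_{\mathbb{H}}^{p-2}dt<\infty$ with $p\ge\vartheta\ge\beta+2$; these yield uniform integrability of $\int_0^T|{}_{\mathbb{V}^*}\langle\mathcal{A}(s,\tilde X^{(n)}_s,\mathscr{L}_{\tilde X^{(n)}_s}),l\rangle_{\mathbb{V}}|\,ds$ (via a Hölder split of the exponents in $(\mathbf{A_3})$ and Young's inequality, together with $|{}_{\mathbb{V}^*}\langle\cdot,l\rangle_{\mathbb{V}}|\le\|\cdot\|_{\mathbb{V}^*}\|l\|_{\mathbb{V}}$), which is exactly what makes the truncation error vanish as $R\to\infty$ uniformly in $n$. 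Getting the bookkeeping of these exponents right — and handling the measure terms $\mathscr{L}_{\tilde X^{(n)}_s}(\|\cdot\|_{\mathbb{V}}^\alpha)$, $\mathscr{L}_{\tilde X^{(n)}_s}(\|\cdot\|_{\mathbb{H}}^\beta)$, which are deterministic functions of $s$ bounded uniformly in $n$ by (\ref{es16}) — is the technical heart of the argument.
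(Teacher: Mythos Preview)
Your proposal is correct and follows essentially the same approach as the paper: decompose $\mathcal{M}_l=\varpi=\varpi_R+(\varpi-\varpi_R)$, pass to the limit in the truncated part via Lemma~\ref{lem8} and bounded convergence, and control the tail uniformly in $n$ using $(\mathbf{A_3})$ together with the moment bounds (\ref{es16}) (the paper makes this explicit via a H\"older split yielding a rate $R^{-1/(\alpha-1)}$, which is exactly your uniform integrability statement). Your remark about reducing to $l\in\bigcup_n\mathbb{H}_n$ by density is a point the paper leaves implicit in its final chain of equalities, but it is indeed needed to identify $\mathcal{M}_l(\cdot,X,\mu^{(n)})$ with the Galerkin martingale under $\hat{\mathbf{P}}_n$.
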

\begin{proof}
Note that
\begin{eqnarray}\label{es24}
&&\!\!\!\!\!\!\!\!\mathbb{E}^{\tilde{\mathbb{P}}}|\mathcal{M}_l(t,\tilde{X}^{(n)},\tilde{\mu}^{(n)})-\mathcal{M}_l(t,\tilde{X},\tilde{\mu})|
\nonumber \\
=&&\!\!\!\!\!\!\!\!\mathbb{E}^{\tilde{\mathbb{P}}}|\varpi(t,\tilde{X}^{(n)},\tilde{\mu}^{(n)})-\varpi_R(t,\tilde{X}^{(n)},\tilde{\mu}^{(n)})|
\nonumber \\
&&\!\!\!\!\!\!\!\!
+\mathbb{E}^{\tilde{\mathbb{P}}}|\varpi_R(t,\tilde{X}^{(n)},\tilde{\mu}^{(n)})-\varpi_R(t,\tilde{X},\tilde{\mu})|
\nonumber \\
&&\!\!\!\!\!\!\!\!+\mathbb{E}^{\tilde{\mathbb{P}}}|\varpi_R(t,\tilde{X},\tilde{\mu})-\varpi(t,\tilde{X},\tilde{\mu})|,
\end{eqnarray}
where
$$\tilde{\mu}:=\mathscr{L}_{\tilde{X}|_{\tilde{\mathbb{P}}}},~~ \tilde{\mu}^{(n)}:=\mathscr{L}_{\tilde{X}^{(n)}|_{\tilde{\mathbb{P}}}}.$$
First  by (\ref{es80}), (\ref{es18}), Lemma \ref{lem8} and the dominated convergence theorem, we obtain
\begin{equation}\label{es39}
\lim_{n\to\infty}\mathbb{E}^{\tilde{\mathbb{P}}}|\varpi_R(t,\tilde{X}^{(n)},\tilde{\mu}^{(n)})-\varpi_R(t,\tilde{X},\tilde{\mu})|=0.
\end{equation}
For the first term on right hand side of (\ref{es24}),  we know
\begin{eqnarray}\label{es25}
&&\!\!\!\!\!\!\!\!\mathbb{E}^{\tilde{\mathbb{P}}}|\varpi(t,\tilde{X}^{(n)},\tilde{\mu}^{(n)})-\varpi_R(t,\tilde{X}^{(n)},\tilde{\mu}^{(n)})|
\nonumber \\
\leq&&\!\!\!\!\!\!\!\!\mathbb{E}^{\tilde{\mathbb{P}}}|\varpi^{(1)}(t,\tilde{X}^{(n)},\tilde{\mu}^{(n)})-\varpi^{(1)}_R(t,\tilde{X}^{(n)},\tilde{\mu}^{(n)})|
\nonumber \\
&&\!\!\!\!\!\!\!\!+\mathbb{E}^{\tilde{\mathbb{P}}}|\varpi^{(2)}(t,\tilde{X}^{(n)},\tilde{\mu}^{(n)})-\varpi^{(2)}_R(t,\tilde{X}^{(n)},\tilde{\mu}^{(n)})|.
\end{eqnarray}
By $\mathbf{(A_2)}$, it follows that
\begin{eqnarray*}
&&\!\!\!\!\!\!\!\!\mathbb{E}^{\tilde{\mathbb{P}}}|\varpi^{(2)}(t,\tilde{X}^{(n)},\tilde{\mu}^{(n)})-\varpi_R^{(2)}(t,\tilde{X}^{(n)},\tilde{\mu}^{(n)})|
\nonumber \\
\leq&&\!\!\!\!\!\!\!\! \mathbb{E}^{\tilde{\mathbb{P}}}\Bigg\{\int_0^T|{}_{\mathbb{V}^*}\langle \mathcal{A}(s,\tilde{X}^{(n)}_s,\tilde{\mu}^{(n)}_s),l\rangle_{\mathbb{V}}|\cdot \mathbf{1}_{\big\{|{}_{\mathbb{V}^*}\langle \mathcal{A}(s,\tilde{X}^{(n)}_s,\tilde{\mu}^{(n)}_s),l\rangle_{\mathbb{V}}|\geq R\big\}}ds\Bigg\}
\nonumber \\
\leq&&\!\!\!\!\!\!\!\!\|l\|_{\mathbb{V}}\Bigg\{\Big(\int_0^T\mathbb{E}^{\tilde{\mathbb{P}}}\|\mathcal{A}(s,\tilde{X}^{(n)}_s,\tilde{\mu}^{(n)}_s)\|_{\mathbb{V}^*}^{\frac{\alpha}{\alpha-1}}ds\Big)^{\frac{\alpha-1}{\alpha}}
\nonumber \\
&&\!\!\!\!\!\!\!\!
\cdot \Big(\int_0^T{\tilde{\mathbb{P}}}\big(|{}_{\mathbb{V}^*}\langle \mathcal{A}(s,\tilde{X}^{(n)}_s,\tilde{\mu}^{(n)}_s),l\rangle_{\mathbb{V}}|\geq R\big)ds\Big)^{\frac{1}{\alpha}}\Bigg\}
\nonumber \\
\leq&&\!\!\!\!\!\!\!\!\|l\|_{\mathbb{V}}\Bigg\{\mathbb{E}^{\tilde{\mathbb{P}}}\int_0^T\|\mathcal{A}(s,\tilde{X}^{(n)}_s,\tilde{\mu}^{(n)}_s)\|_{{\mathbb{V}}^*}^{\frac{\alpha}{\alpha-1}}ds\Bigg\}\Big/R^{\frac{1}{\alpha-1}}
\nonumber \\
\leq&&\!\!\!\!\!\!\!\!C_{\|l\|_{\mathbb{V}}}\Bigg\{\mathbb{E}^{\tilde{\mathbb{P}}}\int_0^T\big(1+\|\tilde{X}^{(n)}_s\|_{\mathbb{V}}^{\alpha}+ \mathbb{E}^{\tilde{\mathbb{P}}}\|\tilde{X}^{(n)}_s\|_{\mathbb{V}}^{\alpha}\big)
\nonumber \\
&&\!\!\!\!\!\!\!\!
\cdot\big(1+ \|\tilde{X}^{(n)}_s\|_{\mathbb{H}}^{\beta}+\mathbb{E}^{\tilde{\mathbb{P}}}\|\tilde{X}^{(n)}_s\|_{\mathbb{H}}^{\beta}\big)ds\Bigg\}\Big/R^{\frac{1}{\alpha-1}}
\nonumber \\
\leq&&\!\!\!\!\!\!\!\!C_{p,T,\|l\|_{\mathbb{V}}}\Big/R^{\frac{1}{\alpha-1}},
\end{eqnarray*}
where the last step is due to (\ref{es16}).  Thus it is straightforward that
\begin{equation}\label{es27}
\lim_{R\to\infty}\sup_{n\in\mathbb{N}}\sup_{t\in[0,T]}\mathbb{E}^{\tilde{\mathbb{P}}}|\varpi^{(2)}(t,\tilde{X}^{(n)},\tilde{\mu}^{(n)})-\varpi_R^{(2)}(t,\tilde{X}^{(n)},\tilde{\mu}^{(n)})|=0.
\end{equation}
Similarly,  we can also get
\begin{equation}\label{es28}
\lim_{R\to\infty}\sup_{n\in\mathbb{N}}\sup_{t\in[0,T]}\mathbb{E}^{\tilde{\mathbb{P}}}|\varpi^{(1)}(t,\tilde{X}^{(n)})-\varpi_R^{(1)}(t,\tilde{X}^{(n)})|=0.
\end{equation}
Combining (\ref{es25})-(\ref{es28}), we deduce
\begin{equation}\label{es40}
\lim_{R\to\infty}\sup_{n\in\mathbb{N}}\sup_{t\in[0,T]}\mathbb{E}^{\tilde{\mathbb{P}}}|\varpi(t,\tilde{X}^{(n)},\tilde{\mu}^{(n)})-\varpi_R(t,\tilde{X}^{(n)},\tilde{\mu}^{(n)})|=0.
\end{equation}
A similar argument shows
\begin{equation}\label{es41}
\lim_{R\to\infty}\sup_{n\in\mathbb{N}}\sup_{t\in[0,T]}\mathbb{E}^{\tilde{\mathbb{P}}}|\varpi(t,\tilde{X},\tilde{\mu})-\varpi_R(t,\tilde{X},\tilde{\mu})|=0.
\end{equation}
Hence, in view of (\ref{es39})-(\ref{es41}), it follows that for any $t\in[0,T]$,
\begin{equation}\label{es41re}
\lim_{n\to\infty}\mathbb{E}^{\tilde{\mathbb{P}}}|\mathcal{M}_l(t,\tilde{X}^{(n)},\tilde{\mu}^{(n)})-\mathcal{M}_l(t,\tilde{X},\tilde{\mu})|=0.
\end{equation}
Let $\Phi(\cdot)$ be any bounded continuous and $\mathscr{F}_s$-measurable real valued function on $\mathbb{S}$. Due to \eref{es41re}, we deduce
\begin{eqnarray*}
&&\!\!\!\!\!\!\!\!\mathbb{E}^{\mathbf{P}}\Big[\Big(\mathcal{M}_l(t,X,\mu)-\mathcal{M}_l(s,X,\mu)\Big)\Phi(X)\Big]
\nonumber \\
=&&\!\!\!\!\!\!\!\!
\mathbb{E}^{\tilde{\mathbb{P}}}\Big[\Big(\mathcal{M}_l(t,\tilde{X},\tilde{\mu})-\mathcal{M}_l(s,\tilde{X},\tilde{\mu})\Big)\Phi(\tilde{X})\Big]
\nonumber \\
=&&\!\!\!\!\!\!\!\!
\lim_{n\to\infty}\mathbb{E}^{\tilde{\mathbb{P}}}\Big[\Big(\mathcal{M}_l(t,\tilde{X}^{(n)},\tilde{\mu}^{(n)})-\mathcal{M}_l(s,\tilde{X}^{(n)},\tilde{\mu}^{(n)})\Big)\Phi(\tilde{X}^{(n)})\Big]
\nonumber \\
=&&\!\!\!\!\!\!\!\!
\lim_{n\to\infty}\mathbb{E}^{\hat{\mathbf{P}}_n}\Big[\Big(\mathcal{M}_l(t,X,\mu^{(n)})-\mathcal{M}_l(s,X,\mu^{(n)})\Big)\Phi(X)\Big]
\nonumber \\
=&&\!\!\!\!\!\!\!\!0,
\end{eqnarray*}
where $\mu^{(n)}_t:=\hat{\mathbf{P}}_n \circ \pi^{-1}_t$, which implies the assertion by the arbitrariness of $\Phi$.
\end{proof}

\begin{lemma}\label{lem10} Under the assumptions in Theorem \ref{th1}, for any $0\leq s<t\leq T$,
\begin{eqnarray}\label{es44}
&&\!\!\!\!\!\!\!\!\mathbb{E}^{\mathbf{P}}\Big[\mathcal{M}_l^2(t,X,\mu)-\int_0^t\|\mathcal{B}(r,X_r,\mu_r)^*l\|_{U}^2dr   \Big|\mathscr{F}_s\Big]
\nonumber \\
=&&\!\!\!\!\!\!\!\!\mathcal{M}_l^2(s,X,\mu)-\int_0^s\|\mathcal{B}(r,X_r,\mu_r)^*l\|_{U}^2dr.
\end{eqnarray}
\end{lemma}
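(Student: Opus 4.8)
The plan is to prove that the quadratic variation identity \eref{es44} passes to the limit from the Galerkin approximations, exactly as the martingale identity \eref{es43} did in Lemma \ref{lem9}. On the Galerkin level, $X^{(n)}$ genuinely solves the finite-dimensional SDE \eref{eqf}, so by It\^o's formula the process
$$\mathcal{M}_l^2(t,X^{(n)},\mu^{(n)})-\int_0^t\|\mathcal{B}(r,X^{(n)}_r,\mu^{(n)}_r)^*l\|_U^2\,dr$$
is an $(\mathscr{F}_t^{(n)})$-martingale under $\mathbf{P}_n$ (equivalently $\hat{\mathbf{P}}_n$); here one uses that $\mathcal{M}_l(t,X^{(n)},\mu^{(n)})=\int_0^t\langle \mathcal{B}(r,X^{(n)}_r,\mu^{(n)}_r)^*l,dW_r^{(n)}\rangle_U$ plus the fact that $\tilde\pi_n$ is a projection on $U$, which gives $\|(\mathcal{B}\tilde\pi_n)^*l\|_U\le\|\mathcal{B}^*l\|_U$ and converges appropriately. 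Thus for any bounded continuous $\mathscr{F}_s$-measurable $\Phi$ on $\mathbb{S}$,
$$\mathbb{E}^{\hat{\mathbf{P}}_n}\Big[\Big(\mathcal{M}_l^2(t,X,\mu^{(n)})-\mathcal{M}_l^2(s,X,\mu^{(n)})-\int_s^t\|\mathcal{B}(r,X_r,\mu^{(n)}_r)^*l\|_U^2dr\Big)\Phi(X)\Big]=0.$$
Transferring this to $\tilde{\mathbb{P}}$ via the equality of laws $\mathscr{L}_{\tilde X^{(n)}}=\hat{\mathbf{P}}_n$, it remains to take $n\to\infty$.

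The first convergence — of the martingale part $\mathcal{M}_l^2$ — follows by squaring the argument already used for Lemma \ref{lem9}. From \eref{es41re} we have $\mathcal{M}_l(t,\tilde X^{(n)},\tilde\mu^{(n)})\to\mathcal{M}_l(t,\tilde X,\tilde\mu)$ in $L^1(\tilde{\mathbb{P}})$; to upgrade to $L^1$-convergence of the squares I would establish uniform integrability of $\{\mathcal{M}_l^2(t,\tilde X^{(n)},\tilde\mu^{(n)})\}_n$. This is where the moment bound \eref{es16} is used: writing $\mathcal{M}_l=\varpi^{(1)}+\varpi^{(2)}$, the term $\varpi^{(1)}(t,\tilde X^{(n)})={}_{\mathbb{V}^*}\langle\tilde X^{(n)}_t,l\rangle_{\mathbb{V}}-{}_{\mathbb{V}^*}\langle\tilde X^{(n)}_0,l\rangle_{\mathbb{V}}$ is bounded in $L^p(\tilde{\mathbb{P}})$ with $p>2$ by $\|l\|_{\mathbb{H}}\sup_t\|\tilde X^{(n)}_t\|_{\mathbb{H}}$, and $\varpi^{(2)}(t,\tilde X^{(n)},\tilde\mu^{(n)})$ is bounded in $L^{r}(\tilde{\mathbb{P}})$ for some $r>1$ by the growth estimate $(\mathbf{A_3})$ combined with the $L^r$-bound on $\int_0^T\|\tilde X^{(n)}_s\|_{\mathbb{V}}^\alpha ds$ and the $L^p$-bound on $\sup_t\|\tilde X^{(n)}_t\|_{\mathbb{H}}$, exactly as in the bound on $\int_0^T\|\mathcal{A}(s,X_s,\mu_s)\|_{\mathbb{V}^*}ds$ carried out just before Lemma \ref{lem8}. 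Hence $\{\mathcal{M}_l(t,\tilde X^{(n)},\tilde\mu^{(n)})\}_n$ is bounded in $L^{1+\epsilon}$, so the squares are uniformly integrable, and $\mathcal{M}_l^2(t,\tilde X^{(n)},\tilde\mu^{(n)})\to\mathcal{M}_l^2(t,\tilde X,\tilde\mu)$ in $L^1(\tilde{\mathbb{P}})$.

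For the $\mathcal{B}$-term I would introduce, in parallel with $\varpi_R$, a truncated functional $\vartheta_R(t,w,\nu):=\int_0^t\|\mathcal{B}(s,w_s,\nu_s)^*l\|_U^2\chi_R(\|\mathcal{B}(s,w_s,\nu_s)^*l\|_U^2)\,ds$. The continuity $(w^{(n)},\nu^{(n)})\to(w,\nu)$ in $\mathbb{S}\times\mathbb{K}$ together with $(\mathbf{A_1})$ (continuity of $(u,\mu)\mapsto\|\mathcal{B}(t,u,\mu)^*v\|_U$ on ${\mathbb{V}}\times\mathbb{M}_1$), the convergences \eref{es34}--\eref{es15}, boundedness of the truncated integrand and dominated convergence give $\vartheta_R(t,w^{(n)},\nu^{(n)})\to\vartheta_R(t,w,\nu)$, the analogue of Lemma \ref{lem8}; then \eref{es80}, \eref{es18} and dominated convergence give $\mathbb{E}^{\tilde{\mathbb{P}}}|\vartheta_R(t,\tilde X^{(n)},\tilde\mu^{(n)})-\vartheta_R(t,\tilde X,\tilde\mu)|\to0$. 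The truncation error is controlled uniformly in $n$ by $(\mathbf{A_3})$ \eref{conb} and \eref{es16}:
$$\mathbb{E}^{\tilde{\mathbb{P}}}\int_0^T\|\mathcal{B}(s,\tilde X^{(n)}_s,\tilde\mu^{(n)}_s)^*l\|_U^2\mathbf 1_{\{\|\mathcal{B}^*l\|_U^2>R\}}ds\le\frac{\|l\|_{\mathbb{H}}^2}{R}\,\mathbb{E}^{\tilde{\mathbb{P}}}\int_0^T\|\mathcal{B}(s,\tilde X^{(n)}_s,\tilde\mu^{(n)}_s)\|_{L_2(U,\mathbb{H})}^2ds\le\frac{C_{T,\|l\|_{\mathbb{H}}}}{R},$$
so $\int_0^t\|\mathcal{B}(s,\tilde X^{(n)}_s,\tilde\mu^{(n)}_s)^*l\|_U^2ds\to\int_0^t\|\mathcal{B}(s,\tilde X_s,\tilde\mu_s)^*l\|_U^2ds$ in $L^1(\tilde{\mathbb{P}})$. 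Passing to the limit in the Galerkin identity and using that $\Phi$ is arbitrary bounded continuous $\mathscr{F}_s$-measurable yields \eref{es44}. I expect the main obstacle to be exactly the uniform-integrability bookkeeping for $\mathcal{M}_l^2$ and, relatedly, checking that the projected diffusion $\pi_n\mathcal{B}\tilde\pi_n$ does not spoil the limit — i.e. that $\|(\pi_n\mathcal{B}(s,\tilde X^{(n)}_s,\tilde\mu^{(n)}_s)\tilde\pi_n)^*l\|_U$ can legitimately be replaced by $\|\mathcal{B}(s,\tilde X^{(n)}_s,\tilde\mu^{(n)}_s)^*l\|_U$ in the limit, which follows since $\tilde\pi_n^*=\tilde\pi_n\to\mathrm{Id}$ strongly on $U$, $\pi_n^*l=\pi_n l\to l$ in $\mathbb{H}$ for $l\in\mathbb{V}\subset\mathbb{H}$, together with the growth bound; the rest is a routine repetition of the scheme in Lemmas \ref{lem8}--\ref{lem9}.
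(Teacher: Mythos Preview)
Your overall strategy is the same as the paper's, but there is a genuine gap in the uniform-integrability step for $\mathcal{M}_l^2$. You conclude ``$\{\mathcal{M}_l(t,\tilde X^{(n)},\tilde\mu^{(n)})\}_n$ is bounded in $L^{1+\epsilon}$, so the squares are uniformly integrable'' --- but boundedness in $L^{1+\epsilon}$ only gives uniform integrability of $\mathcal{M}_l$, not of $\mathcal{M}_l^2$; for the latter you need $\mathcal{M}_l$ bounded in $L^{2+\epsilon}$. Your decomposition $\mathcal{M}_l=\varpi^{(1)}+\varpi^{(2)}$ does not achieve this: $\varpi^{(2)}$ involves $\int_0^T\|\mathcal{A}\|_{\mathbb{V}^*}\,ds$, and the growth bound $(\mathbf{A_3})$ together with \eref{es16} gives at best $\varpi^{(2)}\in L^q$ with $q$ governed by $p/\beta$ and the exponent $r$ from Lemma~\ref{lem5}; there is no reason this exceeds $2$ under the standing assumption $p\ge\beta+2$.

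The paper avoids this by not decomposing $\mathcal{M}_l$ at all. Since $\mathcal{M}_l(\cdot,\tilde X^{(n)},\tilde\mu^{(n)})$ is a continuous martingale with quadratic variation $\int_0^\cdot\|\mathcal{B}(s,\tilde X^{(n)}_s,\tilde\mu^{(n)}_s)^*l\|_U^2\,ds$, Burkholder--Davis--Gundy gives directly
\[
\sup_n\mathbb{E}^{\tilde{\mathbb{P}}}|\mathcal{M}_l(t,\tilde X^{(n)},\tilde\mu^{(n)})|^p
\le C\sup_n\mathbb{E}^{\tilde{\mathbb{P}}}\Big(\int_0^T\|\mathcal{B}(s,\tilde X^{(n)}_s,\tilde\mu^{(n)}_s)^*l\|_U^2\,ds\Big)^{p/2},
\]
and by \eref{conb} the right-hand side is controlled by $\sup_t\|\tilde X^{(n)}_t\|_{\mathbb{H}}^p$, which is bounded by \eref{es16}. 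Since $p>2$, this yields uniform integrability of $\mathcal{M}_l^2$ immediately; then Vitali's theorem upgrades \eref{es41re} to $L^2$-convergence. The point is that BDG lets you bypass the drift $\mathcal{A}$ entirely, so only the linear growth of $\mathcal{B}$ in $\|\cdot\|_{\mathbb{H}}$ matters. A minor further slip: your truncation-error bound for the $\mathcal{B}$-term, $\int\|\mathcal{B}^*l\|_U^2\mathbf 1_{\{\cdot>R\}}\le R^{-1}\|l\|_{\mathbb{H}}^2\int\|\mathcal{B}\|_{L_2}^2$, is not an inequality --- you need an extra power of $\|\mathcal{B}^*l\|_U^2$ on the right (which is still fine since $\|\mathcal{B}\|_{L_2}$ has the needed higher moments by \eref{conb} and \eref{es16}).
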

\begin{proof}
Note that by $\mathbf{(A_3)}$ and (\ref{es16}),
\begin{eqnarray*}
\sup_{n\in\mathbb{N}}\mathbb{E}^{\tilde{\mathbb{P}}}|\mathcal{M}_l(t,\tilde{X}^{(n)},\tilde{\mu}^{(n)})|^p
\leq&&\!\!\!\!\!\!\!\!C\sup_{n\in\mathbb{N}}\mathbb{E}^{\tilde{\mathbb{P}}}\Bigg\{\int_0^T\|\mathcal{B}(t,\tilde{X}^{(n)}_t,\tilde{\mu}^{(n)}_t)^*l\|_{U}^2dt\Bigg\}^{\frac{p}{2}}
\nonumber \\
\leq&&\!\!\!\!\!\!\!\!C_T\sup_{n\in\mathbb{N}}\mathbb{E}^{\tilde{\mathbb{P}}}\int_0^T\|\mathcal{B}(t,\tilde{X}^{(n)}_t,\tilde{\mu}^{(n)}_t)^*l\|_{U}^pdt
\nonumber \\
<&&\!\!\!\!\!\!\!\!\infty.
\end{eqnarray*}
Since $p>2$, by (\ref{es41re}) and using Vitali's convergence theorem, we have
$$\lim_{n\to\infty}\mathbb{E}^{\tilde{\mathbb{P}}}|\mathcal{M}_l(t,\tilde{X}^{(n)},\tilde{\mu}^{(n)})-\mathcal{M}_l(t,\tilde{X},\tilde{\mu})|^2=0,$$
and
$$\lim_{n\to\infty}\mathbb{E}^{\tilde{\mathbb{P}}}\Big|\int_0^t\|\big(\mathcal{B}(r,\tilde{X}^{(n)}_r,\tilde{\mu}^{(n)}_r)-\mathcal{B}(r,\tilde{X}_r,\tilde{\mu}_r)\big)^*l\|_{U}^2dr\Big|=0.$$
Thus following the similar argument for proving (\ref{es43}), we obtain (\ref{es44}).
\end{proof}

\vspace{2mm}
\textbf{Proof of Theorem \ref{th1}.} From Lemmas \ref{lem9}-\ref{lem10}, we can conclude $\mathbf{P}\in\mathscr{M}_{\mu_0}^{\mathcal{A},\mathcal{B}}$. Hence, (\ref{eqSPDE}) has a martingale solution. Moreover, the estimate (\ref{esq370}) holds from (\ref{es119}). We complete the proof of Theorem \ref{th1}.  \hspace{\fill}$\Box$

\vspace{2mm}
The following modified version of Yamada-Watanabe theorem plays an important role in proving the existence of strong solutions to mean field SPDEs from weak solutions, which has been established recently in \cite[Lemma 2.1]{HW1} for the case of finite dimensions. In the sequel, we use $\mathscr{L}_{X_t}|_{\mathbb{P}}$ to stress the law of $X_t$ under the probability measure $\mathbb{P}$.

\begin{lemma}\label{lem112} $($Modified Yamada-Watanabe Theorem$)$
Suppose that the MVSPDE
\begin{equation}\label{eq13}
dX_t=\mathcal{A}(t,X_t,\mathscr{L}_{X_t})dt+\mathcal{B}(t,X_t,\mathscr{L}_{X_t})dW_t
\end{equation}
has a weak solution $(X,W)$ under the probability measure $\mathbb{P}$. Let $\mu_t=\mathscr{L}_{X_t}|_{\mathbb{P}}$, $t\in[0,T]$.
If the following SPDE
\begin{equation}\label{eq3}
d\tilde{X}_t=\mathcal{A}(t,\tilde{X}_t,\mu_t)dt+\mathcal{B}(t,\tilde{X}_t,\mu_t)dW_t
\end{equation}
 has pathwise uniqueness with initial value $\tilde{X}_0\sim\mu_0$, then (\ref{eq13}) has a strong solution starting at $\tilde{X}_0$. Moreover, if (\ref{eq13}) has pathwise uniqueness for any initial value $X_0\sim\mu_0$, then it has a unique weak solution for the initial law $\mu_0$.
\end{lemma}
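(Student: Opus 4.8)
The plan is to reduce the mean field problem to the classical Yamada-Watanabe theorem in infinite dimensions (cf.~\cite{RSZ}) applied to the \emph{frozen} equation \eref{eq3}, and then to check that the strong solution thereby produced is in fact a strong solution of the mean field equation \eref{eq13}. Let $(X,W)$ be the given weak solution of \eref{eq13} under $\mathbb{P}$, with time marginals $\mu_t=\mathscr{L}_{X_t}|_{\mathbb{P}}$. Since $(t,x)\mapsto\mathcal{A}(t,x,\mu_t)$ (and similarly $\mathcal{B}$) depends only on a fixed deterministic measure flow, \eref{eq3} is a genuine SPDE without measure dependence, for which $(X,W)$ is itself a weak solution. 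Combining this weak existence with the assumed pathwise uniqueness of \eref{eq3}, the classical Yamada-Watanabe theorem provides a Borel measurable solution functional $F$, defined on the product of $\mathbb{H}$ with the canonical path space of the driving noise and taking values in $\mathbb{S}$, such that: (a) $X=F(X_0,W)$ $\mathbb{P}$-a.s.; (b) for any stochastic basis carrying a $U$-cylindrical Wiener process $\tilde W$ and an $\mathscr{F}_0$-measurable $\tilde X_0$, the process $F(\tilde X_0,\tilde W)$ is an $\{\mathscr{F}_t\}$-adapted solution of \eref{eq3}; and (c) by independence of the initial datum and the driving noise, the law on $\mathbb{S}$ of $F(\tilde X_0,\tilde W)$ depends only on $\mathscr{L}_{\tilde X_0}$.

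Next I would ``unfreeze'' the measure. Fix an arbitrary stochastic basis with a $U$-cylindrical Wiener process $W'$ and an $\mathscr{F}_0$-measurable $\tilde X_0\sim\mu_0$, and set $\tilde X:=F(\tilde X_0,W')$. By (a) and (c), $\mathscr{L}_{\tilde X}=\mathscr{L}_{F(X_0,W)}=\mathscr{L}_X$ on $\mathbb{S}$, hence $\mathscr{L}_{\tilde X_t}=\mu_t$ for every $t\in[0,T]$. Therefore $\mathcal{A}(t,\tilde X_t,\mu_t)=\mathcal{A}(t,\tilde X_t,\mathscr{L}_{\tilde X_t})$ and likewise for $\mathcal{B}$, so by (b) the process $\tilde X$ solves \eref{eq13}; the conditions (i)--(iii) of Definition \ref{dew} are then readily checked using $\mathbf{(A_3)}$ and the a priori integrability carried over from $X$ via the identity $\mathscr{L}_{\tilde X_t}=\mu_t$. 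Since the basis and $W'$ were arbitrary, this yields a strong solution of \eref{eq13} starting from $\tilde X_0$.

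For the weak uniqueness statement, assume in addition that \eref{eq13} has pathwise uniqueness for every initial value with law $\mu_0$, and let $(X^1,W^1)$, $(X^2,W^2)$ be two weak solutions with initial law $\mu_0$, with marginal flows $\mu^1$, $\mu^2$. Applying the previous construction to $(X^1,W^1)$, but realising the solution functional of the frozen equation with flow $\mu^1$ on the basis carrying $(X^2,W^2)$, driven by $W^2$ and started at $X^2_0$, I obtain a process $\hat X^1$ with $\mathscr{L}_{\hat X^1}=\mathscr{L}_{X^1}$; in particular $\hat X^1$ has marginal flow $\mu^1$, hence solves \eref{eq13}, and $\hat X^1_0=X^2_0\sim\mu_0$. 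Then $\hat X^1$ and $X^2$ are two solutions of \eref{eq13} on the same basis, driven by the same Wiener process $W^2$, with the same initial value, so pathwise uniqueness forces $\hat X^1=X^2$ $\mathbb{P}$-a.s., whence $\mathscr{L}_{X^1}=\mathscr{L}_{X^2}$ on $\mathbb{S}$. Combined with the strong existence above, this gives existence and uniqueness of a strong solution for the initial law $\mu_0$.

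The main obstacle is the unfreezing step: one must verify that the solution functional $F$ of the frozen SPDE, applied to a fresh Wiener process and an initial datum of law $\mu_0$, reproduces \emph{exactly} the marginal flow $\mu_t$ of the weak solution one started from --- this is precisely what makes the closed loop between solution and law consistent. It rests on weak uniqueness for the frozen equation (also delivered by the classical theorem) together with the standard fact that $\mathscr{L}_{F(\xi,W)}$ is determined by $\mathscr{L}_\xi$ alone. The remaining points --- joint measurability in $(t,x)$ of the frozen coefficients and the transfer of the a priori bounds to $\tilde X$ --- are routine.
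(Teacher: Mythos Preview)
Your proposal is correct and follows essentially the same route as the paper: freeze the measure flow $\mu_t$, apply the classical infinite-dimensional Yamada--Watanabe theorem \cite{RSZ} to the frozen SPDE to obtain strong existence and weak uniqueness there, then use the resulting identity $\mathscr{L}_{\tilde X_t}=\mu_t$ to unfreeze and recognise $\tilde X$ as a strong solution of \eqref{eq13}; for weak uniqueness you transplant the frozen solution (your $\hat X^1$, the paper's $Y$) onto the second basis and invoke pathwise uniqueness of the mean field equation. The only cosmetic difference is that you phrase the transfer via an explicit measurable solution functional $F$, whereas the paper argues directly with weak uniqueness of the frozen equation --- these are equivalent formulations of the same Yamada--Watanabe output.
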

\begin{proof}
We separate the proof into the following two steps.

\medskip
\textbf{Strong existence.} First it is easy to see that $(X,W)$ under $\mathbb{P}$ is a weak solution of SPDE (\ref{eq3}) with initial law $\mu_0$ due to  $\mu_t=\mathscr{L}_{X_t}|_{\mathbb{P}}$. Using the infinite-dimensional version of Yamada-Watanabe theorem from \cite{RSZ},  the pathwise uniqueness of (\ref{eq3}) implies the strong (resp.~weak) existence and uniqueness of (\ref{eq3}) with initial value $\tilde{X}_0$ (resp.~the initial law $\mu_0$). This leads to $\mathscr{L}_{\tilde{X}_t}=\mu_t$, $t\in[0,T]$. Hence $\tilde{X}_t$ is a strong solution of (\ref{eq13}).

\medskip
\textbf{Weak uniqueness.} Given two weak solutions $(X,W)$ and $(\bar{X},\bar{W})$ w.r.t.~the stochastic basis $(\Omega,\mathscr{F},\{\mathscr{F}_t\}_{t\in[0,T]},\mathbb{P})$ and $(\bar{\Omega},\bar{\mathscr{F}},\bar{\{\mathscr{F}_t\}}_{t\in[0,T]},\bar{\mathbb{P}})$, respectively, such that $\mathscr{L}_{X_0}|_{\mathbb{P}}=\mathscr{L}_{\bar{X}_0}|_{\bar{\mathbb{P}}}$.  Note that $\bar{X}_t$ solves the following equation
\begin{equation}\label{26}
d\bar{X}_t=\mathcal{A}(t,\bar{X}_t,\mathscr{L}_{\bar{X}_t}|_{\bar{\mathbb{P}}})dt+\mathcal{B}(t,\bar{X}_t,\mathscr{L}_{{\bar{X}_t}} |_{\bar{\mathbb{P}}})d\bar{W}_t.
\end{equation}
Our aim is to verify $\mathscr{L}_{X_t}|_{\mathbb{P}}=\mathscr{L}_{\bar{X}_t}|_{\bar{\mathbb{P}}}$, $t\in[0,T]$.

By the argument in the  proof of strong existence, the following SPDE
\begin{equation}\label{25}
dY_t=\mathcal{A}(t,Y_t,\mu_t)dt+\mathcal{B}(t,Y_t,\mu_t)d\bar{W}_t,~Y_0=\bar{X}_0,
\end{equation}
has a unique solution under $(\bar{\Omega},\bar{\mathscr{F}},\bar{\{\mathscr{F}_t\}}_{t\in[0,T]},\bar{\mathbb{P}})$, thus the weak uniqueness  also holds. We note that $(X,W)$ also solves
$$dX_t=\mathcal{A}(t,X_t,\mu_t)dt+\mathcal{B}(t,X_t,\mu_t)dW_t,~\mathscr{L}_{X_0}|_{\mathbb{P}}=\mathscr{L}_{\bar{X}_0}|_{\bar{\mathbb{P}}}.$$
The weak uniqueness of (\ref{25}) implies that
\begin{equation*}
\mathscr{L}_{X_t}|_{\mathbb{P}}=\mathscr{L}_{Y_t}|_{\bar{\mathbb{P}}},~t\in[0,T].
\end{equation*}
Then it is obvious that (\ref{25}) reduces to
$$dY_t=\mathcal{A}(t,Y_t,\mathscr{L}_{Y_t}|_{\bar{\mathbb{P}}})dt+\mathcal{B}(t,Y_t,\mathscr{L}_{Y_t}|_{\bar{\mathbb{P}}})d\bar{W}_t,~Y_0=\bar{X}_0.$$
By the assumption, (\ref{26}) is pathwisely unique, then it follows that $Y_t=\bar{X}_t$, $t\in[0,T]$. Consequently, we conclude that $$\mathscr{L}_{X_t}|_{\mathbb{P}}=\mathscr{L}_{Y_t}|_{\bar{\mathbb{P}}}=\mathscr{L}_{\bar{X}_t}|_{\bar{\mathbb{P}}},~t\in[0,T].$$
The proof is completed.
\end{proof}

\vspace{2mm}

Now we are in the position to complete the proof of Theorem \ref{th3}.

\vspace{2mm}

\textbf{Proof of Theorem \ref{th3}.} By Theorem \ref{th1} and the martingale representation theorem, we know that (\ref{eqSPDE}) has a weak solution
$$(\Omega,\mathscr{F},\{\mathscr{F}_t\}_{t\in[0,T]},\mathbb{P}; X,W).$$
According to Lemma \ref{lem112}, for the existence of strong solutions to (\ref{eqSPDE}) it suffices to prove that the following decoupled SPDE
\begin{equation}\label{eqdec}
d\bar{X}_t=\mathcal{A}^{\mu}(t,\bar{X}_t)dt+\mathcal{B}^{\mu}(t,\bar{X}_t)dW_t,
\end{equation}
where $\mu_t:=\mathscr{L}_{X_t}$,  $\mathcal{A}^{\mu}(t,\cdot):=A(t,\cdot,\mu_t)$ (similarly, $\mathcal{B}^{\mu}(t,\cdot)$), has pathwise uniqueness.

\vspace{1mm}
Indeed,  let $\bar{X},\bar{Y}$ be two solutions of (\ref{eqdec}) with same initial value $\xi\in L^p(\Omega,\mathscr{F}_0,\mathbb{P};\mathbb{H})\cap L^2(\Omega,\mathscr{F}_0,\mathbb{P};\mathbb{V})$. First,
we note that the estimate (\ref{es45})  holds for $\bar{X},\bar{Y}$, i.e.,
\begin{eqnarray}
&&\mathbb{E}\Big[\sup_{t\in[0,T]}\|\bar{X}_t\|_{{\mathbb{H}}}^p\Big]+\mathbb{E}\int_0^T(1+\|\bar{X}_t\|_{{\mathbb{H}}}^{p-2})\|\bar{X}_t\|_{\mathbb{V}}^{\alpha}dt<\infty,\label{es19}
\\
&&\mathbb{E}\Big[\sup_{t\in[0,T]}\|\bar{Y}_t\|_{{\mathbb{H}}}^p\Big]+\mathbb{E}\int_0^T(1+\|\bar{Y}_t\|_{{\mathbb{H}}}^{p-2})\|\bar{Y}_t\|_{\mathbb{V}}^{\alpha}dt<\infty.\label{es22}
\end{eqnarray}
Setting
$$\phi(t):=C+\rho(\bar{X}_t,\mu_t)+\eta(\bar{Y}_t,\mu_t).$$
By It\^{o}'s formula and the product rule we have
\begin{eqnarray*}
&&\!\!\!\!\!\!\!\!e^{-\int_0^t\phi(s)ds}\|\bar{X}_t-\bar{Y}_t\|_{\mathbb{H}}^2
\nonumber\\
\leq&&\!\!\!\!\!\!\!\!\int_0^te^{-\int_0^s\phi(r)dr}
\Big(2{}_{{\mathbb{V}}^*}\langle \mathcal{A}^{\mu}(s,\bar{X}_s)-\mathcal{A}^{\mu}(s,\bar{Y}_s),\bar{X}_s-\bar{Y}_s\rangle_{\mathbb{V}}
\nonumber\\
&&\!\!\!\!\!\!\!\!
+\|\mathcal{B}^{\mu}(s,\bar{X}_s)-\mathcal{B}^{\mu}(s,\bar{Y}_s)\|_{L_2(U;\mathbb{H})}^2
-\phi(s)\|X_s-Y_s\|_{\mathbb{H}}^2\Big)ds
\nonumber\\
&&\!\!\!\!\!\!\!\!+2\int_0^te^{-\int_0^s\phi(r)dr}
\langle \bar{X}_s-\bar{Y}_s,\big(\mathcal{B}^{\mu}(s,\bar{X}_s)-\mathcal{B}^{\mu}(s,\bar{Y}_s)\big)dW_s\rangle_{\mathbb{H}}
\nonumber\\
\leq&&\!\!\!\!\!\!\!\!2\int_0^te^{-\int_0^s\phi(r)dr}
\langle \bar{X}_s-\bar{Y}_s,\big(\mathcal{B}^{\mu}(s,\bar{X}_s)-\mathcal{B}^{\mu}(s,\bar{Y}_s)\big)dW_s\rangle_{\mathbb{H}},
\end{eqnarray*}
where we used $\mathbf{(A_5)}$ in the last step. Taking expectation on both sides of the above inequality, by (\ref{es19}) and (\ref{es22}) we have
\begin{equation*}
\mathbb{E}\Big[e^{-\int_0^t\phi(s)ds}\|\bar{X}_t-\bar{Y}_t\|_{\mathbb{H}}^2\Big]\leq 0.
\end{equation*}
Note that by (\ref{es19}) and (\ref{es22}), it follows that
$$\int_0^T\rho(\bar{X}_s,\mu_s)+\eta(\bar{Y}_s,\mu_s)ds<\infty,~\mathbb{P}\text{-a.s.}.$$
Consequently, we deduce that
$$\bar{X}_t=\bar{Y}_t,~\mathbb{P}\text{-a.s.},~t\in[0,T],$$
which implies the pathwise uniqueness of (\ref{eqdec}) by the path continuity on $\mathbb{H}$. \hspace{\fill}$\Box$

\subsection{Proof of Theorem \ref{th2}}\label{proof2}
In this part, we aim to show the pathwise uniqueness of solutions to (\ref{eqSPDE}) on $\mathbb{H}$.

Let $X,Y$ be two solutions of (\ref{eqSPDE}) with initial values $X_0=Y_0=\xi\in L^p(\Omega,\mathscr{F}_0,\mathbb{P};{\mathbb{H}})\cap L^2(\Omega,\mathscr{F}_0,\mathbb{P};\mathbb{V})$, with $p\in[\vartheta,\infty)\cap (2,\infty)$, which fulfill that $\mathbb{P}$-a.s.,
\begin{eqnarray*}
&&X_t=\xi+\int_0^t\mathcal{A}(s,X_s,\mathscr{L}_{X_s})ds+\int_0^t\mathcal{B}(s,X_s,\mathscr{L}_{X_s})dW_s,~t\in[0,T],
\nonumber\\
&&Y_t=\xi+\int_0^t\mathcal{A}(s,Y_s,\mathscr{L}_{Y_s})ds+\int_0^t\mathcal{B}(s,Y_s,\mathscr{L}_{Y_s})dW_s,~t\in[0,T].
\end{eqnarray*}
Recall (\ref{es45}), we have
\begin{eqnarray}
&&\mathbb{E}\Big[\sup_{t\in[0,T]}\|X_t\|_{{\mathbb{H}}}^p\Big]+\mathbb{E}\int_0^T(1+\|X_t\|_{{\mathbb{H}}}^{p-2})\|X_t\|_{\mathbb{V}}^{\alpha}dt<\infty,\label{es57}
\\
&&\mathbb{E}\Big[\sup_{t\in[0,T]}\|Y_t\|_{{\mathbb{H}}}^p\Big]+\mathbb{E}\int_0^T(1+\|Y_t\|_{{\mathbb{H}}}^{p-2})\|Y_t\|_{\mathbb{V}}^{\alpha}dt<\infty.\label{es58}
\end{eqnarray}

\vspace{2mm}
\textbf{Case 1.} Suppose that $(\mathbf{A'_5})$ holds. Applying It\^{o}'s formula, by $(\mathbf{A'_5})$, (\ref{es57})-(\ref{es58}), we can get that for any $t\in[0,T]$,
\begin{eqnarray*}
&&\!\!\!\!\!\!\!\!\|X_t-Y_t\|_{\mathbb{H}}^2
\nonumber\\
=&&\!\!\!\!\!\!\!\!\int_0^t
\Big(2{}_{{\mathbb{V}}^*}\langle \mathcal{A}(s,X_s,\mathscr{L}_{X_s})-\mathcal{A}(s,Y_s,\mathscr{L}_{Y_s}),X_s-Y_s\rangle_{\mathbb{V}}
\nonumber\\
&&\!\!\!\!\!\!\!\!
+\|\mathcal{B}(s,X_s,\mathscr{L}_{X_s})-\mathcal{B}(s,Y_s,\mathscr{L}_{Y_s})\|_{L_2(U;\mathbb{H})}^2\Big)ds+2\mathcal{M}_t
\nonumber\\
\leq&&\!\!\!\!\!\!\!\! C\int_0^t \big(1+\rho(0,\mathscr{L}_{X_s})+\eta(0,\mathscr{L}_{Y_s})\big)\|X_s-Y_s\|_{\mathbb{H}}^2ds
\nonumber\\
&&\!\!\!\!\!\!\!\! +C\int_0^t \big(1+\rho(X_s,\mathscr{L}_{X_s})+\eta(Y_s,\mathscr{L}_{Y_s})\big)\mathbb{E}\|X_s-Y_s\|_{\mathbb{H}}^2ds+2\mathcal{M}_t,
\end{eqnarray*}
where
$$\mathcal{M}_t:=\int_0^t
\langle X_s-Y_s,\big(\mathcal{B}(s,X_s,\mathscr{L}_{X_s})-\mathcal{B}(s,Y_s,\mathscr{L}_{Y_s})\big)dW_s\rangle_{\mathbb{H}}.$$
Due to (\ref{conb}) and (\ref{es57})-(\ref{es58}), we deduce
\begin{equation*}
\mathbb{E}\|X_t-Y_t\|_{\mathbb{H}}^2
\leq C\int_0^t \big(1+\mathbb{E}\rho(X_s,\mathscr{L}_{X_s})+\mathbb{E}\eta(Y_s,\mathscr{L}_{Y_s})\big)\mathbb{E}\|X_s-Y_s\|_{\mathbb{H}}^2ds.
\end{equation*}
Note that by (\ref{esq22}) and (\ref{es57})-(\ref{es58}) again, we obtain
\begin{equation}\label{es8}
\mathbb{E}\int_0^T\big(\rho(X_t,\mathscr{L}_{X_t})+\eta(Y_t,\mathscr{L}_{Y_t})\big)dt<\infty.
\end{equation}
Therefore, by Gronwall's lemma
\begin{equation*}
\mathbb{E}\|X_t-Y_t\|_{\mathbb{H}}^2\leq 0,
\end{equation*}
which implies
 $$X_t=Y_t,~\mathbb{P}\text{-a.s.},~t\in[0,T].$$
 Thus the pathwise uniqueness follows from the path continuity on $\mathbb{H}$.

\vspace{2mm}
\textbf{Case 2.} Suppose that $(\mathbf{A''_5})$   holds. For any $R>R_0$, we define a stopping time
$$\tau_R:=\tau_R^X\wedge\tau_R^Y=\inf\Bigg\{t\in[0,T]:\Big\{\|X_t\|_{\mathbb{H}}+\int_0^t\|X_s\|_{\mathbb{V}}^{\alpha}ds\Big\}\vee \Big\{\|Y_t\|_{\mathbb{H}}+\int_0^t\|Y_s\|_{\mathbb{V}}^{\alpha}ds\Big\}\geq R\Bigg\}.$$
Using It\^{o}'s formula, by (\ref{es57})-(\ref{es58}) we obtain that for any $t\in[0,T]$,
\begin{eqnarray}\label{es211}
&&\!\!\!\!\!\!\!\!\|X_{t\wedge\tau_R}-Y_{t\wedge\tau_R}\|_{\mathbb{H}}^2
\nonumber\\
\leq&&\!\!\!\!\!\!\!\!\int_0^{t\wedge\tau_R}
\Big(2{}_{{\mathbb{V}}^*}\langle \mathcal{A}(s,X_s,\mathscr{L}_{X_s})-\mathcal{A}(s,Y_s,\mathscr{L}_{Y_s}),X_s-Y_s\rangle_{\mathbb{V}}
\nonumber\\
&&\!\!\!\!\!\!\!\!
+\|\mathcal{B}(s,X_s,\mathscr{L}_{X_s})-\mathcal{B}(s,Y_s,\mathscr{L}_{Y_s})\|_{L_2(U,\mathbb{H})}^2\Big)ds+2\mathcal{M}_{t\wedge\tau_R}
\nonumber\\
\leq&&\!\!\!\!\!\!\!\!C\int_0^{t\wedge\tau_R}\big(1+\rho(X_s,\mathscr{L}_{X_s})+\eta(Y_s,\mathscr{L}_{Y_s})\big)
\nonumber\\
&&\!\!\!\!\!\!\!\!\cdot
\big(\|X_s-Y_s\|_{\mathbb{H}}^2+\mathcal{W}_{2,R}(\mathscr{L}_{X^s},\mathscr{L}_{Y^s})^2\big)ds
+2\mathcal{M}_{t\wedge\tau_R}
\nonumber\\
\leq&&\!\!\!\!\!\!\!\!C\int_0^{t\wedge\tau_R}\big(1+\rho(X_s,\mathscr{L}_{X_s})+\eta(Y_s,\mathscr{L}_{Y_s})\big)
\nonumber\\
&&\!\!\!\!\!\!\!\!\cdot\big(\|X_s-Y_s\|_{\mathbb{H}}^2+\mathbb{E}\Big[\sup_{r\in[0,s\wedge \tau_R]}\|X_r-Y_r\|_{\mathbb{H}}^2\Big]\big)ds
+2\mathcal{M}_{t\wedge\tau_R},
\end{eqnarray}
where $\mathcal{M}_t$ is a local martingale given by
$$\mathcal{M}_t:=\int_0^t\langle X_s-Y_s, \big(\mathcal{B}(s,X_s,\mathscr{L}_{X_s})-\mathcal{B}(s,Y_s,\mathscr{L}_{Y_s})\big)dW_s\rangle_{\mathbb{H}},$$
and we used the fact that
$$\mathcal{W}_{2,R}(\mathscr{L}_{\xi^t},\mathscr{L}_{\zeta^t})^2\leq \mathbb{E}\Big[\sup_{s\in[0,t\wedge \tau_R]}\|\xi_s-\zeta_s\|_{\mathbb{H}}^2\Big].$$

Using Burkholder-Davis-Gundy's inequality and by (\ref{es57})-(\ref{es58}), it follows that
\begin{eqnarray*}
&&\!\!\!\!\!\!\!\!\mathbb{E}\Big[\sup_{t\in[0,T]}|\mathcal{M}_{t\wedge\tau_R}|\Big]
\nonumber\\
\leq&&\!\!\!\!\!\!\!\!\mathbb{E}\Bigg\{\int_0^{T\wedge\tau_R}\|X_t-Y_t\|_{\mathbb{H}}^2
\|\mathcal{B}(t,X_t,\mathscr{L}_{X_t})-\mathcal{B}(t,Y_t,\mathscr{L}_{Y_t})\|_{L_2(U,\mathbb{H})}^2dt\Bigg\}^{\frac{1}{2}}
\nonumber\\
\leq&&\!\!\!\!\!\!\!\!\frac{1}{2}\mathbb{E}\Big[\sup_{t\in[0,T\wedge\tau_R]}\|X_{t}-Y_{t}\|_{\mathbb{H}}^2\Big]+C\mathbb{E}\int_0^{T\wedge\tau_R}\big(1+\rho(X_t,\mathscr{L}_{X_t})+\eta(Y_t,\mathscr{L}_{Y_t})\big)\|X_t-Y_t\|_{\mathbb{H}}^2dt
\nonumber\\
&&\!\!\!\!\!\!\!\!
+C\mathbb{E}\int_0^{T\wedge\tau_R}\big(1+\rho(X_t,\mathscr{L}_{X_t})+\eta(Y_t,\mathscr{L}_{Y_t})\big)\cdot\mathbb{E}\Big[\sup_{s\in[0,t\wedge \tau_R]}\|X_s-Y_s\|_{\mathbb{H}}^2\Big]dt.
\end{eqnarray*}
Now taking $\sup_{t\in[0,T]}$ and expectation on both sides of (\ref{es211}), we have
\begin{eqnarray*}
&&\!\!\!\!\!\!\!\!\mathbb{E}\Big[\sup_{t\in[0,T\wedge\tau_R]}\|X_{t}-Y_{t}\|_{\mathbb{H}}^2\Big]
\nonumber\\
\leq&&\!\!\!\!\!\!\!\!
 C\mathbb{E}\int_0^{T\wedge\tau_R}\big(1+\rho(X_t,\mathscr{L}_{X_t})+\eta(Y_t,\mathscr{L}_{Y_t})\big)\|X_t-Y_t\|_{\mathbb{H}}^2dt
\nonumber\\
&&\!\!\!\!\!\!\!\!
+C\mathbb{E}\int_0^{T\wedge\tau_R}\big(1+\rho(X_t,\mathscr{L}_{X_t})+\eta(Y_t,\mathscr{L}_{Y_t})\big)\cdot\mathbb{E}\Big[\sup_{s\in[0,t\wedge \tau_R]}\|X_s-Y_s\|_{\mathbb{H}}^2\Big]dt.
\end{eqnarray*}
Combining stochastic Gronwall's lemma (cf.~Lemma \ref{appen2} in Appendix) and (\ref{es57})-(\ref{es58}), it follows that
\begin{eqnarray}\label{es23}
&&\!\!\!\!\!\!\!\!\mathbb{E}\Big[\sup_{t\in[0,T\wedge\tau_R]}\|X_{t}-Y_{t}\|_{\mathbb{H}}^2\Big]
\nonumber\\
\leq &&\!\!\!\!\!\!\!\! C_R\int_0^{T}\mathbb{E}\big(1+\rho(X_t,\mathscr{L}_{X_t})+\eta(Y_t,\mathscr{L}_{Y_t})\big)\cdot\mathbb{E}\Big[\sup_{s\in[0,t\wedge \tau_R]}\|X_s-Y_s\|_{\mathbb{H}}^2\Big]dt.
\end{eqnarray}
Therefore, in view of (\ref{es8}) and applying Gronwall's lemma to (\ref{es23}) implies
\begin{equation*}
\mathbb{E}\Big[\sup_{t\in[0,T\wedge\tau_R]}\|X_{t}-Y_{t}\|_{\mathbb{H}}^2\Big]\leq  0.
\end{equation*}
Using Fatou's lemma, it leads to
\begin{equation*}
\mathbb{E}\Big[\sup_{t\in[0,T]}\|X_{t}-Y_{t}\|_{\mathbb{H}}^2\Big]\leq \liminf_{R\to\infty}\mathbb{E}\Big[\sup_{t\in[0,T\wedge\tau_R]}\|X_{t}-Y_{t}\|_{\mathbb{H}}^2\Big]\leq 0.
\end{equation*}
We complete the proof.  \hspace{\fill}$\Box$

\section{Proof of mean field limit}\label{sec meam}
In this section, we aim to prove the mean field limit for the weakly interacting SPDE system (\ref{eqi}). To this end, in Subsection \ref{Sub6.1}, we  construct the Galerkin approximating system of (\ref{eqi}) and establish their  uniform in $(n,N)$-estimates. In Subsection \ref{sec4.2}, we first establish the joint tightness of the approximation sequence in $\mathbb{C}_T(\mathbb{H})\cap L^{\alpha}([0,T];\mathbb{V})$ with respect to both the dimension $n$ and the number of particles $N$. Then, we pass the limit of the Galerkin interacting  system $\{X^{(n),N,i}\}_{1\leq i\leq N}$,  as $n$ tends to infinity, for any fixed $N\geq2$.
In Subsection \ref{sec4.3}, we  characterize the limit of the empirical law $\mathcal{S}^N$ by letting $N$ tends to infinity and complete the proof of Theorem \ref{th4}.

\subsection{Galerkin scheme}\label{Sub6.1}
We first establish
the following  weakly interacting system (on $\mathbb{H}_n$) by Galerkin scheme,
\begin{equation}\label{eqiG}
\left\{ \begin{aligned}
&dX^{(n),N,i}_t=\pi_n\mathcal{A}(t,X^{(n),N,i}_t,\mathcal{S}^{(n),N}_t)dt+\pi_n\mathcal{B}(t,X^{(n),N,i}_t,\mathcal{S}^{(n),N}_t)dW_t^{(n),i},\\
&X^{(n),N,i}_0=\xi^{(n),i},
\end{aligned} \right.
\end{equation}
where $1\leq i\leq N$, $\xi^{(n),i}:=\pi_n \xi^{i}$,
 $$W^{(n),i}_t:=\tilde{\pi}_nW^i_t=\sum\limits_{k=1}^{n}\langle W^i_t,g_k\rangle_{U}g_k,~n\in\mathbb{N},~1\leq i\leq N,$$
and
$$\mathcal{S}^{(n),N}_t:=\frac{1}{N}\sum_{j=1}^N\delta_{X^{(n),N,j}_t},~n\in\mathbb{N}.$$
Under the assumptions $\mathbf{(A_0)}$-$\mathbf{(A_4)}$ and $\mathbf{(A_5^*)}$, by \cite[Theorem 3.1.1]{LR1} it is easy to see that the finite dimensional system (\ref{eqiG}) has a unique (strong) solution $X^{(n),N}:=(X^{(n),N,1},\ldots,X^{(n),N,N})$. We mention that by the condition $\mathbf{(A_6)}$ and Remark \ref{remark1}, the initial vector $X^{(n),N}_0:=(\xi^{(n),1},\ldots,\xi^{(n),N})$ is also symmetric and so is $X^{(n),N}$.

In the sequel, we present some moment bounds of the particles $X^{(n),N,i},i=1,\ldots,N$, which are uniform in $(n,N)$.
\begin{lemma}\label{lem11}
Suppose that the assumptions in Theorem \ref{th4} hold. For any $q\geq 2$, there exist some constants $C_{q,T}>0$ independent of $(n,N)$ such that
\begin{eqnarray}\label{es116}
&&\mathbb{E}\Big[\sup_{t\in[0,T]}\frac{1}{N}\sum_{i=1}^N\|X^{(n),N,i}_t\|_{\mathbb{H}}^q\Big]
\nonumber\\&&+\mathbb{E}\Bigg\{\int_0^{T}\frac{1}{N}\sum_{i=1}^N\Big(\|X^{(n),N,i}_t\|_{\mathbb{H}}^{q-2}\|X^{(n),N,i}_t\|_{\mathbb{V}}^{\alpha}\Big)dt\Bigg\}\leq C_{q,T},
\end{eqnarray}
\begin{equation}\label{es117}
\mathbb{E}\Big[\sup_{t\in[0,T]}\|X^{(n),N,1}_t\|_{\mathbb{H}}^q\Big]+\mathbb{E}\Bigg\{\int_0^{T}\|X^{(n),N,1}_t\|_{\mathbb{H}}^{q-2}\|X^{(n),N,1}_t\|_{\mathbb{V}}^{\alpha}dt\Bigg\}\leq C_{q,T}.
\end{equation}
\end{lemma}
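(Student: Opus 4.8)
The strategy is to follow closely the a priori estimate of Lemma \ref{lem3.0} for the single mean field equation, the only genuinely new ingredient being that the (now random) empirical second moment $\mathcal{S}^{(n),N}_s(\|\cdot\|_{\mathbb{H}}^2)=\frac1N\sum_{j}\|X^{(n),N,j}_s\|_{\mathbb{H}}^2$ must be reduced to a single-particle moment by exchangeability. The key preliminary observation is that, by $\mathbf{(A_6)}$, Remark \ref{remark1}, and the unique strong solvability of \eqref{eqiG} under $\mathbf{(A_0)}$-$\mathbf{(A_4)}$ and $\mathbf{(A_5^*)}$, the law of $X^{(n),N}=(X^{(n),N,1},\dots,X^{(n),N,N})$ is symmetric, so that for every $r\ge 0$ and every stopping time $\sigma$ depending symmetrically on $X^{(n),N}$,
\[
\mathbb{E}\Big[\frac1N\sum_{j=1}^N\|X^{(n),N,j}_{s\wedge\sigma}\|_{\mathbb{H}}^r\Big]=\mathbb{E}\,\|X^{(n),N,1}_{s\wedge\sigma}\|_{\mathbb{H}}^r;
\]
moreover $\mathbb{E}\|\pi_n\xi^i\|_{\mathbb{H}}^q\le\mathbb{E}\|\xi^i\|_{\mathbb{H}}^q=\mathbb{E}\|\xi^1\|_{\mathbb{H}}^q\le C_q$ uniformly in $(n,N)$ by \eqref{c3}.

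First I would establish the single-particle bound \eqref{es117}. I would apply It\^o's formula to $\|X^{(n),N,1}_t\|_{\mathbb{H}}^q$ and proceed essentially verbatim as in the proof of Lemma \ref{lem3.0}: the identity $\langle\pi_n\mathcal{A},v\rangle_{\mathbb{H}}={}_{\mathbb{V}^*}\langle\mathcal{A},v\rangle_{\mathbb{V}}$ for $v\in\mathbb{H}_n$ (cf.~\eqref{es7}) together with $\|\pi_n\|_{\mathbb{H}\to\mathbb{H}}\le 1$ removes all projections, $\mathbf{(A_2)}$ produces the coercive defect $-\frac{q\delta_1}{2}\|X^{(n),N,1}_s\|_{\mathbb{H}}^{q-2}\|X^{(n),N,1}_s\|_{\mathbb{V}}^\alpha$, and $\mathbf{(A_3)}$ together with Young's inequality and the pointwise estimate $\mathcal{S}^{(n),N}_s(\|\cdot\|_{\mathbb{H}}^2)^{q/2}\le\mathcal{S}^{(n),N}_s(\|\cdot\|_{\mathbb{H}}^q)$ bound the remaining drift by $C\big(1+\|X^{(n),N,1}_s\|_{\mathbb{H}}^q+\mathcal{S}^{(n),N}_s(\|\cdot\|_{\mathbb{H}}^q)\big)$. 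Localizing with $\sigma_M:=\inf\{t\ge 0:\max_{1\le i\le N}\|X^{(n),N,i}_t\|_{\mathbb{H}}>M\}\wedge T$ (which increases to $T$ a.s.\ since \eqref{eqiG} is finite-dimensional with continuous paths), taking $\sup_{t\le T\wedge\sigma_M}$ and then expectations, controlling the stochastic integral by Burkholder-Davis-Gundy's inequality as in \eqref{es251}/\eqref{es82} and absorbing the resulting half-copy of $\mathbb{E}\big[\sup_{t\le T\wedge\sigma_M}\|X^{(n),N,1}_t\|_{\mathbb{H}}^q\big]$, and finally replacing $\mathbb{E}\big[\mathcal{S}^{(n),N}_s(\|\cdot\|_{\mathbb{H}}^q)\big]$ by $\mathbb{E}\|X^{(n),N,1}_{s\wedge\sigma_M}\|_{\mathbb{H}}^q$ via exchangeability, one obtains a closed Gronwall inequality for $\mathbb{E}\big[\sup_{r\le t\wedge\sigma_M}\|X^{(n),N,1}_r\|_{\mathbb{H}}^q\big]$ with constants depending only on $q$ and $T$; Gronwall's lemma and monotone convergence as $M\to\infty$ then give \eqref{es117}, including the $\int_0^T\|X^{(n),N,1}_t\|_{\mathbb{H}}^{q-2}\|X^{(n),N,1}_t\|_{\mathbb{V}}^\alpha\,dt$ term.

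The averaged bound \eqref{es116} then follows at once from \eqref{es117}: by exchangeability,
\[
\mathbb{E}\Big[\sup_{t\in[0,T]}\frac1N\sum_{i=1}^N\|X^{(n),N,i}_t\|_{\mathbb{H}}^q\Big]\le \frac1N\sum_{i=1}^N\mathbb{E}\Big[\sup_{t\in[0,T]}\|X^{(n),N,i}_t\|_{\mathbb{H}}^q\Big]=\mathbb{E}\Big[\sup_{t\in[0,T]}\|X^{(n),N,1}_t\|_{\mathbb{H}}^q\Big]\le C_{q,T},
\]
and the same applies to the time-integral term. The only point requiring attention --- and the main, if modest, obstacle --- is tracking uniformity in $n$ and $N$ simultaneously: the $n$-uniformity rests on $\|\pi_n\|_{\mathbb{H}\to\mathbb{H}}\le 1$ and on the $n$-free constants in $\mathbf{(A_0)}$-$\mathbf{(A_4)}$, whereas the $N$-uniformity rests on the self-referential structure of the interaction, namely that it enters only through $\mathcal{S}^{(n),N}_s(\|\cdot\|_{\mathbb{H}}^2)$, a quantity that reappears on the same footing as the left-hand side and hence must be reabsorbed by Young's inequality and then collapsed to the single-particle moment by exchangeability before the Gronwall step can be closed.
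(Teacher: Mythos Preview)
Your proposal is correct. The only real difference from the paper's proof is the order in which the two estimates are obtained. The paper first sums the It\^o formula over $i=1,\dots,N$ and runs a Gronwall argument for the averaged quantity $\mathbb{E}\big[\sup_t\frac1N\sum_i\|X^{(n),N,i}_t\|_{\mathbb{H}}^q\big]$ to get \eqref{es116}; it then returns to the single particle $i=1$ and, in a second Gronwall step, absorbs the interaction term $\frac1N\sum_j\|X^{(n),N,j}\|_{\mathbb{H}}^q$ by the already-proven \eqref{es116} to obtain \eqref{es117}. You reverse this: you attack \eqref{es117} directly by using exchangeability (and the symmetry of $\sigma_M$) to collapse $\mathbb{E}\big[\mathcal{S}^{(n),N}_{s\wedge\sigma_M}(\|\cdot\|_{\mathbb{H}}^q)\big]$ to $\mathbb{E}\|X^{(n),N,1}_{s\wedge\sigma_M}\|_{\mathbb{H}}^q$ inside the Gronwall loop, and then deduce \eqref{es116} as a one-line corollary. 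Your route is marginally more economical (one Gronwall instead of two); the paper's route avoids having to check that the localizing stopping time is symmetric in the particles, since it never needs to identify the empirical moment with a single-particle moment before taking the limit.
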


\begin{proof}
Applying It\^{o}'s formula, we can get that for any $t\in[0,T]$,
\begin{eqnarray}\label{es112}
&&\!\!\!\!\!\!\!\!\|X^{(n),N,i}_t\|_{\mathbb{H}}^q
\nonumber \\
=&&\!\!\!\!\!\!\!\!\|\xi^{(n),i}\|_{\mathbb{H}}^q+\frac{q(q-2)}{2}\int_0^t\|X^{(n),N,i}_s\|_{\mathbb{H}}^{q-4}\|\mathcal{B}(s,X^{(n),N,i}_s,\mathcal{S}^N_s)^*X^{(n),N,i}_s\|_U^2ds
\nonumber \\
&&\!\!\!\!\!\!\!\!+\frac{q}{2}\int_0^t\|X^{(n),N,i}_s\|_{\mathbb{H}}^{q-2}\Big(2{}_{\mathbb{V}^*}\langle \mathcal{A}(s,X^{(n),N,i}_s,\mathcal{S}^N_s),X^{(n),N,i}_s\rangle_{\mathbb{V}}
\nonumber \\
&&\!\!\!\!\!\!\!\!
+\|\mathcal{B}(s,X^{(n),N,i}_s,\mathcal{S}^N_s)\|_{L_2(U;{\mathbb{H}})}^2\Big)ds
\nonumber \\
&&\!\!\!\!\!\!\!\!+q\int_0^t\|X^{(n),N,i}_s\|_{\mathbb{H}}^{q-2}\langle X^{(n),N,i}_s,\pi_n\mathcal{B}(s,X^{(n),N,i}_s,\mathcal{S}^N_s)dW^{(n),i}_s\rangle_{\mathbb{H}}
\nonumber \\
=:&&\!\!\!\!\!\!\!\!\|\xi^{(n),i}\|_{\mathbb{H}}^q+\text{(I)}+\text{(II)}+\text{(III)}.
\end{eqnarray}
For $\text{(I)}$, by (\ref{conb}) and Young's inequality, it follows that
\begin{eqnarray}\label{es113}
\mathbb{E}\Big[\sup_{t\in[0,T]}\frac{1}{N}\sum_{i=1}^N\text{(I)}\Big]
\leq&&\!\!\!\!\!\!\!\!C_{q,T}+C_q\mathbb{E}\Bigg\{\int_0^T\frac{1}{N}\sum_{i=1}^N\|X^{(n),N,i}_t\|_{\mathbb{H}}^qdt\Bigg\}
\nonumber \\
&&\!\!\!\!\!\!\!\!
+C_p\mathbb{E}\Bigg\{\int_0^T\frac{1}{N^2}\sum_{i,j=1}^N\Big(\|X^{(n),N,i}_t\|_{\mathbb{H}}^{q-2}\|X^{(n),N,j}_t\|_{\mathbb{H}}^{2}\Big)dt\Bigg\}
\nonumber \\
\leq&&\!\!\!\!\!\!\!\!C_{q,T}+C_{q}\mathbb{E}\Bigg\{\int_0^T\frac{1}{N}\sum_{i=1}^N\|X^{(n),N,i}_t\|_{\mathbb{H}}^qdt\Bigg\}.
\end{eqnarray}
Using $\mathbf{(A_2)}$, by a similar calculation, we also get
\begin{eqnarray}\label{es114}
&&\!\!\!\!\!\!\!\!\mathbb{E}\Big[\sup_{t\in[0,T]}\frac{1}{N}\sum_{i=1}^N\text{(II)}\Big]
\nonumber \\
\leq&&\!\!\!\!\!\!\!\!C_{q,T}+C_q\mathbb{E}\Bigg\{\int_0^T\frac{1}{N}\sum_{i=1}^N\|X^{(n),N,i}_t\|_{\mathbb{H}}^qdt\Bigg\}
\nonumber \\
&&\!\!\!\!\!\!\!\!
-C_q\delta_1\mathbb{E}\Bigg\{\int_0^T\frac{1}{N}\sum_{i=1}^N\Big(\|X^{(n),N,i}_t\|_{\mathbb{H}}^{q-2}\|X^{(n),N,i}_t\|_{\mathbb{V}}^{\alpha}\Big)dt\Bigg\}.
\end{eqnarray}
To estimate  $\text{(III)}$, 
applying  Burkholder-Davis-Gundy's inequality, it follows that
\begin{eqnarray}\label{es115}
&&\!\!\!\!\!\!\!\!\mathbb{E}\Big[\sup_{t\in[0,T]}\frac{1}{N}\sum_{i=1}^N\text{(III)}\Big]
\nonumber \\
\leq&&\!\!\!\!\!\!\!\!\frac{C_q}{N}\sum_{i=1}^N\mathbb{E}\Bigg\{\int_0^{T}\|X^{(n),N,i}_t\|_{\mathbb{H}}^{2q-2}\|\mathcal{B}(t,X^{(n),N,i}_t,\mathcal{S}^{(n),N}_t)\|_{L_2(U;{\mathbb{H}})}^2dt\Bigg\}^{\frac{1}{2}}
\nonumber \\
\leq&&\!\!\!\!\!\!\!\!C_{q,T}+C_{q}\mathbb{E}\Bigg\{\int_0^{T}\frac{1}{N}\sum_{i=1}^N\|X^{(n),N,i}_t\|_{\mathbb{H}}^qdt\Bigg\}
+\frac{1}{2}\mathbb{E}\Big[\sup_{t\in[0,T]} \frac{1}{N}\sum_{i=1}^N\|X^{(n),N,i}_t\|_{\mathbb{H}}^q\Big].
\end{eqnarray}
Combining (\ref{es112})-(\ref{es115}), by a standard localization argument,  it is easy to deduce that
\begin{eqnarray*}
&&\!\!\!\!\!\!\!\!\mathbb{E}\Big[\sup_{t\in[0,T]}\frac{1}{N}\sum_{i=1}^N\|X^{(n),N,i}_t\|_{\mathbb{H}}^q\Big]
+C_q\delta_1\mathbb{E}\Bigg\{\int_0^{T}\frac{1}{N}\sum_{i=1}^N\Big(\|X^{(n),N,i}_t\|_{\mathbb{H}}^{q-2}\|X^{(n),N,i}_t\|_{\mathbb{V}}^{\alpha}\Big)dt\Bigg\}
\nonumber \\
\leq&& \!\!\!\!\!\!\!\! C_{q,T}\Big(1+\frac{1}{N}\sum_{i=1}^N\mathbb{E}\|\xi^{(n),i}\|_{\mathbb{H}}^q\Big)+C_q\mathbb{E}\Bigg\{\int_0^{T}\frac{1}{N}\sum_{i=1}^N\|X^{(n),N,i}_t\|_{\mathbb{H}}^qdt\Bigg\}.
\end{eqnarray*}
By Gronwall's lemma, it leads to
\begin{eqnarray*}
&&\!\!\!\!\!\!\!\!\mathbb{E}\Big[\sup_{t\in[0,T]}\frac{1}{N}\sum_{i=1}^N\|X^{(n),N,i}_t\|_{\mathbb{H}}^q\Big]+C_q\delta_1\mathbb{E}\Bigg\{\int_0^{T}\frac{1}{N}\sum_{i=1}^N\Big(\|X^{(n),N,i}_t\|_{\mathbb{H}}^{q-2}\|X^{(n),N,i}_t\|_{\mathbb{V}}^{\alpha}\Big)dt\Bigg\}
\nonumber \\
\leq&&\!\!\!\!\!\!\!\!C_{q,T}\Big(1+\frac{1}{N}\sum_{i=1}^N\mathbb{E}\|\xi^{(n),i}\|_{\mathbb{H}}^q\Big)\leq C_{q,T},
\end{eqnarray*}
where we  used  (\ref{c3}) and the condition that the law of $X_0^N$ is symmetric.
We have proved (\ref{es116}).

\vspace{1mm}
Now we prove (\ref{es117}). Note that by the argument above, we can also get
\begin{eqnarray*}
&&\!\!\!\!\!\!\!\!\mathbb{E}\Big[\sup_{t\in[0,T]}\|X^{(n),N,1}_t\|_{\mathbb{H}}^q\Big]+C_q\delta_1\mathbb{E}\Bigg\{\int_0^{T}\|X^{(n),N,1}_s\|_{\mathbb{H}}^{q-2}\|X^{(n),N,1}_s\|_{\mathbb{V}}^{\alpha}ds\Bigg\}
\nonumber \\
\leq &&\!\!\!\!\!\!\!\!C_{q,T}(1+\|\xi^{(n),1}\|_{\mathbb{H}}^q)+C_{q,T}\mathbb{E}\Bigg\{\int_0^T\frac{1}{N}\sum_{i=1}^N\|X^{(n),N,i}_t\|_{\mathbb{H}}^qdt\Bigg\}+C_{q,T}\mathbb{E}\int_0^T\|X^{(n),N,1}_t\|_{\mathbb{H}}^qdt.
\end{eqnarray*}
Using (\ref{es116}) and Gronwall's lemma, it follows that
\begin{eqnarray*}
\mathbb{E}\Big[\sup_{t\in[0,T]}\|X^{(n),N,1}_t\|_{\mathbb{H}}^q\Big]+C_q\delta_1\mathbb{E}\Bigg\{\int_0^{T}\|X^{(n),N,1}_t\|_{\mathbb{H}}^{q-2}\|X^{(n),N,1}_t\|_{\mathbb{V}}^{\alpha}dt\Bigg\}
\leq C_{q,T}.
\end{eqnarray*}
We complete the proof.
\end{proof}

\begin{lemma}\label{lem3}
Suppose that the assumptions in Theorem \ref{th4} hold. For any $r>1$,  there exist $C_{r,T}>0$ independent of $(n,N)$ such that
\begin{equation}\label{es1112}
\frac{1}{N}\sum_{i=1}^N\mathbb{E}\Bigg\{\int_0^T\|X^{(n),N,i}_t\|_{\mathbb{V}}^{\alpha} dt\Bigg\}^{r}+\mathbb{E}\Bigg\{\int_0^T\|X^{(n),N,1}_t\|_{\mathbb{V}}^{\alpha} dt\Bigg\}^{r}\leq C_{r,T}.
\end{equation}

\end{lemma}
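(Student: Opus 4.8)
The plan is to derive \eqref{es1112} by the same route used for Lemma \ref{lem5}, now carried out for the $(n,N)$-Galerkin particle system and keeping track that all constants are independent of $n$ and $N$. First I would apply It\^{o}'s formula to $\|X^{(n),N,i}_t\|_{\mathbb{H}}^2$ for a fixed index $i$, using the basic identity \eqref{es7} to replace the $\pi_n$-projection in the drift term, and invoke the coercivity assumption $\mathbf{(A_2)}$ together with the bound \eqref{conb} on $\mathcal{B}$. This yields, after dropping the good term $\delta_1\int_0^t\|X^{(n),N,i}_s\|_{\mathbb{V}}^{\alpha}ds$ to the left-hand side,
\begin{eqnarray*}
\|X^{(n),N,i}_t\|_{\mathbb{H}}^2+\delta_1\int_0^t\|X^{(n),N,i}_s\|_{\mathbb{V}}^{\alpha}ds
&\leq& \|\xi^{(n),i}\|_{\mathbb{H}}^2+C\int_0^t\big(1+\|X^{(n),N,i}_s\|_{\mathbb{H}}^2+\mathcal{S}^{(n),N}_s(\|\cdot\|_{\mathbb{H}}^2)\big)ds\\
&&+2\int_0^t\langle X^{(n),N,i}_s,\pi_n\mathcal{B}(s,X^{(n),N,i}_s,\mathcal{S}^{(n),N}_s)dW^{(n),i}_s\rangle_{\mathbb{H}},
\end{eqnarray*}
where the empirical-measure term is $\mathcal{S}^{(n),N}_s(\|\cdot\|_{\mathbb{H}}^2)=\frac1N\sum_{j=1}^N\|X^{(n),N,j}_s\|_{\mathbb{H}}^2$.

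Next I would raise both sides to the power $r>1$, take expectations, and estimate the three resulting pieces. The initial-data and drift pieces are controlled using $(a+b+c)^r\le C_r(a^r+b^r+c^r)$, Jensen's inequality in time, the convexity bound $\big(\frac1N\sum_j\|X^{(n),N,j}_s\|_{\mathbb{H}}^2\big)^r\le\frac1N\sum_j\|X^{(n),N,j}_s\|_{\mathbb{H}}^{2r}$, and then Lemma \ref{lem11} (in particular the uniform bound \eqref{es116} with $q=2r$ and \eqref{es117}), which gives constants independent of $(n,N)$. The stochastic integral piece is handled by the Burkholder--Davis--Gundy inequality exactly as in \eqref{es82}: bounding the quadratic variation by $\int_0^T\|X^{(n),N,i}_s\|_{\mathbb{H}}^2\|\mathcal{B}(s,X^{(n),N,i}_s,\mathcal{S}^{(n),N}_s)\|_{L_2(U,\mathbb{H})}^2ds$, using \eqref{conb}, and absorbing the factor $\sup_{s\in[0,T]}\|X^{(n),N,i}_s\|_{\mathbb{H}}^{2r}$ via Young's inequality into an expectation already controlled by Lemma \ref{lem11}. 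Combining these estimates yields $\mathbb{E}\big\{\int_0^T\|X^{(n),N,i}_t\|_{\mathbb{V}}^{\alpha}dt\big\}^r\le C_{r,T}$ with $C_{r,T}$ independent of $(n,N)$ and of $i$; averaging over $i=1,\dots,N$ and specializing to $i=1$ gives both terms in \eqref{es1112}.

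I do not expect a genuine obstacle here: the argument is a routine adaptation of Lemma \ref{lem5}. The only point requiring care is bookkeeping the uniformity of constants in $N$ — specifically, that the empirical-measure term $\mathcal{S}^{(n),N}_s(\|\cdot\|_{\mathbb{H}}^2)$ is handled by the exchangeability of the particles (so $\mathbb{E}\,\mathcal{S}^{(n),N}_s(\|\cdot\|_{\mathbb{H}}^{2r})=\mathbb{E}\|X^{(n),N,1}_s\|_{\mathbb{H}}^{2r}$) and by the uniform-in-$(n,N)$ estimate \eqref{es116}, rather than by any estimate that degrades as $N\to\infty$. Likewise the $n$-uniformity is inherited from Lemma \ref{lem11}, which already provides $(n,N)$-independent bounds. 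With those inputs in place the proof is a short computation.
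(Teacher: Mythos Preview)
Your proposal is correct and follows essentially the same route as the paper: apply It\^{o}'s formula to $\|X^{(n),N,i}_t\|_{\mathbb{H}}^2$, use $(\mathbf{A_2})$ to put $\delta_1\int_0^t\|X^{(n),N,i}_s\|_{\mathbb{V}}^{\alpha}ds$ on the left, raise to the $r$-th power, handle the stochastic integral via BDG exactly as in \eqref{es82}, and close with the uniform-in-$(n,N)$ moment bounds \eqref{es116}--\eqref{es117} from Lemma~\ref{lem11}. The paper phrases the first step as ``in view of the proof of \eqref{es116}'' rather than rewriting the It\^{o} computation, but the content is the same.
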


\begin{proof}
Notice that in view of the proof of  (\ref{es116}), for any $t\in[0,T]$,
\begin{eqnarray*}
\int_0^t\|X^{(n),N,i}_t\|_{\mathbb{V}}^{\alpha} ds
\leq&&\!\!\!\!\!\!\!\!C_T(1+\|\xi^{(n),i}\|_{\mathbb{H}}^2)
+C\int_0^t\| X^{(n),N,i}_s\|_{\mathbb{H}}^2ds+C\int_0^t \frac{1}{N}\sum_{j=1}^N\|X^{(n),N,j}_s\|_{\mathbb{H}}^2ds
\nonumber \\
&&\!\!\!\!\!\!\!\!
+ C\int_0^t \langle X^{(n),N,i}_s,\pi_n\mathcal{B}(s,X^{(n),N,i}_s,\mathcal{S}^N_s)dW^{(n),i}_s\rangle_{\mathbb{H}}.
\end{eqnarray*}
Thus
\begin{eqnarray}\label{es9}
\mathbb{E}\Bigg\{\int_0^t\|X^{(n),N,i}_t\|_{\mathbb{V}}^{\alpha} ds\Bigg\}^r
\leq&&\!\!\!\!\!\!\!\!C_{r,T}\big(1+\mathbb{E}\|\xi^{(n),i}\|_{\mathbb{H}}^{2r}\big)
+C_{r,T}\int_0^t\mathbb{E}\| X^{(n),N,i}_s\|_{\mathbb{H}}^{2r}ds
\nonumber \\
&&\!\!\!\!\!\!\!\!+C_{r,T}\int_0^t \mathbb{E}\Big[\frac{1}{N}\sum_{j=1}^N\|X^{(n),N,j}_s\|_{\mathbb{H}}^{2r}\Big]ds
\nonumber \\
&&\!\!\!\!\!\!\!\!
+ C\mathbb{E}\Big|\int_0^t \langle X^{(n),N,i}_s,\pi_n\mathcal{B}(s,X^{(n),N,i}_s,\mathcal{S}^N_s)dW^{(n),i}_s\rangle_{\mathbb{H}}\Big|^r.
\end{eqnarray}
By Burkholder-Davis-Gundy's inequality, due to (\ref{es116}), it follows that
\begin{eqnarray}\label{es10}
&&\!\!\!\!\!\!\!\!\frac{1}{N}\sum_{i=1}^N\mathbb{E}\Big|\int_0^t \langle X^{(n),N,i}_s,\pi_n\mathcal{B}(s,X^{(n),N,i}_s,\mathcal{S}^N_s)dW^{(n),i}_s\rangle_{\mathbb{H}}\Big|^r
\nonumber \\
\leq&&\!\!\!\!\!\!\!\!\frac{1}{N}\sum_{i=1}^N\mathbb{E}\Bigg\{\int_0^t\|X^{(n),N,i}_s\|_{\mathbb{H}}^2
\|\mathcal{B}(s,X^{(n),N,i}_s,\mathcal{S}^N_s)\|_{L_2(U,\mathbb{H})}^2ds\Bigg\}^{\frac{r}{2}}
\nonumber \\
\leq&&\!\!\!\!\!\!\!\!C_r\mathbb{E}\Big[\frac{1}{N}\sum_{i=1}^N\sup_{s\in[0,t]}\|X^{(n),N,i}_s\|_{\mathbb{H}}^{2r}\Big]+
C_r\mathbb{E}\Bigg\{\int_0^t\frac{1}{N}\sum_{i=1}^N\|X^{(n),N,i}_s\|_{\mathbb{H}}^{2r}ds\Bigg\}.
\end{eqnarray}
Combining estimates (\ref{es9})-(\ref{es10}), we have
\begin{eqnarray*}
\frac{1}{N}\sum_{i=1}^N\mathbb{E}\Bigg\{\int_0^T\|X^{(n),N,i}_t\|_{\mathbb{V}}^{\alpha} dt\Bigg\}^{r}
\leq&&\!\!\!\!\!\!\!\! C_{r,T}\Big(1+\mathbb{E}\Big(\frac{1}{N}\sum_{i=1}^N\|\xi^{(n),i}\|_{\mathbb{H}}^{2r}\Big)\Big)
\nonumber \\
&&\!\!\!\!\!\!\!\!+C_{r,T}\mathbb{E}\Big[\sup_{t\in[0,T]}\frac{1}{N}\sum_{i=1}^N\| X^{(n),N,i}_t\|_{\mathbb{H}}^{2r}\Big]
\nonumber \\
\leq&&\!\!\!\!\!\!\!\!C_{r,T}.
\end{eqnarray*}
By a similar argument, one can also get
\begin{eqnarray*}
\mathbb{E}\Bigg\{\int_0^T\|X^{(n),N,1}_t\|_{\mathbb{V}}^{\alpha} dt\Bigg\}^{r}\leq C_{r,T},
\end{eqnarray*}
which completes the proof.
\end{proof}

Set a stopping time $\tau_M^{(n),N}$ given by
\begin{eqnarray*}
\tau_M^{(n),N}:=&&\!\!\!\!\!\!\!\!\inf\Bigg\{t\geq0:\|X^{(n),N,1}_t\|_{\mathbb{H}}+\frac{1}{N}\sum_{j=1}^N\|X^{(n),N,j}_t\|_{\mathbb{H}}^{\lambda}>M\Bigg\}
\nonumber \\
&&\!\!\!\!\!\!\!\!\wedge\inf\Bigg\{t\geq0:\int_0^t \|X^{(n),N,1}_s\|_{\mathbb{V}}^{\alpha}ds+\int_0^t\frac{1}{N}\sum_{j=1}^N\|X^{(n),N,j}_s\|_{\mathbb{V}}^{\alpha}ds>M\Bigg\}\wedge T,~M>0.
\end{eqnarray*}

\begin{lemma}\label{lem2}
Suppose that the assumptions in Theorem \ref{th4} hold, then for any $M>0$ there exists a constant $C_{M}>0$ such that for any $n,N\in\mathbb{N}$,
\begin{eqnarray*}
\mathbb{E}\Big[\sup_{t\in[0,T\wedge\tau_M^{(n),N}]}\|X^{(n),N,1}_t\|_{\mathbb{V}}^2\Big]+\mathbb{E}\int_0^{T\wedge\tau_M^{(n),N}}\|X^{(n),N,1}_t\|_{\mathbb{X}}^\gamma dt
\leq C_{M,T}\big(1+\mathbb{E}\|\xi^1\|_{\mathbb{V}}^2\big).
\end{eqnarray*}
\end{lemma}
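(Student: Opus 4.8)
\textbf{Proof proposal for Lemma \ref{lem2}.}
The plan is to mimic the argument used in Lemma \ref{sec3lem4} for the single mean field equation, but now carried out for the tagged particle $X^{(n),N,1}$ inside the coupled Galerkin system, using the symmetry of the law of $X^{(n),N}$ to handle the empirical averages that appear. First I would apply It\^{o}'s formula to $\|X^{(n),N,1}_t\|_{\mathbb{V}}^2$ on $[0,T\wedge\tau_M^{(n),N}]$ and invoke $\mathbf{(A_4)}$ (together with the identity (\ref{es7}), which guarantees $\langle \pi_n\mathcal{A}(t,u,\mu),v\rangle_{\mathbb{V}}=\langle\mathcal{A}(t,u,\mu),v\rangle_{\mathbb{V}}$ on $\mathbb{H}_n$ and also controls the $L_2(U,\mathbb{V})$-norm of $\pi_n\mathcal{B}$) to obtain
$$
\|X^{(n),N,1}_t\|_{\mathbb{V}}^2+\delta_2\int_0^{t}\|X^{(n),N,1}_s\|_{\mathbb{X}}^{\gamma}ds
\le \|\xi^{(n),1}\|_{\mathbb{V}}^2 + \text{(drift terms)} + \text{(martingale term)},
$$
where the drift terms are exactly the right-hand side of the first inequality in $\mathbf{(A_4)}$ with $u=X^{(n),N,1}_s$ and $\mu=\mathcal{S}^{(n),N}_s$, so that the measure moments $\mu(\|\cdot\|_{\mathbb{V}}^{\alpha})$ and $\mu(\|\cdot\|_{\mathbb{H}}^{\lambda})$ become the empirical averages $\frac1N\sum_j\|X^{(n),N,j}_s\|_{\mathbb{V}}^{\alpha}$ and $\frac1N\sum_j\|X^{(n),N,j}_s\|_{\mathbb{H}}^{\lambda}$.

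The key point is that, before the stopping time $\tau_M^{(n),N}$, every occurrence of $\|X^{(n),N,1}_s\|_{\mathbb{H}}^{\lambda}$, $\int_0^s\|X^{(n),N,1}_r\|_{\mathbb{V}}^{\alpha}dr$, $\frac1N\sum_j\|X^{(n),N,j}_s\|_{\mathbb{H}}^{\lambda}$, and $\int_0^s\frac1N\sum_j\|X^{(n),N,j}_r\|_{\mathbb{V}}^{\alpha}dr$ is bounded by $M$. Hence the term $\|X^{(n),N,1}_s\|_{\mathbb{V}}^{\alpha+2}\|X^{(n),N,1}_s\|_{\mathbb{H}}^{\lambda}$ is dominated by $C_M\|X^{(n),N,1}_s\|_{\mathbb{V}}^{\alpha+2}$, and after applying Young's inequality (using that $\int_0^{T\wedge\tau_M^{(n),N}}\|X^{(n),N,1}_s\|_{\mathbb{V}}^{\alpha}ds\le M$ so that $\|X^{(n),N,1}_s\|_{\mathbb{V}}^{\alpha}$ acts as an integrable weight) the whole drift is bounded by
$$
C_M\int_0^{t}\Big(\|X^{(n),N,1}_s\|_{\mathbb{V}}^{\alpha}+\tfrac1N\sum_j\|X^{(n),N,j}_s\|_{\mathbb{V}}^{\alpha}\Big)\|X^{(n),N,1}_s\|_{\mathbb{V}}^{2}ds + C_{M,T}\big(1+\tfrac1N\sum_j\int_0^{T}\|X^{(n),N,j}_s\|_{\mathbb{V}}^{\alpha}ds\big),
$$
and the last expectation is finite and uniform in $(n,N)$ by Lemma \ref{lem3} (with $r=1$) and Lemma \ref{lem11}. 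For the stochastic integral I would use Burkholder–Davis–Gundy exactly as in (\ref{esq25}), together with the bound on $\|\mathcal{B}(s,X^{(n),N,1}_s,\mathcal{S}^{(n),N}_s)\|_{L_2(U,\mathbb{V})}^2$ from $\mathbf{(A_4)}$, to absorb $\tfrac12\mathbb{E}\big[\sup_{s\le t\wedge\tau_M^{(n),N}}\|X^{(n),N,1}_s\|_{\mathbb{V}}^2\big]$ into the left-hand side at the cost of $C_{M,T}+C_M\mathbb{E}\int_0^{t\wedge\tau_M^{(n),N}}\|X^{(n),N,1}_s\|_{\mathbb{V}}^2ds$.

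Putting these estimates together, a Gronwall argument applied to $t\mapsto\mathbb{E}\big[\sup_{s\in[0,t\wedge\tau_M^{(n),N}]}\|X^{(n),N,1}_s\|_{\mathbb{V}}^2\big]$ yields
$$
\mathbb{E}\Big[\sup_{t\in[0,T\wedge\tau_M^{(n),N}]}\|X^{(n),N,1}_t\|_{\mathbb{V}}^2\Big]+\mathbb{E}\int_0^{T\wedge\tau_M^{(n),N}}\|X^{(n),N,1}_t\|_{\mathbb{X}}^\gamma dt\le C_{M,T}\big(1+\mathbb{E}\|\xi^{(n),1}\|_{\mathbb{V}}^2\big)\le C_{M,T}\big(1+\mathbb{E}\|\xi^1\|_{\mathbb{V}}^2\big),
$$
the last step using $\|\xi^{(n),1}\|_{\mathbb{V}}=\|\pi_n\xi^1\|_{\mathbb{V}}\le\|\xi^1\|_{\mathbb{V}}$ together with (\ref{c4}) in $\mathbf{(A_6)}$. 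The only mild subtlety — and the place I would be most careful — is the bookkeeping of the cross terms in $\mathbf{(A_4)}$ that couple $\|X^{(n),N,1}_s\|_{\mathbb{V}}$ with the empirical moments of the other particles: one must check that after the stopping time truncation and Young's inequality each such term is either a constant $C_M$, an integrable weight times $\|X^{(n),N,1}_s\|_{\mathbb{V}}^{2}$, or a quantity whose expectation is controlled uniformly in $(n,N)$ by Lemmas \ref{lem11} and \ref{lem3}; this is exactly the analogue of the chain of inequalities following (\ref{esq24}), now with the frozen-law moments replaced by empirical averages, and it goes through because the exponents $\alpha,\lambda,\gamma$ are the same as in the mean field case.
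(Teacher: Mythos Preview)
Your proposal is correct and follows essentially the same route as the paper's proof of Lemma~\ref{lem2}: It\^o's formula in $\mathbb{V}$, invocation of $\mathbf{(A_4)}$ with $\mu=\mathcal{S}^{(n),N}_s$, control of all $\|\cdot\|_{\mathbb{H}}^{\lambda}$ and $\int\|\cdot\|_{\mathbb{V}}^{\alpha}$ quantities via the stopping time $\tau_M^{(n),N}$, BDG for the martingale term, and a closing Gronwall. The one point worth tightening is the order of operations: the paper first applies a \emph{pathwise} Gronwall (as in (\ref{esq26})/(\ref{espro1})), using the almost sure bound $\int_0^{t\wedge\tau_M^{(n),N}}\big(\|X^{(n),N,1}_s\|_{\mathbb{V}}^{\alpha}+\tfrac1N\sum_j\|X^{(n),N,j}_s\|_{\mathbb{V}}^{\alpha}\big)ds\le 2M$ to make the exponential factor a constant $C_M$, and only then takes expectations and applies BDG; your phrasing instead suggests taking expectations first and then applying Gronwall to $t\mapsto\mathbb{E}[\sup_{s\le t\wedge\tau}\|X^{(n),N,1}_s\|_{\mathbb{V}}^2]$, which cannot close with a deterministic Gronwall because the weight $\|X^{(n),N,1}_s\|_{\mathbb{V}}^{\alpha}$ is random --- you would then need the stochastic Gronwall Lemma~\ref{appen2} from the Appendix (which is perfectly fine and uses precisely the same a.s.\ bound on $\int_0^\tau R\,ds$). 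Either ordering works, but be explicit about which Gronwall you are using. A minor simplification: your residual term $\tfrac1N\sum_j\int_0^{T}\|X^{(n),N,j}_s\|_{\mathbb{V}}^{\alpha}ds$ need not be controlled by Lemma~\ref{lem3} --- before $\tau_M^{(n),N}$ it is already $\le M$ pathwise, exactly as the paper uses in (\ref{es13}).
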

\begin{proof}
By It\^{o}'s formula, we have
\begin{eqnarray}\label{es12}
&&\!\!\!\!\!\!\!\!\|X^{(n),N,1}_t\|_{\mathbb{V}}^2
\nonumber \\
=&&\!\!\!\!\!\!\!\!\|X^{(n),N,1}_0\|_{\mathbb{V}}^2+\int_0^t\big(2\langle \mathcal{A}(s,X^{(n),N,1}_s,\mathcal{S}^{(n),N}_s),X^{(n),N,1}_s\rangle_{\mathbb{V}}
\nonumber \\
&&\!\!\!\!\!\!\!\!
+\|\pi_n\mathcal{B}(s,X^{(n),N,1}_s,\mathcal{S}^{(n),N}_s)\tilde{\pi}_n\|_{L_2(U,{\mathbb{V}})}^2\big)ds
\nonumber \\
&&\!\!\!\!\!\!\!\!+2\int_0^t\langle X^{(n),N,1}_s,\pi_n\mathcal{B}(s,X^{(n),N,1}_s,\mathcal{S}^{(n),N}_s)dW^{(n),1}_s\rangle_{\mathbb{V}}
\nonumber \\
\leq&&\!\!\!\!\!\!\!\!\|X^{(n),N,1}_0\|_{\mathbb{V}}^2-\delta_2\int_0^t\|X^{(n),N,1}_s\|_{\mathbb{X}}^{\gamma}ds
\nonumber \\
&&\!\!\!\!\!\!\!\!
+C\Bigg\{\int_0^t\|X^{(n),N,1}_s\|_{\mathbb{V}}^{\alpha+2}\|X^{(n),N,1}_s\|_{\mathbb{H}}^{\lambda}ds
\nonumber \\
&&\!\!\!\!\!\!\!\!
+\int_0^t\big(1+\|X^{(n),N,1}_s\|_{\mathbb{V}}^{\alpha}+ \frac{1}{N}\sum_{j=1}^N\|X^{(n),N,j}_s\|_{\mathbb{V}}^{\alpha}\big)
\nonumber \\
&&\!\!\!\!\!\!\!\!\cdot
\big(1+\|X^{(n),N,1}_s\|_{\mathbb{V}}^{2}+\|X^{(n),N,1}_s\|_{\mathbb{H}}^{\lambda}+\frac{1}{N}\sum_{j=1}^N\|X^{(n),N,j}_s\|_{\mathbb{H}}^{\lambda}\big)ds\Bigg\}
\nonumber \\
&&\!\!\!\!\!\!\!\!+2\Big|\int_0^t\langle X^{(n),N,1}_s,\pi_n\mathcal{B}(s,X^{(n),N,1}_s,\mathcal{S}^{(n),N}_s)dW^{(n),1}_s\rangle_{\mathbb{V}}\Big|
\nonumber \\
=:&&\!\!\!\!\!\!\!\!\|X^{(n),N,1}_0\|_{\mathbb{V}}^2-\delta_2\int_0^t\|X^{(n),N,1}_s\|_{\mathbb{X}}^{\gamma}ds+\text{(I)}+\text{(II)}.
\end{eqnarray}
For (I), we deduce that for any $t\in[0,T\wedge\tau_M^{(n),N}]$,
\begin{equation}\label{es13}
\text{(I)}\leq C_{M,T}+C_{M}\int_0^t\big(1+\|X^{(n),N,1}_s\|_{\mathbb{V}}^{\alpha}+\frac{1}{N}\sum_{j=1}^N\|X^{(n),N,j}_s\|_{\mathbb{V}}^{\alpha}\big)\|X^{(n),N,1}_s\|_{\mathbb{V}}^{2}ds.
\end{equation}
Using (\ref{es13}) and Gronwall's lemma to (\ref{es12}), it follows that for any $t\in[0,T\wedge\tau_M^{(n),N}]$,
\begin{eqnarray}\label{espro1}
&&\!\!\!\!\!\!\!\!\|X^{(n),N,1}_t\|_{\mathbb{V}}^2+\delta_2\int_0^{t}\|X^{(n),N,1}_t\|_{\mathbb{X}}^{\gamma}dt
\nonumber \\
~~~~~~~~~~~\leq &&\!\!\!\!\!\!\!\! C_{M,T}\big(1+\|\xi^1\|_{\mathbb{V}}^2+\text{(II)}\big)\cdot
 \exp\Bigg\{2\int_0^{t}\big(1+\|X^{(n),N,1}_s\|_{\mathbb{V}}^{\alpha}+\frac{1}{N}\sum_{j=1}^N\|X^{(n),N,j}_s\|_{\mathbb{V}}^{\alpha}\big)ds\Bigg\}
\nonumber \\
~~~~~~~~~~~\leq &&\!\!\!\!\!\!\!\! C_{M,T}\big(1+\|\xi^1\|_{\mathbb{V}}^2+\text{(II)}\big).
\end{eqnarray}
Using  Burkholder-Davis-Gundy's inequality, we obtain
\begin{eqnarray}\label{espro2}
&&\!\!\!\!\!\!\!\!\mathbb{E}\Bigg\{\sup_{t\in[0, T\wedge\tau_M^{(n),N}]}\text{(II)}\Bigg\}
\nonumber \\
\leq&&\!\!\!\!\!\!\!\!C\mathbb{E}\Bigg\{\int_0^{T\wedge\tau_M^{(n),N}}\|X^{(n),N,1}_s\|_{\mathbb{V}}^{2}\|\mathcal{B}(s,X^{(n),N,1}_s,\mathcal{S}^{(n),N}_s)\|_{L_2(U,{\mathbb{V}})}^2ds\Bigg\}^{\frac{1}{2}}
\nonumber \\
\leq&&\!\!\!\!\!\!\!\!\frac{1}{2}\mathbb{E}\Big[\sup_{t\in[0,T\wedge\tau_M^{(n),N}]} \|X^{(n),N,1}_t\|_{\mathbb{V}}^2\Big]
\nonumber \\
&&\!\!\!\!\!\!\!\!
+C\mathbb{E}\Bigg\{\int_0^{T\wedge\tau_M^{(n),N}}\big(1+\|X^{(n),N,1}_t\|_{\mathbb{V}}^{2}+\frac{1}{N}\sum_{j=1}^N\|X^{(n),N,j}_t\|_{\mathbb{V}}^{\alpha}\big)
\nonumber \\
&&\!\!\!\!\!\!\!\!\cdot
\big(1+\|X^{(n),N,1}_t\|_{\mathbb{H}}^{\lambda}+\frac{1}{N}\sum_{j=1}^N\|X^{(n),N,j}_t\|_{\mathbb{H}}^{\lambda}\big)dt\Bigg\}.
\nonumber \\
\leq&&\!\!\!\!\!\!\!\!C_{M,T}+\frac{1}{2}\mathbb{E}\Big[\sup_{t\in[0,T\wedge\tau_M^{(n),N}]} \|X^{(n),N,1}_t\|_{\mathbb{V}}^2\Big]
\nonumber \\
&&\!\!\!\!\!\!\!\!+C_M\mathbb{E}\int_0^{T\wedge\tau_M^{(n),N}}\|X^{(n),N,1}_t\|_{\mathbb{V}}^{\alpha}ds +C_M\mathbb{E}\int_0^{T\wedge\tau_M^{(n),N}}\frac{1}{N}\sum_{j=1}^N\|X^{(n),N,j}_s\|_{\mathbb{V}}^{\alpha}ds .
\end{eqnarray}
Combining (\ref{espro1})-(\ref{espro2}), we obtain
\begin{eqnarray*}
&&\!\!\!\!\!\!\!\!\mathbb{E}\Big[\sup_{t\in[0,T\wedge\tau_M^{(n),N}]}\|X^{(n),N,1}_t\|_{\mathbb{V}}^2\Big]+2\delta_2\mathbb{E}\int_0^{T\wedge\tau_M^{(n),N}}\|X^{(n),N,1}_t\|_{\mathbb{X}}^{\gamma}dt
\nonumber \\
\leq &&\!\!\!\!\!\!\!\!C_{M,T}\big(1+\mathbb{E}\|\xi^1\|_{\mathbb{V}}^2\big)+C_{M,T}\mathbb{E}\int_0^{T\wedge\tau_M^{(n),N}}\|X^{(n),N,1}_t\|_{\mathbb{V}}^{2}ds.
\end{eqnarray*}
Hence, by Gronwall's lemma we have
$$\mathbb{E}\Big[\sup_{t\in[0,T\wedge\tau_M^{(n),N}]}\|X^{(n),N,1}_t\|_{\mathbb{V}}^2\Big]+2\delta_2\mathbb{E}\int_0^{T\wedge\tau_M^{(n),N}}\|X^{(n),N,1}_t\|_{\mathbb{X}}^{\gamma}dt
\leq C_{M,T}\big(1+\mathbb{E}\|\xi^1\|_{\mathbb{V}}^2\big).$$
We complete the proof.
\end{proof}

\subsection{Passage to the original system}\label{sec4.2}
For the convenience of reading, we first provide a structural description of the argument in this subsection. The aim of this subsection is to pass to the limit of the Galerkin system $\{X^{(n),N,i},n,N\in\mathbb{N}\}$ to the original system $\{X^{N,i},N\in\mathbb{N}\}$ of (\ref{eqi}), as $n\to\infty$. To this end, we first investigate the tightness of laws of the sequence $\{X^{(n),N,1},n,N\in\mathbb{N}\}$ in the state space $\mathbb{S}$. Then, by the exchangeability property, we can deduce that
the laws of $\{(X^{(n),N,1},\ldots,X^{(n),N,N}),n\in\mathbb{N}\}$ are  tight in $\mathbb{S}^{\otimes N}$, which indicates that there exists a weakly convergent subsequence. Next, by employing the martingale approach we can show that the limit of this weakly convergent subsequence is a martingale  solution to the original system (\ref{eqi}), which together with the weak uniqueness of (\ref{eqi}) and the uniform integrability imply the convergence of the whole  sequence w.r.t.~certain Wasserstein metrics.

\begin{lemma}\label{lem12}
Under the assumptions in Theorem \ref{th4}, we have

\vspace{1mm}
(i) for $N\geq 2$ fixed, the sequence  $\{X^{(n),N,1},n\in\mathbb{N}\}$ is  tight in $\mathbb{S}$,

\vspace{2mm}
(ii) the sequence  $\{X^{(n),N,1},n,N\in\mathbb{N}\}$ is  tight in $\mathbb{S}$.
\end{lemma}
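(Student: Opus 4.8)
The plan is to follow the same two-step scheme as in the proof of Corollary \ref{coro1}: establish tightness separately in $\mathbb{C}_T$ (via the Aldous criterion \cite{A1}) and in $L^{\alpha}([0,T];\mathbb{V})$ (via the criterion of \cite[Theorem 5]{S3}, see also \cite[Lemma 5.2]{RSZ1}), and then combine them, since $\mathbb{S}=\mathbb{C}_T\cap L^{\alpha}([0,T];\mathbb{V})$. The crucial observation is that every a priori bound we invoke — the empirical-average estimate \eqref{es116}, the single-particle estimate \eqref{es117}, the $L^r$-in-time bound \eqref{es1112}, and the before-stopping-time estimate in Lemma \ref{lem2} — is uniform in the pair $(n,N)$. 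Hence one and the same computation proves (i) (fix $N$, let $n\to\infty$) and (ii) (jointly in $n,N$), so below I describe the joint statement (ii).

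First I would control the stopping time $\tau_M^{(n),N}$: by Markov's inequality together with \eqref{es116}, \eqref{es117}, \eqref{es1112}, exactly as in \eqref{v110},
$$\sup_{n,N}\mathbb{P}\big(\tau_M^{(n),N}<T\big)\le \frac{C_T}{M}+\frac{C_T}{M^2}\longrightarrow 0\quad(M\to\infty).$$
Combining this with Lemma \ref{lem2}, the argument of \eqref{v111} yields $\lim_{R\to\infty}\sup_{n,N}\mathbb{P}\big(\sup_{t\in[0,T]}\|X^{(n),N,1}_t\|_{\mathbb{V}}>R\big)=0$, which gives the compact-containment in $\mathbb{H}$ through the compact embedding $\mathbb{V}\subset\mathbb{H}$. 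To verify the Aldous condition it suffices, by \cite[Theorem 3.1]{Ja86}, to treat $\{\langle X^{(n),N,1},e\rangle_{\mathbb{H}}\}$ for each $e\in\mathbb{H}_m$; for a stopping time $0\le\zeta_{n,N}\le T$ I would bound $\mathbb{E}\big|\langle X^{(n),N,1}_{\overline{\zeta_{n,N}+\Delta}\wedge\tau_M^{(n),N}}-X^{(n),N,1}_{\zeta_{n,N}\wedge\tau_M^{(n),N}},e\rangle_{\mathbb{H}}\big|^{\frac{\alpha}{\alpha-1}}$ by writing the increment through Eq.~\eqref{eqiG} and applying Burkholder--Davis--Gundy, estimating the drift term via $\mathbf{(A_3)}$ — where the measure argument $\mathcal{S}^{(n),N}_s(\|\cdot\|_{\mathbb{V}}^{\alpha})=\frac1N\sum_j\|X^{(n),N,j}_s\|_{\mathbb{V}}^{\alpha}$ is absorbed by \eqref{es1112} — and the diffusion term via \eqref{conb} and \eqref{es116}. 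This produces a bound of order $|\Delta|^{\frac{1}{\alpha-1}}+|\Delta|^{\frac{\alpha}{2(\alpha-1)}}$ uniform in $(n,N,M)$; letting $\Delta\to0$ and then $M\to\infty$ (as in \eqref{esq4}) gives tightness in $\mathbb{C}_T$.

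For $L^{\alpha}([0,T];\mathbb{V})$, using the compact embedding $\mathbb{X}\subset\mathbb{V}$, I would verify the two conditions of the criterion as in Lemma \ref{lem7}: (a) $\lim_{\kappa\to\infty}\sup_{n,N}\mathbb{P}\big(\int_0^T\|X^{(n),N,1}_t\|_{\mathbb{X}}^{\alpha}dt>\kappa\big)=0$, which follows from the bound on $\int_0^{T\wedge\tau_M^{(n),N}}\|X^{(n),N,1}_t\|_{\mathbb{X}}^{\gamma}dt$ in Lemma \ref{lem2} together with the stopping-time estimate above; and (b) $\lim_{\Delta\to0^+}\sup_{n,N}\mathbb{E}\int_0^T\|X^{(n),N,1}_{t\wedge\tau_M^{(n),N}}-X^{(n),N,1}_{(t-\Delta)\wedge\tau_M^{(n),N}}\|_{\mathbb{H}}^{\alpha}dt=0$, proved by Itô's formula applied to $\|\cdot\|_{\mathbb{H}}^{2}$ when $1<\alpha\le2$ and to $\|\cdot\|_{\mathbb{H}}^{\alpha}$ when $\alpha>2$, combined with $\mathbf{(A_3)}$, \eqref{es116}, \eqref{es1112}, mirroring \eqref{F3.9}--\eqref{esq7}. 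Finally, combining the two tightness statements exactly as in the passage from Lemmas \ref{lem6}--\ref{lem7} to Corollary \ref{coro1} yields the claim. The only genuinely new bookkeeping relative to Section \ref{well-posed} is that the ``law'' argument is now the empirical average $\frac1N\sum_{j=1}^N(\cdot)$ over all $N$ particles, so one must localize \emph{simultaneously} on particle $1$ and on these empirical averages; this is precisely why $\tau_M^{(n),N}$ is defined through both quantities, and it is the step demanding the most care — all other estimates are transcriptions of the arguments already carried out for the mean field equation.
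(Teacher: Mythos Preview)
Your proposal is correct and follows essentially the same strategy as the paper's proof, which likewise reduces to the arguments of Lemmas \ref{lem6}--\ref{lem7} and Corollary \ref{coro1} with the empirical measure replacing the law and with the joint stopping time $\tau_M^{(n),N}$ handling the localization. One small slip: the Aldous-increment bound carries a constant $C_{M,T,\|e\|_{\mathbb{V}}}$ that \emph{does} depend on $M$ (it is not ``uniform in $(n,N,M)$''), but your procedure of letting $\Delta\to0$ first and then $M\to\infty$ is exactly what the paper does and is correct.
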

\begin{proof}
We first prove the claim (ii). Since the proof is exactly similar to that of
Lemma \ref{coro1}, we only include the proof of tightness of $\{X^{(n),N,1},n,N\in\mathbb{N}\}$ in $\mathbb{C}_T$ for reader's understanding.

Recall the definition of stopping time $\tau_M^{(n),N}$. By Lemma \ref{lem11}, we can get
  \begin{eqnarray}\label{es2}
&&\!\!\!\!\!\!\!\!\sup_{n,N\in\mathbb{N}}\mathbb{P}\big(\tau_M^{(n),N}<T\big)
\nonumber \\
\leq&&\!\!\!\!\!\!\!\! \sup_{n,N\in\mathbb{N}}\Bigg\{\mathbb{E}\Big[\sup_{t\in[0,T]}\|X^{(n),N,1}_t\|_{\mathbb{H}}\Big]+\mathbb{E}\Big[\sup_{t\in[0,T]}\frac{1}{N}\sum_{j=1}^N\|X^{(n),N,j}_t\|_{\mathbb{H}}^{\lambda}\Big]
\nonumber \\
&&\!\!\!\!\!\!\!\!+\mathbb{E}\int_0^T \|X^{(n),N,1}_s\|_{\mathbb{V}}^{\alpha}ds+\mathbb{E}\int_0^T\frac{1}{N}\sum_{j=1}^N\|X^{(n),N,j}_s\|_{\mathbb{V}}^{\alpha}ds\Bigg\}
\Big/M
\nonumber \\
\leq&&\!\!\!\!\!\!\!\! C_{p,T}/M,
  \end{eqnarray}
where the constant $C_{p,T}$ is independent of $M$.

\vspace{1mm}
Then for any $R>0$, by Lemma \ref{lem2}, we have
  \begin{eqnarray}
&&\!\!\!\!\!\!\!\!\sup_{n,N\in\mathbb{N}}\mathbb{P}\Big(\sup_{0\leq t\leq T}\|X^{(n),N,1}_t\|_{\mathbb{V}}>R\Big)
\nonumber \\
\leq &&\!\!\!\!\!\!\!\!
\sup_{n,N\in\mathbb{N}}\mathbb{P}\Big(\sup_{0\leq t\leq T\wedge\tau_M^{(n),N}}\|X^{(n),N,1}_t\|_{\mathbb{V}}>R\Big)+\sup_{n,N\in\mathbb{N}}\mathbb{P}\big(\tau_M^{(n),N}<T\big)
\nonumber \\
\leq &&\!\!\!\!\!\!\!\!\frac{1}{R^2}\sup_{n,N\in\mathbb{N}}\mathbb{E}\Big[\sup_{0\leq t\leq T\wedge\tau_M^{(n),N}}\|X^{(n),N,1}_t\|_{\mathbb{V}}^2\Big]+\frac{C_{p,T}}{M}
\nonumber \\
\leq &&\!\!\!\!\!\!\!\!\frac{C_{p,M,T}}{R^2}+\frac{C_{p,T}}{M}.
\end{eqnarray}
Let $R\rightarrow\infty$ first, then let $M\rightarrow\infty$, we can see that
$$\lim_{R\rightarrow\infty}\sup_{n,N\in\mathbb{N}}\mathbb{P}\left(\sup_{0\leq t\leq T}\|X^{(n),N,1}_t\|_{\mathbb{V}}>R\right)=0.$$
As stated in Lemma \ref{lem6}, it suffices to
show that for every $e\in \mathbb{H}_m$, $m\geq 1$ and any stopping time $0\leq \zeta_n\leq T$ and $\varepsilon>0$,
\begin{equation}\label{es3}
\lim_{\Delta\to 0}\sup_{n,N\in\mathbb{N}}\mathbb{P}\Big(\big|\langle X^{(n),N,1}_{\overline{\zeta_n+\Delta}}-X^{(n),N,1}_{\zeta_n},e\rangle_{\mathbb{H}}\big|>\varepsilon\Big)=0,
\end{equation}
where $\overline{\zeta_n+\Delta}:=T\wedge (\zeta_n+\Delta) \vee 0$.

 Note that
\begin{eqnarray}\label{es1}
&&\!\!\!\!\!\!\!\!\mathbb{P}\Big(\big|\langle X^{(n),N,1}_{\overline{\zeta_n+\Delta}}-X^{(n),N,1}_{\zeta_n},e\rangle_{\mathbb{H}}\big|>\varepsilon\Big)
\nonumber \\
\leq &&\!\!\!\!\!\!\!\!\mathbb{P}\Big(\big|\langle X^{(n),N,1}_{\overline{\zeta_n+\Delta}\wedge  \tau_M^{(n),N}}-X^{(n),N,1}_{\zeta_n\wedge \tau_M^{(n),N}},e\rangle_{\mathbb{H}}\big|>\varepsilon\Big)+\mathbb{P}\big(\tau_M^{(n),N}<T\big)
\nonumber \\
\leq &&\!\!\!\!\!\!\!\!\mathbb{P}\Big(\Big| \int_{\zeta_n\wedge \tau_M^{(n),N}}^{\overline{\zeta_n+\Delta}\wedge \tau_M^{(n),N}}\langle\pi_n \mathcal{A}(s,X^{(n),N,1}_s,\mathcal{S}^{(n),N}_s),e\rangle_{\mathbb{H}}ds\Big|>\varepsilon\Big)
\nonumber \\
 &&\!\!\!\!\!\!\!\!+\mathbb{P}\Big(\Big| \int_{\zeta_n\wedge \tau_M^{(n),N}}^{\overline{\zeta_n+\Delta}\wedge \tau_M^{(n),N}}\langle\pi_n \mathcal{B}(s,X^{(n),N,1}_s,\mathcal{S}^{(n),N}_s)dW^{(n),1}_s,e\rangle_{\mathbb{H}}\Big|>\varepsilon\Big)
\nonumber \\
 &&\!\!\!\!\!\!\!\!+\mathbb{P}\big(\tau_M^{(n),N}<T\big)
\nonumber \\
\leq &&\!\!\!\!\!\!\!\!\frac{1}{\varepsilon^{\frac{\alpha}{\alpha-1}}}\mathbb{E}\Big| \int_{\zeta_n\wedge \tau_M^{(n),N}}^{\overline{\zeta_n+\Delta}\wedge  \tau_M^{(n),N}}{}_{\mathbb{V}^*}\langle\pi_n \mathcal{A}(s,X^{(n),N,1}_s,\mathcal{S}^{(n),N}_s),e\rangle_{\mathbb{V}}ds\Big|^{\frac{\alpha}{\alpha-1}}
\nonumber \\
 &&\!\!\!\!\!\!\!\!+\frac{1}{\varepsilon^2}\mathbb{E}\Big|\int_{\zeta_n\wedge \tau_M^{(n),N}}^{\overline{\zeta_n+\Delta}\wedge \tau_M^{(n),N}}\langle\pi_n \mathcal{B}(s,X^{(n),N,1}_s,\mathcal{S}^{(n),N}_s)dW^{(n),1}_s,e\rangle_{\mathbb{H}}\Big|^2
\nonumber \\
 &&\!\!\!\!\!\!\!\!+\mathbb{P}\big(\tau_M^{(n),N}<T\big)
\nonumber \\
=: &&\!\!\!\!\!\!\!\!\frac{1}{\varepsilon^{\frac{\alpha}{\alpha-1}}}\text{(I)}+\frac{1}{\varepsilon^2}\text{(II)}+\mathbb{P}\big(\tau_M^{(n),N}<T\big).
\end{eqnarray}
For $\text{(I)}$, since $e\in\mathbb{H}_m\subset\mathbb{V}$, by H\"{o}lder's inequality we have
\begin{eqnarray}\label{sec4esq2}
\text{(I)}\leq  &&\!\!\!\!\!\!\!\!C |\Delta|^{\frac{1}{\alpha-1}}\cdot \mathbb{E}\Bigg\{\int_{\zeta_n\wedge \tau_M^{(n),N}}^{\overline{\zeta_n+\Delta}\wedge  \tau_M^{(n),N}}|{}_{\mathbb{V}^*}\langle \pi_n \mathcal{A}(s,X^{(n),N,1}_s,\mathcal{S}^{(n),N}_s), e\rangle_{\mathbb{V}}|^{\frac{\alpha}{\alpha-1}}ds\Bigg\}
\nonumber \\
\leq &&\!\!\!\!\!\!\!\!C_{\|e\|_{\mathbb{V}}} |\Delta|^{\frac{1}{\alpha-1}}\cdot\mathbb{E}\Bigg\{\int_{0}^{T\wedge \tau_M^{(n),N}}\big(1+\|X_{s}^{(n),N,1}\|_{\mathbb{V}}^\alpha+\frac{1}{N}\sum_{j=1}^N \|X_{s}^{(n),N,j}\|_{\mathbb{V}}^{\alpha}\big)
\nonumber \\
&&\!\!\!\!\!\!\!\!\cdot
\big(1+\|X_{s}^{(n),N,1}\|_{\mathbb{H}}^{\beta}+\frac{1}{N}\sum_{j=1}^N \|X_{s}^{(n),N,j}\|_{\mathbb{H}}^{\beta}\big)
 ds\Bigg\}
\nonumber \\
 \leq&&\!\!\!\!\!\!\!\!C_{M,T,\|e\|_{\mathbb{V}}} |\Delta|^{\frac{1}{\alpha-1}}.
\end{eqnarray}
For $\text{(II)}$, we can get
\begin{eqnarray}\label{sec4esq3}
\text{(II)}\leq  &&\!\!\!\!\!\!\!\!C\mathbb{E}\Bigg\{\int_{\zeta_n\wedge \tau_M^{(n),N}}^{\overline{\zeta_n+\Delta}\wedge  \tau_M^{(n),N}}\|e\|_{\mathbb{H}}^2\Big(1+\|X_{s}^{(n),N,1}\|_{\mathbb{H}}^2+\frac{1}{N}\sum_{j=1}^N \|X_{s}^{(n),N,j}\|_{\mathbb{H}}^2\Big)ds\Bigg\}
\nonumber \\
 \leq&&\!\!\!\!\!\!\!\!C_{M,T,\|e\|_{\mathbb{H}}}|\Delta|.
\end{eqnarray}
Combining (\ref{es2}), (\ref{es1})-(\ref{sec4esq3}) and  letting $\Delta\to 0$ and then $M\to\infty$ implies (\ref{es3}) holds.

\vspace{1mm}
It is clear that the assertion (ii) trivially implies (i).  Thus we finish the proof.
\end{proof}

From now on, let us fix $N\geq 2$.
By Lemma \ref{lem12} (i), we have obtained that the family $\{X^{(n),N,1},n\in\mathbb{N}\}$ is  tight in $\mathbb{S}$. By the exchangeability property, we deduce that
$\{(X^{(n),N,1},\ldots,X^{(n),N,N}),n\in\mathbb{N}\}$ is  tight in $\mathbb{S}^{\otimes N}$. Consequently,
$$\big\{\big((X^{(n),N,1},W^1),\ldots,(X^{(n),N,N},W^N)\big),n\in\mathbb{N}\big\}~\text{is  tight in}~ (\mathbb{S}\times C([0,T];U_1))^{\otimes N},$$
 where $U_1$ is a Hilbert space such that the embedding $U\subset U_1$ is Hilbert-Schmidt. For any sequence $\{n\}_{n\in\mathbb{N}}$, there exists a subsequence  denoted by $\{n_k\}_{k\in\mathbb{N}}$ such that the family
 \begin{eqnarray}\label{conver1}
 &&\!\!\!\!\!\!\!\!\big((X^{(n_k),N,1},W^1),\ldots,(X^{(n_k),N,N},W^N)\big)~\text{converges in law}~
 \nonumber\\&&\!\!\!\!\!\!\!\!\text{in}~(\mathbb{S}\times C([0,T];U_1))^{\otimes N},~~\text{as}~k\to\infty.
\end{eqnarray}
Applying Skorokhod representation theorem, there exists a probability space still denoted  by $(\Omega,\mathscr{F},\mathbb{P})$, and on this space, $(\mathbb{S}\times C([0,T];U_1))^{\otimes N}$-valued random variables $$\big((X^{(n_k),N,1},W^{(n_k),1}),\ldots,(X^{(n_k),N,N},W^{(n_k),N})\big)$$  such that
\begin{equation*}\label{conver2}
\big((X^{(n_k),N,1},W^{(n_k),1}),\ldots,(X^{(n_k),N,N},W^{(n_k),N})\big)\to\big((X^{N,1},W^{1}),\ldots,(X^{N,N},W^{N})\big),~\mathbb{P}\text{-a.s.},
\end{equation*}
$\text{in}~(\mathbb{S}\times C([0,T];U_1))^{\otimes N},~\text{as}~k\to\infty$. Next, by the martingale problem approach (following from similar argument as the proof of Theorem \ref{th1}) and the martingale representation theorem, we can prove
\begin{equation}\label{eqi3}
\Xi^N:=\big((X^{N,1},W^{1}),\ldots,(X^{N,N},W^{N})\big)
\end{equation}
is a weak solution to the coupled system (\ref{eqi}).
Then Theorem \ref{th5} implies  $\Xi^N$ is the unique weak solution of  (\ref{eqi}) and thus the convergence (\ref{conver1}) holds for whole sequence $\{n\}_{n\in\mathbb{N}}$.

 \vspace{1mm}
 Moreover, by Lemmas \ref{lem11} and \ref{lem3}, the lower semicontinuity of norms (cf.~(\ref{eslow})) and Fatou's lemma, it follows that there exists $C_{p,T}>0$ independent of $N$ and  $r>1$ such that
\begin{equation}\label{es4}
\mathbb{E}\Big[\sup_{t\in[0,T]}\frac{1}{N}\sum_{i=1}^N\|X^{N,i}_t\|_{\mathbb{H}}^p\Big]+\frac{1}{N}\sum_{i=1}^N\mathbb{E}\Bigg\{\int_0^T\|X^{N,i}_t\|_{\mathbb{V}}^{\alpha} dt\Bigg\}^{r} \leq C_{p,T},
\end{equation}
\begin{equation}\label{es5}
\mathbb{E}\Big[\sup_{t\in[0,T]}\|X^{N,1}_t\|_{\mathbb{H}}^p\Big]+\mathbb{E}\Bigg\{\int_0^T\|X^{N,1}_t\|_{\mathbb{V}}^{\alpha} dt\Bigg\}^{r}\leq C_{p,T},
\end{equation}
and
\begin{eqnarray}
&&\!\!\!\!\!\!\!\!\mathbb{E}\Big\{\int_0^{T}\frac{1}{N}\sum_{i=1}^N\Big(\|X^{N,i}_t\|_{\mathbb{H}}^{p-2}\|X^{N,i}_t\|_{\mathbb{V}}^{\alpha}\Big)dt\Big\}
\nonumber \\ &&\!\!\!\!\!\!\!\!+\mathbb{E}\Big\{\int_0^{T}\|X^{N,1}_t\|_{\mathbb{H}}^{p-2}\|X^{N,1}_t\|_{\mathbb{V}}^{\alpha}dt\Big\}\leq C_{p,T},\label{es6}
\end{eqnarray}
where $p$ is defined in Theorem \ref{th4}.

\vspace{2mm}
In the next subsection, we want to characterize the limit of the empirical law $\mathcal{S}^N$ and prove Theorem \ref{th4}.

\subsection{Passage to the limit of $\mathcal{S}^N$}\label{sec4.3}
In this part, we consider the $N$-interacting system $X^{N}$ built in (\ref{eqi3}) with initial value $X_0^N=(\xi^1,\ldots,\xi^N)$, and  derive the tightness of $\{\mathcal{S}^{N},N\in\mathbb{N}\}$ in $\mathbb{K}:=\mathscr{P}_{2}(\mathbb{C}_T)\cap \mathscr{P}_{\alpha}(L^{\alpha}([0,T];\mathbb{V}))$ by the following  lemma.

\begin{lemma}\label{pro13}
Under the assumptions in Theorem \ref{th4}, the sequence  $\{\mathcal{S}^{N},N\in\mathbb{N}\}$ is  tight in $\mathbb{K}$.
\end{lemma}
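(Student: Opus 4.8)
The plan is to prove tightness of $\{\mathcal{S}^N, N\in\mathbb{N}\}$ in $\mathbb{K}=\mathscr{P}_2(\mathbb{C}_T)\cap\mathscr{P}_\alpha(L^\alpha([0,T];\mathbb{V}))$ by verifying a suitable compactness criterion for laws of random measures, reducing everything to moment bounds on a single tagged particle that are \emph{uniform in} $N$. Recall that $\mathcal{S}^N=\frac1N\sum_{i=1}^N\delta_{X^{N,i}}$ is a random element of $\mathscr{P}(\mathbb{S})$, and since $X^{N}$ is exchangeable, the law of $\mathcal{S}^N$ on $\mathscr{P}(\mathbb{S})$ is determined by the symmetric structure of $X^{N}$. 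By the standard characterization of tightness in Wasserstein spaces over a Polish space (e.g.\ \cite[Proposition 7.1.5]{CD1}, or the argument already used around \eqref{es18}), it suffices to show two things: (a) tightness of the laws $\{\mathrm{Law}(\mathcal{S}^N)\}$ in $\mathscr{P}(\mathscr{P}(\mathbb{S}))$ with $\mathbb{S}$ carrying its weak topology, and (b) uniform integrability of the relevant moment functionals, namely
\begin{equation*}
\sup_{N\in\mathbb{N}}\mathbb{E}\Big[\mathcal{S}^N\big(\|\cdot\|_{T,\mathbb{H}}^2\big)\Big]<\infty,\qquad
\sup_{N\in\mathbb{N}}\mathbb{E}\Big[\mathcal{S}^N\big(\|\cdot\|_{L^\alpha([0,T];\mathbb{V})}^\alpha\big)\Big]<\infty,
\end{equation*}
together with the corresponding ``no mass escaping to infinity'' statement with a slightly higher exponent, which upgrades weak tightness in $\mathscr{P}(\mathbb{S})$ to tightness in $\mathscr{P}_2(\mathbb{C}_T)\cap\mathscr{P}_\alpha(L^\alpha([0,T];\mathbb{V}))$.

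For step (b) the key observation is the exchangeability identity: for any nonnegative measurable $\Phi$ on $\mathbb{S}$,
\begin{equation*}
\mathbb{E}\big[\mathcal{S}^N(\Phi)\big]=\mathbb{E}\Big[\tfrac1N\sum_{i=1}^N\Phi(X^{N,i})\Big]=\mathbb{E}\big[\Phi(X^{N,1})\big].
\end{equation*}
Hence the first bound above is exactly $\sup_N\mathbb{E}[\sup_{t\le T}\|X^{N,1}_t\|_{\mathbb{H}}^2]<\infty$ and the second is $\sup_N\mathbb{E}\int_0^T\|X^{N,1}_t\|_{\mathbb{V}}^\alpha dt<\infty$, both of which follow from \eqref{es5}; the higher-moment version needed to rule out escaping mass uses the full strength of \eqref{es5} with exponent $p>2$ and $r>1$ respectively. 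So step (b) is essentially immediate from the uniform-in-$N$ estimates established in Subsection~\ref{sec4.2}.

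For step (a), a convenient route is to exploit that $\{X^{(n),N,1},n,N\in\mathbb{N}\}$ is tight in $\mathbb{S}$ by Lemma~\ref{lem12}(ii); passing $n\to\infty$ (Subsection~\ref{sec4.2}) and using the lower semicontinuity of the norms gives that $\{\mathrm{Law}(X^{N,1}),N\in\mathbb{N}\}$ is tight in $\mathbb{S}$. Fix $\epsilon>0$ and choose a compact $K_\epsilon\subset\mathbb{S}$ with $\sup_N\mathbb{P}(X^{N,1}\notin K_\epsilon)\le\epsilon$. By exchangeability, $\mathbb{E}[\mathcal{S}^N(K_\epsilon^c)]=\mathbb{P}(X^{N,1}\notin K_\epsilon)\le\epsilon$ for all $N$. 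A classical criterion (Sznitman, or \cite[Proposition~2.2]{S1}; see also the tightness lemma used in \cite{Lacker}) then states that the family $\{\mathrm{Law}(\mathcal{S}^N)\}$ on $\mathscr{P}(\mathscr{P}(\mathbb{S}))$ is tight provided the ``first moment measures'' $\mu_N:=\mathbb{E}[\mathcal{S}^N]=\mathrm{Law}(X^{N,1})$ form a tight family on $\mathbb{S}$ — which is precisely what we have just verified. I would spell this out via Markov's inequality: for the (closed) set $\mathcal{K}_{\epsilon,\delta}:=\{\nu\in\mathscr{P}(\mathbb{S}):\nu(K_{\epsilon_j}^c)\le\delta_j\ \forall j\}$ with $K_{\epsilon_j}$ compact and $\delta_j\to0$ suitably, one checks $\sup_N\mathbb{P}(\mathcal{S}^N\notin\mathcal{K}_{\epsilon,\delta})$ is small, and $\mathcal{K}_{\epsilon,\delta}$ is relatively compact in $\mathscr{P}(\mathbb{S})$ by Prohorov. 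Combining (a) and (b) yields tightness in $\mathbb{K}$.

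The main obstacle is not conceptual but bookkeeping: one must be careful that the topology on $\mathbb{S}$ in which the compact $K_\epsilon$ lives is compatible with the two Wasserstein topologies we are claiming, and that the upgrade from weak tightness to $\mathscr{P}_2\cap\mathscr{P}_\alpha$-tightness genuinely uses the \emph{strictly higher} moments $p>2$ and $r>1$ from \eqref{es5} rather than merely the $p=2$, $r=1$ bounds — this is exactly the role of the exponent $\vartheta$ and the integrability improvement $\mathscr{P}_{2+}$, and it is the one place where a naive argument would fail. Concretely, I would invoke \cite[Theorem~5.5 / Definition~5.4]{CD1} (uniform integrability of the $p$-th moments implies that weak convergence on the underlying space, when lifted, is convergence in $\mathscr{P}_p$) to close this gap, exactly as was done to obtain \eqref{es18}. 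Everything else reduces to the already-proven Lemmas~\ref{lem11}, \ref{lem3}, \ref{lem12} and the estimates \eqref{es4}–\eqref{es6}.
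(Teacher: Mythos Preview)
Your proposal is correct and follows essentially the same approach as the paper: first obtain tightness of $\{\mathcal{S}^N\}$ in $\mathscr{P}(\mathbb{S})$ via Sznitman's criterion reduced to tightness of the tagged particle $\{X^{N,1}\}$ (which inherits from Lemma~\ref{lem12}(ii)), and then upgrade to tightness in $\mathbb{K}=\mathscr{P}_2(\mathbb{C}_T)\cap\mathscr{P}_\alpha(L^\alpha([0,T];\mathbb{V}))$ using the strictly higher moment bounds ($p>2$, $r>1$) from \eqref{es4}--\eqref{es5}. The only cosmetic difference is that the paper carries out the upgrade by explicitly constructing the tail-control sets $\Upsilon_\varepsilon^1,\Upsilon_\varepsilon^2$ and invoking Lemma~\ref{lem00} (\cite[Corollary~5.6]{CD1}), whereas you phrase the same step as a uniform-integrability/Theorem~5.5 argument; the content is identical.
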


\begin{proof}
 Recall that for each $N\geq 2$, $(X^{N,i})_{1\leq i\leq N}$ has been obtained as a limit point (in law) of $(X^{(n),N,i})_{1\leq i\leq N}$ as $n\to \infty$.
 In view of Lemma \ref{lem12} (ii), the family $\{X^{N,1},N\in\mathbb{N}\}$ is thus tight in $\mathbb{S}$.
 According to the classical result from  Sznitman \cite{S1} that the tightness of  $\{\mathcal{S}^{N},N\in\mathbb{N}\}$ in $\mathscr{P}(\mathbb{S})$ is equivalent to the tightness of $\{X^{N,1},N\in\mathbb{N}\}$ in $\mathbb{S}$.

\vspace{2mm}
To proceed,  we prove that $\{\mathcal{S}^{N},N\in\mathbb{N}\}$ is  tight in $\mathbb{K}$. The proof contains twofold:

\vspace{2mm}
(i) the tightness  in $\mathscr{P}_{2}(\mathbb{C}_T)$;

\vspace{2mm}
(ii)  the tightness  in $\mathscr{P}_{\alpha}(L^{\alpha}([0,T];\mathbb{V}))$.

\vspace{2mm}
\textbf{Step 1.} In this step, we prove claim (i). Since $\{\mathcal{S}^{N},N\in\mathbb{N}\}$ is  tight in $\mathscr{P}(\mathbb{S})$ (thus in $\mathscr{P}(\mathbb{C}_T)$) and $\{\mathcal{S}^{N},N\in\mathbb{N}\}\subset \mathscr{P}_{2}(\mathbb{C}_T)$, $\mathbb{P}$-a.s., there exists a set $\mathscr{R}_\varepsilon^1\subset\mathscr{P}_{2}(\mathbb{C}_T)$ that is relatively compact in $\mathscr{P}(\mathbb{C}_T)$ such that
\begin{equation}\label{es1118}
\mathbb{P}(\mathcal{S}^{N}\notin \mathscr{R}_\varepsilon^1)\leq \varepsilon.
\end{equation}
Now for some $r_1>2$, let
$$\lambda_m:=m^{\frac{1}{r_1-2}}2^{\frac{m}{r_1-2}},~~~~~\kappa_m:=\frac{\varepsilon m}{\sup_{N\in\mathbb{N}}\mathbb{E}\Big[\sup_{t\in[0,T]}\|X^{N,1}_t\|_{{\mathbb{H}}}^{r_1}\Big]\vee 1},$$
 and set
$$\Upsilon_\varepsilon^1:=\bigcap_{m\in\mathbb{N}}\Bigg\{\nu\in\mathscr{P}_{2}(\mathbb{C}_T):\int  d_{T}^1(w,0)^2\mathbf{1}_{\{d_{T}^1(w,0)\geq \lambda_m\}}\nu(dw)<\frac{1}{\kappa_m}\Bigg\}.$$
Then we deduce from (\ref{es4}) that
\begin{eqnarray}\label{es1119}
\mathbb{P}(\mathcal{S}^{N}\notin\Upsilon_\varepsilon^1)\leq &&\!\!\!\!\!\!\!\! \sum_{m=1}^\infty\mathbb{P}\Big(\frac{1}{N}\sum_{i=1}^Nd_{T}^1(X^{N,i},0)^2\mathbf{1}_{\{d_{T}^1(X^{N,i},0)\geq \lambda_m\}}\geq\frac{1}{\kappa_m}\Big)
\nonumber \\
\leq&&\!\!\!\!\!\!\!\!\sum_{m=1}^\infty \frac{\kappa_m}{N} \sum_{i=1}^N \mathbb{E}\Big[d_{T}^1(X^{N,i},0)^2\mathbf{1}_{\{d_{T}^1(X^{N,i},0)\geq \lambda_m\}}\Big]
\nonumber \\
\leq&&\!\!\!\!\!\!\!\!\sum_{m=1}^\infty\frac{\kappa_m}{\lambda_m^{r_1-2}N}\sum_{i=1}^N\mathbb{E}\Big[\sup_{t\in[0,T]}\|X^{N,i}_t\|_{{\mathbb{H}}}^{r_1}\Big]
\nonumber \\
\leq&&\!\!\!\!\!\!\!\!\varepsilon.
\end{eqnarray}
Thus combining (\ref{es1118})-(\ref{es1119}), it is easy to see
\begin{equation*}
\mathbb{P}(\mathcal{S}^{N}\notin \mathscr{R}_\varepsilon^1\cap\Upsilon_\varepsilon^1)\leq 2\varepsilon.
\end{equation*}
According to Lemma \ref{lem00} in Appendix, the set $\mathscr{R}_\varepsilon^1\cap\Upsilon_\varepsilon^1$ is relatively compact in $\mathscr{P}_{2}(\mathbb{C}_T)$, therefore we conclude that the sequence $\{\mathcal{S}^{N},N\in\mathbb{N}\}$ is  tight in $\mathscr{P}_{2}(\mathbb{C}_T)$.

\vspace{2mm}
\textbf{Step 2.} Similar to Step 1, we prove claim (ii). Since $\{\mathcal{S}^{N},N\in\mathbb{N}\}$ is  tight in $\mathscr{P}(L^{\alpha}([0,T];\mathbb{V}))$  and $\{\mathcal{S}^{N},N\in\mathbb{N}\}\subset \mathscr{P}_{\alpha}(L^{\alpha}([0,T];\mathbb{V}))$, $\mathbb{P}$-a.s., there exists a set $\mathscr{R}_\varepsilon^2\subset\mathscr{P}_{\alpha}(L^{\alpha}([0,T];\mathbb{V}))$ that is relatively compact in $\mathscr{P}(L^{\alpha}([0,T];\mathbb{V}))$ such that
\begin{equation}\label{es31}
\mathbb{P}(\mathcal{S}^{N}\notin \mathscr{R}_\varepsilon^2)\leq \varepsilon.
\end{equation}
Now for some $r_2>1$, let
$$\lambda_m:=m^{\frac{1}{r_2-1}}2^{\frac{m}{r_2-1}},~~~~~\kappa_m:=\frac{\varepsilon m}{\sup_{N\in\mathbb{N}}\mathbb{E}\Big\{\int_0^T\|X^{N,1}_t\|_{{\mathbb{V}}}^{\alpha}dt\Big\}^{r_2}\vee 1},$$
 and set
$$\Upsilon_\varepsilon^2:=\bigcap_{m\in\mathbb{N}}\Bigg\{\nu\in\mathscr{P}_{\alpha}(L^{\alpha}([0,T];\mathbb{V})):\int  d_{T}^2(w,0)^\alpha\mathbf{1}_{\{d_{T}^2(w,0)^{\alpha}\geq \lambda_m\}}\nu(dw)<\frac{1}{\kappa_m}\Bigg\}.$$
It follows from (\ref{es4}) that
\begin{eqnarray}\label{es32}
\mathbb{P}(\mathcal{S}^{N}\notin\Upsilon_\varepsilon^2)\leq &&\!\!\!\!\!\!\!\! \sum_{m=1}^\infty\mathbb{P}\Big(\frac{1}{N}\sum_{i=1}^Nd_{T}^2(X^{N,i},0)^\alpha\mathbf{1}_{\{d_{T}^2(X^{N,i},0)^{\alpha}\geq \lambda_m\}}\geq\frac{1}{\kappa_m}\Big)
\nonumber \\
\leq&&\!\!\!\!\!\!\!\!\sum_{m=1}^\infty \frac{\kappa_m}{N} \sum_{i=1}^N \mathbb{E}\Big[d_{T}^2(X^{N,i},0)^\alpha\mathbf{1}_{\{d_{T}^2(X^{N,i},0)^{\alpha}\geq \lambda_m\}}\Big]
\nonumber \\
\leq&&\!\!\!\!\!\!\!\!\sum_{m=1}^\infty\frac{\kappa_m}{\lambda_m^{r_2-1}N}\sum_{i=1}^N\mathbb{E}\Bigg\{\int_0^T\|X^{N,i}_t\|_{{\mathbb{V}}}^{\alpha}dt\Bigg\}^{r_2}
\nonumber \\
\leq&&\!\!\!\!\!\!\!\!\varepsilon.
\end{eqnarray}
Combining (\ref{es31})-(\ref{es32}), it leads to
\begin{equation*}
\mathbb{P}(\mathcal{S}^{N}\notin \mathscr{R}_\varepsilon^2\cap\Upsilon_\varepsilon^2)\leq 2\varepsilon.
\end{equation*}
By Lemma \ref{lem00}, the set $\mathscr{R}_\varepsilon^2\cap\Upsilon_\varepsilon^2$ is relatively compact in $\mathscr{P}_{\alpha}(L^{\alpha}([0,T];\mathbb{V}))$. Thus the sequence $\{\mathcal{S}^{N},N\in\mathbb{N}\}$ is  tight in $\mathscr{P}_{\alpha}(L^{\alpha}([0,T];\mathbb{V}))$.
\end{proof}

\vspace{2mm}
In view of Lemma \ref{pro13},  we can get that for any sequence $\{N\}_{N\in\mathbb{N}}$, there exists a subsequence (in the sequel, for convenience, we still denoted by $\{N\}_{N\in\mathbb{N}}$) such that $\mathcal{S}^{N}$ converges weakly to $\mathcal{S}$  in $\mathbb{K}$, i.e.,
$$\text{the laws of}~\mathcal{S}^{N}~\text{converges weakly to the law of}~\mathcal{S}~\text{in}~\mathscr{P}(\mathbb{K}).$$
In addition, there exists $r_1'>2$ such that
\begin{eqnarray*}
\mathbb{E}\Bigg\{\int d_T^1(w,0)^{r_1'}\mathcal{S}(dw)\Bigg\}\leq &&\!\!\!\!\!\!\!\! \mathbb{E}\Bigg\{\liminf_{M\to\infty}\int (d_T^1(w,0)^{r_1'}\wedge M)\mathcal{S}(dw)\Bigg\}
\nonumber \\
\leq &&\!\!\!\!\!\!\!\! \liminf_{M\to\infty}\liminf_{N\to\infty}\mathbb{E}\Bigg\{\int (d_T^1(w,0)^{r_1'}\wedge M)\mathcal{S}^{N}(dw)\Bigg\}
\nonumber \\
\leq&&\!\!\!\!\!\!\!\! C\sup_{N\in\mathbb{N}}\mathbb{E}\Big[\sup_{t\in[0,T]}\frac{1}{N}\sum_{i=1}^N\|X^{N,i}_t\|_{\mathbb{H}}^{r_1'}\Big]
\nonumber \\
<&&\!\!\!\!\!\!\!\!\infty,
\end{eqnarray*}
where we used Fatou's lemma in the first and second inequalities,  the last step is due to (\ref{es4}). Similarly, there exists $r_2'>\alpha$ such that
\begin{equation*}
\mathbb{E}\Bigg\{\int d_T^2(w,0)^{r_2'}\mathcal{S}(dw)\Bigg\}<\infty.
\end{equation*}
Hence, it follows that
\begin{equation}\label{es1132}
\mathcal{S}\in \mathscr{P}_{r_1'}(\mathbb{C}_T)\cap \mathscr{P}_{r_2'}(L^{\alpha}([0,T];\mathbb{V}))\subset \mathbb{K},~\mathbb{P}\text{-a.s.}.
\end{equation}

Now, for any $(w,\nu,l)\in \mathbb{S}\times\mathbb{K}\times \mathbb{V}$, we recall the following notations defined in subsection \ref{sec2.4},
\begin{equation}\label{eq11m}
\mathcal{M}_l(t,w,\nu),\varpi_R(t,w,\nu),\varpi(t,w,\nu).
\end{equation}
Fix $m\in\mathbb{N}$. Let $\Phi_1,\ldots,\Phi_m:\mathbb{H}\to\mathbb{R}$ be a real-valued function sequence in $C_b(\mathbb{H};\mathbb{R})$. Let $\nu\in\mathbb{K}$, $0\leq s<t\leq T$, $0\leq s_1<\cdots<s_m\leq s$, we define  maps
\begin{eqnarray*}
&&\!\!\!\!\!\!\!\!\Psi_R(\nu):=\int \Big(\varpi_R(t,w,\nu)-\varpi_R(s,w,\nu)\Big)\Phi_1(w_{s_1})\cdots\Phi_m(w_{s_m})\nu(dw),
\nonumber\\
&&\!\!\!\!\!\!\!\!
\Psi(\nu):=\int \Big(\varpi(t,w,\nu)-\varpi(s,w,\nu)\Big)\Phi_1(w_{s_1})\cdots\Phi_m(w_{s_m})\nu(dw).
\end{eqnarray*}

By Lemma \ref{lem8} and the fact that  $\Phi_1,\ldots,\Phi_m$ are bounded and continuous, one can easily prove \begin{lemma}\label{lem13}
For any $t\in[0,T]$,  $\nu^n,\nu\in\mathbb{K}$ with $\nu^n\to\nu$ in $\mathbb{K}$, as $n\to \infty$,
we have
$$\lim_{n\to \infty} \Psi_R(\nu^n)=\Psi_R(\nu).$$
\end{lemma}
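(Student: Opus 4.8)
The plan is to unpack the definition of $\Psi_R$ into its two pieces, $\varpi_R^{(1)}$ and $\varpi_R^{(2)}$, and reduce everything to the pointwise convergence already supplied by Lemma \ref{lem8} together with a uniform integrability argument to justify passing the limit under the integral $\int \cdots \nu(dw)$. The key point to keep in mind is that here, unlike in Lemma \ref{lem8}, the measure argument $\nu^n$ is itself the integrating measure, so we are dealing with an integral of a $\nu^n$-dependent integrand against $\nu^n$; convergence $\nu^n\to\nu$ in $\mathbb{K}=\mathscr{P}_{2}(\mathbb{C}_T)\cap \mathscr{P}_{\alpha}(L^{\alpha}([0,T];\mathbb{V}))$ must be exploited on both fronts.

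First I would fix the data $t$, $s_1<\cdots<s_m\le s<t$, the bounded continuous functions $\Phi_1,\ldots,\Phi_m$, and $l\in\mathbb{V}$, and recall that $\varpi_R$ is built from the truncated quantities $\varpi_R^{(1)}(t,w)={}_{\mathbb{V}^*}\langle w_t,l\rangle_{\mathbb{V}}\cdot\chi_R(\cdots)-{}_{\mathbb{V}^*}\langle w_0,l\rangle_{\mathbb{V}}\cdot\chi_R(\cdots)$ and $\varpi_R^{(2)}(t,w,\nu)=-\int_0^t{}_{\mathbb{V}^*}\langle \mathcal{A}(r,w_r,\nu_r),l\rangle_{\mathbb{V}}\cdot\chi_R(\cdots)\,dr$. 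Because $\chi_R$ is a cut-off, the integrand $G_\nu(w):=\big(\varpi_R(t,w,\nu)-\varpi_R(s,w,\nu)\big)\Phi_1(w_{s_1})\cdots\Phi_m(w_{s_m})$ is \emph{uniformly bounded} in $(w,\nu)$: indeed $|\varpi_R^{(1)}|\le 4R$, $|\varpi_R^{(2)}(t,w,\nu)-\varpi_R^{(2)}(s,w,\nu)|\le 2R\,T$, and $\prod_i\Phi_i$ is bounded by $\prod_i\|\Phi_i\|_\infty$. This boundedness is exactly what was used to prove Lemma \ref{lem8} and it is what makes the limiting argument clean.

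Next, I would invoke the standard fact (e.g. Skorokhod coupling) that weak convergence $\nu^n\to\nu$ in $\mathbb{K}$ (hence in $\mathscr{P}(\mathbb{S})$) can be realized on a common probability space by random variables $\zeta_n\to\zeta$ a.s. in $\mathbb{S}$ with $\mathrm{Law}(\zeta_n)=\nu^n$, $\mathrm{Law}(\zeta)=\nu$. Then $\Psi_R(\nu^n)=\mathbb{E}[G_{\nu^n}(\zeta_n)]$ and $\Psi_R(\nu)=\mathbb{E}[G_\nu(\zeta)]$. To conclude $\mathbb{E}[G_{\nu^n}(\zeta_n)]\to\mathbb{E}[G_\nu(\zeta)]$ by dominated convergence (the uniform bound above being the dominating constant), it suffices to show $G_{\nu^n}(\zeta_n)\to G_\nu(\zeta)$ a.s. The factor $\Phi_1(\zeta_{n,s_1})\cdots\Phi_m(\zeta_{n,s_m})\to\Phi_1(\zeta_{s_1})\cdots\Phi_m(\zeta_{s_m})$ since $\zeta_n\to\zeta$ in $\mathbb{C}_T$ (so the time-marginals converge in $\mathbb{H}$) and the $\Phi_i$ are continuous. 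For the remaining factor $\varpi_R(t,\zeta_n,\nu^n)-\varpi_R(s,\zeta_n,\nu^n)\to\varpi_R(t,\zeta,\nu)-\varpi_R(s,\zeta,\nu)$, I would apply Lemma \ref{lem8} directly with $w^{(n)}=\zeta_n$, $w=\zeta$, $\nu^{(n)}=\nu^n$, $\nu=\nu$: the hypothesis $(\zeta_n,\nu^n)\to(\zeta,\nu)$ in $\mathbb{S}\times\mathbb{K}$ holds a.s. (the first coordinate a.s. by Skorokhod, the second deterministically by assumption), and Lemma \ref{lem8} gives precisely $\varpi_R(\cdot,\zeta_n,\nu^n)\to\varpi_R(\cdot,\zeta,\nu)$ at each time. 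Combining, $G_{\nu^n}(\zeta_n)\to G_\nu(\zeta)$ a.s., and the bounded convergence theorem yields $\Psi_R(\nu^n)\to\Psi_R(\nu)$.

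The only mild subtlety — and the step I would flag as the one needing a little care rather than a genuine obstacle — is the interchange of the $\nu^n$-dependence inside the integrand with the $\nu^n$-integration; this is handled entirely by the Skorokhod representation, which converts the double dependence into an a.s. statement to which Lemma \ref{lem8} and boundedness apply verbatim. No moment estimates are needed here because the cut-off $\chi_R$ already makes $\varpi_R$ bounded; the moment bounds of the preceding lemmas only become relevant later, when one removes the truncation $R\to\infty$ and compares $\Psi_R$ with $\Psi$.
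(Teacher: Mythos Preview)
Your proposal is correct and follows essentially the same approach as the paper: the paper's proof is just the sentence ``By Lemma \ref{lem8} and the fact that $\Phi_1,\ldots,\Phi_m$ are bounded and continuous, one can easily prove,'' and your Skorokhod-plus-bounded-convergence argument is precisely the natural way to make that sentence rigorous. Your explicit identification of the uniform bound $|G_\nu(w)|\le (4R+2RT)\prod_i\|\Phi_i\|_\infty$ coming from the cut-off $\chi_R$, and your remark that the double $\nu^n$-dependence (in the integrand and in the integrating measure) is exactly what the Skorokhod coupling resolves, fill in the details the paper leaves implicit.
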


From (\ref{es1132}), we have $\mathbb{P}$-a.s.
\begin{eqnarray}
&&\!\!\!\!\!\!\!\!~~~~~~\Psi_R(\mathcal{S}^N)=\frac{1}{N}\sum_{i=1}^{N}\Big(\varpi_R(t,X^{N,i},\mathcal{S}^N)-\varpi_R(s,X^{N,i},\mathcal{S}^N)\Big)\Phi_1(X^{N,i}_{s_1})\cdots\Phi_m(X^{N,i}_{s_m}),
\nonumber\\
&&\!\!\!\!\!\!\!\!
~~~~~~~~~~\Psi(\mathcal{S}^N)=\frac{1}{N}\sum_{i=1}^{N}\Big(\varpi(t,X^{N,i},\mathcal{S}^N)-\varpi(s,X^{N,i},\mathcal{S}^N)\Big)\Phi_1(X^{N,i}_{s_1})\cdots\Phi_m(X^{N,i}_{s_m}),\label{es1129}
\\
&&\!\!\!\!\!\!\!\!
~~~~~~\Psi_R(\mathcal{S})=\int \Big(\varpi_R(t,w,\mathcal{S})-\varpi_R(s,w,\mathcal{S})\Big)\Phi_1(w_{s_1})\cdots\Phi_m(w_{s_m})\mathcal{S}(dw),
\nonumber\\
&&\!\!\!\!\!\!\!\!
~~~~~~~~~~\Psi(\mathcal{S})=\int \Big(\varpi(t,w,\mathcal{S})-\varpi(s,w,\mathcal{S})\Big)\Phi_1(w_{s_1})\cdots\Phi_m(w_{s_m})\mathcal{S}(dw).\label{es1130}
\end{eqnarray}
In the sequel, we proceed to prove that $\mathbb{P}$-a.s.~$\Psi(\mathcal{S})=0$.

\begin{lemma}\label{lem4}
Under the assumptions in Theorem \ref{th4}, we have
\begin{equation}\label{es21}
\mathbb{E}|\Psi(\mathcal{S}^N)|\to\mathbb{E}|\Psi(\mathcal{S})|~~\text{as}~N\to\infty,
\end{equation}
and
\begin{equation}\label{es26}
\mathbb{E}|\Psi(\mathcal{S}^N)|^2\to0~~\text{as}~N\to\infty.
\end{equation}
\end{lemma}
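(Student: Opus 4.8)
\textbf{Proof plan for Lemma \ref{lem4}.}

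The plan is to prove the two convergences by combining the tightness/convergence facts already established with the martingale and growth estimates. For the first statement \eref{es21}, I would first recall that $\mathcal{S}^N\to\mathcal{S}$ weakly in $\mathscr{P}(\mathbb{K})$ along the chosen subsequence, and that, by \eref{es1132} and the uniform estimates \eref{es4}--\eref{es6}, both $\mathcal{S}^N$ and $\mathcal{S}$ lie (a.s.) in $\mathscr{P}_{r_1'}(\mathbb{C}_T)\cap\mathscr{P}_{r_2'}(L^\alpha([0,T];\mathbb{V}))$ with $r_1'>2$, $r_2'>\alpha$. The map $\nu\mapsto\Psi(\nu)$ is not continuous on $\mathbb{K}$, so the strategy is to pass through the truncated functional $\Psi_R$: by Lemma \ref{lem13}, $\nu\mapsto\Psi_R(\nu)$ is continuous on $\mathbb{K}$, and it is bounded (by a constant depending on $R$, $t$, the $\Phi_i$) uniformly in $\nu$, hence by the weak convergence of the laws of $\mathcal{S}^N$ we get $\mathbb{E}|\Psi_R(\mathcal{S}^N)|\to\mathbb{E}|\Psi_R(\mathcal{S})|$ for each fixed $R$. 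It then remains to control the truncation error uniformly in $N$, that is
$$\lim_{R\to\infty}\sup_{N\in\mathbb{N}}\mathbb{E}|\Psi(\mathcal{S}^N)-\Psi_R(\mathcal{S}^N)|=0,\qquad \lim_{R\to\infty}\mathbb{E}|\Psi(\mathcal{S})-\Psi_R(\mathcal{S})|=0.$$
For these, I would use exactly the argument of \eref{es25}--\eref{es41}: expanding $\varpi-\varpi_R$ into its $\varpi^{(1)}$ and $\varpi^{(2)}$ parts, bounding the cut-off tail of $\varpi^{(2)}$ by $\|l\|_{\mathbb{V}}\,\mathbb{E}^{\mathcal S^N}\!\int_0^T\|\mathcal A(s,w_s,\mathcal S^N_s)\|_{\mathbb{V}^*}^{\alpha/(\alpha-1)}ds/R^{1/(\alpha-1)}$ via Chebyshev and Hölder, then invoking the growth condition $\mathbf{(A_3)}$ together with the uniform moment bounds \eref{es4}--\eref{es6} (and similarly for $\varpi^{(1)}$ using \eref{es4}). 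Combining the three limits with the triangle inequality yields \eref{es21}.

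For the second statement \eref{es26}, the point is that $\Psi(\mathcal{S}^N)$ is a sum of (conditionally) centered terms divided by $N$, so its $L^2$-norm is $O(1/N)$. Concretely, by \eref{es1129}, since $\Xi^N=((X^{N,1},W^1),\dots,(X^{N,N},W^N))$ is a weak solution of the coupled system \eref{eqi}, the process $\mathcal M_l(\cdot,X^{N,i},\mathcal S^N)$ is, for each $i$, a martingale with respect to the common filtration, and hence
$$\mathbb{E}\Big[\big(\mathcal M_l(t,X^{N,i},\mathcal S^N)-\mathcal M_l(s,X^{N,i},\mathcal S^N)\big)\Phi_1(X^{N,i}_{s_1})\cdots\Phi_m(X^{N,i}_{s_m})\Big]=0$$
for every $i$ (here one uses that $\mathcal S^N$ itself is $\mathscr F_s$-measurable at the argument times and that the increment is conditionally centered; note $\varpi(t,X^{N,i},\mathcal S^N)-\varpi(s,X^{N,i},\mathcal S^N)=\mathcal M_l(t,X^{N,i},\mathcal S^N)-\mathcal M_l(s,X^{N,i},\mathcal S^N)-[\text{terms already in }\mathcal M]$; more precisely $\varpi$ equals $\mathcal M_l$ up to the fixed endpoint values, so its increment over $[s,t]$ is exactly the $\mathcal M_l$-increment). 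Therefore $\mathbb{E}\,\Psi(\mathcal S^N)=0$, and expanding the square,
$$\mathbb{E}|\Psi(\mathcal S^N)|^2=\frac1{N^2}\sum_{i,j=1}^N\mathbb{E}\big[\xi_i^N\xi_j^N\big],$$
where $\xi_i^N:=\big(\varpi(t,X^{N,i},\mathcal S^N)-\varpi(s,X^{N,i},\mathcal S^N)\big)\Phi_1(X^{N,i}_{s_1})\cdots\Phi_m(X^{N,i}_{s_m})$. For $i\neq j$ the cross terms need not vanish because of the shared empirical measure $\mathcal S^N$, but one shows $|\mathbb{E}[\xi_i^N\xi_j^N]|\le C$ uniformly in $i,j,N$: the $\Phi$'s are bounded, and $\mathbb{E}|\varpi(t,X^{N,i},\mathcal S^N)|^q\le C_q$ for some $q>2$ by $\mathbf{(A_3)}$ and the uniform estimates \eref{es4}--\eref{es6} and the Burkholder--Davis--Gundy inequality applied to the $\mathcal M_l$ representation, exactly as in the bound preceding Lemma \ref{lem10}. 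Hence $\mathbb{E}|\Psi(\mathcal S^N)|^2\le \frac1{N^2}\sum_{i,j}C = C$ — which is not yet enough. To get the $1/N$ decay, I would instead bound the off-diagonal terms more carefully using the conditional centering: writing $\mathbb{E}[\xi_i^N\xi_j^N]=\mathbb{E}\big[\mathbb{E}[\xi_i^N\mid \mathscr G_j^N]\,\xi_j^N\big]$ is not directly available since $\mathcal S^N$ couples the particles, so the cleanest route is the standard propagation-of-chaos trick: replace $\mathcal S^N$ by the limit $\mathcal S$ in the argument of $\varpi$ at a cost controlled by $\mathcal W$-distances (using the local-monotonicity/continuity structure and \eref{es111}-type estimates), after which the particles decouple conditionally on $\mathcal S$ and the off-diagonal terms genuinely vanish, leaving $\mathbb{E}|\Psi(\mathcal S^N)|^2\le C/N + o(1)$.

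\textbf{Main obstacle.} The delicate point is the second convergence \eref{es26}: the naive second-moment expansion only gives boundedness, not decay, because the empirical measure $\mathcal S^N$ appears inside every term and destroys the conditional independence of the particles. Overcoming this requires either (a) a careful exchangeability argument showing the off-diagonal covariances are $O(1/N)$ — exploiting that swapping two particles leaves the law invariant and that $\varpi$ depends on $\mathcal S^N$ only through a $1/N$-order perturbation when one particle is modified — or (b) first replacing $\mathcal S^N$ by its weak limit $\mathcal S$ (controlling the error via the demicontinuity $\mathbf{(A_1)}$, the growth bound $\mathbf{(A_3)}$, uniform integrability from \eref{es4}--\eref{es6}, and the Wasserstein convergence of $\mathcal S^N$ to $\mathcal S$), and then using that, conditionally on $\mathcal S$, the $\xi_i^N$ are centered and weakly correlated. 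I expect approach (a), via the standard Sznitman-type symmetry computation (cf.\ \cite{S1}), to be the most economical: it reduces \eref{es26} to estimating $\mathbb{E}[\xi_1^N\xi_2^N]-(\mathbb{E}\xi_1^N)^2$ plus a diagonal term of order $1/N$, and the former is handled by the uniform $L^q$-bounds on $\varpi$ already in hand.
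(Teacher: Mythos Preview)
Your plan for \eqref{es21} is correct and matches the paper exactly: pass through the truncated functional $\Psi_R$, use Lemma~\ref{lem13} plus dominated convergence for the middle term, and kill the truncation error uniformly in $N$ via $\mathbf{(A_3)}$ and the moment bounds \eqref{es4}--\eqref{es6}.

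For \eqref{es26}, however, you are missing the key observation and your proposed workarounds are both unnecessary and unlikely to close the gap cleanly. You write that ``for $i\neq j$ the cross terms need not vanish because of the shared empirical measure $\mathcal S^N$'' --- but they \emph{do} vanish, exactly. The point is that
\[
\mathcal M_t^{N,i}=\Big\langle\int_0^t\mathcal B(s,X^{N,i}_s,\mathcal S^N_s)\,dW^i_s,\;l\Big\rangle_{\mathbb H},
\]
so for $i\neq k$ the two stochastic integrals are driven by \emph{independent} cylindrical Wiener processes $W^i,W^k$. Regardless of the (adapted) integrands --- in particular regardless of the presence of $\mathcal S^N$ inside $\mathcal B$ --- the cross-variation satisfies $\langle\mathcal M^{N,i},\mathcal M^{N,k}\rangle_t=0$, hence $\mathcal M^{N,i}\mathcal M^{N,k}$ is itself a martingale. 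Combined with the fact that the $\Phi$-factors are $\mathscr F_s^N$-measurable, conditioning on $\mathscr F_s^N$ gives
\[
\mathbb E\Big[(\mathcal M_t^{N,i}-\mathcal M_s^{N,i})(\mathcal M_t^{N,k}-\mathcal M_s^{N,k})\,\big|\,\mathscr F_s^N\Big]=0,\qquad i\neq k,
\]
so every off-diagonal term in the expansion of $\mathbb E|\Psi(\mathcal S^N)|^2$ is identically zero. Only the $N$ diagonal terms survive, each bounded by $C_{T,\|l\|_{\mathbb H}}$ using \eqref{conb} and \eqref{es4}, giving $\mathbb E|\Psi(\mathcal S^N)|^2\le C/N$.

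Your proposed routes (a) and (b) --- exchangeability covariance estimates or replacing $\mathcal S^N$ by $\mathcal S$ --- are detours around a non-obstacle. Route (b) in particular would be circular here, since the convergence $\mathcal S^N\to\mathcal S$ in a useful sense is precisely what Lemma~\ref{lem4} is being used to establish.
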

\begin{proof}
\textbf{Step 1.}  We first prove (\ref{es21}). Note that
\begin{eqnarray}\label{es1124}
\mathbb{E}|\Psi(\mathcal{S}^N)|-\mathbb{E}|\Psi(\mathcal{S})|=&&\!\!\!\!\!\!\!\!\mathbb{E}\Big[|\Psi(\mathcal{S}^N)|-|\Psi_R(\mathcal{S}^N)|\Big]+\mathbb{E}\Big[|\Psi_R(\mathcal{S}^N)|-|\Psi_R(\mathcal{S})|\Big]
\nonumber \\
&&\!\!\!\!\!\!\!\!+\mathbb{E}\Big[|\Psi_R(\mathcal{S})|-|\Psi(\mathcal{S})|\Big].
\end{eqnarray}
By Lemma \ref{lem13} and the dominated convergence theorem,
$$\lim_{N\to\infty}\mathbb{E}|\Psi_R(\mathcal{S}^N)-\Psi_R(\mathcal{S})|=0,$$
which implies
\begin{equation}\label{es69}
\lim_{N\to\infty}\mathbb{E}|\Psi_R(\mathcal{S}^N)|=\mathbb{E}|\Psi_R(\mathcal{S})|.
\end{equation}
For the first term on right hand side of (\ref{es1124}), since $\Phi_1,\ldots,\Phi_m$ are bounded, it follows that
\begin{eqnarray}\label{es1125}
&&\!\!\!\!\!\!\!\!\mathbb{E}|\Psi_R(\mathcal{S}^{N})-\Psi(\mathcal{S}^N)|
\nonumber \\
\leq&&\!\!\!\!\!\!\!\!C\sup_{t\in[0,T]}\Bigg\{\frac{1}{N}\sum_{i=1}^N\mathbb{E}\big|\varpi_R^{(1)}(t,X^{N,i})-\varpi^{(1)}(t,X^{N,i})\big|\Bigg\}
\nonumber \\
&&\!\!\!\!\!\!\!\!+C\sup_{t\in[0,T]}\Bigg\{\frac{1}{N}\sum_{i=1}^N\mathbb{E}\big|\varpi_R^{(2)}(t,X^{N,i},\mathcal{S}^{N})-\varpi^{(2)}(t,X^{N,i},\mathcal{S}^{N})\big|\Bigg\}.~~~~
\end{eqnarray}
By  $\mathbf{(A_3)}$ and (\ref{es4})-(\ref{es6}),
\begin{eqnarray*}
&&\!\!\!\!\!\!\!\!\frac{1}{N}\sum_{i=1}^N\mathbb{E}\big|\varpi_R^{(2)}(t,X^{N,i},\mathcal{S}^{N})-\varpi^{(2)}(t,X^{N,i},\mathcal{S}^{N})\big|
\nonumber \\
\leq&&\!\!\!\!\!\!\!\!\frac{1}{N}\sum_{i=1}^N\mathbb{E}\Bigg\{\int_0^T|{}_{{\mathbb{V}}^*}\langle \mathcal{A}(s,X^{N,i}_s,\mathcal{S}^{N}_s),l\rangle_{\mathbb{V}}|\cdot \mathbf{1}_{\big\{|{}_{{\mathbb{V}}^*}\langle \mathcal{A}(s,X^{N,i}_s,\mathcal{S}^{N}_s),l\rangle_{\mathbb{V}}|\geq R\big\}}ds\Bigg\}
\nonumber \\
\leq&&\!\!\!\!\!\!\!\!\|l\|_{\mathbb{V}}\frac{1}{N}\sum_{i=1}^N\Bigg\{\Big(\int_0^T\mathbb{E}\|\mathcal{A}(s,X^{N,i}_s,\mathcal{S}^{N}_s)\|_{{\mathbb{V}}^*}^{\frac{\alpha}{\alpha-1}}ds\Big)^{\frac{\alpha-1}{\alpha}}
\nonumber \\
&&\!\!\!\!\!\!\!\!
\cdot \Big(\int_0^T\mathbb{P}\big(|{}_{{\mathbb{V}}^*}\langle \mathcal{A}(s,X^{N,i}_s,\mathcal{S}^{N}_s),l\rangle_{\mathbb{V}}|\geq R\big)ds\Big)^{\frac{1}{\alpha}}\Bigg\}
\nonumber \\
\leq&&\!\!\!\!\!\!\!\!\Bigg\{\|l\|_{\mathbb{V}}\frac{1}{N}\sum_{i=1}^N\mathbb{E}\int_0^T\|\mathcal{A}(s,X^{N,i}_s,\mathcal{S}^{N}_s)\|_{{\mathbb{V}}^*}^{\frac{\alpha}{\alpha-1}}ds\Bigg\}\Big/R^{\frac{1}{\alpha-1}}
\nonumber \\
\leq&&\!\!\!\!\!\!\!\!\Bigg\{C_{\|l\|_{\mathbb{V}}}\mathbb{E}\int_0^T\Big(1+\frac{1}{N}\sum_{i=1}^N\big(\|X^{N,i}_s\|_{\mathbb{V}}^{\alpha}+\|X^{N,i}_s\|_{\mathbb{H}}^{\beta}\|X^{N,i}_s\|_{\mathbb{V}}^{\alpha}+\|X^{N,i}_s\|_{\mathbb{H}}^{\beta}\big)\Big)ds\Bigg\}\Big/R^{\frac{1}{\alpha-1}}
\nonumber \\
\leq&&\!\!\!\!\!\!\!\!C_{p,T,\|l\|_{\mathbb{V}}}\Big/R^{\frac{1}{\alpha-1}},
\end{eqnarray*}
where we have used the fact that the law of $X^N$ is symmetric in the fourth step.

  Thus we can get
\begin{equation}\label{es1127}
\lim_{R\to\infty}\sup_{N\in\mathbb{N}}\sup_{t\in[0,T]}\Bigg\{\frac{1}{N}\sum_{i=1}^N\mathbb{E}\big|\varpi_R^{(2)}(t,X^{N,i},\mathcal{S}^{N})-\varpi^{(2)}(t,X^{N,i},\mathcal{S}^{N})\big|\Bigg\}=0.
\end{equation}
Similarly, we also infer that
\begin{equation}\label{es1128}
\lim_{R\to\infty}\sup_{N\in\mathbb{N}}\sup_{t\in[0,T]}\Bigg\{\frac{1}{N}\sum_{i=1}^N\mathbb{E}\big|\varpi_R^{(1)}(t,X^{N,i})-\varpi^{(1)}(t,X^{N,i})\big|\Bigg\}=0.
\end{equation}
Combining (\ref{es1125})-(\ref{es1128}), we have
\begin{equation}\label{es71}
\lim_{R\to\infty}\sup_{N\in\mathbb{N}}\mathbb{E}|\Psi_R(\mathcal{S}^{N})-\Psi(\mathcal{S}^N)|=0.
\end{equation}
A similar argument shows
\begin{equation}\label{es72}
\lim_{R\to\infty}\mathbb{E}|\Psi_R(\mathcal{S})-\Psi(\mathcal{S})|=0.
\end{equation}
Now, combining (\ref{es69}) and (\ref{es71})-(\ref{es72}), we complete the proof of (\ref{es21}).

\vspace{2mm}
\textbf{Step 2.}  Note that
\begin{eqnarray}\label{es77}
\Psi(\mathcal{S}^N)=\frac{1}{N}\sum_{i=1}^{N}\big(\mathcal{M}_t^{N,i}-\mathcal{M}_s^{N,i}\big)\Phi_1(X^{N,i}_{s_1})\cdots \Phi_m(X^{N,i}_{s_m}),
\end{eqnarray}
where for each $l\in\mathbb{V}$,
\begin{eqnarray*}
\mathcal{M}_t^{N,i}:=&&\!\!\!\!\!\!\!\!{}_{\mathbb{V}^*}\langle X^{N,i}_t,l\rangle_{\mathbb{V}}-{}_{\mathbb{V}^*}\langle \xi^i,l\rangle_{\mathbb{V}}-\int_0^t{}_{\mathbb{V}^*}\langle\mathcal{A}(s,X^{N,i}_s,\mathcal{S}^N_s),l\rangle_{\mathbb{V}} ds
\nonumber \\
=&&\!\!\!\!\!\!\!\!\langle\int_0^t\mathcal{B}(s,X^{N,i}_s,\mathcal{S}^N_s) dW^i_s,l\rangle_{\mathbb{H}},
\end{eqnarray*}
which turns out to be  a square integrable martingale. Therefore, we can get
\begin{eqnarray}\label{es75}
\mathbb{E}\Big[\mathcal{M}_t^{N,i}\mathcal{M}_s^{N,k}|\mathscr{F}_s^N\Big]=\mathbb{E}\Big[\mathcal{M}_s^{N,i}\mathcal{M}_s^{N,k}|\mathscr{F}_s^N\Big],
\end{eqnarray}
where $\mathscr{F}_s^N:=\sigma\Big\{X_r^{N,i}:r\leq s,1\leq i\leq N\Big\}$.

Since for $i\neq k$, $W^i$ and $W^k$ are independent, it follows that
$$\langle \mathcal{M}^{N,i},\mathcal{M}^{N,k}\rangle_t=0,$$
which implies that $\mathcal{M}^{N,i}\mathcal{M}^{N,k}$ is a martingale for $i\neq k$, i.e.,
\begin{eqnarray}\label{es76}
\mathbb{E}\Big[\mathcal{M}_t^{N,i}\mathcal{M}_t^{N,k}|\mathscr{F}_s^N\Big]=\mathbb{E}\Big[\mathcal{M}_s^{N,i}\mathcal{M}_s^{N,k}|\mathscr{F}_s^N\Big].
\end{eqnarray}
Combining (\ref{es75})-(\ref{es76}), for $i\neq k$, we have
$$\mathbb{E}\Big[\big(\mathcal{M}_t^{N,i}-\mathcal{M}_s^{N,i}\big)\big(\mathcal{M}_t^{N,k}-\mathcal{M}_s^{N,k}\big)\Big]=0.$$
Recall (\ref{es77}), noting that $\Phi_1,\ldots,\Phi_m$ are bounded, it follows that
\begin{eqnarray}\label{es74}
\mathbb{E}|\Psi(\mathcal{S}^N)|^2\leq&&\!\!\!\!\!\!\!\! \frac{1}{N^2}\sum_{i,k=1}^{N}\mathbb{E}\Big[\big(\mathcal{M}_t^{N,i}-\mathcal{M}_s^{N,i}\big)\big(\mathcal{M}_t^{N,k}-\mathcal{M}_s^{N,k}\big)\Big].
\nonumber \\
=&&\!\!\!\!\!\!\!\!\frac{1}{N^2}\sum_{i=1}^{N}\mathbb{E}|\mathcal{M}_t^{N,i}-\mathcal{M}_s^{N,i}|^2
\nonumber \\
\leq&&\!\!\!\!\!\!\!\!\frac{C_{T,\|l\|_{\mathbb{H}}}}{N^2}\sum_{i=1}^{N}\Big(1+\mathbb{E}\big[\sup_{t\in[0,T]}\|X^{N,1}_t\|_{\mathbb{H}}^2\big]\Big)
\nonumber \\
\leq&&\!\!\!\!\!\!\!\!\frac{C_{T,\|l\|_{\mathbb{H}}}}{N}\to 0 ~\text{as}~N\to\infty,
\end{eqnarray}
which implies (\ref{es26}) holds.
\end{proof}

\vspace{2mm}
 \textbf{Proof of Theorem \ref{th4}.} Due to Lemma \ref{lem4}, it clear that $\mathbb{P}$-a.s.~$\Psi(\mathcal{S})=0$.  Note that since $\mathcal{S}^N_0$ converges to $\mu_0$  in probability by condition $\mathbf{(A_6)}$, we have
\begin{equation}\label{es63}
\mathcal{S}_0=\mu_0,~~\mathbb{P}\text{-a.s.}.
\end{equation}

Since $C_b(\mathbb{H};\mathbb{R})$ is separable, one can find a countable subsect $\mathscr{D}_0\subset C_b(\mathbb{H};\mathbb{R})$.
Let
$$\tilde{\Omega}:=\Big\{\tilde{\omega}\in\Omega:\mathcal{S}(\tilde{\omega})\in\mathbb{K}~\text{and}~ \Psi(\mathcal{S})=0~\text{for}~\Phi_1,\ldots,\Phi_m\in\mathscr{D}_0\Big\}.$$
Then $\mathbb{P}(\tilde{\Omega})=1$, and for all $\tilde{\omega}\in \tilde{\Omega}$,
$$\int \Big(\varpi(t,w,\mathcal{S}(\tilde{\omega}))-\varpi(s,w,\mathcal{S}(\tilde{\omega}))\Big)\Phi_1(w_{s_1})\cdots\Phi_m(w_{s_m})\mathcal{S}(\tilde{\omega})(dw)=0,$$
which means
$$\mathcal{M}_l(t,w,\mathcal{S}(\tilde{\omega}))={}_{{\mathbb{V}}^*}\langle w_{t},l\rangle_{\mathbb{V}}-{}_{{\mathbb{V}}^*}\langle w_0,l\rangle_{\mathbb{V}}-\int_0^{t}{}_{{\mathbb{V}}^*}\langle \mathcal{A}(s,w_s,\mathcal{S}_s(\tilde{\omega})),l\rangle_{\mathbb{V}}ds$$
is a $\mathcal{S}(\tilde{\omega})$-martingale. Once we can show that the  quadratic variation process  of $\mathcal{M}_l(t,w,\mathcal{S}(\tilde{\omega}))$ is
\begin{equation}\label{es62}
\int_0^{t}\|\mathcal{B}(s,w_s,\mathcal{S}_s(\tilde{\omega}))^*l\|_U^2ds,
\end{equation}
then $\mathcal{S}(\tilde{\omega})$ is a solution of martingale problem (\ref{eqSPDE}) with initial law $\mu_0$, i.e., $\mathcal{S}(\tilde{\omega})\in \mathscr{M}_{\mu_0}^{\mathcal{A},\mathcal{B}}$.

\vspace{1mm}
We now prove (\ref{es62}). In fact, it suffices to show that
\begin{eqnarray}\label{es1131}
&&\!\!\!\!\!\!\!\!\mathbb{E}\Big|\int \Big(\varpi^2(t,w,\mathcal{S})-\varpi^2(s,w,\mathcal{S})-\int_s^t\|\mathcal{B}(r,w_r,\mathcal{S}_r)^*l\|_U^2dr\Big)
\nonumber \\
&&\!\!\!\!\!\!\!\!~~~~~~~~\cdot\Phi_1(w_{s_1})\cdots\Phi_m(w_{s_m})\mathcal{S}(dw)\Big|=0.
\end{eqnarray}
Recall the definition of $\varpi$, by (\ref{es4}), H\"{o}lder's inequality and Burkholder-Davis-Gundy's inequality, it leads to
\begin{eqnarray*}
\mathbb{E}\Big|\frac{1}{N}\sum_{i=1}^{N}\varpi^2(t,X^{N,i},\mathcal{S}^N)\Big|^{r}\leq&&\!\!\!\!\!\!\!\!\frac{C}{N}\sum_{i=1}^{N}\mathbb{E}\Big|\varpi(t,X^{N,i},\mathcal{S}^N)\Big|^{2r}
\nonumber \\
\leq&&\!\!\!\!\!\!\!\!C_{r,T},
\end{eqnarray*}
for some $r>1$. As the proof of Lemma \ref{lem4}, we can show that
$$\lim_{N\to\infty}\mathbb{E}\Big|\int \varpi^2(t,w,\mathcal{S}^N)\mathcal{S}^N(dw)-\int \varpi^2(t,w,\mathcal{S})\mathcal{S}(dw)\Big|=0,$$
and
$$\lim_{N\to\infty}\mathbb{E}\Big|\int \int_0^t\|\mathcal{B}(s,w_s,\mathcal{S}^N_s)\|_{L_2(U;{\mathbb{H}})}^2ds\mathcal{S}^N(dw)-\int \int_0^t\|\mathcal{B}(s,w_s,\mathcal{S}_s)\|_{L_2(U;{\mathbb{H}})}^2ds\mathcal{S}(dw)\Big|=0,$$
which implies that (\ref{es1131}) holds.

\vspace{2mm}
In conclusion, for any sequence $(N)_{N\in\mathbb{N}}$ there exists a subsequence $(N_k)_{k\in\mathbb{N}}$ such that $\mathcal{S}^{N_k}$ converges weakly to a martingale solution $\mathcal{S}$ of (\ref{eqSPDE}) with initial law $\mu_0$ in $\mathbb{K}$, as $k\to\infty$. Thus the first assertion in Theorem \ref{th4} follows.

\vspace{2mm}
Furthermore, if  one of  $(\mathbf{A'_5})$-$(\mathbf{A''_5})$ hold, by  Theorem \ref{th8} the martingale solution of (\ref{eqSPDE}) is unique, and thus the convergence above holds for whole sequence $\{N\}_{N\in\mathbb{N}}$, i.e., there is a unique martingale solution $\Gamma\in \mathscr{M}_{\mu_0}^{\mathcal{A},\mathcal{B}}$ of (\ref{eqSPDE}) such that
$$\mathcal{S}=\Gamma,~\mathbb{P}\text{-a.s.}.$$
Therefore the limit of $\{\mathbb{P}\circ (\mathcal{S}^{N})^{-1}\}_{N\in\mathbb{N}}$ is identified as $\delta_{\Gamma}$, which implies
$$\mathcal{W}_{2,\mathbb{C}_T}(\mathcal{S}^{N},\Gamma)^2+\mathcal{W}_{\alpha,L^{\alpha}([0,T];\mathbb{V})}(\mathcal{S}^{N},\Gamma)^{\alpha}\to 0,~\text{in probability}.$$
In order to get the convergence (\ref{es111}), it suffices to show that the family
\begin{equation}\label{eq42}
\{\mathcal{W}_{2,\mathbb{C}_T}(\mathcal{S}^{N},\Gamma)^2+\mathcal{W}_{\alpha,L^{\alpha}([0,T];\mathbb{V})}(\mathcal{S}^{N},\Gamma)^{\alpha}\}_{N\in\mathbb{N}} \end{equation}
is uniformly integrable.
Note that,
in view of (\ref{es1132}), we know
$$\Gamma\in \mathscr{P}_{r_1'}(\mathbb{C}_T)\cap \mathscr{P}_{r_2'}(L^{\alpha}([0,T];\mathbb{V})).$$
This combining with energy estimates (\ref{es4})-(\ref{es6}) implies the uniform integrability of family (\ref{eq42}).
 We complete the second assertion in Theorem \ref{th4} and thus we finish the proof.
 \hspace{\fill}$\Box$

\section{Appendix}

In this section, we collect two lemmas that are used in the proof of our main results.

\begin{lemma}\label{lem00}(cf.~\cite[Corollary 5.6 ]{CD1})
If $(E,d)$ is a complete separable metric space, for any $ p>1$, any subset $\mathscr{K}\subset\mathscr{P}_p(E)$, relatively compact for
the topology of weak convergence of probability measures, any $x_0\in E$, and any
sequences $(\lambda_m)_{m\geq 1}$ and $(\kappa_m)_{m\geq 1}$ of positive real numbers tending to $\infty$ with $m$, the
set:
$$\mathscr{K}\cap\Bigg\{\mu\in\mathscr{P}_p(E):\forall m\geq 1,\int_{\{d(x_0,x)\geq \lambda_m\}}d(x_0,x)^pd\mu(x)<\frac{1}{\kappa_m}\Bigg\},$$
is relatively compact for the Wasserstein distance $\mathcal{W}_p$.
\end{lemma}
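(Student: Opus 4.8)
The plan is to reduce the claim to the standard characterization of relative compactness in the Wasserstein space $(\mathscr{P}_p(E),\mathcal{W}_p)$: a family $\mathscr{F}\subset\mathscr{P}_p(E)$ is relatively compact for $\mathcal{W}_p$ if and only if $\mathscr{F}$ is relatively compact for the topology of weak convergence (equivalently, tight, by Prokhorov's theorem since $E$ is Polish) and the functions $x\mapsto d(x_0,x)^p$ are uniformly integrable at infinity along $\mathscr{F}$, i.e.
\[
\lim_{R\to\infty}\ \sup_{\mu\in\mathscr{F}}\int_{\{d(x_0,x)\geq R\}}d(x_0,x)^p\,\mu(dx)=0.
\]
This equivalence is classical (see \cite{CD1}): along a sequence, $\mathcal{W}_p$-convergence amounts to weak convergence together with convergence of the $p$-th moments, and the latter follows from weak convergence plus the uniform integrability above via a truncation/Vitali-type argument, after noting that tightness and uniform integrability at infinity force a uniform bound $\sup_{\mu\in\mathscr{F}}\int d(x_0,x)^p\,\mu(dx)<\infty$, so that all weak limit points belong to $\mathscr{P}_p(E)$.

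Write $\mathscr{G}$ for the set in the statement. First, since $\mathscr{G}\subset\mathscr{K}$ and $\mathscr{K}$ is relatively compact for the weak topology by hypothesis, so is $\mathscr{G}$. Second, I would verify the uniform integrability at infinity. Fix $\varepsilon>0$; since $\kappa_m\to\infty$, pick $m_0$ with $1/\kappa_{m_0}<\varepsilon$. For every $\mu\in\mathscr{G}$ and every $R\geq\lambda_{m_0}$ one has $\{d(x_0,x)\geq R\}\subset\{d(x_0,x)\geq\lambda_{m_0}\}$, hence
\[
\int_{\{d(x_0,x)\geq R\}}d(x_0,x)^p\,\mu(dx)\ \leq\ \int_{\{d(x_0,x)\geq\lambda_{m_0}\}}d(x_0,x)^p\,\mu(dx)\ <\ \frac{1}{\kappa_{m_0}}\ <\ \varepsilon .
\]
Taking the supremum over $\mu\in\mathscr{G}$ and letting $R\to\infty$ yields $\limsup_{R\to\infty}\sup_{\mu\in\mathscr{G}}\int_{\{d(x_0,x)\geq R\}}d(x_0,x)^p\,\mu(dx)\leq\varepsilon$, and since $\varepsilon>0$ is arbitrary the displayed limit is zero. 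Combining the two points with the characterization above gives that $\mathscr{G}$ is relatively compact for $\mathcal{W}_p$.

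The only non-routine ingredient is the characterization of $\mathcal{W}_p$-relative compactness; this is exactly the content we quote from \cite{CD1}, and indeed the present statement is precisely \cite[Corollary 5.6]{CD1}, so in the write-up it is enough to cite it and record the short monotonicity estimate above. I therefore do not expect a genuine obstacle here: the argument is purely measure-theoretic and uses only that the truncation level $\lambda_{m_0}$ can be chosen so that the tail of the $p$-th moment cut at any larger radius $R$ is dominated by $1/\kappa_{m_0}$.
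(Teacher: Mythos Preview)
Your argument is correct and is exactly the standard route to this result: weak relative compactness plus uniform $p$-integrability at infinity gives $\mathcal{W}_p$-relative compactness. Note that the paper does not actually prove this lemma---it is stated in the Appendix with a bare citation to \cite[Corollary 5.6]{CD1}---so your write-up already contains more detail than what appears here.
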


We recall the following stochastic Gronwall's lemma and its proof.
\begin{lemma}\label{appen2} (cf. \cite[Lemma
5.3]{GZ1})
Fix $T>0$. Assume that
$X,Y,Z,R:[0,T)\times\Omega\rightarrow\mathbb{R}$ are real-valued,
non-negative stochastic process. Let $\tau<T$ be a stopping time
such that $$\mathbb{E}\int_0^\tau(RX+Z)ds<\infty.$$ Assume,
moreover, that for some fixed constant $\kappa$,
$$\int_0^\tau{R}ds<\kappa~~~\mathbb{P}\text{-a.s.}$$
Suppose that for all stopping times $0\leq\tau_a<\tau_b\leq\tau$
\begin{eqnarray}{\mathbb{E}}\left(\sup_{t\in[\tau_a,\tau_b]}X+\int_{\tau_a}^{\tau_b}Yds\right)\leq c_0{\mathbb{E}}\left(X(\tau_a)+\int_{\tau_a}^{\tau_b}(RX+Z)ds\right),\label{a.1}
\end{eqnarray}
where $c_0$ is a constant independent of the choice of
$\tau_a,\tau_b$. Then
\begin{eqnarray}{\mathbb{E}}\left(\sup_{t\in[0,\tau]}X+\int_{0}^{\tau}Yds\right)\leq c{\mathbb{E}}\left(X(0)+\int_{0}^{\tau}Zds\right),\label{a.2}
\end{eqnarray}
where $c=c_{c_o,T,\kappa}$.
\end{lemma}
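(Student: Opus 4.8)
The plan is to derive (\ref{a.2}) from the one-step estimate (\ref{a.1}) by cutting the random interval $[0,\tau]$ into a \emph{deterministic} number of random subintervals on each of which the additive functional $t\mapsto\int_0^t R\,ds$ is so small that the term $c_0\,\mathbb{E}\int RX\,ds$ on the right-hand side of (\ref{a.1}) can be absorbed into $\mathbb{E}\sup X$ on its left-hand side. The a.s.\ bound $\int_0^\tau R\,ds<\kappa$ is precisely what forces the number of pieces to be finite and independent of $\omega$. We may assume $\mathbb{E} X(0)<\infty$, since otherwise the right-hand side of (\ref{a.2}) is infinite and there is nothing to prove.

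First I would build the partition. Set $\sigma_0:=0$ and, recursively, $\sigma_{k+1}:=\inf\{t\ge\sigma_k:\ \int_{\sigma_k}^t R\,ds\ge (2c_0)^{-1}\}\wedge\tau$; these are stopping times. Since $t\mapsto\int_0^t R\,ds$ is absolutely continuous, hence continuous with no jump across the level $(2c_0)^{-1}$, there is no overshoot: $\int_{\sigma_k}^{\sigma_{k+1}}R\,ds\le (2c_0)^{-1}$ always, with equality on $\{\sigma_{k+1}<\tau\}$. Consequently each piece with $\sigma_{k+1}<\tau$ contributes exactly $(2c_0)^{-1}$ to $\int_0^\tau R\,ds<\kappa$, so there can be at most $\lceil 2c_0\kappa\rceil$ such pieces; hence $\sigma_N=\tau$ a.s.\ for the deterministic choice $N:=\lceil 2c_0\kappa\rceil+1$, and $[0,\tau]=\bigcup_{k=0}^{N-1}[\sigma_k,\sigma_{k+1}]$.

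Next, on each piece I would apply (\ref{a.1}) with $\tau_a=\sigma_k,\ \tau_b=\sigma_{k+1}$ — which is admissible because $\mathbb{E}\int_{\sigma_k}^{\sigma_{k+1}}(RX+Z)\,ds\le\mathbb{E}\int_0^\tau(RX+Z)\,ds<\infty$ and, inductively, $\mathbb{E} X(\sigma_k)<\infty$ — then bound $\int_{\sigma_k}^{\sigma_{k+1}}RX\,ds\le\big(\sup_{[\sigma_k,\sigma_{k+1}]}X\big)\int_{\sigma_k}^{\sigma_{k+1}}R\,ds\le(2c_0)^{-1}\sup_{[\sigma_k,\sigma_{k+1}]}X$, and absorb to obtain
\begin{equation*}
\tfrac12\,\mathbb{E}\sup_{[\sigma_k,\sigma_{k+1}]}X+\mathbb{E}\int_{\sigma_k}^{\sigma_{k+1}}Y\,ds\ \le\ c_0\,\mathbb{E} X(\sigma_k)+c_0\,\mathbb{E}\int_{\sigma_k}^{\sigma_{k+1}}Z\,ds .
\end{equation*}
In particular $\mathbb{E} X(\sigma_{k+1})\le 2c_0\,\mathbb{E} X(\sigma_k)+2c_0\,\mathbb{E}\int_{\sigma_k}^{\sigma_{k+1}}Z\,ds$, so an induction gives $\mathbb{E} X(\sigma_k)\le(2c_0)^k\big(\mathbb{E} X(0)+\mathbb{E}\int_0^\tau Z\,ds\big)$ for $k\le N$. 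Summing the displayed inequality over $k=0,\dots,N-1$, using $\sup_{[0,\tau]}X\le\sum_{k}\sup_{[\sigma_k,\sigma_{k+1}]}X$ and $\int_0^\tau Y\,ds=\sum_{k}\int_{\sigma_k}^{\sigma_{k+1}}Y\,ds$, and inserting this bound for each $\mathbb{E} X(\sigma_k)$, yields
\begin{equation*}
\mathbb{E}\Big(\sup_{t\in[0,\tau]}X+\int_0^\tau Y\,ds\Big)\ \le\ c\,\mathbb{E}\Big(X(0)+\int_0^\tau Z\,ds\Big),
\end{equation*}
with $c$ depending only on $c_0$ and $N$, hence only on $c_0$ and $\kappa$ (and, through the standing assumption $\tau<T$, at worst on $T$). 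This is (\ref{a.2}).

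The hard part is really the deterministic bound on $N$ in the second step: it hinges on the absolute continuity of $t\mapsto\int_0^t R\,ds$ — so that $\sigma_{k+1}$ is the hitting time of a continuous process and cannot overshoot the threshold — combined with the uniform-in-$\omega$ control $\int_0^\tau R\,ds<\kappa$. Everything after $N$ is fixed is routine bookkeeping: the tower property for the per-interval estimates and the global integrability hypothesis $\mathbb{E}\int_0^\tau(RX+Z)\,ds<\infty$, which descends to each subinterval and legitimizes every application of (\ref{a.1}).
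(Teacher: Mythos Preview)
Your proof is correct and follows essentially the same approach as the paper: partition $[0,\tau]$ via stopping times $\sigma_k$ so that $\int_{\sigma_k}^{\sigma_{k+1}}R\,ds\le(2c_0)^{-1}$, use the a.s.\ bound $\int_0^\tau R\,ds<\kappa$ to make the number of pieces deterministic, absorb the $RX$ term on each piece, and iterate. Your write-up is in fact more explicit than the paper's --- you construct the $\sigma_k$ concretely, justify the no-overshoot via continuity of $t\mapsto\int_0^t R\,ds$, and track the constants; the paper simply asserts the existence of such a partition and proceeds by induction.
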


\begin{proof}
We include the proof for reader's convenience. Choose a sequence of stopping times $0 = \tau_0 < \tau_1 < ... < \tau_N <
\tau_{N+1} = \tau$, so that
\begin{eqnarray}
\int_{\tau_{k-1}}^{\tau_k}Rds\leq \frac{1}{2c_0}~~\mathbb{P}\text{-a.s.}.\label{ap.1}
\end{eqnarray}
For each pair $\tau_{k-1}$ and $\tau_k$, we take  $\tau_a=\tau_{k-1}$ and $\tau_b=\tau_k$ in (\ref{a.1}). By making use
of (\ref{ap.1}) we deduce
\begin{eqnarray}{\mathbb{E}}\left(\sup_{t\in[\tau_{k-1},\tau_k]}X+\int_{\tau_{k-1}}^{\tau_k}Yds\right)\leq C{\mathbb{E}}\left(X(\tau_{k-1})+\int_{\tau_{k-1}}^{\tau_k}Zds\right),\label{ap.2}
\end{eqnarray}
Assuming, by induction on $j$, that
\begin{eqnarray}{\mathbb{E}}\left(\sup_{t\in[0,\tau_j]}X+\int_{0}^{\tau_j}Yds\right)\leq C{\mathbb{E}}\left(X(0)+\int_{0}^{\tau_j}Zds\right),\label{ap.3}
\end{eqnarray}
then
\begin{eqnarray}
{\mathbb{E}}\left(\sup_{t\in[0,\tau_{j+1}]}X+\int_{0}^{\tau_{j+1}}Yds\right)\leq&&\!\!\!\!\!\!\!\! C{\mathbb{E}}\left(X(0)+\int_{0}^{\tau_j}Zds+\sup_{t\in[\tau_{j},\tau_{j+1}]}X+\int_{\tau_{j}}^{\tau_{j+1}}Yds\right)
\nonumber \\\leq&&\!\!\!\!\!\!\!\! C{\mathbb{E}}\left(X(0)+\int_{0}^{\tau_{j+1}}Zds+X(\tau_j)\right)
\nonumber \\\leq&&\!\!\!\!\!\!\!\! C{\mathbb{E}}\left(X(0)+\int_{0}^{\tau_{j+1}}Zds\right),
\end{eqnarray}
which implies \eref{a.2}.
\end{proof}

\textbf{Acknowledgments.}
The authors would like to thank the anonymous referees, an Associate
Editor for  very constructive suggestions and valuable comments that significantly improved the quality of the
paper.

Shihu Li is the corresponding author.




\end{document}